\title{\textbf{New characterizations for supersolvability of fusion system and $p$-nilpotency of finite groups}\thanks{ \scriptsize\emph{E-mail addresses:}
      zsmcau@cau.edu.cn\,(S. Zhang); zhencai688@sina.com\,(Z. Shen).}}
\author{Shengmin Zhang,   Zhencai Shen\\
\quad
\\{\small College of Science,
China Agricultural University,
Beijing 100083, China}}
\date{}
\newtheorem{theorem}{Theorem}[section]
\newtheorem{lemma}[theorem]{Lemma}
\newtheorem{corollary}[theorem]{Corollary}
\theoremstyle{definition}
\newtheorem{definition}[theorem]{Definition}
\newtheorem{remark}[theorem]{Remark}
\newtheorem{example}[theorem]{Example}
\let\expandafter\oldproof\csname\string\proof\endcsname
\let\oldendproof\endproof
\renewenvironment{proof}[1][\proofname]{%
  \oldproof[\bfseries\scshape #1]%
}{\oldendproof}
\def\trianglelefteqslant{\ThisStyle{\mathrel{%
  \stackinset{r}{.75pt+.15\LMpt}{t}{.1\LMpt}{\rule{.3pt}{1.1\LMex+.2ex}}{\SavedStyle\leqslant}%
}}}
\renewcommand{\unlhd}{\trianglelefteqslant}
\renewcommand{\leq}{\leqslant}
\renewcommand{\geq}{\geqslant}
\begin{document}
\maketitle
\begin{abstract}
Let $p$ be a prime, $S$ be a $p$-group and $\mathcal{F}$ be a saturated fusion system over $S$. Then $\mathcal{F}$ is said to be supersolvable, if there exists a series of $S$, namely $1 = S_0 \leq S_1 \leq \cdots \leq S_n = S$, such that $S_{i+1}/S_i$ is cyclic, $i=0,1,\cdots, n-1$, $S_i$ is strongly $\mathcal{F}$-closed, $i=0,1,\cdots,n$. In this paper, we investigate the characterizations for supersolvability of $\mathcal{F}_S (G)$ under the assumption that certain subgroups of $G$ satisfy different kinds of generalized normalities in section \ref{1003}. In section \ref{Section 4}, we obtain the   result of characterizations for generalized saturated fusion system $\mathcal{F}$ based on the work of F. Aseeri {\it et al} in {{\cite{FJ}}}. Finally, we apply the results in section \ref{1003} and \ref{Section 4} and give characterizations for $p$-nilpotency of finite groups under the assumption that some subgroups of $G$ satisfy different kinds of generalized normalities.

\end{abstract}

\section{Introduction}
All groups considered in this paper will be finite and $G$ always denotes a finite group. Let $S$ be a Sylow $p$-subgroup of $G$, where $p$ is a prime divisor of $|G|$. Then the fusion system of $G$ over $S$, namely $\mathcal{F}_S (G)$, is a fusion category over $S$ which is defined as follows:
\begin{itemize}
\item[(1)] The object of $\mathcal{F}_S (G)$ is the set of all subgroups of $S$.
\item[(2)] For any $P,Q \leq S$, ${\rm Mor}_{\mathcal{F}_S (G)} (P,Q)=\lbrace \phi\,|\,\phi:P\rightarrow Q,\,p \mapsto p^g,\,P^g \leq Q,\,g \in G \rbrace  $.
\end{itemize}
One can  find that $\mathcal{F}_S (G)$ is exactly a saturated fusion system over $S$ by {{\cite[Theorem 2.3]{AK}}}. As is known to all, the structure of $\mathcal{F}_S (G)$ has a strong relationship with the structure of $G$. Hence some structures of finite groups can be generalized into the fusion system $\mathcal{F}_S (G)$. Recall that $G$ is said to be supersolvable, if there exists a chief series, namely
$$1 = N_0 \leq N_1 \leq \cdots \leq N_t = G,$$
such that $N_{i+1}/N_i$ is cyclic, $i=0,1,\cdots,t-1$. As a natural way of generalization, we want to define a similar structure in $\mathcal{F}$, where $\mathcal{F}$ is a fusion system over a $p$-group $S$. Since the objects of $\mathcal{F}$ are exactly the subgroups of $S$, we may restrict the chief series of $G$ into a series of $S$. Note that the normality of subgroups $H$ of $S$ in $G$ represents the invariance of $H$ under the morphisms induced by conjugation of $G$, we may change the normality of $H$ in $G$ into invariance of $H$ under the morphisms in $\mathcal{F}$. Then one can easily find that the invariance of $H$ under the morphisms in $\mathcal{F}$ exactly suits the concept of weakly $\mathcal{F}$-closed property, hence we give the following definition which was introduced by N. Su in {{\cite{SN}}}.
\begin{definition}
Let $\mathcal{F}$ be a saturated fusion system over a $p$-group $S$. Then $\mathcal{F}$ is called supersolvable, if there exists a series of subgroups of $S$, namely:
$$1 = S_0 \leq S_1 \leq \cdots \leq S_n =S,$$
such that $S_i$ is strongly $\mathcal{F}$-closed, $i=0,1,\cdots, n$, and $S_{i+1}/S_i$ is cyclic for any $i=0,1,\cdots,n-1$.
\end{definition}

Now we would like to introduce some concepts which are useful for us to discover the structure of $\mathcal{F}_S (G)$. Let $S$ be a $p$-group and $P$ be a subgroup of $S$. Suppose that $\mathcal{F}$ is a fusion system over $S$. Then $P$ is called $\mathcal{F}$-centric, if $C_S (Q) = Z(Q)$ for all $Q \in P^{\mathcal{F}}$, where $P^{\mathcal{F}}$ denotes the set of all subgroups of $S$ which are $\mathcal{F}$-conjugate to $P$. $P$ is said to be fully normalized in $\mathcal{F}$, if $|N_S (P)| \geq |N_S(Q)|$ for all $Q \in P^{\mathcal{F}}$. $P$ is said to be $\mathcal{F}$-essential, if $P$ is $\mathcal{F}$-centric and fully normalized in $\mathcal{F}$, and ${\rm Out}_{\mathcal{F}} (P)$ contains a strongly $p$-embedded subgroup (see {{\cite[Definition A.6]{AK}}}). Now we are ready to introduce the following concept.
\begin{definition}
Let $p$ be a prime, $\mathcal{F}$ be a saturated fusion system on a finite group $S$. Let 
$$\mathcal{E}_{\mathcal{F}} ^{*}:=\lbrace Q \leq S\,|\,Q ~\text{is $\mathcal{F}$-essential, or $Q =S$}\rbrace. $$
\end{definition}

Now we introduce the section \ref{1003} of our paper. Let $G$ be a finite group and $S$ be a Sylow $p$-subgroup of $G$, where $p\,|\,|G|$. Firstly, we would like to introduce the work of F. Aseeri and J. Kaspczyk in {{\cite{FJ}}}. At the beginning of their passage, they introduced the characterizations for $p$-nilpotency of $G$ under the hypothesis that certain subgroups of $S$ are weakly closed in $S$ with respect to $G$ which are obtained by M. Assad {{\cite{AS}}}. An interesting fact in {{\cite[Theorem 4.3]{LY}}}, S. Liu and H. Yu showed that a subgroup $H$ of $S$ is weakly closed in $S$ with respect to $G$ if and only if $H$ is weakly pronormal in $G$. Note that $H$ is weakly closed in $S$ with respect to $G$ if and only if $H$ is weakly $\mathcal{F}_S (G)$-closed, then the characterizations for $p$-nilpotency of $G$ can be changed into the characterizations for supersolvability of $\mathcal{F}_S (G)$ since if $G$ is $p$-nilpotent, then $\mathcal{F}_S (G) = \mathcal{F}_S (S)$ is supersolvable. As a consequence, they obtained the following result:
\begin{theorem}[{{\cite[Theorem A]{FJ}}}]
Let $p$ be a prime, $G$ be a finite group, and $S$ be a Sylow $p$-subgroup of $G$. Suppose that there exists a subgroup $D$ of $S$ with $1<|D|<|S|$ such that every subgroup of $S$ of order $|D|$ or $p|D|$ is abelian and weakly pronormal in $G$, then $\mathcal{F}_S (G)$ is supersolvable.
\end{theorem}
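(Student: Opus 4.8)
The plan is to argue by a minimal counterexample and to translate every hypothesis into the language of the fusion system $\mathcal{F} := \mathcal{F}_S(G)$. First I would record the dictionary that makes the statement tractable: by the Liu--Yu equivalence a subgroup $H \le S$ is weakly pronormal in $G$ precisely when it is weakly closed in $S$ with respect to $G$, i.e. weakly $\mathcal{F}$-closed, and taking the conjugating element inside $S$ shows at once that any such $H$ satisfies $H \unlhd S$. I would also note the elementary but decisive fact that a weakly closed subgroup of order $p$ is automatically strongly $\mathcal{F}$-closed, since its only proper subgroup is trivial; thus in the extremal range the hypothesis already manufactures the strongly closed subgroups demanded by the definition of supersolvability. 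In short, the hypothesis says that every subgroup of $S$ of order $|D|$ or $p|D|$ is abelian, normal in $S$, and has no further $G$-conjugate inside $S$.

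Next I would take a counterexample $G$ of least order and carry out the standard reductions. Since $O_{p'}(G)$ does not affect $p$-fusion we have $\mathcal{F}_S(G) \cong \mathcal{F}_{\bar S}(G/O_{p'}(G))$ with all the numerical and weakly-closed data descending, so I may assume $O_{p'}(G) = 1$. I would then isolate the two boundary values of $|D|$ for separate treatment. The top case $|D| = |S|/p$ is especially clean: here $p|D| = |S|$, so the unique subgroup of that order is $S$ itself, and the abelian hypothesis forces $S$ to be abelian, which collapses much of the difficulty. The bottom case $|D| = p$ (where the abelian condition is vacuous and the content is that every subgroup of order $p$ or $p^2$ is normal in $S$ and strongly, respectively weakly, closed) I would handle directly, leveraging these strong local constraints either to exhibit the series outright or to reduce to an Asaad-type $p$-nilpotency conclusion.

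The heart of the argument is an inductive step driven by Alperin's fusion theorem, which is exactly why the family $\mathcal{E}_{\mathcal{F}}^{*}$ of essential subgroups together with $S$ was introduced: $\mathcal{F}$ is generated by the groups ${\rm Aut}_{\mathcal{F}}(Q)$ with $Q \in \mathcal{E}_{\mathcal{F}}^{*}$, so to control all $\mathcal{F}$-fusion it suffices to understand how an essential $Q$ can move a subgroup of order $|D|$. I would use the weakly-closed hypothesis to show that no essential $Q$ can displace such a subgroup, which pins down the fusion enough to produce a nontrivial strongly $\mathcal{F}$-closed subgroup $R$ admitting a refinement into strongly closed subgroups with cyclic factors. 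I would then pass to the quotient fusion system $\mathcal{F}/R$ over $S/R$, which is saturated since $R$ is strongly closed, verify that the hypothesis is inherited with $D$ replaced by $D/R$ (using that subgroups of $S/R$ of order $|D|/|R|$ correspond to subgroups of $S$ of order $|D|$ containing $R$, and that abelianness and weak closedness survive the quotient), and invoke minimality to conclude that $\mathcal{F}/R$ is supersolvable. Splicing the cyclic series of $R$ below the pulled-back series of $\mathcal{F}/R$ then yields a supersolvable series for $\mathcal{F}$, contradicting the choice of $G$.

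The step I expect to be the main obstacle is the manufacture of $R$ and the faithful descent of the hypothesis. Weak closedness is a statement about a subgroup's own image under $\mathcal{F}$-morphisms and says nothing a priori about its sections, whereas strong closedness, required throughout the definition of supersolvability, controls every section; bridging this gap is where the essential-subgroup analysis must do the real work, and it is delicate precisely because the abelian hypothesis is imposed only at the two orders $|D|$ and $p|D|$. Equally delicate is checking that after quotienting by $R$ the replacement datum $D/R$ still lies strictly between $1$ and $S/R$ and still governs subgroups at the correct two orders, so that the induction genuinely applies rather than silently collapsing into one of the boundary cases.
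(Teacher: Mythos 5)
Your dictionary is correct (weakly pronormal $=$ weakly closed in $S$ with respect to $G$ $=$ weakly $\mathcal{F}_S(G)$-closed; such subgroups are normal in $S$; at order $p$ weak and strong closedness coincide), and your opening moves (minimal counterexample, reduction modulo $O_{p'}(G)$, focus on $\mathcal{E}_{\mathcal{F}}^{*}$ via Alperin) match the standard framework. But the proof has a hole exactly where you yourself flag ``the main obstacle'': the production of a nontrivial strongly $\mathcal{F}$-closed subgroup $R$ admitting a strongly closed refinement with cyclic factors is asserted, not proved, and it is the entire content of the theorem. Worse, the recursion you build around $R$ cannot work as described. The strongly closed subgroup that the essential-subgroup analysis actually yields is $O_p(G)$: the abelian hypothesis forces every $Q \in \mathcal{E}_{\mathcal{F}}^{*}$ to satisfy $|Q| \geq p|D|$ (if $|Q| < p|D|$, embed $Q$ in an abelian subgroup $R$ of order $p|D|$; then $R \leq C_S(Q) = Z(Q) \leq Q$ by $\mathcal{F}$-centricity, a contradiction), and Lemma \ref{1} forces some member of $\mathcal{E}_{\mathcal{F}}^{*}$ to be normal in $G$, whence $|O_p(G)| \geq p|D| > |D|$. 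Consequently your replacement datum $D/R$ does not exist --- there are no subgroups of order $|D|$ containing $R$ --- and the inherited hypothesis for $\mathcal{F}/R$ is empty, so the induction collapses precisely in the situation the argument produces. In addition, for an $R$ that is merely strongly $\mathcal{F}$-closed but not normal in $G$, the quotient $\mathcal{F}/R$ need not be the fusion system of any group of order smaller than $|G|$, so minimality of $|G|$ gives you nothing; you would have to re-found the induction on abstract saturated fusion systems, which your group-theoretic hypothesis (``weakly pronormal in $G$'') does not support without first translating the whole statement into fusion-theoretic terms.

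The route the paper takes (Theorem \ref{3.1}, which abstracts exactly the proof of this statement; alternatively Theorem \ref{Theorem B} together with Corollary \ref{Corollary 1}) avoids quotient fusion systems entirely. The induction runs over subgroups of $G$, not quotients: for a non-$G$-normal $Q \in \mathcal{E}_{\mathcal{F}}^{*}$, the proper subgroup $N_G(Q)$ inherits the hypothesis (weak closedness restricts to overgroups of $H$ in $G$), so $N_{\mathcal{F}}(Q) = \mathcal{F}_{N_S(Q)}(N_G(Q))$ is supersolvable by minimality. The cyclic strongly closed series is then obtained inside $O_p(G)$ for free from group theory: a weakly closed subgroup contained in $O_p(G)$ is normal in $G$ (its $G$-conjugates stay in $O_p(G) \leq S$), so Skiba's theorem (Lemma \ref{5}) gives $O_p(G) \leq Z_{\mathfrak{U}}(G)$, and the terms of a $G$-chief series below $O_p(G)$ are $G$-normal, hence strongly $\mathcal{F}$-closed with cyclic factors. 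Finally the splicing is done not by passing to $\mathcal{F}/R$ but by Lemma \ref{2} (in the abstract setting, Lemma \ref{6}), which converts ``all relevant proper subsystems supersolvable plus $O_p(G) \leq Z_{\mathfrak{U}}(G)$'' directly into supersolvability of $\mathcal{F}_S(G)$. If you want to salvage your outline, replace the ``quotient by a small $R$ and recurse'' step by these two lemmas; as written, that step is not repairable.
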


Then a natural thought of improvement comes to our mind. Does the generalised normality “weakly pronormal” essential? Can we change the weakly pronormal property into other kinds of generalized normalities like weakly $S\Phi$-supplemented property {{\cite[Definition 1.1]{WU}}} and so on? Thus, we analysis the proof of {{\cite[Theorem A]{FJ}}}, and find out the reason why weakly pronormal property has made this theorem holds. In fact, this property is not essential and we claim that $\mathcal{F}_S (G)$ is supersolvable for a bunch of generalised normalities which satisfy certain conditions as we will show in section \ref{1003}. Therefore, in section \ref{1003}, we investigate the sufficient conditions that make $\mathcal{F}_S (G)$  supersolvable, and show two ways to axiomize the sufficient conditions. Then, we give 16 different theorems about characterizations for supersolvability of $\mathcal{F}_S (G)$ based on 16 different kinds of generalised normalities as direct or indirect applications of our axiomatization. All of the definitions of the generalised normalities are given the first time they appear in this paper. Here we would like to define two generalised normalities, which are not researched before.
\begin{definition}\label{1.4}
Let $G$ be a finite group, and $H$ be a subgroup of $G$.
\begin{itemize}
\item[(1)] $H$ is said to be a strong $ICC$-subgroup of $G$, if $H$ is an $ICC$-subgroup {{\cite{GL3}}} for any $H \leq K \leq G$.
\item[(2)] $H$ is said to be strongly $\mathfrak{U}$-embedded in $G$, if $H$ is $\mathfrak{U}$-embedded {{\cite{WS}}} in $K $ for any $H \leq K \leq G$.
\end{itemize}
\end{definition}

However, it seems difficult to migrate other kinds of generalised normalities into the general or exotic saturated fusion system, since the weakly pronormal property has a close relationship with weakly $\mathcal{F}$-closed property in fusion system. More precisely, a subgroup $H$ of $S$ is weakly $\mathcal{F}$-closed if and only if $H$ is weakly pronormal in $G$ if $\mathcal{F} = \mathcal{F}_S (G)$ for some finite group $G$ such that $S \in {\rm Syl}_p (G)$. Therefore, to actually generalise or to cover the work of F. Aseeri and J. Kaspczyk, we need to weaken the condition of weakly $\mathcal{F}$-closed property, where $\mathcal{F}$ is an arbitrary saturated fusion system. Now let $S$ be a $p$-group, and $\mathcal{F}$ be a saturated fusion system over $S$. Let $H$ be a subgroup of $S$. For any $H \leq K \leq S$ and $\alpha \in {\rm Aut}_{\mathcal{F}} (K)$, it follows directly that $\alpha^{*} : H \rightarrow H^{\alpha}, h \mapsto h^{\alpha}$ lies in ${\rm Iso}_{\mathcal{F}} (H,H^{\alpha})$. However, for any $\beta \in {\rm Iso}_{\mathcal{F}} (H,T)$, there does not necessarily exist a subgroup $K \leq H,T$ and an extension $\overline{\beta} \in {\rm Aut}_{\mathcal{F}} (K)$. The argument above inspires us to consider a generalization of weakly $\mathcal{F}$-closed property, and we list it as follows.
\begin{definition}
let $\mathcal{F}$ be a fusion system over a $p$-group $S$, and $H$ be a subgroup of $S$. Then $H$ is said to be semi-invariant in $\mathcal{F}$, if $H$ is $\mathcal{F}$-invariant in any subgroup $K$ of $S$ that containing $H$. Here, $H$ is called $\mathcal{F}$-invariant in $K$, if $H$ is invariant under the action induced by ${\rm Aut}_{\mathcal{F}} (K)$.
\end{definition}
\begin{example}
Let $P = D_{2^n} = \langle h,a \,|\,h^{2^{n-1}} = h^q =a^2 =1, h^a = h^{-1} \rangle$ and $G = {\rm PSL} (2, p^d)$ for some suitable choice of $p,d$ such that the order of the Sylow $2$-subgroup of $G$ equals to $2^n$. Consider the involution of $P$, namely $h^i a$ for some $i \in \mathbb{N}$. Then the subgroup $Q_i = \lbrace 1, h^i a \rbrace$ is semi-invariant in $\mathcal{F}_{P} (G)$ if and only if $Q_i$ is invariant under any $G$-automorphism $\phi$ of any subgroup $Q_i \leq R \leq P$. In other words, $N_G (Q_i ) \geq N_G (R)$ for any   $Q_i \leq R \leq P$. Notice that $P_i  =  \langle h^2 \rangle \cdot \langle h^i a \rangle$ is a normal subgroup of $P$ containing $Q_i$, it follows from the fact $Q_i$ is not normal in $P$ that $N_P (Q_i ) \not\geq N_P (P_i)$, and so $N_G (Q_i ) \not\geq N_G (P_i)$. Thus $Q_i$ does not satisfy the $D_{2^n}$-subnormalizer condition for any $i \in \mathbb{N}$. Clearly, $P_i$ satisfies the $D_{2^n}$-subnormalizer condition for any $i \in \mathbb{N}$.
\end{example}
As a direct observation, we conclude from Lemma \ref{cover} that the semi-invariant property seems weaker than weakly $\mathcal{F}$-closed property. Hence our aim is now clear. We wonder whether the characterizations for supersolvability of saturated fusion system still hold or not if we change the condition of weakly $\mathcal{F}$-closed into the so-called weaker version named semi-invariant. Actually, using exactly the same framework of F. Aseeri and J. Kaspczyk {{\cite{FJ}}}, we find it true that all of their theorems hold if we change the condition of weakly $\mathcal{F}$-closed into semi-invariant, and hence the results of {{\cite{FJ}}} have been covered and improved. We will show the improvements in section \ref{Section 4} as our paper's main results. At this time, another question comes to our mind naturally. What does the semi-invariant property really mean in ordinary finite group theory? Or equivalently, since weakly $\mathcal{F}$-closed property means weakly closed in $S$ with respect to $G$ if $\mathcal{F}=  \mathcal{F}_S (G)$ for some $S \in {\rm Syl}_p(G)$, can we find a property in finite group theory for semi-invariant property in fusion system correspondingly? Actually, it is not hard to give the following definition, which answers the question above.
\begin{definition}
Let $G$ be a finite group, $p$ be a prime dividing the order of $G$, and $S$ be a Sylow $p$-subgroup of $G$. Now let $H$ be a subgroup of $S$, then $H$ satisfies the $S$-subnormalizer condition in $G$, if for any $H \leq K \leq S$, $N_G (H) \geq N_G (K)$.
\end{definition}
\begin{example}
Let $G  =S_4$, and $p = 2$. Then we claim that the subgroups $C_4$ and $C_2 ^2$ of the Sylow 2-subgroup $D_4$ of $G$ satisfy the $D_4$-subnormalizer condition in $S_4$. Actually, since $N_G (C_4) = D_4 = N_G (D_4)$, $N_G (C_2 ^2) = D_4 = N_G (D_4)$, we obtain that $C_2 ^2$ and $C_4$ satisfy the $D_4$-subnormalizer condition in $G = S_4$. Notice that there are two subgroups $Q_1$ and $Q_2$ of $C_2 ^2$ which are isomorphic to $C_2$, and $N_G (Q_1) = D_4$, while $N_G (Q_2) = C_2 ^2$, it follows that $Q_1$ satisfies the $D_4$-subnormalizer condition while $Q_2$ fails to satisfy the $D_4$-subnormalizer condition.
\end{example}
Correspondingly, it is obvious that the $S$-subnormalizer condition is weaker than weakly closed property in group theory. The name of $S$-subnormalizer condition comes from the well-known subnormalizer conditions. Let $G$ be a finite group and $H$ be a subgroup of $g$. Recall that $H$ is said to satisfy the subnormalizer condition in $G$, if for any $H \leq K \leq N_G (H)$, we have $N_G (K) \leq N_G (H)$. One can find that the two concepts have some commonplace, which is why we choose this name for the new generalised normality. Using our significant results in section \ref{Section 4}, we get several corollaries about the characterizations for supersolvability of $\mathcal{F}_S (G)$ under the hypothesis that certain subgroups of $G$ satisfy the $S$-subnormaizer condition in $G$, where $S$ denotes a Sylow $p$-subgroup of $G$.

In F. Aseeri and J. Kaspczyk {{\cite{FJ}}}, they pointed out the relationship between  the $p$-nilpotency of a finite group $G$  and the supersolvability of $\mathcal{F}_S (G)$. We illustrate the argument of them by the following remarks.
\begin{remark}\label{Equivalence}
Let $G$ be a finite group, $p$ be a prime dividing the order of $G$, and $S$ be a Sylow $p$-subgroup of $G$. Suppose that $H$ is a subgroup of $S$. Then $H$ is semi-invariant in $\mathcal{F}_S (G)$ if and only if for any $H \leq K \leq S$ and $\alpha \in {\rm Aut}_{\mathcal{F}_S (G)} (K)$, $H^{\alpha} = H$. In other words, $H$ is semi-invariant in $\mathcal{F}_S (G)$ if and only if for any $H \leq K \leq S$ and $g \in N_G (K)$, $H^g = H$, i.e. $g \in N_G (H)$. Hence $H$ is semi-invariant in $\mathcal{F}_S (G)$ if and only if $H$ satisfies the $S$-subnormalizer condition in $G$.
\end{remark}
\begin{remark}\label{p-nilpotence}
Let $G$ be a finite group, $p$ be a prime dividing the order of $G$, and $S$ be a Sylow $p$-subgroup of $G$. Assume that $(p-1,|G|)=1$ and $\mathcal{F}_S (G)$ is supersolvable, then we conclude from {{\cite[Theorem 1.9]{SN}}} that $G$ is $p$-nilpotent, and so $p$-supersolvable.
\end{remark}

Using the remarks above, we can actually obtain a lot of criteria for $p$-nilpotency of finite group since the supersolvability for $\mathcal{F}_S (G)$ can be transformed into the $p$-nilpotency of $G$, if $(p-1,|G|)=1$ according to Remark \ref{p-nilpotence}. Hence in Section \ref{1005}, we obtain 16 criteria for $p$-nilpotency of finite group based on 16 different kinds of generalised normalities using the results we have gained in Section \ref{1003} and Section \ref{Section 4}.
\section{Preliminaries}

Firstly, we introduce some basic lemmas about saturated fusion systems and semi-invariant property, which will play a key role in our main results. 
\begin{lemma}\label{1}
Let $p$ be a prime and $\mathcal{F}$ be a saturated fusion system on a finite $p$-group $S$. Assume that the fusion system $N_{\mathcal{F}} (Q)$ is supersolvable for any $Q \in \mathcal{E}_{\mathcal{F}} ^{*}$, then $\mathcal{F}$ is supersolvable.\end{lemma}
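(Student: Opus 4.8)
The plan is to argue by induction on $|S|$, using Alperin's fusion theorem to detect strong $\mathcal{F}$-closure from the local data attached to the members of $\mathcal{E}_{\mathcal{F}}^{*}$. Recall that Alperin's fusion theorem for saturated fusion systems asserts that every morphism of $\mathcal{F}$ is a composite of restrictions of automorphisms $\alpha \in {\rm Aut}_{\mathcal{F}}(Q)$ with $Q \in \mathcal{E}_{\mathcal{F}}^{*}$. Consequently a subgroup $T \le S$ is strongly $\mathcal{F}$-closed if and only if $T \cap Q$ is ${\rm Aut}_{\mathcal{F}}(Q)$-invariant for every $Q \in \mathcal{E}_{\mathcal{F}}^{*}$; and since $Q$ is fully normalized, ${\rm Aut}_{\mathcal{F}}(Q) = {\rm Aut}_{N_{\mathcal{F}}(Q)}(Q)$, so this invariance is exactly a feature that can be read off from the subsystem $N_{\mathcal{F}}(Q)$. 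This is the bridge that lets the supersolvability hypothesis on each $N_{\mathcal{F}}(Q)$ speak about strong $\mathcal{F}$-closure in the whole of $S$. The base case ($S$ cyclic, in particular $|S| = p$) is immediate.

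For the inductive step I would first manufacture a nontrivial \emph{cyclic} strongly $\mathcal{F}$-closed subgroup $S_1 \le S$ to serve as the bottom of the series. Let $A$ be a minimal nontrivial strongly $\mathcal{F}$-closed subgroup of $S$; then $A \unlhd S$ and $A \cap Q$ is ${\rm Aut}_{\mathcal{F}}(Q)$-invariant for each $Q \in \mathcal{E}_{\mathcal{F}}^{*}$. The idea is to use the supersolvability of the subsystems $N_{\mathcal{F}}(Q)$ to force $A$ to be cyclic: a supersolvable $N_{\mathcal{F}}(Q)$ possesses a strongly $N_{\mathcal{F}}(Q)$-closed series of $N_S(Q)$ with cyclic factors, and intersecting this series with $A \cap Q$ yields an ${\rm Aut}_{\mathcal{F}}(Q)$-invariant series of $A \cap Q$ with cyclic factors. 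Feeding this back through the Alperin reduction, a nontrivial proper subgroup of $A$ that is ${\rm Aut}_{\mathcal{F}}(Q)$-invariant for all $Q$ simultaneously would be strongly $\mathcal{F}$-closed and contradict the minimality of $A$, so $A$ must already be cyclic. One then sets $S_1 = A$ (or, after refining, a cyclic strongly $\mathcal{F}$-closed subgroup of order $p$ inside $A$).

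Having obtained $S_1$, I would pass to the quotient fusion system $\mathcal{F}/S_1$ over $S/S_1$, which is again saturated because $S_1$ is strongly $\mathcal{F}$-closed. The hypotheses must then be shown to descend: under the quotient by a strongly closed subgroup, the members of $\mathcal{E}_{\mathcal{F}/S_1}^{*}$ are the images $Q/S_1$ of those $Q \in \mathcal{E}_{\mathcal{F}}^{*}$ containing $S_1$, together with $S/S_1$, and $N_{\mathcal{F}/S_1}(Q/S_1)$ is the quotient $N_{\mathcal{F}}(Q)/S_1$. Since a quotient of a supersolvable fusion system by a strongly closed subgroup is again supersolvable (the fusion-theoretic analogue of the fact that quotients of supersolvable groups are supersolvable, obtained by projecting a cyclic-factor strongly closed series), each such $N_{\mathcal{F}/S_1}(Q/S_1)$ is supersolvable. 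Thus $\mathcal{F}/S_1$ satisfies the inductive hypothesis and is supersolvable; pulling back a strongly $(\mathcal{F}/S_1)$-closed cyclic-factor series of $S/S_1$ to $S$ and prepending $1 \le S_1$ produces a strongly $\mathcal{F}$-closed series of $S$ with cyclic factors, so $\mathcal{F}$ is supersolvable.

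The main obstacle I anticipate is the construction of the first term $S_1$: the local supersolvability at each $Q \in \mathcal{E}_{\mathcal{F}}^{*}$ supplies a cyclic strongly $N_{\mathcal{F}}(Q)$-closed series, but these series live in the different overgroups $N_S(Q)$ and need not be compatible, so extracting a \emph{single} cyclic subgroup that is simultaneously ${\rm Aut}_{\mathcal{F}}(Q)$-invariant for \emph{all} $Q$ (equivalently, strongly $\mathcal{F}$-closed, by Alperin) is the delicate point; this is where the minimality argument, together with the interplay between $Z(S)$ and the various $N_S(Q)$, must be handled with care. A secondary technical point is verifying the stated correspondence of essential subgroups and normalizer subsystems under the quotient by $S_1$, for which I would rely on the standard theory of quotient fusion systems.
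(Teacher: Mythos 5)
There is a genuine gap, and it sits exactly where you flagged it: the construction of the initial cyclic strongly $\mathcal{F}$-closed subgroup $S_1$. Your minimality argument does not go through. For each $Q \in \mathcal{E}_{\mathcal{F}}^{*}$ the supersolvability of $N_{\mathcal{F}}(Q)$ gives an ${\rm Aut}_{\mathcal{F}}(Q)$-invariant series of $A \cap Q$ with cyclic factors, but these series are attached to different $Q$'s and need not share a single common term; nothing in your sketch produces one nontrivial proper subgroup of $A$ that is ${\rm Aut}_{\mathcal{F}}(Q)$-invariant for \emph{all} $Q$ simultaneously, so the contradiction with minimality of $A$ is never reached, and the conclusion ``$A$ is cyclic'' is not established. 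The secondary steps are also only asserted: the claims that $\mathcal{E}_{\mathcal{F}/S_1}^{*}$ consists exactly of the images of members of $\mathcal{E}_{\mathcal{F}}^{*}$ and that $N_{\mathcal{F}/S_1}(Q/S_1) = N_{\mathcal{F}}(Q)/S_1$ are not standard facts in the clean form you use them (essential subgroups do not correspond bijectively under quotients in general), so the inductive step would need substantial extra work even if $S_1$ existed.

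What the proposal misses is that the hypothesis is self-destructive for essential subgroups, which makes all of this machinery unnecessary; this is the paper's entire proof. If $Q \in \mathcal{E}_{\mathcal{F}}^{*}$, then since $Q$ is fully normalized, ${\rm Aut}_{\mathcal{F}}(Q) = {\rm Aut}_{N_{\mathcal{F}}(Q)}(Q)$, and supersolvability of $N_{\mathcal{F}}(Q)$ forces this group to be $p$-closed by {{\cite[Proposition 1.3]{SN}}}. Hence ${\rm Out}_{\mathcal{F}}(Q)$ is $p$-closed, and a $p$-closed group has no strongly $p$-embedded subgroup ({{\cite[Proposition A.7 (c)]{AK}}}), so $Q$ cannot be $\mathcal{F}$-essential. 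Therefore $\mathcal{E}_{\mathcal{F}}^{*} = \lbrace S \rbrace$, which by {{\cite[Part I, Proposition 4.5]{AK}}} gives $S \unlhd \mathcal{F}$, and then $\mathcal{F} = N_{\mathcal{F}}(S)$ is supersolvable by the hypothesis applied to $Q = S$. In other words, the ``gluing problem'' you identify as the delicate point does not need to be solved: the hypothesis collapses the set of essential subgroups to nothing, and the lemma follows in one step with no induction and no quotient fusion systems.
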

\begin{proof}
Let $Q \in \mathcal{E}_{\mathcal{F}} ^{*}$. It follows from $N_{\mathcal{F}} (Q)$ is supersolvable and {{\cite[Proposition 1.3]{SN}}} that ${\rm Aut}_{N_{\mathcal{F}} (Q)} (Q) = {\rm Aut}_{\mathcal{F}} (Q)$ is $p$-closed. Hence, ${\rm Out}_{\mathcal{F}} (Q)$ is $p$-closed since ${\rm Out}_{\mathcal{F}} (Q)$ is a quotient group of ${\rm Aut}_{\mathcal{F}} (Q)$. By {{\cite[Proposition A.7 (c)]{AK}}}, we conclude that there is no subgroup $H$ of a $p$-closed finite group $G$ such that $H$ is strongly $p$-embedded with respect to $G$. Therefore ${\rm Out}_{\mathcal{F}} (Q)$ does not possess a strongly $p$-embedded subgroup, which implies that $Q$ is not $\mathcal{F}$-essential. Thus we get that $\mathcal{E}_{\mathcal{F}} ^{*} = \lbrace S \rbrace$. Now it indicates from {{\cite[Part I, Proposition 4.5]{AK}}} that $S$ is normal in $\mathcal{F}$. Hence the proof is complete since $N_{\mathcal{F}} (S) = \mathcal{F}$ is supersolvable by our hypothesis.
\end{proof}

\begin{lemma}[{{\cite[Lemma 2.9]{FJ}}}]\label{2}
Let $G$ be a finite group, $p \in \pi(G)$, and $S$ be a Sylow $p$-subgroup of $G$. Suppose that for any proper subgroup $H$ of $G$ with $O_p (G) <S \cap H$ and $S \cap H \in {\rm Syl}_p (H)$, the fusion system $\mathcal{F}_{S \cap H} (H)$ is supersolvable. Assume additionally that $O_p (G) \leq Z_{\mathfrak{U}} (G)$. Then $\mathcal{F}_{S} (G)$ is supersolvable.
\end{lemma}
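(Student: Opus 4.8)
The plan is to reduce $\mathcal{F}:=\mathcal{F}_S(G)$ to its local normalizer systems via Lemma \ref{1}, feed each such system into the hypothesis on proper subgroups of $G$, and invoke the hypercentral hypothesis $O_p(G)\le Z_{\mathfrak{U}}(G)$ only to dispose of the subgroups that happen to be normal in $G$. First I would treat the degenerate case $S\trianglelefteq G$ separately. Here $S=O_p(G)\le Z_{\mathfrak{U}}(G)$, so a chief series of $G$ refined through $S$ yields terms $1=S_0\le\cdots\le S_n=S$ whose factors are $G$-chief factors inside $Z_{\mathfrak{U}}(G)$, hence cyclic, and whose terms, being normal in $G$, are strongly $\mathcal{F}$-closed; this is precisely a supersolvable series for $\mathcal{F}$, and we are done. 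So from now on I assume $S\not\trianglelefteq G$.

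By Lemma \ref{1} it then suffices to show that $N_{\mathcal{F}}(Q)$ is supersolvable for each $Q\in\mathcal{E}_{\mathcal{F}}^{*}$. Every such $Q$ is fully normalized (by definition when $Q$ is essential, trivially when $Q=S$), so $N_S(Q)\in{\rm Syl}_p(N_G(Q))$ and I may use the standard identification $N_{\mathcal{F}}(Q)=\mathcal{F}_{N_S(Q)}(N_G(Q))$. The key point is that $N_G(Q)$ is a proper subgroup of $G$, which I would prove by contradiction: suppose $N_G(Q)=G$. The case $Q=S$ is excluded since $S\not\trianglelefteq G$, so $Q$ is essential and $Q\trianglelefteq G$, whence $Q\le O_p(G)$. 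An essential subgroup is $\mathcal{F}$-centric and $\mathcal{F}$-radical (the strongly $p$-embedded subgroup in ${\rm Out}_{\mathcal{F}}(Q)$ forces $O_p({\rm Out}_{\mathcal{F}}(Q))=1$), so the standard containment $O_p(G)\le O_p(\mathcal{F})\le Q$ holds for centric radical subgroups, giving $Q=O_p(G)\le Z_{\mathfrak{U}}(G)$.

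Now $Q=O_p(G)\le Z_{\mathfrak{U}}(G)$ admits a $G$-chief series with factors of order $p$, on each of which $G$ acts as a subgroup of ${\rm Aut}(C_p)$, a $p'$-group. The automorphisms in ${\rm Aut}_G(Q)=N_G(Q)/C_G(Q)$ fixing this series and acting trivially on every factor form a normal $p$-subgroup of $p'$-index, so ${\rm Aut}_G(Q)$ is $p$-closed, and hence so is its quotient ${\rm Out}_{\mathcal{F}}(Q)={\rm Out}_G(Q)$. Exactly as in the proof of Lemma \ref{1}, a $p$-closed group has no strongly $p$-embedded subgroup, so $Q$ is not essential, a contradiction. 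Therefore $N_G(Q)<G$ for every $Q\in\mathcal{E}_{\mathcal{F}}^{*}$.

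To finish, I would set $H:=N_G(Q)<G$ and check the hypothesis on proper subgroups. We have $S\cap H=N_S(Q)\in{\rm Syl}_p(H)$ by full normalization, and $O_p(G)<S\cap H$ in both remaining cases: when $Q$ is essential, $O_p(G)\le O_p(\mathcal{F})\le Q<N_S(Q)$ since $Q$ is a proper subgroup of the $p$-group $S$; when $Q=S$, $O_p(G)<S=S\cap H$, because $O_p(G)=S$ would return us to $S\trianglelefteq G$. Hence the hypothesis applies to $H$ and gives that $\mathcal{F}_{S\cap H}(H)=N_{\mathcal{F}}(Q)$ is supersolvable, so Lemma \ref{1} yields that $\mathcal{F}$ is supersolvable. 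I expect the normal–essential case to be the main obstacle: identifying that $Q=O_p(G)$ and then converting $O_p(G)\le Z_{\mathfrak{U}}(G)$ into $p$-closedness of ${\rm Out}_{\mathcal{F}}(Q)$ is precisely where the hypercentral hypothesis is indispensable, since without it an essential subgroup normal in $G$ could survive and break the reduction.
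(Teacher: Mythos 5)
You should first note that the paper itself never proves this statement: it is imported verbatim from \cite[Lemma 2.9]{FJ} (hence the citation in the lemma header), so the only meaningful comparison is with the paper's general toolkit rather than with a specific in-paper argument. Judged on its own, your proof is correct, and it is assembled precisely from that toolkit: the reduction via Lemma \ref{1}; the fact that every $Q\in\mathcal{E}_{\mathcal{F}}^{*}$ is fully normalized, so that $N_S(Q)\in{\rm Syl}_p(N_G(Q))$ and $N_{\mathcal{F}}(Q)=\mathcal{F}_{N_S(Q)}(N_G(Q))$ (the paper uses exactly this identification in Step 2 of Theorem \ref{3.1}); and the containments $O_p(G)\le O_p(\mathcal{F})\le Q$ for $\mathcal{F}$-centric $\mathcal{F}$-radical $Q$, which the paper invokes through \cite[Part I, Proposition 4.5]{AK} in the proof of Theorem \ref{C}. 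The two ingredients you add beyond the paper's explicit apparatus are also sound: the degenerate case $S\unlhd G$, where a $G$-chief series refined through $S=O_p(G)\le Z_{\mathfrak{U}}(G)$ directly produces the required strongly $\mathcal{F}$-closed series with cyclic factors; and the key step ruling out essential subgroups normal in $G$, where $Q=O_p(G)\le Z_{\mathfrak{U}}(G)$ yields a $G$-chief series of $Q$ with factors of order $p$, whose stability subgroup in ${\rm Aut}_G(Q)$ is a normal $p$-subgroup (Kaloujnine's stability theorem for chains in a $p$-group) of $p'$-index (each factor has automorphism group of order $p-1$), so ${\rm Aut}_G(Q)$ and hence ${\rm Out}_{\mathcal{F}}(Q)$ is $p$-closed, and \cite[Proposition A.7 (c)]{AK} — the same step as in the paper's proof of Lemma \ref{1} — gives the contradiction with essentiality. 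Only two cosmetic points: you implicitly use that essential subgroups are proper in $S$ (true, since saturation forces ${\rm Out}_{\mathcal{F}}(S)$ to be a $p'$-group, which cannot contain a strongly $p$-embedded subgroup), and the stability-group theorem deserves an explicit citation; neither is a gap.
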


\begin{lemma}[{{\cite[Lemma 2.10]{FJ}}}]\label{6}
Let $p$ be a prime number and $\mathcal{F}$ be a saturated fusion system over a $p$-group $S$. Assume that for any subgroup $R$ such that $O_p (\mathcal{F}) <R$, any proper saturated fusion subsystem of $\mathcal{F}$ on $R$ is supersolvable. Suppose moreover that there is a series $1 = S_0 \leq S_1 \leq \cdots \leq S_n = O_p (\mathcal{F})$ such that $S_i$ is strongly $\mathcal{F}$-closed, $i=0,1,\cdots,n$, $S_{i+1}/S_i$ is cyclic, $i=0,1,\cdots,n-1$. Then $\mathcal{F}$ is supersolvable.
\end{lemma}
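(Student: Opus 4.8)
The plan is to show that $\mathcal{F}$ has no $\mathcal{F}$-essential subgroups at all; once this is established, the hypothesised series through $O_p(\mathcal{F})$ will immediately furnish a supersolvable series for $\mathcal{F}$. Concretely, I would argue that if $Q$ were $\mathcal{F}$-essential, then $N_{\mathcal{F}}(Q)$ would be a proper saturated fusion subsystem of $\mathcal{F}$ defined over a subgroup strictly containing $O_p(\mathcal{F})$, so the standing hypothesis would force $N_{\mathcal{F}}(Q)$ to be supersolvable, and this---by the very mechanism used in the proof of Lemma \ref{1}---would contradict the essentiality of $Q$.

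To fill in the steps, first I would record that every $\mathcal{F}$-essential $Q$ satisfies $O_p(\mathcal{F}) < Q$: the containment $O_p(\mathcal{F}) \leq \bigcap_{R \in \mathcal{E}_{\mathcal{F}}^*} R \leq Q$ is standard (it follows from the description of $O_p(\mathcal{F})$ via Alperin's fusion theorem, see \cite{AK}), while equality is impossible because $O_p(\mathcal{F})$ is normal in $\mathcal{F}$ and an essential subgroup cannot be (if $Q = O_p(\mathcal{F})$ then ${\rm Out}_{\mathcal{F}}(Q)$ would contain the non-trivial normal $p$-subgroup ${\rm Out}_S(Q) = N_S(Q)/Q$, precluding a strongly $p$-embedded subgroup). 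Since $Q < S$ and $S$ is a $p$-group we also have $Q < N_S(Q)$, so $O_p(\mathcal{F}) < N_S(Q) =: R$. Because $Q$ is essential it is fully normalized, so $N_{\mathcal{F}}(Q)$ is a saturated fusion system over $R$ (see \cite{AK}), and it is a subsystem of $\mathcal{F}$.

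Next I would check properness so that the hypothesis applies: if $R < S$ this is automatic, since $N_{\mathcal{F}}(Q)$ lives over a proper subgroup of $S$; if $R = S$, then $Q \unlhd S$ and $N_{\mathcal{F}}(Q) = \mathcal{F}$ would give $Q \unlhd \mathcal{F}$, i.e.\ $Q \leq O_p(\mathcal{F})$, against $O_p(\mathcal{F}) < Q$. Thus $N_{\mathcal{F}}(Q)$ is a proper saturated fusion subsystem of $\mathcal{F}$ over $R$ with $O_p(\mathcal{F}) < R$, and the hypothesis yields that $N_{\mathcal{F}}(Q)$ is supersolvable. Now I would reuse the argument of Lemma \ref{1}: by \cite[Proposition 1.3]{SN}, ${\rm Aut}_{N_{\mathcal{F}}(Q)}(Q) = {\rm Aut}_{\mathcal{F}}(Q)$ is $p$-closed, hence so is its quotient ${\rm Out}_{\mathcal{F}}(Q)$, and by \cite[Proposition A.7 (c)]{AK} a $p$-closed group has no strongly $p$-embedded subgroup---contradicting the essentiality of $Q$.

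Having ruled out essential subgroups, I would conclude $\mathcal{E}_{\mathcal{F}}^* = \lbrace S \rbrace$, whence $S \unlhd \mathcal{F}$ by \cite[Part I, Proposition 4.5]{AK}; that is, $O_p(\mathcal{F}) = S$. The given chain $1 = S_0 \leq S_1 \leq \cdots \leq S_n = O_p(\mathcal{F}) = S$ then has strongly $\mathcal{F}$-closed terms with cyclic successive quotients, which is precisely a supersolvable series for $\mathcal{F}$. The step I expect to require the most care is the verification that $N_{\mathcal{F}}(Q)$ genuinely falls under the hypothesis---namely that it is both a proper subsystem of $\mathcal{F}$ and defined over a subgroup strictly above $O_p(\mathcal{F})$---with the borderline case $N_S(Q) = S$ being the one where properness must be argued rather than read off; everything else is bookkeeping with the standard saturation and $p$-closedness facts already invoked in Lemma \ref{1}.
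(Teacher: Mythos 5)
Your skeleton is the right one: rule out $\mathcal{F}$-essential subgroups, conclude $\mathcal{E}_{\mathcal{F}}^{*} = \lbrace S \rbrace$ and hence $S \unlhd \mathcal{F}$ by \cite[Part I, Proposition 4.5]{AK}, and then read off supersolvability from the hypothesised series, whose top term has become $S$. Your treatment of an essential $Q$ with $O_p(\mathcal{F}) < Q$ is also correct (properness of $N_{\mathcal{F}}(Q)$, supersolvability from the hypothesis, $p$-closure of ${\rm Aut}_{\mathcal{F}}(Q)$ via \cite[Proposition 1.3]{SN}, contradiction via \cite[Proposition A.7 (c)]{AK}). The genuine gap is the case you dismiss in parentheses: the claim that an essential subgroup can never equal $O_p(\mathcal{F})$ because ${\rm Out}_S(Q) = N_S(Q)/Q$ would be a non-trivial \emph{normal} $p$-subgroup of ${\rm Out}_{\mathcal{F}}(Q)$. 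Normality of $Q$ in $\mathcal{F}$ gives the extension property for morphisms; it does not make ${\rm Aut}_S(Q)$, or ${\rm Out}_S(Q)$, normal in ${\rm Aut}_{\mathcal{F}}(Q)$ --- by saturation it is merely a Sylow $p$-subgroup there. Concretely, let $S_0$ be a dihedral group of order $8$ inside $S_4$ and $\mathcal{F}_0 = \mathcal{F}_{S_0}(S_4)$. Then $Q_0 := O_2(\mathcal{F}_0) = V_4$ (the normal Klein four subgroup) is both normal in $\mathcal{F}_0$ and $\mathcal{F}_0$-essential: ${\rm Out}_{\mathcal{F}_0}(Q_0) \cong S_3$ has a strongly $2$-embedded subgroup of order $2$, and ${\rm Out}_{S_0}(Q_0) \cong C_2$ is a non-normal Sylow $2$-subgroup of it. So "essential subgroups cannot be normal in $\mathcal{F}$" is false in general.

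This gap is not cosmetic, because $Q = O_p(\mathcal{F})$ is exactly the case in which the series hypothesis must be used --- and notice that your argument never invokes that hypothesis until the harmless final step. Indeed, the example $\mathcal{F}_0$ above satisfies the subsystem hypothesis of the lemma (the only subgroup strictly above $V_4$ is $S_0$, and the only proper saturated subsystem of $\mathcal{F}_0$ on $S_0$ is $\mathcal{F}_{S_0}(S_0)$, which is supersolvable), yet $\mathcal{F}_0$ is not supersolvable; what fails for $\mathcal{F}_0$ is precisely the existence of the series, since no subgroup of order $2$ of $V_4$ is strongly $\mathcal{F}_0$-closed. Hence any argument that ignores the series until the end cannot close this case. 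The repair is as follows: if $Q = O_p(\mathcal{F})$ is essential, then each $S_i$ is ${\rm Aut}_{\mathcal{F}}(Q)$-invariant, because $S_i$ is strongly $\mathcal{F}$-closed and contained in $Q$, so for $\alpha \in {\rm Aut}_{\mathcal{F}}(Q)$ the restriction $\alpha|_{S_i}$ is an $\mathcal{F}$-morphism into $S$ and must land in $S_i$. The kernel of the induced action of ${\rm Aut}_{\mathcal{F}}(Q)$ on the factors $S_{i+1}/S_i$ is then a normal $p$-subgroup (the stability group of a chain of subgroups of a $p$-group is a $p$-group), while the image is abelian because each $S_{i+1}/S_i$ is cyclic and automorphism groups of cyclic groups are abelian. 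Therefore ${\rm Aut}_{\mathcal{F}}(Q)$, and hence its quotient ${\rm Out}_{\mathcal{F}}(Q)$, is $p$-closed, contradicting essentiality via \cite[Proposition A.7 (c)]{AK} exactly as in Lemma \ref{1}. With this case repaired, the remainder of your argument goes through.
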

\begin{lemma}\label{7}
Let $p$ be a prime number and $\mathcal{F}$ be a fusion system on a finite $p$-group $S$. Assume that $Q$ is a strongly closed subgroup of $S$ in $\mathcal{F}$, $R$ is a semi-invariant subgroup of $S$ in $\mathcal{F}$, and $Q \leq R \leq S$. Then $R/Q$ is semi-invariant in $\mathcal{F}/Q$.
\end{lemma}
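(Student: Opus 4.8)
The plan is to unwind the construction of the quotient fusion system $\mathcal{F}/Q$ and reduce the semi-invariance of $R/Q$ in $\mathcal{F}/Q$ to the semi-invariance of $R$ in $\mathcal{F}$. First I would recall that, since $Q$ is strongly $\mathcal{F}$-closed, the quotient $\mathcal{F}/Q$ is a well-defined fusion system over $S/Q$: its objects are the subgroups $K/Q$ with $Q \le K \le S$, and for any morphism $\phi \in {\rm Hom}_{\mathcal{F}}(P,P')$ with $Q \le P, P'$, strong closure forces $\phi(Q) \le Q$, hence $\phi(Q)=Q$ as $\phi$ is injective and $Q$ is finite, so $\phi$ descends to an induced map $\overline{\phi} \in {\rm Hom}(P/Q, P'/Q)$. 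The morphism sets of $\mathcal{F}/Q$ consist precisely of these induced maps.

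To prove that $R/Q$ is semi-invariant, I would fix an arbitrary subgroup $\overline{K}$ of $S/Q$ with $R/Q \le \overline{K}$ together with an arbitrary $\overline{\alpha} \in {\rm Aut}_{\mathcal{F}/Q}(\overline{K})$; it then suffices to show $(R/Q)^{\overline{\alpha}} = R/Q$. Writing $\overline{K} = K/Q$ with $Q \le R \le K \le S$, the decisive step is to lift $\overline{\alpha}$ to a genuine $\mathcal{F}$-automorphism of $K$. By the construction of the quotient, $\overline{\alpha} = \overline{\alpha_0}$ for some $\alpha_0 \in {\rm Hom}_{\mathcal{F}}(K,K)$, and since every $\mathcal{F}$-morphism from $K$ to $K$ is an injective homomorphism between finite groups of equal order, we have $\alpha_0 \in {\rm Aut}_{\mathcal{F}}(K)$.

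With this lift in hand I would invoke the hypothesis that $R$ is semi-invariant in $\mathcal{F}$: because $R \le K \le S$ and $\alpha_0 \in {\rm Aut}_{\mathcal{F}}(K)$, the definition gives $R^{\alpha_0} = R$. Consequently $\alpha_0$ restricts to an automorphism of $R$ which fixes $Q$ (again by strong closure of $Q$), and passing to the quotient yields $(R/Q)^{\overline{\alpha}} = R^{\alpha_0}/Q = R/Q$, as required.

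The main obstacle is the lifting step: one must be confident that \emph{every} automorphism of $K/Q$ in $\mathcal{F}/Q$ truly arises from an automorphism of $K$ in $\mathcal{F}$, which hinges on the precise definition of $\mathcal{F}/Q$ and on the strong closure of $Q$ ensuring that the relevant $\mathcal{F}$-morphisms preserve $Q$ so that the descent to $S/Q$ is well defined. Once this correspondence between ${\rm Aut}_{\mathcal{F}/Q}(K/Q)$ and ${\rm Aut}_{\mathcal{F}}(K)$ is cleanly established, the remainder is a direct translation of the semi-invariance of $R$ into that of $R/Q$.
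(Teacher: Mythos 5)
Your proposal is correct and follows essentially the same route as the paper's proof: lift an automorphism $\overline{\alpha} \in {\rm Aut}_{\mathcal{F}/Q}(K/Q)$ to some $\overline{\alpha}_0 \in {\rm Aut}_{\mathcal{F}}(K)$ via the definition of the quotient system, apply semi-invariance of $R$ to get $R^{\overline{\alpha}_0}=R$, and descend to conclude $(R/Q)^{\overline{\alpha}}=R/Q$. The only difference is cosmetic: you spell out why the lift exists and lands in ${\rm Aut}_{\mathcal{F}}(K)$ (injectivity plus equal orders), a point the paper compresses into ``by the definition of $\mathcal{F}/Q$.''
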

\begin{proof}
Let $H/Q$ be a subgroup of $S/Q$ such that $R/Q \leq H/Q \leq S/Q$, and $\alpha \in {\rm Aut}_{\mathcal{F/Q}} (H/Q)$. By the definition of $\mathcal{F}/Q$, there exists $\overline{\alpha} \in {\rm Aut}_{\mathcal{F }} (H)$, such that $(hQ)^{\alpha} = h^{\overline{\alpha}}Q$ for all $hQ \in H/Q$. Since $R$ is semi-invariant in $\mathcal{F}$, $R^{\overline{\alpha}} = R$. Hence we conclude that $(R/Q)^{\alpha} = \lbrace r^{\overline{\alpha}} Q\,|\,r \in R \rbrace= R/Q$. By the randomness of $H$ and $\alpha$, $R/Q$ is semi-invariant in $\mathcal{F}/Q$ and we are done.
\end{proof}
\begin{lemma}[{{\cite[Lemma 1.11]{BO}}}]\label{8}
Let $p$ be a prime, $A$ be an abelian $p$-group and $G$ be a subgroup of ${\rm Aut} (A)$. Assume that the following hold: 
\begin{itemize}
\item[(1)] The Sylow $p$-subgroups of $G$ has order $p$ and $G$ is not $p$-closed.
\item[(2)] For each $x \in G$ of order $p$, $[x,A]$ has order $p$.
\end{itemize}
Set $A:=O_p (\mathcal{F})$, $G:={\rm Aut} (O_P (\mathcal{F}))$, $A_1 = C_A (H)$, $H = O^{p'} (G)$ and $A_2 = [H,A]$. Then $G$ normalizes $A_1$ and $A_2$, $A = A_1 \times A_2$, $A_2 = C_p \times C_p$.

\end{lemma}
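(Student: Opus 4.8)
The plan is to exploit that $H := O^{p'}(G)$ is normal in $G$ and is generated by the Sylow $p$-subgroups of $G$, each of order $p$ (equivalently, by the elements of order $p$, since these are exactly the nontrivial $p$-elements). First I would record the easy half. Since $H \trianglelefteq G$, for $g \in G$, $h \in H$ and $a \in C_A(H)$ one has $a^{gh} = a^{(ghg^{-1})g} = a^{g}$ because $ghg^{-1} \in H$ fixes $a$; thus $A_1 = C_A(H)$ is $G$-invariant. Likewise $[h,a]^{g} = [h^{g},a^{g}] \in [H,A]$ shows $A_2 = [H,A]$ is $G$-invariant. This settles the normalization statement at once.

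Next I would analyse a single $x \in G$ of order $p$. By hypothesis $(2)$, $[x,A]$ has order $p$; since ${\rm Aut}(C_p)$ is a $p'$-group and $x$ has order $p$, its image there is trivial, so $x$ centralizes $[x,A]$, giving $[x,[x,A]] = 1$. Hence $x$ acts quadratically and $C_A(x)$ has index $p$. As $H$ is generated by its elements of order $p$, one gets $A_2 = [H,A] = \sum_{x} [x,A]$, a sum of subgroups of order $p$; therefore $A_2$ is elementary abelian, and by Sylow's theorem $G$ permutes the lines $[x,A]$ transitively.

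The heart of the argument is to pin down the rank of $A_2$. For the lower bound: if all the lines $[x,A]$ coincided with a single $L \cong C_p$, then $H$ would centralize $L$ (trivial image in the $p'$-group ${\rm Aut}(L)$) and act trivially on $A/L$, whence $H$ would embed in the $p$-group ${\rm Hom}(A/L,L)$; but then $H$ would be a $p$-group containing the $\geqslant 2$ distinct Sylow $p$-subgroups guaranteed by non-$p$-closure, which is absurd. The same computation with $A_2$ in place of $L$ shows each $x$ acts nontrivially on $A_2$, so each $x$ induces a transvection of $A_2$ with centre $[x,A]$. For the upper bound, set $\overline{H} = H/C_H(A_2) \leqslant {\rm GL}(A_2)$; it is generated by transvections, has Sylow $p$-subgroups of order $p$, and is not $p$-closed. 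Two transvections with distinct centres that commute would generate an elementary abelian group of order $p^{2}$, contradicting the Sylow order, and a transvection group on a space of dimension $\geqslant 3$ with cyclic Sylow $p$-subgroup of order $p$ cannot arise. Hence $A_2$ has rank exactly $2$, so $A_2 \cong C_p \times C_p$, and two transvections with distinct fixed lines force $\overline{H}$ to act irreducibly, inducing a subgroup of ${\rm GL}(A_2)$ containing ${\rm SL}_2(p)$. I expect this dimension bound — ruling out rank $\geqslant 3$ from the Sylow hypothesis — to be the main obstacle, and would discharge it by appealing to the structure of finite groups generated by transvections with a Sylow $p$-subgroup of order $p$.

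Finally I would assemble $A = A_1 \times A_2$. Since $A_2$ is a nontrivial irreducible $H$-module, $A_1 \cap A_2 = C_{A_2}(H) = 1$. To obtain $A = A_1 A_2$, observe that $a \mapsto ([x,a])_{x}$ has kernel $\bigcap_{x} C_A(x) = C_A(H) = A_1$ and lands in a sum of copies of $C_p$, so $A/A_1$ is elementary abelian; moreover the second centralizer $A_1^{(2)} = \{\, a : [H,a] \leqslant A_1 \,\}$ satisfies $[H, A_1^{(2)}] \leqslant A_1 \cap A_2 = 1$, whence $A_1^{(2)} = A_1$ and $C_{A/A_1}(H) = 1$. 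Thus $A/A_1$ is an elementary abelian $H$-module with no fixed points and with $[H, A/A_1] = A_2 A_1 / A_1 \cong A_2$; since the natural ${\rm SL}_2(p)$-module $A_2$ has vanishing first cohomology, the trivial quotient $(A/A_1)/[H,A/A_1]$ must be zero, so $A/A_1 = A_2 A_1/A_1$, i.e. $A = A_1 A_2$, completing the decomposition $A = A_1 \times A_2$.
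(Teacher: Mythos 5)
You should first know what you are being compared against: the paper contains \emph{no} proof of Lemma~\ref{8} at all. It is quoted from the literature (B.~Oliver, ``Simple fusion systems over $p$-groups with abelian subgroup of index $p$: I'', Lemma 1.11; indeed the clause ``Set $A:=O_p(\mathcal{F})$, $G:={\rm Aut}(O_P(\mathcal{F}))\ldots$'' in the statement is text imported from the place where the lemma is applied). So your proposal has to stand on its own, and much of it does: the $G$-invariance of $A_1$ and $A_2$; the quadratic action of each order-$p$ element $x$ (so $x$ is a transvection with centre $[x,A]$ and $|A:C_A(x)|=p$); the fact that $A_2=[H,A]$ is elementary abelian, generated by the lines $[x,A]$, which are permuted transitively; the lower bound ${\rm rank}(A_2)\geq 2$; and the concluding second-centralizer/$H^1$ argument for $A=A_1A_2$ are all correct modulo what they presuppose. (Your ``each $x$ acts nontrivially on $A_2$'' needs the additional remark that $C_H(A_2)$ is a \emph{normal} $p$-subgroup of $G$ and that $O_p(G)=1$ follows from hypothesis (1), but this is easily supplied.)

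The genuine gap is exactly the step you flag and defer: ${\rm rank}(A_2)\leq 2$. The statement you propose to invoke --- ``a transvection group on a space of dimension $\geq 3$ with cyclic Sylow $p$-subgroup of order $p$ cannot arise'' --- is false without an irreducibility hypothesis: ${\rm SL}_2(p)$ acting on $\mathbb{F}_p^{3}$ as (natural)\,$\oplus$\,(trivial) is generated by transvections of the $3$-space, has Sylow $p$-subgroups of order $p$, and is not $p$-closed. The correct classification (McLaughlin's theorem) applies only to \emph{irreducible} transvection groups, and in your setting irreducibility of $H$ on $A_2$ is equivalent to $C_{A_2}(H)=0$ (an $H$-invariant subspace either contains one line, hence all lines by transitivity, or satisfies $[x,W]\leq W\cap[x,A]=0$ for every $x$, hence lies in $C_{A_2}(H)$); but $C_{A_2}(H)=A_1\cap A_2=0$ is itself part of what is being proved, and your justification of irreducibility via ``two transvections with distinct fixed lines'' is valid only in dimension $2$, i.e.\ only after the rank bound is known. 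So the argument is circular at its central point. The gap is fillable: either pass to $A_2/C_{A_2}(H)$, where $H$ acts faithfully (a stability-group argument plus $O_p(G)=1$) and irreducibly, apply McLaughlin to force dimension $2$ and $H\cong{\rm SL}_2(p)$, and then recover $C_{A_2}(H)=0$ from $A_2=[H,A_2]$ and vanishing of $H_1({\rm SL}_2(p),{\rm nat})$; or, more elementarily, show two distinct Sylow subgroups $\langle x\rangle,\langle y\rangle$ generate $L\cong{\rm SL}_2(p)$ with $A=C_A(L)\oplus B$, $B=[x,A][y,A]$, and kill any further line by the observation that when Sylow $p$-subgroups have order $p$, two transvections $s,t$ with $\langle s\rangle\neq\langle t\rangle$ must satisfy $Z_s\not\leq W_t$ and $Z_t\not\leq W_s$ (centre versus axis, else $\langle s,t\rangle$ is a $p$-group of order at least $p^{2}$), a condition no transvection $z$ with $[z,A]\not\leq B$ can satisfy against every transvection of $L$. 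Neither completion is in your proposal, so as written the proof is incomplete at its heart.
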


\begin{lemma}\label{9}
Let $p$ be a prime, $\mathcal{F}$ be a supersolvable saturated fusion system over a $p$-group $S$. Suppose that $Q$ is a subgroup of $S$ such that $Q$ is semi-invariant in $\mathcal{F}$. Then $Q$ is strongly closed in $\mathcal{F}$. 
\end{lemma}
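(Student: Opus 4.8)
The plan is to show that supersolvability forces $\mathcal{F}$ to have no proper essential subgroups, i.e. $\mathcal{E}_{\mathcal{F}}^{*} = \lbrace S \rbrace$, and then to run the semi-invariance hypothesis against the single automorphism group $\mathrm{Aut}_{\mathcal{F}}(S)$ via the Alperin--Goldschmidt fusion theorem. Concretely, once $\mathcal{E}_{\mathcal{F}}^{*}=\lbrace S \rbrace$ is known, every $\mathcal{F}$-morphism is a restriction of an element of $\mathrm{Aut}_{\mathcal{F}}(S)$, and since $Q$ is semi-invariant we may take $K=S$ to obtain $Q^{\alpha}=Q$ for all $\alpha \in \mathrm{Aut}_{\mathcal{F}}(S)$; strong closure of $Q$ is then immediate. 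Thus the whole argument reduces to establishing $\mathcal{E}_{\mathcal{F}}^{*}=\lbrace S \rbrace$, which I expect to be the main obstacle.

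To get $\mathcal{E}_{\mathcal{F}}^{*}=\lbrace S \rbrace$ I would reuse the mechanism in the proof of Lemma \ref{1}: it suffices to show that $N_{\mathcal{F}}(Q)$ is supersolvable for every $Q \in \mathcal{E}_{\mathcal{F}}^{*}$, since then exactly the reasoning of Lemma \ref{1} (via \cite{SN} and \cite[Proposition A.7(c)]{AK}) shows that $\mathrm{Out}_{\mathcal{F}}(Q)$ is $p$-closed and hence has no strongly $p$-embedded subgroup, so no proper $Q$ is $\mathcal{F}$-essential. Every $Q \in \mathcal{E}_{\mathcal{F}}^{*}$ is fully normalized, so $N_{\mathcal{F}}(Q)$ is a saturated fusion system over $N_S(Q)$. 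The key point is that supersolvability is inherited by $N_{\mathcal{F}}(Q)$: starting from the given series $1=S_0 \leq \cdots \leq S_n = S$, I would set $T_i := S_i \cap N_S(Q)$ and check that $1=T_0 \leq \cdots \leq T_n = N_S(Q)$ witnesses supersolvability of $N_{\mathcal{F}}(Q)$. Each $T_i$ is strongly closed in $N_{\mathcal{F}}(Q)$ because any $N_{\mathcal{F}}(Q)$-morphism is an $\mathcal{F}$-morphism with image inside $N_S(Q)$, hence carries elements of $S_i$ into $S_i \cap N_S(Q) = T_i$; and using $T_{i+1}\cap S_i = T_i$ we get $T_{i+1}/T_i \cong T_{i+1}S_i/S_i \leq S_{i+1}/S_i$, which is cyclic. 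Deleting repetitions yields the required series.

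With $\mathcal{E}_{\mathcal{F}}^{*}=\lbrace S \rbrace$ in hand, \cite[Part I, Proposition 4.5]{AK} gives $S \unlhd \mathcal{F}$, and the Alperin--Goldschmidt fusion theorem \cite{AK} shows that every morphism of $\mathcal{F}$ is a composite of restrictions of elements of $\mathrm{Aut}_{\mathcal{F}}(S)$; since $\mathrm{Aut}_{\mathcal{F}}(S)$ is a group, such a composite is itself the restriction $\alpha|_{P}$ of a single $\alpha \in \mathrm{Aut}_{\mathcal{F}}(S)$. Finally, applying semi-invariance of $Q$ with $K=S$ yields $Q^{\alpha}=Q$ for every $\alpha \in \mathrm{Aut}_{\mathcal{F}}(S)$; hence for any $P \leq Q$ and any $\phi=\alpha|_{P} \in \mathrm{Hom}_{\mathcal{F}}(P,S)$ we obtain $P^{\phi}=P^{\alpha} \leq Q^{\alpha}=Q$, so $Q$ is strongly closed.

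The main obstacle is the inheritance step: one must verify that $N_{\mathcal{F}}(Q)$ really is saturated (guaranteed by full normalization of the essential candidates) and that the intersected subgroups $T_i$ are genuinely strongly closed \emph{inside the normalizer subsystem} rather than only in $\mathcal{F}$. Once that bookkeeping is settled, the reduction to $\mathcal{E}_{\mathcal{F}}^{*}=\lbrace S \rbrace$ --- and thereby to a single automorphism group against which semi-invariance bites --- is precisely what makes the semi-invariant hypothesis as strong as strong closure in the supersolvable setting.
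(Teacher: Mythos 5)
Your proposal is correct, but it takes a genuinely different --- and considerably longer --- route than the paper's. The paper disposes of the structural step with a single citation: by {\cite[Lemma 1.11]{SN}}, a supersolvable saturated fusion system satisfies $O_p (\mathcal{F}) = S$, i.e. $S$ is normal in $\mathcal{F}$; the extension property of the normal subgroup $S$ then lets any $\phi \in {\rm Iso}_{\mathcal{F}} (P,R)$ with $P = \langle x \rangle \leq Q$ cyclic extend to some $\overline{\phi} \in {\rm Hom}_{\mathcal{F}} (PS,RS) = {\rm Aut}_{\mathcal{F}} (S)$, and semi-invariance applied with $K = S$ gives $R = P^{\overline{\phi}} \leq Q^{\overline{\phi}} = Q$, so strong closure follows element-wise. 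You instead re-derive the structural fact from scratch: you prove that supersolvability is inherited by the normalizer subsystems $N_{\mathcal{F}} (Q)$ for $Q \in \mathcal{E}_{\mathcal{F}} ^{*}$ by intersecting the strongly closed series with $N_S (Q)$ --- and your bookkeeping here is sound: each $T_i = S_i \cap N_S (Q)$ is strongly closed in $N_{\mathcal{F}} (Q)$ because $N_{\mathcal{F}} (Q)$-morphisms are $\mathcal{F}$-morphisms with image in $N_S (Q)$, the quotients $T_{i+1}/T_i \cong T_{i+1}S_i / S_i \leq S_{i+1}/S_i$ are cyclic, and saturation of $N_{\mathcal{F}} (Q)$ is guaranteed since members of $\mathcal{E}_{\mathcal{F}} ^{*}$ are fully normalized. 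You then run the \emph{proof} (not the statement) of Lemma \ref{1} to conclude $\mathcal{E}_{\mathcal{F}} ^{*} = \lbrace S \rbrace$ and $S \unlhd \mathcal{F}$, and finish via Alperin--Goldschmidt, observing that a composite of restrictions of elements of ${\rm Aut}_{\mathcal{F}} (S)$ is itself such a restriction; this endgame is equivalent to the paper's use of the extension axiom for $S \unlhd \mathcal{F}$. What your route buys is self-containedness and an inheritance statement of independent interest (supersolvability passes to normalizer subsystems); what it costs is that you are essentially re-proving {\cite[Lemma 1.11]{SN}}, which the paper simply quotes.
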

\begin{proof}
Let $P = \langle x \rangle$ be a cyclic subgroup of $Q$, $R = \langle y \rangle$ be a cyclic subgroup of $S$ and $\phi \in {\rm Iso}_{\mathcal{F}} (P,R)$ such that $x^{\phi} = y$. By {{\cite[Lemma 1.11]{SN}}}, $O_p (\mathcal{F}) = S$. Hence there exists an extension of $\phi$, namely $\overline{\phi}$, such that $\overline{\phi} \in {\rm Hom}_{\mathcal{F}} (S P,SR) = {\rm Aut}_{\mathcal{F}} (S)$. Since $Q$ is semi-invariant in $\mathcal{F}$ and $P \leq Q$, we conclude that $R = P^{\phi} = P^{\overline{\phi}} \leq Q^{\overline{\phi}}=Q$. Therefore by the randomness of $x$ we assert that $Q$ is strongly $\mathcal{F}$-closed.
\end{proof}
\begin{lemma}\label{10}
Let $p$ be a prime, $\mathcal{F}$ be a fusion system on a finite $p$-group $S$ and $Q \leq R \leq S$. Assume that $Q \unlhd \mathcal{F}$ and that $R/Q$ is strongly $\mathcal{F}/Q$-closed, then $R$ is strongly $\mathcal{F}$-closed.
\end{lemma}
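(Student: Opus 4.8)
The plan is to verify strong closure of $R$ directly from the definition. I take an arbitrary subgroup $P \le R$ together with a morphism $\phi \in {\rm Hom}_{\mathcal{F}}(P, S)$, and aim to show $P^{\phi} \le R$. The strategy is to transport the question into the quotient $\mathcal{F}/Q$, where strong closure of $R/Q$ is available by hypothesis, and then pull the conclusion back up to $\mathcal{F}$.

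First I would exploit the hypothesis $Q \unlhd \mathcal{F}$. Normality of $Q$ gives, in particular, that $Q$ is strongly $\mathcal{F}$-closed, so that the quotient system $\mathcal{F}/Q$ over $S/Q$ is well-defined. More importantly, normality supplies the crucial extension property: the morphism $\phi \colon P \to S$ extends to some $\overline{\phi} \in {\rm Hom}_{\mathcal{F}}(PQ, S)$ with $\overline{\phi}(Q) = Q$. Having $Q$ in its domain and fixing it, this extended morphism descends to a genuine morphism $\psi := \overline{\phi}/Q \in {\rm Hom}_{\mathcal{F}/Q}(PQ/Q, S/Q)$, acting by $\psi(yQ) = \overline{\phi}(y)Q$.

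Next I would apply the hypothesis in the quotient. Since $P \le R$ and $Q \le R$, we have $PQ/Q \le R/Q$; as $R/Q$ is strongly $\mathcal{F}/Q$-closed, this yields $\psi(PQ/Q) \le R/Q$. Unwinding the coset bookkeeping, $\psi(PQ/Q) = \overline{\phi}(PQ)/Q = P^{\phi}Q/Q$, using $\overline{\phi}(Q) = Q$ and $\overline{\phi}|_{P} = \phi$ so that $\overline{\phi}(PQ) = P^{\phi}Q$. Hence $P^{\phi}Q \le R$, and therefore $P^{\phi} \le R$. By the arbitrariness of $P$ and $\phi$, the subgroup $R$ is strongly $\mathcal{F}$-closed.

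The step I expect to be the crux is the extension: it is the full force of $Q \unlhd \mathcal{F}$, rather than mere strong closure of $Q$, that guarantees $\phi$ extends across $PQ$ while fixing $Q$, and that the resulting map is a legitimate morphism of $\mathcal{F}/Q$. Once that is in place, the identity $\overline{\phi}(PQ) = P^{\phi}Q$ together with the containment $PQ/Q \le R/Q$ makes the rest purely formal.
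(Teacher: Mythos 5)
Your proposal is correct and follows essentially the same route as the paper's proof: extend $\phi$ over $PQ$ using the normality of $Q$ (with $Q$ mapped to itself), descend to a morphism of $\mathcal{F}/Q$, apply strong closure of $R/Q$ to $PQ/Q$, and pull back. The only cosmetic difference is that the paper verifies the condition on cyclic subgroups $P=\langle x\rangle$ of $R$ (which suffices), whereas you treat an arbitrary subgroup $P\le R$ directly; the key extension step and the coset computation $\overline{\phi}(PQ)=P^{\phi}Q$ are identical.
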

\begin{proof}
Let $P = \langle x \rangle$ be a cyclic subgroup of $R$, $T = \langle y \rangle$ be a subgroup of $S$, and $\phi \in {\rm Iso}_{\mathcal{F}} (P,T)$ such that $x^{\phi} = y$. Since $Q$ is normal in $\mathcal{F}$, $\phi$ can extend to $\psi \in {\rm Hom}_{\mathcal{F}} (PQ,TQ)$ such that $Q^{\psi} = Q$. Now we define the following morphism:
$$\overline{\psi}:PQ/Q \rightarrow TQ/Q,\,xQ \mapsto x^{\psi} Q.$$
Then clearly $\overline{\psi} \in {\rm Hom}_{\mathcal{F}/Q} (PQ/Q,TQ/Q)$. Since $R/Q$ is strongly $\mathcal{F}/Q$-closed and $PQ/Q \leq R/Q$, we conclude that $(PQ/Q)^{\overline{\psi}} = P^{\psi}Q/Q=P^{\phi}Q/Q = TQ/Q \leq R/Q$, which yields that $T \leq R$. By the randomness of $P$, we conclude that $R$ is strongly $\mathcal{F}$-closed and we are done.
\end{proof}
\begin{lemma}\label{cover}
Let $p$ be a prime, $S$ be a $p$-group and $\mathcal{F}$ be a fusion system over $S$. Suppose that $H$ is a subgroup of $S$ such that $H$ is weakly $\mathcal{F}$-closed, then $H$ is semi-invariant in $\mathcal{F}$.
\end{lemma}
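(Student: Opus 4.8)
The plan is to unwind both definitions and observe that the only structural fact about fusion systems required is that $\mathcal{F}$-morphisms restrict to subgroups of their source. Recall that $H$ is weakly $\mathcal{F}$-closed precisely when $H^{\phi} = H$ for every $\phi \in {\rm Hom}_{\mathcal{F}} (H, S)$, whereas $H$ is semi-invariant exactly when $H^{\alpha} = H$ for every $K$ with $H \le K \le S$ and every $\alpha \in {\rm Aut}_{\mathcal{F}} (K)$. Thus the goal is, from each such pair $(K, \alpha)$, to manufacture a morphism in ${\rm Hom}_{\mathcal{F}} (H, S)$ whose action on $H$ coincides with that of $\alpha$, and then to apply the weakly $\mathcal{F}$-closed hypothesis to it.

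First I would fix an arbitrary subgroup $K$ with $H \le K \le S$ and an arbitrary $\alpha \in {\rm Aut}_{\mathcal{F}} (K)$. Since $\alpha$ is an $\mathcal{F}$-automorphism of $K$ and fusion systems are closed under restriction of morphisms to subgroups of the source, the restriction $\alpha|_{H} \colon H \to K$ is an $\mathcal{F}$-morphism. Composing with the inclusion $K \hookrightarrow S$, which is itself a morphism in $\mathcal{F}$, yields $\alpha|_{H} \in {\rm Hom}_{\mathcal{F}} (H, S)$; its image is exactly $H^{\alpha}$, which lies inside $S$ because $K \le S$.

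Next I would apply the hypothesis that $H$ is weakly $\mathcal{F}$-closed directly to the morphism $\alpha|_{H}$, obtaining $H^{\alpha} = H^{\alpha|_{H}} = H$. As the pair $(K, \alpha)$ was arbitrary, this says precisely that $H$ is invariant under ${\rm Aut}_{\mathcal{F}} (K)$ for every intermediate subgroup $K$, i.e. $H$ is semi-invariant in $\mathcal{F}$, completing the argument.

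There is no serious obstacle here; the statement is essentially a bookkeeping consequence of the two definitions. The one point deserving care is verifying that $\alpha|_{H}$ is a legitimate $\mathcal{F}$-morphism with codomain contained in $S$, so that the weakly $\mathcal{F}$-closed hypothesis genuinely applies to it; this is exactly what the restriction property of morphisms in a fusion system supplies. Conceptually, weakly $\mathcal{F}$-closed controls \emph{all} morphisms emanating from $H$, a family that in particular contains the restrictions to $H$ of automorphisms of any overgroup $K$, while semi-invariance only demands control of this latter, smaller collection; hence the implication, and the appearance that semi-invariance is the weaker notion.
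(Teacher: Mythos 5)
Your proof is correct and takes essentially the same approach as the paper's: both manufacture from $\alpha \in {\rm Aut}_{\mathcal{F}} (K)$ a morphism in $\mathcal{F}$ with source $H$ (you via restriction of $\alpha$ to $H$ followed by the inclusion $K \hookrightarrow S$, the paper via the inclusion $H \to K$ composed with $\alpha$) and then invoke weak $\mathcal{F}$-closedness to conclude $H^{\alpha} = H$. The only difference is presentational: the paper phrases the argument as a proof by contradiction using the $\mathcal{F}$-conjugacy formulation of weak closure, while you argue directly; this is immaterial.
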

\begin{proof}
Suppose that the lemma is not true and let $H \leq S$ be a counterexample. Then there exists a subgroup $H \leq N \leq S$ and $\alpha \in {\rm Aut}_{\mathcal{F}} (N)$ such that $H^{\alpha} \neq H$. Since the morphism $\phi:H \rightarrow N$, $h \mapsto h$ is an $\mathcal{F}$-morphism, we conclude that $\phi \circ \alpha :H\rightarrow N, h \mapsto h^{\alpha}$ is an $\mathcal{F}$-morphism. Hence the morphism $\psi:H \rightarrow H^{\alpha}, h \rightarrow h^{\alpha}$ is an $\mathcal{F}$-morphism. Note that $\psi $ is in fact an isomorphism, hence $H$ and $H^{\alpha}$ is $\mathcal{F}$-conjugate. However, it follows from $H \neq H^{\alpha}$ that $H$ is not weakly $\mathcal{F}$-closed, a contradiction. Hence our proof is complete.
\end{proof}
As a direct observation, we conclude from Lemma \ref{cover} that the semi-invariant property seems weaker than weakly $\mathcal{F}$-closed property. Hence our aim is now clear. We wonder whether the characterizations for supersolvability of saturated fusion system still hold or not if we change the condition of weakly $\mathcal{F}$-closed into the so-called weaker version named semi-invariant.
\begin{lemma}[{{\cite[Lemma 2.4 (1)]{FJ}}}]\label{12}
Let $p$ be a prime, $S$ be a $p$-group and $\mathcal{F}$ be a  supersolvable saturated fusion system over $S$ and $Q$ be a strongly $\mathcal{F}$-closed subgroup of $S$. Then there exists a series 
$$1 =S_0 \leq S_1 \leq \cdots \leq S_m = S,$$
such that $S_i$ is strongly $\mathcal{F}$-closed, $i=0,1,\cdots,m$, $S_{i+1} / S_i$ is cyclic, $i=0,1,\cdots,m-1$, and $S_n = Q$ for some $0 \leq n \leq m$.
\end{lemma}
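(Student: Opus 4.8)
The plan is to exploit the defining feature of a supersolvable fusion system. By \cite[Lemma 1.11]{SN} (the very fact invoked in the proof of Lemma \ref{9}) one has $O_p(\mathcal{F}) = S$, so that $S \unlhd \mathcal{F}$. Consequently, exactly as in the proof of Lemma \ref{9}, every $\mathcal{F}$-morphism $\varphi \in \mathrm{Hom}_{\mathcal{F}}(P,R)$ is the restriction of some automorphism in $A := \mathrm{Aut}_{\mathcal{F}}(S)$. Writing $\mathrm{Inn}(S) \le A$, I would first record the reformulation that a subgroup $T \le S$ is strongly $\mathcal{F}$-closed if and only if $T$ is $A$-invariant, i.e. $T^{\alpha} = T$ for all $\alpha \in A$. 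For the forward direction, $\alpha|_T \colon T \to S$ is an $\mathcal{F}$-morphism, so strong closure gives $T^{\alpha} \le T$, whence equality by comparing orders. For the backward direction, given $P \le T$ and $\varphi \in \mathrm{Hom}_{\mathcal{F}}(P,S)$, extend $\varphi$ to $\alpha \in A$ and observe $P^{\varphi} = P^{\alpha} \le T^{\alpha} = T$. In particular $Q$ and every $S_i$ are $A$-invariant, and since $\mathrm{Inn}(S) \le A$ they are all normal in $S$.

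With this reformulation the problem becomes the purely operator-group statement that an $A$-invariant subgroup of the ($A$-supersolvable) $p$-group $S$ can be threaded through an $A$-invariant series with cyclic factors. I would build such a series explicitly by intersecting and multiplying the given series with $Q$:
$$1 = S_0 \cap Q \le S_1 \cap Q \le \cdots \le S_n \cap Q = Q = QS_0 \le QS_1 \le \cdots \le QS_n = S.$$
Every term here is an intersection or a product of $A$-invariant subgroups, hence again $A$-invariant and therefore strongly $\mathcal{F}$-closed. This is precisely where the reduction to $A$-invariance pays off: in an arbitrary saturated fusion system it is not at all transparent that a product of two strongly closed subgroups is strongly closed, but under $O_p(\mathcal{F}) = S$ it is immediate.

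It then remains to verify that all factors are cyclic, which is routine via the isomorphism theorems since all terms are normal in $S$. For the lower half, $S_i \cap Q = (S_{i+1} \cap Q) \cap S_i$ yields $(S_{i+1} \cap Q)/(S_i \cap Q) \cong (S_{i+1} \cap Q)S_i/S_i \le S_{i+1}/S_i$, a subgroup of a cyclic group. For the upper half, $QS_{i+1} = S_{i+1}(QS_i)$ yields $QS_{i+1}/QS_i \cong S_{i+1}/(S_{i+1} \cap QS_i)$, a quotient of $S_{i+1}/S_i$; both are cyclic. Since $S_n \cap Q = Q = QS_0$, the displayed chain is a genuine series of length $2n$ in which $Q$ occurs at the middle term; reindexing (and discarding trivial repetitions, the trivial group being cyclic) produces a series of exactly the required form with $Q$ appearing as one of its terms.

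The step I expect to carry the real weight is the reformulation ``strongly $\mathcal{F}$-closed $\Longleftrightarrow$ $A$-invariant'', which hinges on $O_p(\mathcal{F}) = S$. Once this is in hand, everything downstream — closure of the family of strongly closed subgroups under intersections and products, and preservation of cyclicity of the factors — is elementary. Without this identity one would be forced to argue directly that each $QS_i$ is strongly $\mathcal{F}$-closed, and that is the genuinely delicate point one avoids here.
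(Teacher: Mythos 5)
Your proof is correct, but there is nothing internal to compare it against: the paper does not prove this lemma at all, it imports it verbatim as \cite[Lemma 2.4 (1)]{FJ}. Your argument is therefore best read as a self-contained replacement for that citation, and it is built entirely from the toolkit the paper itself deploys in Lemmas \ref{9} and \ref{10}: supersolvability forces $O_p(\mathcal{F}) = S$ by \cite[Lemma 1.11]{SN}, so $S \unlhd \mathcal{F}$ and every $\mathcal{F}$-morphism extends to an element of $A = {\rm Aut}_{\mathcal{F}}(S)$; your equivalence ``strongly $\mathcal{F}$-closed $\Longleftrightarrow$ $A$-invariant'' then follows exactly as you say (forward by applying strong closure to $\alpha|_{T}$ and comparing orders, backward by extension), and it does carry the real weight of the proof. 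I verified the remaining details: intersections are plainly $A$-invariant; since ${\rm Inn}(S) \le A$ all terms are normal in $S$, so each $QS_i$ is a genuine subgroup and is $A$-invariant, hence strongly $\mathcal{F}$-closed; and the two applications of the second isomorphism theorem,
$$(S_{i+1} \cap Q)/(S_i \cap Q) \cong (S_{i+1} \cap Q)S_i/S_i \le S_{i+1}/S_i, \qquad QS_{i+1}/QS_i \cong S_{i+1}/(S_{i+1} \cap QS_i),$$
exhibit the lower factors as subgroups and the upper factors as quotients of the cyclic groups $S_{i+1}/S_i$, with trivial (repeated) factors harmless since the trivial group is cyclic, so $Q$ sits as a term of a series of the required shape. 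What your route buys, as you note, is that the delicate general question of whether products of strongly $\mathcal{F}$-closed subgroups are strongly $\mathcal{F}$-closed is bypassed entirely by the reduction to $A$-invariance, which is only available because $O_p(\mathcal{F}) = S$; this is precisely the hypothesis supersolvability provides, and the argument would not survive without it.
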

\begin{lemma}\label{13}
Let $G$ be a minimal non-nilpotent group, and $S$ be a Sylow subgroup of $G$ which is not normal in $G$. Then $S$ is cyclic.
\end{lemma}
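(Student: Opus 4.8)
The plan is to reduce the statement to the classical structure theory of minimal non-nilpotent (Schmidt) groups and then extract the cyclicity of the non-normal Sylow subgroup by a short centralizer argument. Since $G$ is not nilpotent, its Sylow subgroups are not all normal, and by hypothesis $S$ is one non-normal Sylow subgroup, say $S \in \mathrm{Syl}_p(G)$. First I would pin down the global shape of $G$, and then I would prove that $S$ must be cyclic.

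For the structure, I would invoke Schmidt's theorem on minimal non-nilpotent groups: $G$ is solvable, its order is divisible by exactly two primes, say $|G| = p^a r^b$, and $G$ possesses a normal Sylow subgroup for one of these primes while the Sylow subgroup for the other prime is non-normal (and is, up to conjugacy, the unique non-normal Sylow subgroup). Because $S$ is not normal, $p$ is the prime whose Sylow subgroup fails to be normal, so $S$ is that non-normal Sylow subgroup, while the Sylow $r$-subgroup $R$ is normal; thus $G = R \rtimes S$ with $R \unlhd G$ and $(|R|,|S|)=1$. I expect this structure theorem---above all the solvability together with the facts that only two primes occur and that a normal Sylow complement to $S$ exists---to be the substantive input, and rather than reproving it I would cite it, since it is classical.

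Granting this structure, the cyclicity of $S$ follows quickly. Suppose toward a contradiction that $S$ is non-cyclic, so that $S/\Phi(S)$ has rank at least $2$; then the maximal subgroups of $S$ generate $S$, since their images in $S/\Phi(S)$ are hyperplanes that span $S/\Phi(S)$. For each maximal subgroup $S_1 < S$ the product $R S_1$ has order $|R|\,|S|/p < |G|$, hence is a proper subgroup of $G$ and is therefore nilpotent; being nilpotent it is the direct product of its Sylow subgroups, so $[R,S_1]=1$ and $S_1 \le C_G(R)$. As the maximal subgroups $S_1$ generate $S$, this forces $S \le C_G(R)$, i.e. $[R,S]=1$, whence $G = R \times S$ is nilpotent, contradicting the choice of $G$. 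Therefore $S$ is cyclic. The one delicate point to get right is the reduction step, namely the structure theorem; once $G = R \rtimes S$ is in hand, the centralizer argument on the maximal subgroups of $S$ is routine and is the real content of the lemma.
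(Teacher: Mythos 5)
Your proof is correct, but it splits the work differently from the paper. The paper's proof is a two-liner: it cites Huppert, Kapitel III, Satz 5.2 (Schmidt's theorem on minimal non-nilpotent groups) in its \emph{full} strength, namely $G = P \rtimes Q$ with $P$ the normal Sylow $p$-subgroup and $Q$ a \emph{cyclic} Sylow $q$-subgroup, and then observes that a non-normal Sylow subgroup $S$ must be conjugate to $Q$, hence cyclic. You instead cite only the weaker structural half of that theorem (two primes, normal Sylow complement, so $G = R \rtimes S$) and re-derive the cyclicity of $S$ yourself: if $S$ were non-cyclic, its maximal subgroups would generate it, each product $RS_1$ with $S_1$ maximal in $S$ is proper, hence nilpotent, hence $[R,S_1]=1$, forcing $[R,S]=1$ and the contradiction $G = R \times S$ nilpotent. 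This centralizer argument is sound in every step (maximal subgroups of a $p$-group are normal in it, so any two distinct ones generate $S$ when the Frattini quotient has rank at least $2$; $RS_1$ is a subgroup because $R \unlhd G$; nilpotency gives the direct-product decomposition into Sylow subgroups), and it is in fact essentially the final step of the standard proof of Schmidt's theorem itself. So the two routes rest on the same classical input; the paper's buys brevity by letting the citation carry the cyclicity, while yours buys self-containedness and makes visible that the cyclicity needs only the normal complement together with minimal non-nilpotency, not solvability or the two-prime condition as such.
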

\begin{proof}
It follows from {{\cite[Kapitel III, Satz 5.2]{HU}}} that $G = P \rtimes Q$, where $P$ is the normal Sylow $p$-subgroup of $G$, $Q$ is a cyclic Sylow $q$-subgroup of $G$, $p \neq q$ are distinct prime numbers. Since $S$ is not normal in $G$, we assert immediately that $S$ is cyclic.
\end{proof}
\begin{lemma}[{{\cite[Lemma 2.12 (2)]{FJ}}}]\label{14}
Let $p$ be an odd prime and $t =2 $ or $t=3$. If $H$ is a non-trivial cyclic subgroup of $GL_t (p)$, then $H$ is cyclic of order $p$.

\end{lemma}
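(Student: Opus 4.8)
The plan is to exploit the fact that a non-trivial cyclic $p$-subgroup $H$ of $GL_t(p)$ is pinned down, up to order, by the exponent of a Sylow $p$-subgroup of $GL_t(p)$. Since $H$ is a cyclic $p$-group, Sylow's theorem conjugates it into any fixed Sylow $p$-subgroup $U$, and if $U$ has exponent $p$ then every cyclic subgroup of $U$---and hence $H$---has order dividing $p$, forcing $|H| = p$ once $H \neq 1$. Thus the whole statement reduces to locating a Sylow $p$-subgroup of $GL_t(p)$ and showing it has exponent $p$ when $p$ is odd.

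First I would record the order of $GL_t(p)$. Writing $|GL_t(p)| = \prod_{i=0}^{t-1}(p^t - p^i)$ and noting $p^t - p^i = p^i(p^{t-i}-1)$ with $p^{t-i}-1$ coprime to $p$, one reads off that the exact power of $p$ dividing $|GL_t(p)|$ is $p^{0+1+\cdots+(t-1)} = p^{t(t-1)/2}$. For $t = 2$ this is $p^1$, so every non-trivial $p$-subgroup of $GL_2(p)$ already has order exactly $p$ and the claim is immediate. The only substantive case is $t = 3$, where the $p$-part is $p^3$ and a Sylow $p$-subgroup is visibly the group $U$ of upper unitriangular $3 \times 3$ matrices over $\mathbb{F}_p$, a group of order $p^3$ that is \emph{not} cyclic; here the order of $H$ cannot be read off from $|U|$ directly, so the exponent computation is unavoidable.

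The heart of the argument is therefore to show that $U$ has exponent $p$ for odd $p$. I would write an element of $U$ as $I + N$ with $N$ strictly upper triangular, so that $N^3 = 0$, and expand by the binomial theorem (all terms commute, being powers of the single matrix $N$):
$$(I+N)^p = \sum_{k=0}^{p}\binom{p}{k}N^k = I + pN + \binom{p}{2}N^2,$$
the higher terms vanishing because $N^3 = 0$. In characteristic $p$ the coefficient $p$ kills the linear term, and since $p$ is odd the coefficient $\binom{p}{2} = p\,(p-1)/2$ is divisible by $p$ and kills the quadratic term, whence $(I+N)^p = I$. So every element of $U$ has order $1$ or $p$, i.e. $U$ has exponent $p$, and the reduction of the first paragraph finishes the proof. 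The main obstacle is precisely this $t = 3$ exponent computation: one must use the oddness of $p$ to annihilate the $\binom{p}{2}N^2$ term, which is exactly why the hypothesis fails for $p = 2$ (there the unitriangular group of $GL_3(2)$ already contains elements of order $4$), and thus why $p$ odd is essential.
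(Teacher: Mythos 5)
Your proof is correct, but there is nothing in the paper to compare it against: the paper states this lemma as an import from \cite[Lemma 2.12(2)]{FJ} and gives no proof of it, so your argument supplies a self-contained proof where the paper has only a citation. Two remarks. First, you have tacitly (and rightly) read ``non-trivial cyclic subgroup'' as ``non-trivial cyclic $p$-subgroup''; as literally printed the lemma is false, since $GL_t(p)$ contains cyclic subgroups of every order dividing $p-1$ among the diagonal matrices, and the $p$-subgroup reading is the one the paper itself relies on --- in Step~7 of Theorem~\ref{D1} the lemma is applied precisely to the cyclic $p$-group $S/O_p(G)$. Second, your route is elementary and complete: the Sylow-conjugation reduction to the exponent of one fixed Sylow $p$-subgroup is sound; the computation of the $p$-part $p^{t(t-1)/2}$ of $|GL_t(p)|$ settles $t=2$ at once; for $t=3$ the unitriangular group $U$ is indeed a Sylow $p$-subgroup, and the expansion $(I+N)^p = I + pN + \binom{p}{2}N^2$ (legitimate because the powers of $N$ commute and $N^3=0$), combined with $p \mid \binom{p}{2}$ for odd $p$, gives that $U$ has exponent $p$. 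Your closing observation that the hypothesis ``$p$ odd'' is genuinely needed --- for $p=2$ the unitriangular group of $GL_3(2)$ is dihedral of order $8$ and contains elements of order $4$ --- correctly locates where oddness enters, and matches the restriction in the statement.
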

Let $p$ be a prime and $S$ be a $p$-group. Recall that $S$ is said to be an $\mathcal{A}_2$-group, if $S$ contains a non-abelian maximal subgroup, and all subgroups of $S$ of index $p^2$ are abelian. According to {{\cite{YZ}}}, we have the following lemma.
\begin{lemma}\label{15}
Let $p$ be a prime and $S$ be a $p$-group. Suppose that $S$ is a non-metacyclic $\mathcal{A}_2$-group with $|S|>p^4$. Then the following hold:
\begin{itemize}
\item[(1)]{{\cite[Proposition 71.4]{YZ}}} Assume that $S$ has exactly one abelian maximal subgroup and $S' \leq Z(S)$, then $Z(S) = \Phi (S)$.
\item[(2)]{{\cite[Proposition 71.5]{YZ}}} If any maximal subgroup of $S$ is non-abelian and $p$ is odd, then $|S| = p^5$.
\end{itemize}
\end{lemma}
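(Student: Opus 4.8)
The two assertions are structural facts about $\mathcal{A}_2$-groups, and the plan is to derive them from the classification theory of such groups developed in \cite{YZ}, reducing each part to an analysis of the maximal subgroups together with the classification of minimal non-abelian ($\mathcal{A}_1$) groups. The starting point in both cases is the standard observation that in an $\mathcal{A}_2$-group every maximal subgroup is either abelian or an $\mathcal{A}_1$-group, and that an $\mathcal{A}_1$-group $M$ satisfies $d(M)=2$ and $|M'|=p$, so that $M/Z(M)\cong C_p\times C_p$. I would also record at the outset that the generator rank $d(S)$ of an $\mathcal{A}_2$-group lies in $\{2,3\}$ and that the number of abelian maximal subgroups is $0$, $1$, or $p+1$; in the situation of part~(1) the hypothesis pins this number to exactly one, which is the geometric fact driving the argument.

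For part~(1), since $S'\le Z(S)$ the group $S$ has nilpotency class $2$, so $Z(S)\ge S'$ and $\Phi(S)=S'\,\mho_1(S)$, where $\mho_1(S)=\{\,x^p : x\in S\,\}$. The inclusion $\Phi(S)\le Z(S)$ then reduces to $\mho_1(S)\le Z(S)$, i.e.\ to showing that every $p$-th power is central; for this I would use the class-$2$ commutator identity $[x^p,y]=[x,y]^p$, so that $x^p$ centralises $S$ as soon as $S'$ has exponent $p$, a property forced by the $\mathcal{A}_1$-structure of the non-abelian maximal subgroups. For the reverse inclusion $Z(S)\le\Phi(S)$ I would exploit the \emph{uniqueness} of the abelian maximal subgroup $A$: then $Z(S)\le A$, and comparing $A$ against the $p$ non-abelian maximal subgroups (each an $\mathcal{A}_1$-group whose centre has index $p^2$) yields a count of $|Z(S)|$ against $|\Phi(S)|$ that forces equality once $|S|>p^4$ and $S$ is non-metacyclic.

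For part~(2), the hypothesis that every maximal subgroup is non-abelian means every maximal subgroup is $\mathcal{A}_1$, so each maximal $M$ has $|M'|=p$ and $d(M)=2$. I would first pin down $d(S)$ and $|S'|$: the uniform structure $M/Z(M)\cong C_p\times C_p$ across \emph{all} maximal subgroups, together with $p$ odd, constrains both the rank and the derived subgroup. The decisive numerical step is the upper bound on $|S|$: using regularity of $S$ for odd $p$ (available since an $\mathcal{A}_2$-group has small class) together with the coincidence of the quotients $M/Z(M)$, I would show that $|S/Z(S)|$ and $|S'|$ are squeezed into a single admissible value, yielding $|S|=p^5$.

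The main obstacle is the reverse inclusion in part~(1) and the order bound in part~(2): both require the fine classification of $\mathcal{A}_2$-groups rather than soft arguments, since the conclusions genuinely fail without the precise hypotheses (non-metacyclic, $|S|>p^4$, $p$ odd). Accordingly, I expect the cleanest route is simply to invoke \cite[Propositions 71.4 and 71.5]{YZ}, where these facts are established inside the systematic treatment of $\mathcal{A}_2$-groups; reproving them from scratch would amount to redeveloping that classification.
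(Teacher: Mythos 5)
The paper gives no proof of this lemma at all: it is imported verbatim from Berkovich--Janko, with the citations \cite[Proposition 71.4]{YZ} and \cite[Proposition 71.5]{YZ} built directly into the statement. Your concluding move --- simply invoking those two propositions --- is therefore exactly the paper's approach; the preliminary structural sketch is reasonable background but is neither needed nor, as you yourself note, a complete proof on its own.
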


Now we would like to introduce the lemmas about the generalised normalities, which are useful for our characterizations for $p$-nilpotency of finite groups and supersolvability for $\mathcal{F}_S (G)$ in Section \ref{1003} and Section \ref{1005}.\begin{lemma}\label{16}
Let $p$ be a prime such that $p$ is a prime divisor of $G$, $S$ be a Sylow $p$-subgroup of $G$, and $H$ be a subgroup of $S$. Then the following statements are equivalent. 
\begin{itemize}
\item[(1)] $H$ is pronormal in $G$.
\item[(2)] $H$ is weakly normal in $G$.
\item[(3)] $H$ is weakly closed in $G$ with respect to $S$.
\item[(4)] $N_G (H)$ is the subnormalizer of $H$ in $G$.
\end{itemize}
\end{lemma}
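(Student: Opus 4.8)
The plan is to prove the four equivalences by running the cycle $(1) \Rightarrow (2) \Rightarrow (3) \Rightarrow (1)$ and then attaching $(4)$ via the two implications $(1) \Rightarrow (4)$ and $(4) \Rightarrow (2)$. Throughout I would lean on three standard facts: pronormality passes to every intermediate overgroup (if $H$ is pronormal in $G$ and $H \le K \le G$, then $H$ is pronormal in $K$); a subgroup that is simultaneously pronormal and subnormal in a group is normal in it; and subnormality is inherited by intermediate subgroups. I read condition $(4)$ as the assertion that the subnormalizer $\mathrm{sub}_G(H) = \{ g \in G : H \text{ is subnormal in } \langle H, H^g \rangle \}$ coincides with $N_G(H)$; since $N_G(H) \subseteq \mathrm{sub}_G(H)$ is automatic, $(4)$ only claims the reverse inclusion.

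For $(1) \Rightarrow (2)$ I would argue directly: if $H^g \le N_G(H)$, then $\langle H, H^g \rangle \le N_G(H)$, and pronormality supplies $x \in \langle H, H^g \rangle \le N_G(H)$ with $H^g = H^x = H$, so $g \in N_G(H)$. For $(2) \Rightarrow (3)$ the first step is to show that weak normality forces $H \unlhd S$: if not, then $N_S(H) < S$, and the normalizer-growth property of $p$-groups yields some $s \in N_S(N_S(H)) \setminus N_S(H)$; since $H^s \le N_S(H) \le N_G(H)$, weak normality gives $s \in N_G(H)$, contradicting $s \notin N_S(H)$. Once $H \unlhd S$, we have $S \le N_G(H)$, so any conjugate $H^g \le S$ lies in $N_G(H)$, whence $g \in N_G(H)$ and $H^g = H$; this is exactly weak closure.

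The implication $(3) \Rightarrow (1)$ is where the real work lies, and I expect it to be the main obstacle, since here one must convert a global weak-closure hypothesis into conjugacy inside $\langle H, H^g \rangle$. Given $g \in G$, set $K = \langle H, H^g \rangle$ and choose Sylow $p$-subgroups $P, Q$ of $K$ with $H \le P$ and $H^g \le Q$; Sylow's theorem in $K$ gives $Q = P^k$ for some $k \in K$, so $H$ and $H^{gk^{-1}}$ both lie in the $p$-subgroup $P$. Embedding $P$ in a Sylow $p$-subgroup of $G$ and transporting it onto $S$ by a second application of Sylow's theorem (say by $h \in G$), both $H^{h^{-1}}$ and $H^{gk^{-1}h^{-1}}$ become conjugates of $H$ contained in $S$; the weak-closure hypothesis forces each to equal $H$, so $h \in N_G(H)$ and then $H^{gk^{-1}} = H^{h} = H$. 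Hence $H^g = H^k$ with $k \in K$, which is precisely pronormality.

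Finally I would close the loop at $(4)$. For $(1) \Rightarrow (4)$, take $g \in \mathrm{sub}_G(H)$, so $H$ is subnormal in $L = \langle H, H^g \rangle$; being also pronormal in $L$, it is normal there, giving $H^g \le L \le N_G(H)$, and then $(2)$ (already equivalent to $(1)$ from the cycle) yields $g \in N_G(H)$, i.e. $\mathrm{sub}_G(H) \subseteq N_G(H)$. For $(4) \Rightarrow (2)$, if $H^g \le N_G(H)$ then $\langle H, H^g \rangle \le N_G(H)$, so $H$ is normal, hence subnormal, in $\langle H, H^g \rangle$; thus $g \in \mathrm{sub}_G(H) = N_G(H)$. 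Assembling the cycle $(1)\Rightarrow(2)\Rightarrow(3)\Rightarrow(1)$ together with $(1)\Rightarrow(4)\Rightarrow(2)$ delivers the four-fold equivalence.
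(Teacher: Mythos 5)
Your proof is correct, but it takes a genuinely different route from the paper: the paper gives no argument for this lemma at all, disposing of it in one line as a direct consequence of \cite[Theorem 2]{PE} and \cite[Theorem 4.3]{LY}. Your proposal is in effect a self-contained reconstruction of the content of those citations. Every implication checks out: $(1)\Rightarrow(2)$ is the standard pronormality computation; $(2)\Rightarrow(3)$ correctly uses normalizer growth in the $p$-group $S$ to force $H\unlhd S$, after which weak closure is immediate; $(3)\Rightarrow(1)$, the real core, is handled properly by two applications of Sylow's theorem (first inside $\langle H,H^g\rangle$, then in $G$), and this is precisely where the hypothesis $H\le S\in{\rm Syl}_p(G)$ enters; and the bridges $(1)\Rightarrow(4)\Rightarrow(2)$ rest only on the standard facts that pronormality restricts to intermediate overgroups and that a subgroup which is both pronormal and subnormal is normal. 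The paper's citation buys brevity and a pointer to the literature; your argument buys verifiability, and it makes visible that $(1)\Rightarrow(4)\Rightarrow(2)$ hold for an arbitrary subgroup of an arbitrary finite group, the Sylow hypothesis being needed only to close the cycle through $(3)$. Two minor repairs are worth making. First, the paper never defines the subnormalizer, and your step $(4)\Rightarrow(2)$ depends on reading it as $\lbrace g\in G : H\ \text{subnormal in}\ \langle H,H^g\rangle\rbrace$; if the intended definition (as in Peng's paper) is instead $\lbrace g\in G : H\ \text{subnormal in}\ \langle H,g\rangle\rbrace$, then $(1)\Rightarrow(4)$ survives verbatim but $(4)\Rightarrow(2)$ would have to be rerouted, so the reading should be pinned down against \cite{PE}. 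Second, in $(3)\Rightarrow(1)$ the conjugation convention ($H^{h}$ versus $H^{h^{-1}}$ after transporting the Sylow subgroup containing $P$ onto $S$) is used inconsistently; this is purely notational and harmless, but should be fixed in a written version.
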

\begin{proof}
It follows directly from {{\cite[Theorem 2]{PE}}} and {{\cite[Theorem 4.3]{LY}}}.
\end{proof}
\begin{lemma}[{{\cite[Theorem 3.5]{GL3}}}]\label{3}
Let $G$ be a finite group and $P$ be a normal $p$-subgroup of $G$, where $p$ is a prime divisor of $G$. Assume that $P$ has a subgroup $D$ such that $1 < |D| < |P|$, and any subgroup of $P$ with order $|D|$ or $4$ (if $P$ is a non-abelian 2-group and $|D|=2$) is an ICC-subgroup of $G$, then $P \leq Z_{\mathfrak{U}} (G)$.
\end{lemma}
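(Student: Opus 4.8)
The plan is to argue by induction on $|G|$, taking $(G,P)$ to be a counterexample of least order. Since $P \trianglelefteq G$ and $P \neq 1$, I would fix a minimal normal subgroup $N$ of $G$ with $N \le P$; because $P$ is a $p$-group, $N$ is elementary abelian. The theorem then splits into two tasks: first, to reduce modulo $N$ by verifying that the hypotheses are inherited by the pair $(G/N, P/N)$, so that induction delivers $P/N \le Z_{\mathfrak{U}}(G/N)$; and second, to show that $N$ itself lies in $Z_{\mathfrak{U}}(G)$, equivalently that $|N| = p$. Granting both, the standard property of the saturated formation $\mathfrak{U}$ that $Z_{\mathfrak{U}}(G)/N = Z_{\mathfrak{U}}(G/N)$ whenever $N \le Z_{\mathfrak{U}}(G)$ pulls the desired conclusion $P \le Z_{\mathfrak{U}}(G)$ back up to $G$, contradicting the choice of counterexample.

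For the inheritance step I would establish a quotient-closure statement for the ICC property: if $H \le P$ has order $|D|$ and contains $N$, then $H/N$ should be an ICC-subgroup of $G/N$, and by the correspondence theorem every subgroup of $P/N$ of order $|D|/|N|$ arises as exactly one such $H/N$. This forces a split according to whether $|N| < |D|$ or $|N| \ge |D|$. When $|N| < |D|$ the translated order $|D|/|N|$ still satisfies $1 < |D|/|N| < |P/N|$, so the induction applies cleanly to $G/N$ and returns $P/N \le Z_{\mathfrak{U}}(G/N)$; the boundary case $|N| = |D|$, where the quotient hypothesis degenerates, and the case $|N| > |D|$ must be handled by direct analysis inside $P$ rather than in the quotient.

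The crux, and the step I expect to be the main obstacle, is showing $|N| = p$, since the sole lever available is the ICC property of the order-$|D|$ subgroups that meet or contain $N$. Here I would feed that hypothesis into the action of $G$ on the elementary abelian module $N$: through the core and intersection behaviour packaged into the ICC definition of \cite{GL3}, an order-$|D|$ ICC-subgroup interacting with $N$ should either produce a proper nontrivial $G$-invariant subgroup of $N$, contradicting minimality unless $\dim_{\mathbb{F}_p} N = 1$, or split $N$ as a $G$-module in a way that again pins the dimension at one. For odd $p$ this is the Buckley-type phenomenon for subgroups of fixed order; for $p = 2$ with $P$ non-abelian the order-$p$ subgroups alone do not suffice, because of the generalized-quaternion obstruction, which is precisely why the extra hypothesis on order-$4$ subgroups is imposed, and I would use it to exclude the $Q_8$-configuration and recover the dichotomy. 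Extracting this module-theoretic consequence from the bare ICC definition is where the real work sits.

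Finally, with $|N| = p$ established, $N$ is a cyclic chief factor and hence $N \le Z_{\mathfrak{U}}(G)$, while the induction on $G/N$ has already given $P/N \le Z_{\mathfrak{U}}(G/N) = Z_{\mathfrak{U}}(G)/N$. Combining these yields $P \le Z_{\mathfrak{U}}(G)$ and completes the induction. The two points demanding the most care are the precise order bookkeeping under passage to $G/N$, so that the condition $1 < |D| < |P|$ survives in a usable form and the degenerate case $|N| = |D|$ is not overlooked, and the derivation, from the definition alone, of the fact that an ICC-subgroup of the right order constrains a minimal normal subgroup of $P$ to be cyclic.
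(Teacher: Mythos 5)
The first thing to note is that the paper contains no proof of this lemma: it is quoted verbatim as Theorem 3.5 of Gao--Li \cite{GL3}, so there is no internal argument to compare yours against, and your proposal must stand on its own as a proof of the cited result. It does not. What you have written is a strategy outline in which every step carrying real mathematical content is deferred. The crux --- showing that a minimal normal subgroup $N$ of $G$ with $N \leq P$ satisfies $|N| = p$ --- is exactly where you say ``the real work sits'', and no argument is supplied: you only assert that the ICC hypothesis ``should'' either produce a proper nontrivial $G$-invariant subgroup of $N$ or split $N$ as a $G$-module. Since the ICC property of \cite{GL3} (an intersection condition of the type $H \cap [H,G] \leq H_{cG}$ with $H_{cG}$ a cover-avoidance subgroup contained in $H$, analogous to the ICSC condition used in Lemma \ref{18} of this paper) is the only available input, extracting that dichotomy from it \emph{is} the theorem; without it the induction is an empty shell.

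There is also a concrete failure in the inheritance step, beyond the fact that the quotient-closure of the ICC property is asserted rather than proved. To apply induction to $(G/N, P/N)$ you must verify the full hypothesis there: every subgroup of $P/N$ of order $|D|/|N|$, \emph{and}, in case $P/N$ is a non-abelian $2$-group with $|D|/|N| = 2$, every subgroup of $P/N$ of order $4$, must be an ICC-subgroup of $G/N$. Those order-$4$ subgroups of $P/N$ correspond to subgroups of $P$ of order $4|N| = 2|D|$ containing $N$, about which the hypothesis of the lemma says nothing: when $|N| \geq 2$ one has $|D| = 2|N| \geq 4$, so the order-$4$ clause of the original hypothesis is vacuous and provides no ICC information at order $2|D|$. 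Hence even granting a quotient-closure lemma, the induction hypothesis cannot be verified for the quotient in this boundary case; published proofs of results of this type must work around precisely this obstruction rather than through it. Combined with the unhandled cases $|N| \geq |D|$, which you flag but do not treat, the proposal remains a plausible plan rather than a proof: to complete it you would need (i) a proved quotient-closure lemma for ICC-subgroups, (ii) a genuine treatment of the order-$4$ boundary case above, and (iii) an actual argument, from the definition of ICC-subgroup, that the chief factors of $G$ below $P$ are cyclic.
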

\begin{lemma}[{{\cite[Proposition 3.1]{WS}}}]\label{4}
Let $P$ be a non-identity normal subgroup of $G$ with $|P|=p^n$, where $p$ is a prime divisor of $|G|$. Suppose that $n>1$, and there exists an integer $n>k \geq 1$ that any subgroup of $P$ of order $p^k$ and any cyclic subgroup of $P$ of order $4$ (if $P$ is a non-abelian 2-group and $n-1>k=1$) are $\mathfrak{U}$-embedded in $G$. Then $P \leq Z_{\mathfrak{U}} (G)$.
\end{lemma}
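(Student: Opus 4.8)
The plan is a minimal-counterexample argument combined with induction on $|G|\cdot|P|$. So suppose the conclusion fails and choose a pair $(G,P)$ with $|G|\cdot|P|$ as small as possible among the counterexamples. The first preparatory step is to verify that the hypothesis descends to quotients: using the behaviour of $\mathfrak{U}$-embedded subgroups under homomorphic images established in \cite{WS}, I would show that for a minimal normal subgroup $N$ of $G$ with $N\leqslant P$, the pair $(G/N,\,P/N)$ again satisfies the hypothesis of the lemma (with subgroups of order $p^{k}$ or $p^{k}/|N|$ playing the required role, and the order-$4$ clause preserved). Since $P$ is a normal $p$-group, such an $N$ is elementary abelian, and by minimality of the counterexample we may conclude $P/N\leqslant Z_{\mathfrak{U}}(G/N)$ whenever $P/N\neq 1$.

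The reduction then hinges on the single claim that $|N|=p$. Indeed, if $|N|=p$ the chief factor $N$ is cyclic, hence $\mathfrak{U}$-central, so $N\leqslant Z_{\mathfrak{U}}(G)$; because $Z_{\mathfrak{U}}$ is the hypercentre for the saturated formation $\mathfrak{U}$ we then have $Z_{\mathfrak{U}}(G/N)=Z_{\mathfrak{U}}(G)/N$, and combining this with $P/N\leqslant Z_{\mathfrak{U}}(G/N)$ yields $P\leqslant Z_{\mathfrak{U}}(G)$, contradicting the choice of $(G,P)$. Thus the whole problem is to prove $|N|=p$.

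To establish $|N|=p$ I would exploit the $\mathfrak{U}$-embeddedness of a well-chosen subgroup $H$ of order $p^{k}$, splitting into the cases $|N|\leqslant p^{k}$ and $|N|>p^{k}$; in each case one can arrange $H$ so that its intersection with $N$ is controlled. Unwinding the definition, there is an $S$-quasinormal subgroup $T$ of $G$ with $HT$ likewise $S$-quasinormal and with $H\cap T$ lying, modulo the core of $H$, in the $\mathfrak{U}$-hypercentre. Because $N$ is minimal normal and $S$-quasinormal subgroups are subnormal and permute with every Sylow subgroup, the section of $N$ cut out by $HT$ is tightly constrained, and tracing the possibilities forces every $G$-chief factor inside $N$ to be cyclic, that is $|N|=p$. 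The order-$4$ hypothesis enters exactly here: when $P$ is a non-abelian $2$-group and $k=1$, the order-$2$ subgroups carry too little information to restrict the action of $G$ on $N$, so one substitutes a cyclic subgroup of order $4$ into the $\mathfrak{U}$-embedding argument.

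The genuine obstacle is this last step, namely promoting the $\mathfrak{U}$-embedding data attached to one subgroup $H$ of order $p^{k}$ into a statement about the full minimal normal subgroup $N$; it demands careful coordination of the $S$-quasinormal supplement $T$ with $N$, together with the separate treatment of the $2$-group exception. Since the statement is quoted verbatim from \cite[Proposition 3.1]{WS}, in a final write-up I would record the minimal-counterexample reduction above and then invoke that reference for the technical core.
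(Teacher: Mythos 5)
The paper contains no proof of this lemma: it is imported verbatim from the literature, and the bracketed citation in the lemma header, \cite[Proposition 3.1]{WS}, is the entire justification. Your proposal, by its own closing sentence, terminates in that same citation, so as a formal matter it establishes the statement only in the sense the paper does — by appeal to the reference. The problem is that the ``technical core'' you defer to \cite{WS}, namely forcing the minimal normal subgroup $N \leq P$ to satisfy $|N| = p$ (equivalently, forcing the $G$-chief factors inside $P$ to be cyclic), \emph{is} the content of the cited proposition; so read strictly your write-up is circular, and read charitably it collapses to exactly what the paper does, with the minimal-counterexample scaffolding doing no mathematical work.

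If that scaffolding were meant to carry weight, it has two concrete defects. First, hypothesis descent to $(G/N,\,P/N)$ fails in general: a subgroup of $P/N$ of order $p^{k'}$ lifts to a subgroup of $P$ of order $p^{k'}|N|$ containing $N$, so the only designated order usable in the quotient is $p^{k}/|N|$, which requires $|N| < p^{k}$; since $|N| \geq p$, your induction is vacuous whenever $k=1$ — precisely the case the order-$4$ clause is designed to handle — and the order-$4$ hypothesis itself does not pass to $P/N$ from the stated assumptions (cyclic subgroups of order $4$ in $P/N$ lift to subgroups of $P$ of order $8$, about which the hypothesis says nothing). Second, your unwinding of ``$\mathfrak{U}$-embedded'' (an $S$-quasinormal supplement $T$ with $HT$ $S$-quasinormal and $H \cap T$ lying in the hypercentre modulo the core of $H$) is not the definition used in \cite{WS}: there the notion is built on a subnormal supplement $T$ with $G = AT$ and $A \cap T$ contained in a generalized CAP-subgroup of $G$ inside $A$ — generalized cover-avoidance, not $S$-quasinormality — so the ``tight constraints'' you invoke to force $|N| = p$ belong to a different concept and would have to be re-derived from scratch. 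Since this lemma is an imported result, the appropriate treatment is simply the citation, exactly as the paper gives it.
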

\begin{lemma}[{{\cite[Theorem]{SK}}}]\label{5}
Let $p$ be a prime and $P$ be a non-trivial normal $p$-subgroup of $G$. Suppose that there exists a subgroup $D$ of $P$ with $1 <|D|<|P|$ such that every subgroup of $P$ of order $|D|$ is normal in $G$. If $P$ is a non-abelian $2$-group and $|D|=2$, suppose moreover that   every cyclic subgroup of $P$ of order $4$ is normal in $G$. Then $P \leq Z_{\mathfrak{U}} (G)$.
\end{lemma}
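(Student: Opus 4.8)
The plan is to argue by induction on $|G|$, using the standard characterization that $P \le Z_{\mathfrak{U}}(G)$ holds precisely when every $G$-chief factor lying inside $P$ is cyclic of order $p$. Accordingly, it suffices to locate a minimal normal subgroup $N$ of $G$ with $N \le P$, show that $N$ is cyclic of order $p$ (hence $\mathfrak{U}$-central in $G$), and then push the hypothesis down to $G/N$ so that the inductive conclusion $\overline{P} \le Z_{\mathfrak{U}}(G/N)$ combines with $N \le Z_{\mathfrak{U}}(G)$ to give $P \le Z_{\mathfrak{U}}(G)$. Here I would use the standard property that once $N \le Z_{\mathfrak{U}}(G)$, one has $Z_{\mathfrak{U}}(G)/N = Z_{\mathfrak{U}}(G/N)$, so that $\overline{P} \le Z_{\mathfrak{U}}(G/N)$ pulls back to $P \le Z_{\mathfrak{U}}(G)$.

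First I would fix a minimal normal subgroup $N$ of $G$ contained in $P$; since $P \trianglelefteq G$ is a $p$-group, $N$ is elementary abelian, say $|N| = p^m$. The first observation is that $|N| \le |D|$: if $|N| > |D|$ then, $N$ being elementary abelian and hence possessing subgroups of every intermediate order, it contains a subgroup of order $|D|$, which by hypothesis is normal in $G$ while being proper and nontrivial in $N$, contradicting minimality. The crucial and least routine step is then to show $m = 1$. The device I would rely on is that if every subgroup of order $p$ of an elementary abelian $G$-section is normal in $G$, then $G$ fixes every $1$-dimensional $\mathbb{F}_p$-subspace, so by a Schur-type argument $G$ acts on that section by scalars and $G/C_G(\text{section})$ is cyclic of order dividing $p-1$; every $G$-composition factor of the section is then cyclic and $\mathfrak{U}$-central. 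When $|D| = p$ this applies directly to $N$, with the extra hypothesis on cyclic subgroups of order $4$ used to dispose of the $p=2$ quaternion obstruction (where the unique involution yields no scalar control, so the order-$4$ subgroups are needed to pin down the action). When $|D| = p^k$ with $k \ge 2$, I would instead leverage the normality of all order-$p^k$ subgroups to produce, inside any putative non-cyclic chief factor, a proper nontrivial $G$-invariant subgroup by intersecting suitable normal subgroups of order $|D|$ with the relevant section, thereby forcing the factor to be cyclic.

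With $N$ of order $p$ in hand, I would pass to $\overline{G} = G/N$ and $\overline{P} = P/N$ and verify that the hypothesis is inherited, with $\overline{D}$ of order $|D|/|N|$ when $|D| > |N|$ (and a suitable separate treatment when $|D| = |N| = p$). The verification amounts to checking that every subgroup of $\overline{P}$ of the prescribed order arises as the image of a $G$-normal subgroup of $P$ of order $|D|$ containing $N$, so that its image is normal in $\overline{G}$; the order-$4$ clause should likewise descend. Applying the induction hypothesis gives $\overline{P} \le Z_{\mathfrak{U}}(\overline{G})$, and combining with $N \le Z_{\mathfrak{U}}(G)$ completes the induction.

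The hard part will be the inheritance of the hypothesis by $G/N$ together with the chief-factor reduction in the case $|D| = p^k$, $k \ge 2$: an arbitrary subgroup of $\overline{P}$ of the target order need not be the image of a subgroup of $P$ of order $|D|$ containing $N$, so one must either choose $N$ with additional care (for instance inside $\Phi(P)$ or inside a well-chosen layer of $P$) or argue section-by-section. Equally delicate is the borderline case $|N| = |D|$, where the normality of order-$|D|$ subgroups conveys no information strictly inside $N$; there, controlling $m$ requires examining order-$|D|$ subgroups that meet $N$ in proper subgroups and projecting onto $N$, which is where I expect the bulk of the technical work to lie.
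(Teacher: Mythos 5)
The paper never proves this lemma: it is imported verbatim from Skiba \cite[Theorem]{SK}, so there is no internal proof to compare your attempt against, and the only meaningful review is whether your sketch would itself constitute a proof of Skiba's theorem. Your overall frame (induction on $|G|$, the chief-factor characterization of $Z_{\mathfrak{U}}(G)$, a minimal normal subgroup $N \leq P$, passage to $G/N$) is the standard one, and several steps are sound: the observation $|N| \leq |D|$; the scalar-action argument showing $|N|=p$ when $|D|=p$; and your intersection device does settle the borderline case $|N|=|D|>p$, since one may take $T$ with $N \leq T \leq P$, $|T:N|=p$, note $T$ is non-cyclic, pick a maximal subgroup $E \neq N$ of $T$, and then $E \trianglelefteq G$ by hypothesis and $1 < E\cap N < N$ contradicts minimality.

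The genuine gaps are elsewhere. First, the regime $p < |N| < |D|$ is not covered by your dichotomy, and there your device can have nothing to bite on: it requires a subgroup of order $|D|$ meeting $N$ in a proper nontrivial subgroup, but none need exist. Concretely, if $P = C_{p^2} \times C_{p^2}$ and $N = \Omega_1(P) \cong C_p \times C_p$ with $|D| = p^3$, then \emph{every} subgroup of $P$ of order $|D|$ contains $N$. Excluding this configuration needs a different idea, e.g.\ that $xN \mapsto x^p$ is a $G$-isomorphism from $P/N$ onto $\Omega_1(P)$, so the hypothesis (which forces $G$ to fix every subgroup of order $p^3$, i.e.\ every line of $P/N$) forces $G$ to fix every line of $N$, contradicting minimality of $N$; no mechanism of this kind appears in your plan. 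Second, the inheritance of the hypothesis by $G/N$ fails exactly where you defer it: when $|D| = p$, a subgroup of order $p$ of $P/N$ pulls back to a subgroup of order $p^2$ containing $N$, and when that pullback is cyclic its normality in $G$ is not given by the hypothesis --- this is precisely the $Q_8 \trianglelefteq {\rm SL}(2,3)$ obstruction that the order-$4$ clause exists to block, and the difficulty does not disappear for odd $p$; likewise the order-$4$ clause itself need not descend, since a cyclic subgroup of order $4$ of $P/N$ can pull back to a cyclic group of order $8$, about which the hypothesis says nothing. These deferred steps are the actual mathematical content of Skiba's theorem, so what you have is a reasonable plan rather than a proof.
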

\begin{lemma}[{{\cite[Main Theorem]{LI}}}]\label{17}
Let $H$ be a normal subgroup of a group $G$.  Assume that for any non-cyclic Sylow subgroup $P$ of $H$, either all maximal subgroups of $P$ or all cyclic subgroups of $P$ of prime order and order $4$ are $E$-supplemented {{\cite[Definition 1.1]{LI}}} in $G$. Then each
chief factor of $G$ below $H$ is cyclic.
\end{lemma}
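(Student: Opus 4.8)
The plan is to recast the conclusion as the single containment $H \le Z_{\mathfrak{U}}(G)$, since ``every chief factor of $G$ below $H$ is cyclic'' is exactly the assertion that $H$ lies in the $\mathfrak{U}$-hypercentre of $G$, and then to argue by induction on $|G|$. I would fix a minimal normal subgroup $N$ of $G$ with $N \le H$ and reduce the whole statement to two claims: (a) every such $N$ is cyclic of prime order; and (b) the hypothesis is inherited by the pair $(G/N, H/N)$. Granting these, the chief factor $N$ is cyclic and the induction hypothesis applied to $(G/N, H/N)$ makes every chief factor of $G/N$ below $H/N$ cyclic, so every chief factor of $G$ below $H$ is cyclic. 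For (b) the only real content is that the $E$-supplemented property passes to quotients: if $L \le P$ has $E$-supplement $T$ in $G$, then $LN/N$ has $E$-supplement $TN/N$ in $G/N$, and the images of the maximal (respectively, minimal or order-$4$) subgroups of a Sylow $p$-subgroup of $H$ are again of the prescribed type in a Sylow $p$-subgroup of $H/N$; this is the standard ``quotient lemma'' for supplementation conditions and I would dispatch it first.

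Before attacking (a) I would first show $H$ is solvable, i.e.\ rule out non-abelian chief factors below $H$. A non-abelian chief factor is a direct product of copies of a non-abelian simple group, and such a group has a non-cyclic Sylow $2$-subgroup by Burnside's normal $p$-complement theorem; since a section of a cyclic group is cyclic, it follows that the Sylow $2$-subgroup of $H$ is non-cyclic, so the hypothesis is available at $p=2$. Feeding this into the $p=2$ hypercentre-embedding analysis applied to the normal $2$-subgroup $O_2(H) \trianglelefteq G$ (in the spirit of Lemmas \ref{3}--\ref{5}) forces the relevant $2$-sections into $Z_{\mathfrak{U}}(G)$ and contradicts the presence of a non-abelian factor. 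Hence every chief factor below $H$, in particular every minimal normal $N \le H$, is an elementary abelian $p$-group, and since $N$ is itself normal in $G$ I may treat it as a normal $p$-subgroup and try to prove $|N|=p$.

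It is here that the stated dichotomy is used. In the \emph{minimal subgroups} branch every subgroup of $N$ of order $p$ --- together with every cyclic subgroup of order $4$ when $N$ sits in a non-abelian $2$-group --- is $E$-supplemented in $G$, which is precisely the input format of the hypercentre-embedding results of Lemmas \ref{3}--\ref{5}. I would therefore establish the $E$-supplemented analogue of those lemmas (taking the distinguished subgroup to have order $|D|=p$) by the usual supplement bookkeeping: choose an $E$-supplement $T$ of a minimal subgroup $R \le N$, analyse $R \cap T$ and the induced $G$-action on $N$, and conclude $N \le Z_{\mathfrak{U}}(G)$, whence $|N|=p$.

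The genuinely hard branch, and the step I expect to be the main obstacle, is the \emph{maximal subgroups} case, because a maximal subgroup $M$ of the Sylow subgroup $P$ need not meet $N$ in an $E$-supplemented subgroup --- the property is not inherited by intersections. My plan is to argue through the Frattini subgroup: if $N \not\le \Phi(P)$, pick a maximal subgroup $L$ of $P$ with $P = NL$, so that $N \cap L$ is normal in $P$ of index $p$ in $N$; then exploit an $E$-supplement $T$ of $L$ in $G$ to control $|G:T|$ and the action of $G$ on $N/(N\cap L)$, forcing $|N|=p$. The residual case $N \le \Phi(P)$ I would push to the quotient $G/\Phi(P)$ via a further minimal-counterexample reduction. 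The delicate technical points throughout are the non-abelian $2$-group exception, where order-$4$ subgroups must be tracked separately exactly as in Lemmas \ref{4} and \ref{5}, and checking that after passing to $G/N$ the non-cyclicity needed to invoke one of the two branches is not lost; once $|N|=p$ is secured in both branches, the induction of the first paragraph completes the proof.
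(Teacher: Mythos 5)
First, a point of comparison: the paper offers no proof of Lemma \ref{17} at all --- it is imported verbatim, with attribution, as the Main Theorem of C. Li's paper \cite{LI}, and is used here as a black box. So your proposal cannot be measured against an internal argument; it has to stand on its own, and as written it has two genuine gaps.

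The first gap is the solvability reduction. You propose to rule out a non-abelian chief factor of $G$ below $H$ by noting that the Sylow $2$-subgroup of $H$ must then be non-cyclic (correct, via Burnside) and then ``feeding this into the hypercentre-embedding analysis applied to $O_2(H)$''. But the hypercentre-embedding lemmas (Lemmas \ref{3}--\ref{5} and any $E$-supplemented analogue) only yield a conclusion of the form $O_2(H) \leq Z_{\mathfrak{U}}(G)$, and that is no contradiction: when $H$ has a non-abelian chief factor, $O_2(H)$ is typically trivial (take $H \cong A_5$), and $1 \leq Z_{\mathfrak{U}}(G)$ says nothing about the factor. The hypothesis at $p=2$ has to be exploited in a completely different way, namely through a $p$-nilpotency criterion: one shows $H$ is $q$-nilpotent for a suitable prime $q$, extracts the normal $q$-complement (which is normal in $G$), and inducts on that. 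This $p$-nilpotency machinery is the actual engine of Li's proof and is entirely absent from your outline.

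The second gap is the inheritance step (b) in the minimal-subgroups branch, in the critical case where the minimal normal subgroup $N$ is a $p$-group, so that $N \leq P$ and $PN/N \cong P/N$. A subgroup of order $p$ of $P/N$ is generated by a coset $xN$ with $x^p \in N$, and $\langle x \rangle$ may have order $p^k$ with $k>1$; such a subgroup is \emph{not} the image of any order-$p$ subgroup of $P$, so the hypothesis for $(G/N, H/N)$ cannot be verified by pushing forward the order-$p$ subgroups of $P$. (The same case also breaks your appeal to the ``standard quotient lemma'' for $E$-supplementation, which requires $N \leq L$ or coprimality, neither of which holds for an order-$p$ subgroup $L \not\leq N$.) By contrast, the maximal-subgroup branch of (b) is fine, since maximal subgroups of $P/N$ do correspond to maximal subgroups of $P$ containing $N$. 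Finally, a smaller repair: in the residual case $N \leq \Phi(P)$ you pass to ``$G/\Phi(P)$'', which is not meaningful because $\Phi(P)$ need not be normal in $G$; the correct move is the standard fact that a normal $p$-subgroup contained in $\Phi(P)$ lies in $\Phi(G)$, followed by saturation of $\mathfrak{U}$. Because of the first two gaps, the proposal is not an acceptable proof of the lemma.
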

\begin{lemma}\label{18}
Let $G$ be a finite group and $H$ be an $ICSC$-subgroup  {{\cite[Definition 1.1]{GL3}}} of $G$. Then for any $H \leq K \leq G$, $H$ is an $ICSC$-subgroup of $K$.
\end{lemma}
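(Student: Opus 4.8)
The plan is to unpack \cite[Definition 1.1]{GL3} and show that the data witnessing the $ICSC$ property of $H$ in $G$ restricts to witnessing data inside any intermediate subgroup $K$ with $H \le K \le G$. The $ICSC$ hypothesis furnishes a supplement $B \le G$ with $G = HB$ together with an auxiliary subgroup $C$ satisfying $H \cap B \le C \le H$, where $C$ carries the prescribed s-permutability (the ``$S$'') and conjugacy (the ``$C$'') conditions in $G$. My goal is to manufacture the analogous triple inside $K$, and the central tool throughout is Dedekind's modular law together with the fact that s-permutability is inherited by overgroups.

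First I would restrict the supplement by setting $B^{*} := B \cap K$. Since $H \le K$, Dedekind's modular law gives $K = K \cap G = K \cap HB = H(K \cap B) = HB^{*}$, so $B^{*}$ supplements $H$ in $K$. Moreover $H \cap B^{*} = H \cap (K \cap B) = H \cap B \le C$, because $H \le K$ forces $H \cap K = H$; hence the same subgroup $C$ sandwiches the reduced intersection $H \cap B^{*}$, and clearly $C \le H \le K$.

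Next I would check that $C$ keeps its defining properties relative to $K$. The decisive point is that s-permutability passes to overgroups of $C$: writing a Sylow subgroup of $K$ as $P_K = K \cap P$ for a Sylow subgroup $P$ of $G$, and using $C \le K$ together with Dedekind twice, one gets $CP_K = C(K \cap P) = K \cap CP = K \cap PC = (K \cap P)C = P_K C$, so $C$ is s-permutable in $K$ whenever it is s-permutable in $G$. The conjugacy requirement transfers because $K$-conjugation is the restriction of $G$-conjugation to $K$, so any conjugacy relation guaranteed in $G$ and internal to $K$ persists. Assembling $B^{*}$ and $C$ then exhibits $H$ as an $ICSC$-subgroup of $K$.

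The step I expect to be the main obstacle is matching the exact auxiliary conditions of \cite[Definition 1.1]{GL3}: the supplement construction via Dedekind's law is routine, but one must confirm that it is precisely the \emph{overgroup}-stable flavour of s-permutability (rather than a ``relative to $G$'' property that need not restrict) which the definition demands -- this is exactly the contrast that forces one to define a \emph{strong} $ICC$-subgroup separately in Definition \ref{1.4}, whereas for $ICSC$-subgroups the built-in s-permutability makes the restriction automatic. If the definition instead encodes $C$ as $H \cap W$ for some s-permutable $W \le G$, I would replace $W$ by $W \cap K$ and rerun the Dedekind computation, the only subtlety being the standard fact that Sylow subgroups of $K$ can be taken of the form $K \cap P$ with $P$ Sylow in $G$.
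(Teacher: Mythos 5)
There is a genuine gap: your proof is built on an incorrect unpacking of \cite[Definition 1.1]{GL3}. An $ICSC$-subgroup is not defined via a supplement $B$ with $G=HB$ and an s-permutable intermediate subgroup; there is no supplement anywhere in the definition. Rather, $H$ is an $ICSC$-subgroup of $G$ if $H \cap [H,G] \le H_{scG}$ for some \emph{semi-$CAP$-subgroup} (semi cover-avoidance property) $H_{scG}$ of $G$ contained in $H$ --- the ``IC'' refers to the intersection condition with the commutator $[H,G]$, and ``SC'' to semi cover-avoiding, not to s-permutability plus conjugacy. Consequently your entire Dedekind-law computation ($K = HB^{*}$ with $B^{*}=B\cap K$, and the permutability of $C$ with Sylow subgroups of $K$) proves inheritance of a different, supplement-type property; it says nothing about the $ICSC$ property. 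Your hedge at the end (``if the definition instead encodes $C$ as $H\cap W$ for some s-permutable $W$'') also misses, since neither of your two candidate definitions involves the commutator or the cover-avoidance condition that actually appear.

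The paper's proof is short and uses exactly the two ingredients your argument lacks: monotonicity of the commutator, $[H,K] \le [H,G]$ for $H \le K \le G$, and the fact (quoted as \cite[Lemma 2.1]{BB}) that a semi-$CAP$-subgroup of $G$ contained in $K$ is again a semi-$CAP$-subgroup of $K$. With these, the same witness works in $K$: one has $H \cap [H,K] \le H \cap [H,G] \le H_{scG}$, and $H_{scG}$ is semi-$CAP$ in $K$ and contained in $H$, so $H$ is an $ICSC$-subgroup of $K$. Note that your high-level strategy --- restrict the witnessing data from $G$ to $K$ --- is the right instinct and is the same as the paper's, but because the witnessing data was misidentified, no part of your written argument can be salvaged toward the actual statement; in particular nothing in the definition lets you assume $H$ even has a proper supplement in $G$.
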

\begin{proof}
Since $H$ is an $ICSC$-subgroup of $G$, we conclude that $H \cap [H,G] \leq H_{scG}$, where $H_{scG}$ is a semi-$CAP$-subgroup of $G$ contained in $H$. By lemma {{\cite[Lemma 2.1]{BB}}}, $H_{scG}$ is a semi-$CAP$-subgroup of $K$ contained in $H$. Hence we can write $H_{scG}$ as $H_{scK}$. Then we obtain that $H \cap [H,K] \leq H \cap [H,G] \leq H_{scG} =  H_{scK}$, which implies that $H$ is an $ICSC$-subgroup of $K$. 
\end{proof}
\begin{lemma}[{{\cite[Theorem 3.1]{GL3}}}]\label{19}
Let $G$ be a group and $ P$ a normal $p$-subgroup of $G$, where $p$ is a prime divisor of $|G|$. Suppose that every cyclic subgroup of $P$ with order $p$ and 4 (if $P$ is a non-abelian 2-group) is an $ICSC$-subgroup of $G$, then $P \leq Z_{\mathfrak{U}} (G)$.
\end{lemma}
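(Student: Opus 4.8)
The plan is to induct on $|G|$ and to exploit the characterization of the $\mathfrak{U}$-hypercenter: $P \le Z_{\mathfrak{U}}(G)$ holds if and only if every $G$-chief factor lying below $P$ is cyclic, i.e. of order $p$. Throughout I will use the standard fact that $\mathfrak{U}$ is a saturated formation, so that $Z_{\mathfrak{U}}(G/N) = Z_{\mathfrak{U}}(G)/N$ whenever $N \le Z_{\mathfrak{U}}(G)$, together with the unpacked definition of the $ICSC$ property extracted from the proof of Lemma \ref{18}: a subgroup $H$ is an $ICSC$-subgroup of $G$ exactly when $H \cap [H,G] \le H_{scG}$ for some semi-$CAP$-subgroup $H_{scG}$ of $G$ with $H_{scG} \le H$. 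The base case $|P| = p$ is immediate, since then $P$ is a cyclic chief factor and $P \le Z_{\mathfrak{U}}(G)$.

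The heart of the argument is to show that every minimal normal subgroup $N$ of $G$ with $N \le P$ has order $p$. Such an $N$ is elementary abelian, say of order $p^k$; suppose $k \ge 2$. For a subgroup $L \le N$ of order $p$ the commutator $[L,G]$ is normal in $\langle L, G\rangle = G$ and is contained in $N$ (as $N \trianglelefteq G$ and $L \le N$), so minimality of $N$ forces $[L,G] \in \{1, N\}$. The hypothesis that $L$ is an $ICSC$-subgroup gives $L \cap [L,G] \le L_{scG} \le L$ with $L_{scG}$ semi-$CAP$; since $|L| = p$ we have $L_{scG} \in \{1, L\}$. If $L_{scG} = 1$ then $L \cap [L,G] = 1$, which rules out $[L,G] = N$ (else $L = L \cap N = L \cap [L,G] = 1$), so $[L,G] = 1$ and $L \le Z(G)$. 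If this happened for every order-$p$ subgroup of $N$ then $N \le Z(G)$, and a central minimal normal subgroup has order $p$, contradicting $k \ge 2$. Hence some $L \le N$ of order $p$ has $L_{scG} = L$, i.e. $L$ is itself a semi-$CAP$-subgroup. Choosing a chief series $1 = G_0 \le \cdots \le G_n = G$ that $L$ covers or avoids factor-by-factor, the normal subgroups $N \cap G_j$ run through $\{1, N\}$, so there is an index $r$ with $N \le G_r$, $N \cap G_{r-1} = 1$ and $G_r/G_{r-1} \cong N$ of order $p^k$. Since $L \cap G_{r-1} \le N \cap G_{r-1} = 1$ while $L \cap G_r = L \ne 1$, the subgroup $L$ cannot avoid $G_r/G_{r-1}$ and must therefore cover it, forcing $|G_r : G_{r-1}| \le |L| = p$ and hence $k = 1$, the desired contradiction. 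Thus $|N| = p$.

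With $|N| = p$ in hand, $N$ is a cyclic chief factor, so $N \le Z_{\mathfrak{U}}(G)$. I would next verify that the hypothesis descends to $G/N$: that $P/N$ is a normal $p$-subgroup of $G/N$ each of whose cyclic subgroups of order $p$ (and of order $4$, if $P/N$ is a non-abelian $2$-group) is an $ICSC$-subgroup of $G/N$. This requires a quotient-version of Lemma \ref{18}, namely that if $H$ is an $ICSC$-subgroup of $G$ with $N \le H$ then $H/N$ is an $ICSC$-subgroup of $G/N$; the key points are that semi-$CAP$-subgroups project to semi-$CAP$-subgroups in quotients and that $[H/N, G/N] = [H,G]N/N$. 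Granting this, induction yields $P/N \le Z_{\mathfrak{U}}(G/N) = Z_{\mathfrak{U}}(G)/N$, and therefore $P \le Z_{\mathfrak{U}}(G)$, completing the proof.

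The main obstacle is the passage to the quotient. First, the behaviour of the semi cover-avoiding property under quotient maps is genuinely delicate — unlike full $CAP$-subgroups, the choice of chief series is existential, so one must track a single good series through the correspondence theorem. Second, and more seriously, a cyclic subgroup $\bar L$ of order $p$ in $P/N$ need not lift to an order-$p$ subgroup of $P$: if $\bar L = \langle xN\rangle$ with $x^p \in N \setminus \{1\}$, then the smallest cyclic lift $\langle x\rangle$ has order $p^2$, which the hypothesis controls only when $p = 2$ (via the order-$4$ clause, precisely the reason that clause is present). For odd $p$ one cannot simply quote the order-$p$ hypothesis for $\bar L$, and the reduction must instead be arranged so that such lifts do not obstruct the induction — for instance by choosing $N$ inside $Z(G) \cap \Omega_1(P)$, or by exploiting regularity of $p$-groups to split off a suitable order-$p$ section modulo $N$. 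Handling this lifting issue uniformly, alongside the order-$4$ bookkeeping for non-abelian $2$-groups, is the technical crux of the theorem.
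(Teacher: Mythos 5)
First, a point of comparison: the paper does not prove this statement at all. Lemma~\ref{19} is imported verbatim from the literature (it is Theorem~3.1 of \cite{GL3}), so there is no internal proof to measure your argument against; what follows is an assessment of your proposal on its own merits.

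On those merits there is a genuine gap, and it is the one you candidly flag yourself. The first half of your argument is correct and complete: for a minimal normal subgroup $N\le P$ of $G$, the dichotomy $[L,G]\in\{1,N\}$, the case split $L_{scG}\in\{1,L\}$, and the cover/avoid computation forcing $|G_r:G_{r-1}|\le |L|=p$ together show $|N|=p$, and all the intermediate facts you use ($[L,G]\unlhd G$, $[L,G]\le N$, centrality of $N$ when every order-$p$ subgroup is central) are sound. But the lemma does not follow from that statement alone; the inductive descent to $G/N$ rests on two claims you never establish. (a) Quotient-closure of the ICSC property: you write ``Granting this\dots'' and move on, but this is not routine, because the chief series witnessing the semi-$CAP$ property of $L_{scG}$ is existentially quantified and need not pass through $N$, so one cannot simply push it through the correspondence theorem; the only semi-$CAP$ lemma available in the paper (\cite[Lemma 2.1]{BB}, used in Lemma~\ref{18}) concerns intermediate subgroups, not quotients. (b) The lifting problem: a cyclic subgroup of order $p$ in $P/N$ may have all its generators represented by elements of order $p^2$ in $P$, about which the hypothesis says nothing when $p$ is odd. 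You acknowledge (b) explicitly, but the two fixes you float do not work in general: a minimal normal subgroup of $G$ inside $Z(G)\cap\Omega_1(P)$ need not exist (take $G=S_3$, $P=O_3(G)$, where $Z(G)\cap P=1$), and regularity is not available for arbitrary $p$-groups, in particular not for $p=2$ where the order-$4$ clause lives. Since the entire induction hinges on (a) and (b), the proposal reduces the lemma to its true technical core and stops there; it is a correct partial argument, not a proof.
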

\begin{lemma}[{{\cite[Theorem 2.3]{BB}}}]\label{20}
Let $G$ be a finite group, $p$ be a prime divisor of $|G|$, and $P$ be a normal $p$-subgroup of $G$. Assume that every cyclic subgroup of $P$ with order $p$ and 4 (if $p=2$) is a partial $CAP$-subgroup {{\cite{BB}}} of $G$, then $P \leq Z_{\mathfrak{U}} (G)$.
\end{lemma}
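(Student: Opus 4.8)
The plan is to prove the sharper statement that \emph{every $G$-chief factor lying below $P$ is cyclic of order $p$}; since a chief factor of order $p$ has cyclic automorphism group and is therefore $\mathfrak{U}$-central, this is equivalent to $P \le Z_{\mathfrak{U}}(G)$. I would argue by induction on $|G|$, and the heart of the matter splits into a clean base step and a more delicate inductive transfer.

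For the base step I would show that any minimal normal subgroup $N$ of $G$ with $N \le P$ satisfies $|N| = p$. Here $N$ is elementary abelian, so I pick $1 \ne x \in N$; then $|x| = p$ and by hypothesis $\langle x\rangle$ is a partial $CAP$-subgroup, so there is a chief series $1 = G_0 < G_1 < \cdots < G_n = G$ each factor of which $\langle x\rangle$ covers or avoids. Let $k$ be minimal with $x \in G_k$. Since $x \notin G_{k-1}$ and $|\langle x\rangle| = p$, we get $\langle x\rangle \cap G_{k-1} = 1 \ne \langle x\rangle = \langle x\rangle \cap G_k$, so $\langle x\rangle$ does not avoid $G_k/G_{k-1}$ and hence covers it; thus $G_k \le \langle x\rangle G_{k-1}$ and $|G_k : G_{k-1}| \le |\langle x\rangle : \langle x\rangle \cap G_{k-1}| = p$, forcing $|G_k : G_{k-1}| = p$. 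Now minimality of $N$ enters: both $N \cap G_{k-1}$ and $N \cap G_k$ are normal in $G$ and contained in $N$, while $x$ lies in the second but not the first, so $N \cap G_{k-1} = 1$ and $N \le G_k$. Therefore $N \cong N G_{k-1}/G_{k-1} \le G_k/G_{k-1}$ has order at most $p$, giving $|N| = p$. The pleasant point is that a single respected chief series already suffices, because minimality pins $N$ between two consecutive terms.

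For the inductive step I would pass to $\overline{G} = G/N$ and $\overline{P} = P/N$. As $|N| = p$ the factor $N/1$ is cyclic, so $N \le Z_{\mathfrak{U}}(G)$, and by the standard quotient behaviour $Z_{\mathfrak{U}}(G)/N = Z_{\mathfrak{U}}(\overline{G})$ it suffices to prove $\overline{P} \le Z_{\mathfrak{U}}(\overline{G})$. This requires that the hypothesis descend to $\overline{G}$: every cyclic subgroup of $\overline{P}$ of order $p$ (and of order $4$ when $p=2$) is a partial $CAP$-subgroup of $\overline{G}$. I would invoke the quotient behaviour of partial $CAP$-subgroups from \cite{BB} (the same mechanism used in Lemma~\ref{18}), namely that if $A$ is a partial $CAP$-subgroup of $G$ and $N$ is normal in $G$, then $AN/N$ is a partial $CAP$-subgroup of $G/N$. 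Given $\langle \bar y\rangle \le \overline{P}$ of order $p$, its full preimage $Q$ in $P$ has order $p^2$ and contains $N$; if $Q$ possesses an element $z$ of order $p$ outside $N$, then $\langle z\rangle$ is a partial $CAP$-subgroup with $\langle z\rangle N/N = \langle \bar y\rangle$, and the quotient lemma finishes this case.

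The main obstacle is the residual case in which $Q$ is cyclic of order $p^2$, so that its only order-$p$ subgroup is $N$ itself and no order-$p$ cyclic subgroup of $P$ maps onto $\langle\bar y\rangle$. This is precisely what the order-$4$ provision is engineered to settle: when $p=2$, $Q$ is cyclic of order $4$ and hence a partial $CAP$-subgroup by hypothesis, so $\langle\bar y\rangle = Q/N$ is a partial $CAP$-subgroup of $\overline{G}$ by the quotient lemma. For odd $p$ this configuration, namely $y^p \in N$ with $y$ of order $p^2$, must be excluded by a separate argument, for instance by reducing to the case $\Phi(P)=1$ through consideration of $P/(P\cap\Phi(G))$ or by choosing the minimal normal subgroup $N$ so as to avoid such a cyclic lift; pinning this reduction down cleanly is the delicate part and the place where the odd-prime case genuinely diverges from $p=2$. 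Once the hypothesis is known to descend to $\overline{G}$, induction yields $\overline{P} \le Z_{\mathfrak{U}}(\overline{G})$, whence $P \le Z_{\mathfrak{U}}(G)$ and the proof is complete.
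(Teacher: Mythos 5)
First, a framing remark: the paper does not prove this statement at all --- it is quoted from the literature as \cite[Theorem 2.3]{BB}, so your proposal has to stand entirely on its own. Your base step is correct and clean: a single chief series respected by $\langle x\rangle$, combined with the minimality of $N$, does force $|N|=p$. The fatal problem is in the inductive step. The ``quotient behaviour of partial $CAP$-subgroups'' you invoke --- if $A$ is a partial $CAP$-subgroup of $G$ and $N \unlhd G$, then $AN/N$ is a partial $CAP$-subgroup of $G/N$ --- is not in \cite{BB} and is false in general. The mechanism actually used in Lemma \ref{18}, namely \cite[Lemma 2.1]{BB}, concerns passage to \emph{intermediate subgroups} $H \le K \le G$, not to quotients. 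For a counterexample to the quotient statement, take $G = \langle z\rangle \times A_4 \cong C_2 \times A_4$, $N = \langle (z,1)\rangle$ and $A = \langle (z,(12)(34))\rangle$. Then $A$ avoids $(1\times V_4)/1$, covers $(C_2\times V_4)/(1\times V_4)$ and avoids $G/(C_2\times V_4)$, so $A$ is a partial $CAP$-subgroup of $G$; but $G/N \cong A_4$ has a unique chief series $1 < V_4 < A_4$, and $AN/N$, which corresponds to $\langle (12)(34)\rangle$, neither covers nor avoids $V_4$. Note that here $A$ and $N$ are both minimal subgroups of the normal $2$-subgroup $C_2\times V_4$, which is exactly the configuration of your Case A ($z$ of order $p$ lying outside $N$), so the extra structure present in your situation does not rescue the transfer. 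The transfer genuinely holds only under additional hypotheses such as $N \le A$, where Dedekind's law gives $A \cap G_iN = (A\cap G_i)N$; this is why your cyclic-of-order-$4$ case is sound, but it is precisely unavailable in Case A. (In the counterexample the full hypothesis of the lemma fails for $G$, since the subgroups $\langle (1,v)\rangle$ with $v \in V_4$ are not partial $CAP$ in $G$; this is consistent with the lemma being true, and the moral is that a correct proof must exploit the hypothesis for \emph{all} minimal subgroups of $P$ simultaneously, rather than pushing one subgroup at a time through the quotient.)

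The second gap is the one you flag yourself: for odd $p$, when the preimage of $\langle \bar y\rangle$ is cyclic of order $p^2$, no subgroup of order $p$ of $P$ maps onto $\langle \bar y\rangle$, the hypothesis gives you nothing, and your suggested remedies (reducing to $\Phi(P)=1$, or choosing $N$ to dodge cyclic lifts) are not carried out. This case is not a removable technicality: if, say, $P$ is cyclic of order $p^2$, then the unique minimal normal subgroup of $G$ inside $P$ \emph{is} the Frattini subgroup of a cyclic lift, so no choice of $N$ avoids it. As it stands, both halves of the descent of the hypothesis to $G/N$ are unjustified, so the induction does not close; you would need to follow the actual argument of \cite[Theorem 2.3]{BB}, which does not proceed by transporting individual partial $CAP$-subgroups to quotients.
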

\begin{lemma}[{{\cite[Theorem 3.2]{AS2}}}]\label{21}
Let $p$ be an odd prime and $P$ be a normal subgroup of $G$. Suppose that $P$ has a subgroup $D$ such that $1 \leq D <P$, and all subgroups of $P$ of order $|D|$ or $p|D|$ are  $c$-supplemented {{\cite{BB2}}} in $G$, then $P \leq Z_{\mathfrak{U}} (G)$.
\end{lemma}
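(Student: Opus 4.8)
The plan is to argue by a minimal counterexample, relying on the standard reformulation that $P \le Z_{\mathfrak{U}}(G)$ is equivalent to every $G$-chief factor lying below $P$ being cyclic; so the goal is to peel cyclic chief factors off the bottom of $P$ by induction on $|G|\,|P|$. First I would record the hereditary behaviour of $c$-supplementation that makes the induction run: if $H$ is $c$-supplemented in $G$ then so is $H/N$ in $G/N$ whenever $N \unlhd G$ with $N \le H$, together with the corresponding statements for the cores $H_G$; these are the routine properties drawn from \cite{BB2}. I would also note at the outset that, since $p$ is odd, the exceptional clause about cyclic subgroups of order $4$ that appears in the even case never intervenes, so only the two ``levels'' of order $|D|$ and $p|D|$ need to be tracked throughout.

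For the inductive step I would choose a minimal normal subgroup $N$ of $G$ with $N \le P$; as $P$ is a normal $p$-subgroup, $N$ is elementary abelian and $[N,P]=1$, so $N \le Z(P)$. The clean case is $N$ cyclic of order $p$: then $N$ is itself a cyclic chief factor, whence $N \le Z_{\mathfrak{U}}(G)$, and in $G/N$ the correspondence theorem sends subgroups of $P/N$ of order $|D|/p$ (resp.\ $|D|$) to subgroups of $P$ of order $|D|$ (resp.\ $p|D|$) containing $N$, all $c$-supplemented in $G$ by hypothesis. Hence $(G/N, P/N)$ satisfies the hypothesis with $\overline{D}$ of order $|D|/p$, and by minimality $P/N \le Z_{\mathfrak{U}}(G/N)$. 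Invoking $Z_{\mathfrak{U}}(G/N) = Z_{\mathfrak{U}}(G)/N$ (valid because $N \le Z_{\mathfrak{U}}(G)$) forces $P \le Z_{\mathfrak{U}}(G)$, contradicting the choice of counterexample. This reduction needs a surviving lower layer, i.e.\ $|D| \ge p$; the degenerate level $|D|=1$ (only order-$p$ subgroups controlled) must instead be handled by probing $N$ directly.

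Consequently the whole difficulty is concentrated in showing that a minimal normal subgroup $N \le P$ is cyclic, i.e.\ in excluding $|N| = p^m$ with $m \ge 2$. Here I would exploit the $c$-supplementation of subgroups lying \emph{inside} $N$: when $1 < |D| \le |N|$ (or using order-$p$ subgroups when $D$ is trivial), any such subgroup $L \le N$ has $L_G \le N$, so minimality of $N$ gives $L_G = 1$; a $c$-supplement $K$ then yields $G = LK$ with $L \cap K = 1$, whence $|G:K| = |L|$ and, by Dedekind's law applied to $N = L(N \cap K)$, tight control of $|N \cap K|$. Running this simultaneously for the order-$|D|$ and order-$p|D|$ layers and feeding the resulting supplements into the coprime action of $G$ on $N \cong C_p^{\,m}$ — which embeds $G/C_G(N)$ into $GL_m(p)$, where for odd $p$ Lemma~\ref{14} sharply restricts the admissible cyclic subgroups when $m \in \{2,3\}$ — should force $|N| = p$. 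The main obstacle I anticipate is precisely the order bookkeeping between $|D|$ and $|N|$: when $|N| < |D|$ the controlled subgroups no longer live inside $N$, so one cannot probe $N$ directly and must instead quotient by $N$ first and recover the cyclicity of $N$ a posteriori. Reconciling these two regimes, together with the boundary case $p|D| = |P|$ in which the upper layer degenerates, is the technical heart of the proof.
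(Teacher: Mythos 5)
First, a point of reference: the paper contains no proof of this statement. Lemma~\ref{21} is imported verbatim as \cite[Theorem 3.2]{AS2} and is only ever used as a black box, to verify condition (5) of Definition~\ref{eternal} in the proof of Corollary~\ref{c-supplemented}. So your attempt has to be judged as a reconstruction of Asaad's theorem itself, and as such it has a genuine gap --- located exactly where you yourself place ``the technical heart.''

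Your inductive frame (minimal counterexample, quotient behaviour of $c$-supplementation, minimal normal $N \le Z(P)$ elementary abelian) is standard and sound, and your ``probe'' is correct as far as it goes: for $L \le N$ with $1 < |L| < |N|$ and $L$ $c$-supplemented, one gets $L_G = 1$, a supplement $K$ with $G = LK$ and $L \cap K = 1$, and since $N$ is abelian and normal, $N \cap K \unlhd NK = G$, so $N \cap K = 1$ and $N = L(N \cap K) = L$, a contradiction. But this only excludes $|D| < |N|$: it yields $|N| \le |D|$ (and $|N| = p$ when $|D| = 1$), and it says nothing in the range $p < |N| \le |D|$, where the hypothesis supplies no $c$-supplemented subgroup properly inside $N$. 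Your proposed rescue --- ``feeding the supplements into the coprime action'' and invoking Lemma~\ref{14} --- is not an argument: the action of $G$ on $N$ is not coprime (only $O_p\bigl(G/C_G(N)\bigr) = 1$ is guaranteed, and $G/C_G(N)$ may have order divisible by $p$), Lemma~\ref{14} concerns only $GL_t(p)$ with $t \in \{2,3\}$, and you never exhibit the cyclic subgroup to which it would be applied. Independently of this, your recursion cannot close even where minimal normal subgroups are known to be cyclic: each quotient step divides $|D|$ by $|N| \ge p$, so the induction bottoms out at an instance with $|D| = 1$, and there the hypothesis does not pass to $G/N$ at all, because minimal subgroups of $P/N$ pull back to subgroups of $P$ of order $p^2$, about which nothing is assumed. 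That $|D| = 1$ instance is precisely the classical theorem that $c$-supplementation of all minimal subgroups (with $p$ odd) forces $P \le Z_{\mathfrak{U}}(G)$ --- it is where the real work in \cite{AS2} and its predecessors \cite{BB2} lies --- and ``probing $N$ directly'' only gives $|N| = p$ there, not the conclusion. So the two regimes you promise to reconcile are never reconciled, and the main content of the theorem remains unproved.
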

\begin{lemma}\label{22}
Let $G$ be a finite group and $H$ be a subgroup of $G$. Suppose that $H$ is $c$-supplemented in $G$, then $H$ is $c$-supplemented in $K$ for any $H \leq K \leq G$.
\end{lemma}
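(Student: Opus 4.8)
The plan is to use the standard ``intersect the supplement with the intermediate subgroup'' technique, in the same spirit as the proof of Lemma \ref{18}. Recall from \cite{BB2} that $H$ being $c$-supplemented in $G$ means there exists a subgroup $T \leq G$ with $G = HT$ and $H \cap T \leq H_G$, where $H_G$ denotes the normal core of $H$ in $G$, i.e.\ the largest normal subgroup of $G$ contained in $H$. Given an intermediate subgroup $H \leq K \leq G$, my candidate supplement inside $K$ will be $T' := T \cap K$, and the task is to verify the two defining properties of $c$-supplementation for the pair $(H,K)$.

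First I would establish the factorization $K = HT'$. Since $H \leq K$, Dedekind's modular law applies and yields $K = K \cap G = K \cap HT = H(K \cap T) = HT'$, so that $T'$ is genuinely a supplement of $H$ in $K$. Next I would check the core condition $H \cap T' \leq H_K$, where $H_K$ is the core of $H$ in $K$. Because $H_G \leq H \leq K$, we have $H_G \cap K = H_G$; moreover $H_G \unlhd G$ forces $H_G \unlhd K$, and $H_G \leq H$, so $H_G$ is a normal subgroup of $K$ contained in $H$, whence $H_G \leq H_K$ by maximality of the core. Combining this with the hypothesis $H \cap T \leq H_G$ gives
$$H \cap T' = (H \cap T) \cap K \leq H_G \cap K = H_G \leq H_K,$$
so $T'$ witnesses that $H$ is $c$-supplemented in $K$, completing the argument.

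The reasoning is entirely routine, with the only point requiring a little care being the containment $H_G \leq H_K$, namely that the normal core does not shrink when we pass from $G$ down to $K$. This holds precisely because a normal subgroup of $G$ lying inside $H$ automatically remains a normal subgroup of $K$ lying inside $H$, and is the sole place where the subgroup $K$ being intermediate (rather than arbitrary) is used. I therefore do not anticipate any genuine obstacle; the lemma is a direct persistence statement obtained by transporting the supplement $T$ to $T \cap K$.
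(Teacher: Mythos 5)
Your proof is correct and follows essentially the same route as the paper's: intersect the supplement $T$ with $K$, apply Dedekind's modular law to get $K = H(T \cap K)$, and use $H \cap T \cap K \leq H \cap T \leq H_G \leq H_K$. The only difference is that you explicitly justify the containment $H_G \leq H_K$, which the paper uses without comment.
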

\begin{proof}
Since $H$ is $c$-supplemented in $G$, there exists a subgroup $T$ of $G$ such that $G = TH$ and $T \cap H \leq H_G$. Hence we conclude from Dedekind Modular Law that $K = K \cap TH = H(T \cap K)$. Then it follows that $H \cap T \cap K \leq H \cap T \leq H_G \leq H_K$, which implies that $H$ is $c$-supplemented in $K$ and we are done.
\end{proof}
\begin{lemma}[{{\cite[Lemma 2.2]{CZ}}}]\label{23}
Let $G$ be a finite group and $K$ be a subgroup of $G$. Assume that $K$ is a weakly $\mathcal{H}$-subgroup {{\cite{AS4}}} of $G$, then for any $K \leq T \leq G$, $K$ is a weakly $\mathcal{H}$-subgroup of $T$.
\end{lemma}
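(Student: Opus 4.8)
The plan is to manufacture, for each intermediate subgroup $K \leq T \leq G$, a normal supplement inside $T$ witnessing that $K$ is a weakly $\mathcal{H}$-subgroup of $T$, by intersecting the given ambient supplement with $T$. Recall that $K$ being a weakly $\mathcal{H}$-subgroup of $G$ means there is a normal subgroup $N \unlhd G$ with $G = KN$ such that $K \cap N$ is an $\mathcal{H}$-subgroup of $G$, i.e. $(K \cap N)^g \cap N_G(K \cap N) \leq K \cap N$ for every $g \in G$. First I would set $M := N \cap T$. Since $N \unlhd G$ and $T \leq G$, the subgroup $M$ is normal in $T$, which supplies the required normal supplement candidate.

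Next I would verify the factorization and identify the relevant intersection. Because $K \leq T$, Dedekind's modular law gives $T = T \cap G = T \cap KN = K(T \cap N) = KM$, exactly in the spirit of the proof of Lemma \ref{22}. Using $K \leq T$ once more, we also have $K \cap M = K \cap N \cap T = K \cap N$. Thus the whole argument reduces to showing that $K \cap N$, which is an $\mathcal{H}$-subgroup of $G$ by hypothesis, remains an $\mathcal{H}$-subgroup of the overgroup $T$.

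The core step, and the only place where anything must genuinely be checked, is this transfer of the $\mathcal{H}$-subgroup property to $T$. Writing $H := K \cap N$, I would argue directly from the defining condition: for every $t \in T$ we have $N_T(H) \cap H^t \leq N_G(H) \cap H^t \leq H$, where the first inclusion holds because $N_T(H) \leq N_G(H)$ and the second is precisely the $\mathcal{H}$-subgroup condition for $H$ in $G$ applied to the element $t \in T \leq G$. Hence $K \cap M = K \cap N$ is an $\mathcal{H}$-subgroup of $T$, and combined with $M \unlhd T$ and $T = KM$ this exhibits $K$ as a weakly $\mathcal{H}$-subgroup of $T$. I expect no serious obstacle here: the $\mathcal{H}$-subgroup property is a universal statement quantified over the ambient group, so passing to $T$ merely restricts that quantifier, and the supplement behaves well because $N \cap T$ is automatically normal in $T$ while Dedekind's law preserves the factorization.
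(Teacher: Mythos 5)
Your proof is correct. The paper does not actually prove this lemma --- it imports it verbatim as \cite[Lemma 2.2]{CZ} --- so there is no internal proof to compare against; but your argument is precisely the standard one (and the same pattern the paper itself uses for the analogous Lemmas \ref{22} and \ref{25}): intersect the normal supplement $N$ with $T$ to get $M = N \cap T \unlhd T$, apply Dedekind's modular law to get $T = KM$ with $K \cap M = K \cap N$, and observe that the $\mathcal{H}$-subgroup condition $N_G(H) \cap H^g \leq H$ restricts to any intermediate overgroup of $H$ since $N_T(H) \leq N_G(H)$ and $T \leq G$. All three steps are verified correctly, so the proposal is a complete and faithful reconstruction of the cited result.
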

\begin{lemma}[{{\cite[Theorem 3.1]{CZ}}} and {{\cite[Lemma 2.6]{LQ}}}]\label{24}
Let $P$ be a normal $p$-subgroup of a group $G$, where $p$ is a prime divisor of $|G|$. Suppose that one of the following holds:
\begin{itemize}
\item[(1)] Every cyclic subgroup
of $P$ of order $p$ or $4$ (if $P$ is a non-cyclic $2$-group) either is a weakly $\mathcal{H}$-subgroup of
$G$ or has a supersoluble supplement in $G$.
\item[(2)] $p$ is odd, and there exists a subgroup $D$ of $P$ such that every subgroup of $P$ of order $|D|$ is a weakly $\mathcal{H}$-subgroup of $G$.
\end{itemize}
Then $P \leq Z_{\mathfrak{U}} (G)$.
\end{lemma}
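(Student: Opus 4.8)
Since the statement merely collects {{\cite[Theorem 3.1]{CZ}}} and {{\cite[Lemma 2.6]{LQ}}}, I would reprove it by the standard minimal-counterexample method for $\mathfrak{U}$-hypercenter theorems, treating the two hypotheses in parallel. The plan is to fix a counterexample $(G,P)$ with $|G|$ minimal, to choose a minimal normal subgroup $N$ of $G$ contained in $P$ (which exists and is elementary abelian since $P$ is a non-trivial normal $p$-group), and to run a two-step scheme: first show $|N|=p$, so that $N \unlhd G$ is a cyclic chief factor and hence $N \leq Z_{\mathfrak{U}}(G)$; then verify that $(G/N, P/N)$ again satisfies the hypothesis, so that $P/N \leq Z_{\mathfrak{U}}(G/N)$ by minimality; finally apply the closure property ``$N \leq Z_{\mathfrak{U}}(G)$ and $P/N \leq Z_{\mathfrak{U}}(G/N)$ imply $P \leq Z_{\mathfrak{U}}(G)$'' to contradict the choice of $(G,P)$.

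The heart of the argument is showing $|N|=p$, and here the weakly-$\mathcal{H}$/supplement dichotomy of case (1) is exploited cleanly. Pick $L \leq N$ with $|L|=p$. If $L$ has a supersoluble supplement $T$, then from $L \leq N \unlhd G$ one gets $G = NT$, and since $N$ is abelian both $N$ and $T$ normalize $N \cap T$, so $N \cap T \unlhd G$; minimality of $N$ forces $N \cap T = 1$ or $N \cap T = N$. If $N \cap T = N$ then $L \leq T$, whence $G = T$ is supersoluble and its minimal normal subgroup $N$ is cyclic; if $N \cap T = 1$ then $|N| = |G:T| = |L| = p$. If instead $L$ is a weakly $\mathcal{H}$-subgroup with witness $G = LK$, $K \unlhd G$, $L \cap K$ an $\mathcal{H}$-subgroup, then either $L \cap K = 1$, giving $|N|=p$ by the same $N \cap K \unlhd G$ argument, or $L \cap K = L$, so $L$ itself is an $\mathcal{H}$-subgroup; in the latter case, as $N$ is abelian we have $N \leq N_G(L)$, hence $L^g = L^g \cap N_G(L) \leq L$ for every $g \in G$, so $L \unlhd G$ and $L = N$ by minimality. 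Either way $|N| = p$.

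For case (2) the same skeleton applies, but the hypothesis concerns all subgroups of the fixed order $|D|$ rather than minimal subgroups; here I would follow the pattern of Lemma \ref{21}, using $p$ odd to rule out the exceptional order-$4$ behaviour, to first reduce to $\Phi(N)=1$ and then to force every subgroup of $N$ of the induced order to be normalized, again yielding $|N|=p$. Throughout, the subgroup- and quotient-inheritance of weakly $\mathcal{H}$-subgroups (Lemma \ref{23} and its image analogue) keeps the hypothesis available after passing to $G/N$.

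The step I expect to be the real obstacle is the quotient reduction, namely verifying that $(G/N,P/N)$ still satisfies the minimal-subgroup hypothesis of case (1). The difficulty is arithmetic: an order-$p$ subgroup $\langle \bar{x}\rangle$ of $P/N$ need not lift to an order-$p$ subgroup of $P$, for if $x$ has order $p^2$ with $x^p \in N$ then the cyclic group $\langle x\rangle$ of order $p^2$ covers $\langle\bar{x}\rangle$ but contains no subgroup of order $p$ other than $N$. Handling this covering phenomenon --- via the order-$4$ clause when $p=2$ and via a separate normality/transfer argument when $p$ is odd, and symmetrically the descent of the order-$|D|$ condition in case (2) --- is where the bulk of the technical work lies; once it is in place, the assembly through the hypercenter closure property is routine.
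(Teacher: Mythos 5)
The paper offers no proof of this lemma at all --- it is imported verbatim from \cite[Theorem 3.1]{CZ} and \cite[Lemma 2.6]{LQ} --- so the only question is whether your sketch stands on its own, and it does not. The parts you do carry out are sound: the minimal-counterexample skeleton, the final assembly via chief series and generalized Jordan--H\"older, and in particular the argument under hypothesis (1) that a minimal normal subgroup $N$ of $G$ inside $P$ has order $p$ (the three-way analysis --- supersoluble supplement $T$ with $N\cap T\unlhd G$, the sub-case $L\cap K=1$, and the sub-case where $L$ is an $\mathcal{H}$-subgroup so that $L^g\leq N\leq N_G(L)$ forces $L\unlhd G$ --- all checks out). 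But the induction cannot close without the step you explicitly defer: verifying that $(G/N,P/N)$ inherits the hypothesis. That step is not a residual technicality; it is the actual mathematical content of the two cited results. For odd $p$ in case (1), an order-$p$ subgroup $\langle xN\rangle$ of $P/N$ with $x$ of order $p^2$ and $x^p\in N$ is covered by no subgroup of $P$ to which the hypothesis applies, and there is no order-$4$ clause to fall back on; the ``separate normality/transfer argument'' is named but never given. In case (2) the descent fails precisely when $|D|=|N|$, since order-$p$ subgroups of $P/N$ pull back to subgroups of order $p|D|$, about which nothing is assumed; and your appeal to ``the pattern of Lemma \ref{21}'' is an appeal to yet another externally cited theorem, whose proof rests on inheritance lemmas for $c$-supplementation that have no analogue proved here for the weakly-$\mathcal{H}$ property.

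So the verdict is: genuine gap. A proof whose acknowledged ``real obstacle'' and ``bulk of the technical work'' is left unexecuted is a plan, not a proof. The published arguments do not run this naive quotient induction; they circumvent the covering phenomenon you identified with additional structural input (analysis of elements of order $p^2$ lying over $N$, case analysis on $|D|$ against the minimal normal subgroup, and $p$-supersolubility criteria), and supplying that machinery is exactly what your sketch still owes before it can replace the citation.
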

\begin{lemma}\label{25}
Let $G$ be a finite group and $H$ be a subgroup of $G$. Suppose that $H$ is a weakly $S\Phi$-supplemented subgroup {{\cite[Definition 1.1]{WU}}} of $G$, then $H$ is a weakly $S\Phi$-supplemented subgroup of $T$ for any $H \leq T \leq G$.
\end{lemma}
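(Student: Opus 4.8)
The plan is to imitate the inheritance arguments already used for the $c$-supplemented property in Lemma \ref{22} and for the weakly $\mathcal{H}$-subgroup property in Lemma \ref{23}: the whole proof rests on Dedekind's modular law together with the observation that the data witnessing ``weakly $S\Phi$-supplemented'' is intrinsic enough to survive restriction to an overgroup of $H$. So I would fix $H \le T \le G$ and unwind the definition from \cite{WU}: there is a (subnormal) supplement $B$ of $G$ with $G = HB$ and $H \cap B$ confined to the relevant $S\Phi$-condition, the key point being that this condition is governed by $\Phi(H)$, which is determined by $H$ itself and does not depend on the ambient group.

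First I would produce a supplement inside $T$. Since $H \le T$, Dedekind's modular law gives $T = T \cap HB = H(B \cap T)$, so $B \cap T$ supplements $H$ in $T$. Next I would check that $B \cap T$ carries the required embedding property in $T$: as $B$ is subnormal in $G$, its intersection $B \cap T$ with the subgroup $T$ is subnormal in $T$, so the ``$S$'' half of the hypothesis transfers automatically. Finally, for the intersection condition, $H \cap (B \cap T) = (H \cap B) \cap T \le H \cap B$, and because the containment bound is phrased through $\Phi(H)$, which is intrinsic to $H$, the estimate $H \cap B \le \Phi(H)$ valid in $G$ is exactly the one needed in $T$. Combining the three points, $B \cap T$ witnesses that $H$ is weakly $S\Phi$-supplemented in $T$, and $T$ was arbitrary.

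The step I expect to carry the actual content, rather than mere bookkeeping, is the verification that the supplement's embedding property descends from $G$ to $T$. For a subnormal supplement this is the standard fact that subnormality passes to intersections, so it is painless; but should the precise definition in \cite{WU} demand more of the supplement (for instance $s$-permutability in place of bare subnormality), I would instead invoke the known inheritance result that an $s$-permutable subgroup of $G$ meets any intermediate subgroup in an $s$-permutable subgroup of that subgroup. That is the only place where the argument is not completely formal; everything else is Dedekind's law and the intrinsic nature of $\Phi(H)$.
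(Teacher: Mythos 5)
Your Dedekind step matches the paper's, but the rest of the argument rests on a misreading of the definition, and the misreading hides exactly the step that carries the mathematical content. In \cite[Definition 1.1]{WU}, $H$ is weakly $S\Phi$-supplemented in $G$ if there exists a subgroup $K \le G$ (no subnormality or $s$-permutability is imposed on the supplement at all) such that $G = HK$ and $H \cap K \le \Phi(H)\,H_{sG}$, where $H_{sG}$ is the subgroup generated by all subgroups of $H$ that are $s$-permutable in $G$. So the intersection bound is \emph{not} intrinsic to $H$: it involves $H_{sG}$, which depends on the ambient group. Your central claim that ``the estimate $H \cap B \le \Phi(H)$ valid in $G$ is exactly the one needed in $T$'' is therefore about a different (stronger) notion -- $\Phi$-supplementation with a subnormal supplement -- and your treatment of the ``$S$'' half as an embedding property of the supplement that passes to $B \cap T$ is aimed at the wrong object: the ``$S$'' lives inside the intersection bound, not on the supplement.

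The genuine content, which your proposal never establishes, is the containment $H_{sG} \le H_{sT}$: one must show that every subgroup of $H$ that is $s$-permutable in $G$ remains $s$-permutable in $T$, so that the generated subgroups compare. This is precisely what the paper invokes via \cite[Lemma 2.1 (b)]{LL}; combined with Dedekind's law $T = T \cap HK = H(T \cap K)$ it yields
$H \cap (T \cap K) \le H \cap K \le \Phi(H)\,H_{sG} \le \Phi(H)\,H_{sT}$,
so $T \cap K$ is the required supplement in $T$. Ironically, the inheritance fact you mention only as a hedge (an $s$-permutable subgroup of $G$ meets any intermediate subgroup in an $s$-permutable subgroup of it) is essentially the right tool -- but it must be applied to the subgroups of $H$ generating $H_{sG}$, not to the supplement $B$. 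As written, your argument does not prove the lemma for weak $S\Phi$-supplementation.
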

\begin{proof}
As  $H$ is a weakly $S\Phi$-supplemented subgroup of $G$, there exists a subgroup $K$ of $G$ such that $G = KH$ and $K \cap H \leq   \Phi (H) H_{sG}$, where $H_{sG}$ is generated by all subgroups of $H$ which are $s$-permutable in $G$. Then we conclude from {{\cite[Lemma 2.1 (b)]{LL}}} that $H_{sG} \leq H_{sT}$. Hence we assert from Dedekind modular law that $T = T \cap HK  = H (T \cap K)$. Since $H \cap K  \cap T \leq H \cap K \leq H_{sG} \leq H_{sT}$, it yields that $H$ is a weakly $S\Phi$-supplemented subgroup of $T$ and the result follows.
\end{proof}
\begin{lemma}[{{\cite[Theorem 1.4]{AS3}}}]\label{26}
Let $G$ be a finite group, $p$ be a prime divisor of $|G|$ and $P$ be a normal subgroup of $G$. Assume that there exists a subgroup $D$ of $P$ of order $1 \leq |D| <|P|$ such that every subgroup of $P$ of order $|D|$ or $p|D|$ is a weakly $S\Phi$-supplemented subgroup of $G$. If $P$ is a non-abelian $2$-group and $|D|=1$, we have additionally that every cyclic subgroup of $G$ of order $4$ is a weakly $S\Phi$-supplemented subgroup of $G$. Then $P \leq Z_{\mathfrak{U}}(G)$.
\end{lemma}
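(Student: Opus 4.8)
The plan is to argue by induction on $|G|$, taking $(G,P)$ to be a counterexample for which $|G|$ is as small as possible; as in the companion Lemmas~\ref{4} and~\ref{5} I read $P$ as a normal $p$-group, so that the hypothesis concerns $p$-subgroups of orders $|D|$ and $p|D|$. Two closure properties of the weakly $S\Phi$-supplemented condition drive the induction. The first is already in hand: Lemma~\ref{25} says the condition is inherited by every intermediate overgroup $H\le T\le G$. The second is its behaviour modulo a normal subgroup $N\unlhd G$ with $N\le H$: writing $G=KH$ and $K\cap H\le\Phi(H)H_{sG}$, one has $G/N=(KN/N)(H/N)$ and $(KN/N)\cap(H/N)=N(K\cap H)/N\le\Phi(H/N)\,(H/N)_{s(G/N)}$, because $\Phi(H)N/N\le\Phi(H/N)$ and $s$-permutability descends to $G/N$ by \cite[Lemma 2.1]{LL}. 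Hence $H/N$ is weakly $S\Phi$-supplemented in $G/N$, and this lets the hypotheses transfer to quotients.

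Using these, I would first reduce to the case that $P$ is a minimal normal subgroup of $G$, hence elementary abelian. Choosing a minimal normal subgroup $N\le P$ of $G$, I would check that $(G/N,P/N)$ inherits the hypotheses with $D$ replaced by a subgroup of order $|D|/|D\cap N|$; since $Z_{\mathfrak{U}}(G)/N=Z_{\mathfrak{U}}(G/N)$ once $N\le Z_{\mathfrak{U}}(G)$, proving $N\le Z_{\mathfrak{U}}(G)$ and $P/N\le Z_{\mathfrak{U}}(G/N)$ would give $P\le Z_{\mathfrak{U}}(G)$ against minimality. The bookkeeping matching ``order $|D|$ or $p|D|$ in $P$'' with the prescribed order in $P/N$ is the first delicate point; it is here that the small cases $|D|=1$ (with the cyclic-order-$4$ proviso when $P$ is a non-abelian $2$-group) and $|D|=p$ must be separated off, so that after the reduction $P$ is elementary abelian and the order-$4$ clause has become vacuous.

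In the reduced situation the argument becomes concrete. For any $H<P$ appearing in the hypothesis, pick a supplement $G=KH$ with $K\cap H\le\Phi(H)H_{sG}=H_{sG}$, the equality because $\Phi(H)=1$ in an elementary abelian group. The key observation is that $P\cap K\unlhd G$: it is normalized by $K$, and by $H$ as well since $H\le P$ is abelian and centralizes $P\cap K\le P$, so it is normalized by $G=\langle K,H\rangle$. Minimality of $P$ forces $P\cap K\in\{1,P\}$. If $P\cap K=1$ then $P=H(P\cap K)=H$, impossible when $H<P$; hence $P\cap K=P$, giving $H\le K$ and therefore $H=H_{sG}$. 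Thus every subgroup of $P$ of the relevant orders is generated by $s$-permutable subgroups of $G$, and in particular $P=P_{sG}$.

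The final step, which I expect to be the main obstacle, is to turn $P=P_{sG}$ into a contradiction with $\dim_{\mathbb{F}_p}P\ge2$. Here I would use that an $s$-permutable $p$-subgroup $T$ is normalized by $O^p(G)$, whence $T^G=T^S\le P$ and, by minimality, $T^G=P$ for $T\ne1$; feeding this back, when $|D|\le p$ every subgroup of order $p$ is $s$-permutable, so $O^p(G)$ fixes every line of $P$ and acts by scalars, and a nonzero fixed vector of the $p$-group $G/O^p(G)C_G(P)$ then yields a $G$-invariant line, forcing $|P|=p$. The genuine difficulty is the case $|D|=p^k$ with $k\ge2$, where $H=H_{sG}$ does not by itself place $s$-permutable subgroups of order $p$ inside $P$; controlling $H_{sG}$ for the two consecutive orders $|D|$ and $p|D|$ simultaneously, and extracting from them a proper nonzero $G$-invariant subspace, is the crux, and it is precisely why the hypothesis constrains subgroups of two adjacent orders rather than a single one.
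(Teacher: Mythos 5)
This statement is quoted by the paper from \cite[Theorem 1.4]{AS3} without any internal proof, so there is nothing in the paper to compare against; your proposal has to stand on its own, and it does not: its two hardest steps are left open, as you yourself concede.

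First, the reduction to the case that $P$ is a minimal normal subgroup of $G$ does not go through as described. Passing to $(G/N,P/N)$ requires that all subgroups of $P/N$ of two consecutive fixed orders be weakly $S\Phi$-supplemented in $G/N$; such subgroups correspond to subgroups of $P$ \emph{containing} $N$, so the hypothesis transfers (via your quotient lemma, which is correct) only when $|N|$ divides $|D|$, with $|D'|=|D|/|N|$. It fails outright when $|N|>|D|$: already for $|D|=1$ and $|N|=p$, the order-$p$ subgroups of $P/N$ come from order-$p^2$ subgroups of $P$, about which the hypothesis says nothing. Your formula ``$|D|$ replaced by $|D|/|D\cap N|$'' is not even a well-defined transfer, since $|D\cap N|$ varies with the individual subgroup, whereas the hypothesis must hold for all subgroups of one fixed order. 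Moreover, the induction also needs $N\le Z_{\mathfrak{U}}(G)$, i.e.\ $|N|=p$; that is not an available inductive input (same group $G$, and $(G,N)$ need not inherit the hypothesis when $|D|\ge |N|$) --- it is essentially the content of the theorem itself for $N$, so the reduction is circular at exactly the point where it needs to do work.

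Second, and decisively, the endgame is missing. In the reduced situation your supplement argument is sound: $\Phi(H)=1$, $P\cap K\unlhd G$, minimality forces $P\le K$, hence $H=H_{sG}$; and the finish for $|D|\le p$ is correct (order-$p$ subgroups become $s$-permutable, $O^p(G)$ fixes every line of $P$ and acts by scalars, and the $p$-group $S$ fixes a point of the projective space, giving a $G$-invariant line, so $|P|=p$). But for $|D|=p^k$ with $k\ge 2$ you state explicitly that you do not know how to conclude, and that is the generic case of the lemma. A sketch that covers only $|D|\le p$, and only after a reduction that does not close, is not a proof of the statement; the published proof in \cite{AS3} rests on a different induction and case analysis precisely because, as you note, controlling $H_{sG}$ for two adjacent orders is the crux.
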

\begin{lemma}[{{\cite[Theorem 1.1]{WH}}}]\label{27}
Let $E$ be a normal subgroup of $G$. If for every non-cyclic Sylow $p$-subgroup $P$ of $E$, there exists a subgroup $D$ of $P$  with $1 <|D|<|P|$ such that every subgroup of $P$ of order $|D|$ or $4$ (if $p=|D|=2$) is an $\mathscr{H} C$-subgroup {{\cite{GX}}} of $G$, then $E \leq Z_{\mathfrak{U}} (G)$.
\end{lemma}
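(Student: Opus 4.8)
The plan is to argue by induction on $|G|$, reducing to the task of showing that each minimal normal subgroup $N$ of $G$ contained in $E$ is cyclic of prime order, i.e. $N \leq Z_{\mathfrak{U}}(G)$. The driving mechanism is hypercentre transitivity: once we know $N \leq Z_{\mathfrak{U}}(G)$ and, by induction applied to the pair $(G/N, E/N)$, that $E/N \leq Z_{\mathfrak{U}}(G/N)$, we obtain $E \leq Z_{\mathfrak{U}}(G)$ at once. Thus the whole argument hinges on two ingredients: (i) that the hypothesis descends to the quotient $(G/N, E/N)$, so that induction applies, and (ii) that minimal normal subgroups $N \leq E$ are cyclic.

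For (i) I would first record the persistence of the $\mathscr{H}C$-property, namely that if $H$ is an $\mathscr{H}C$-subgroup of $G$ and $N \trianglelefteq G$ with $N \leq H$, then $H/N$ is an $\mathscr{H}C$-subgroup of $G/N$; this is the analogue for the present property of Lemmas \ref{18}, \ref{22} and \ref{25}, and is presumably available from \cite{GX}. Combining this with the facts that the Sylow $p$-subgroups of $E/N$ are precisely the images $PN/N$ of Sylow $p$-subgroups $P$ of $E$, and that subgroups of $PN/N$ of a prescribed order lift to subgroups of $P$ of the corresponding order, one checks that $(G/N, E/N)$ again satisfies the hypothesis. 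Hence the inductive hypothesis gives $E/N \leq Z_{\mathfrak{U}}(G/N)$.

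For (ii), treat the minimal normal subgroup $N \leq E$ in two cases. If $N$ is non-abelian, write $N = T_1 \times \cdots \times T_k$ with the $T_i$ isomorphic non-abelian simple groups; by Feit--Thompson each $T_i$ has even order, and its Sylow $2$-subgroup is non-cyclic, for otherwise $T_i$ would have a normal $2$-complement by Burnside and hence fail to be simple. So a Sylow $2$-subgroup of $E$ is non-cyclic, and the hypothesis for $p=2$ supplies a subgroup $D$ with the cyclic subgroups of order $2$ and $4$ being $\mathscr{H}C$-subgroups; feeding this into the normal-$p$-subgroup building block (the $\mathscr{H}C$-analogue of Lemmas \ref{3}, \ref{5}, \ref{19}, \ref{20}) forces a supersolvably embedded, hence solvable, chief structure on $N$, contradicting that $N$ is a direct product of non-abelian simple groups. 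If instead $N$ is an elementary abelian $p$-group and the Sylow $p$-subgroup of $E$ is cyclic, then $N$ is cyclic and we are done; otherwise the Sylow $p$-subgroup $P$ is non-cyclic, the hypothesis furnishes the distinguished $D \leq P$, and subgroups of $N$ of order $|D|$ (together with cyclic subgroups of order $4$ in the exceptional case) are among the prescribed subgroups of $P$, hence are $\mathscr{H}C$-subgroups of $G$. Applying the normal-$p$-subgroup building block to $N \trianglelefteq G$ then yields $N \leq Z_{\mathfrak{U}}(G)$, whence $|N| = p$.

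The step I expect to be the main obstacle is the comparison of $|D|$ with $|N|$ in the abelian case. The distinguished subgroup $D$ is selected relative to the full Sylow subgroup $P$ of $E$, not relative to the minimal normal subgroup $N$, so when $|D| \geq |N|$ there need be no subgroup of $N$ of order $|D|$ to feed into the building block. Resolving this calls for a careful split: using that $N$ itself is an $\mathscr{H}C$-subgroup when $|D| = |N|$ and invoking the definition of the $\mathscr{H}C$-property directly, and choosing the minimal normal subgroup (or passing to a suitable normal subgroup of $P$) so that the bookkeeping $1 < |D| < |N|$ can be arranged. This order bookkeeping, together with the precise verification that the $\mathscr{H}C$-property is inherited by quotients, is the technical heart of the proof; the hypercentre transitivity and the abelian/non-abelian case split are otherwise routine.
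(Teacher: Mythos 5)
First, a point of reference: the paper gives no proof of this statement at all --- Lemma \ref{27} is imported verbatim from {\cite[Theorem 1.1]{WH}} and used as a black box (in Corollary \ref{HC}), so your proposal can only be judged against what a complete proof would require, and there it has two genuine gaps. The first is the non-abelian case. To exclude a non-abelian minimal normal subgroup $N \leq E$ you observe (correctly, via Feit--Thompson and Burnside) that a Sylow $2$-subgroup of $E$ is then non-cyclic, and you propose to feed the resulting $p=2$ hypothesis ``into the normal-$p$-subgroup building block'' to force a solvable chief structure on $N$. But every such building block (Lemmas \ref{3}, \ref{5}, \ref{19}, \ref{20} and the $\mathscr{H}C$-analogue you invoke) takes as hypothesis a $p$-subgroup that is \emph{normal} in $G$; a non-abelian minimal normal subgroup satisfies $O_2(N)=1$, and the Sylow $2$-subgroups of $N$ or $E$ are not normal, so there is simply no subgroup to which such a lemma applies. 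Ruling out non-abelian chief factors is precisely the hard content of {\cite[Theorem 1.1]{WH}}: it needs a $p$-nilpotency/solvability argument tailored to the $\mathscr{H}$- and $\mathscr{H}C$-properties inside a minimal counterexample, not a reduction to the normal-$p$-subgroup case.

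The second gap is the induction itself, which you flag but do not repair. If $N$ is elementary abelian of order $p^k$, the Sylow $p$-subgroup of $E/N$ is $P/N$, and its subgroups of order $m$ are exactly the $X/N$ with $N \leq X \leq P$ and $|X| = m|N|$; since the hypothesis controls only subgroups of $P$ of order $|D|$ (plus cyclic ones of order $4$), the only order one can prescribe in $P/N$ is $|D|/|N|$, which is an admissible choice only when $|N| < |D|$. When $|N| \geq |D|$ the pair $(G/N, E/N)$ need not satisfy the hypothesis, so the inductive step collapses; and in the base case, when $|D| \geq |N|$ there are no subgroups of $N$ of order $|D|$ to feed into the building block. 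Your two suggested remedies do not close this: one cannot ``choose'' the minimal normal subgroup so that $1 < |D| < |N|$ (minimal normal subgroups are whatever they are), and the observation that $N$ itself is an $\mathscr{H}C$-subgroup when $|N| = |D|$ carries no information whatsoever, because \emph{every} normal subgroup of $G$ is an $\mathscr{H}C$-subgroup of $G$ (take $T = G$ in the definition; a normal subgroup $H$ satisfies $N_G(H) \cap H^g = H$, hence is an $\mathscr{H}$-subgroup). So both the reduction and the base case are open, and what you have is a plausible skeleton rather than a proof; this is consistent with the fact that the authors cite the result rather than reprove it.
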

\begin{lemma}[{{\cite[Lemma 3.2]{LF}}}]\label{28}
Let $E$ be a normal subgroup of $G$. Suppose that there exists a  normal subgroup $X$ of $G$ such that $F^{*} (E) \leq X \leq E$, and $X$ satisfies the following properties: for every non-cyclic Sylow subgroup $P$ of $X$, there exists a subgroup $D$ of $P$ such that $1<|D|<|P|$, and every subgroup $H$ of $P$ with order $|D|$ or $2|D|$ (if $P$ is a non-abelian 2-group and $|P:D|>2$) is either $S$-quasinormally embedded {{\cite{XX}}} or $SS$-quasinormal {{\cite{LSK}}} in $G$, then $E \leq Z_{\mathfrak{U}} (G)$.
\end{lemma}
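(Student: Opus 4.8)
The plan is to reduce the problem to the generalized Fitting subgroup and then to argue prime by prime. The backbone is the standard reduction lemma for the $\mathfrak{U}$-hypercenter: for a normal subgroup $E \unlhd G$ one has $E \le Z_{\mathfrak{U}}(G)$ as soon as $F^{*}(E) \le Z_{\mathfrak{U}}(G)$ (this rests on the self-centralizing property $C_{E}(F^{*}(E)) = Z(F^{*}(E)) \le F^{*}(E)$). So it suffices to prove $F^{*}(E) \le Z_{\mathfrak{U}}(G)$. First I would record that, because $F^{*}(E) \le X \le E$ and $X \unlhd G$, the generalized Fitting subgroups coincide, i.e. $F^{*}(X) = F^{*}(E)$. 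Indeed $F^{*}(E) \unlhd X$ and $C_{X}(F^{*}(E)) \le C_{E}(F^{*}(E)) \le F^{*}(E)$ forces $F^{*}(X) \le F^{*}(E)$, while idempotence and monotonicity of $F^{*}$ under normal containment give $F^{*}(E) = F^{*}(F^{*}(E)) \le F^{*}(X)$. Hence the whole task collapses to showing $F^{*}(X) \le Z_{\mathfrak{U}}(G)$, which, applying the same reduction lemma to $X$, is equivalent to $X \le Z_{\mathfrak{U}}(G)$.

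Next I would dispose of the layer. Since $Z_{\mathfrak{U}}(G)$ is hypercyclically embedded, every $G$-chief factor inside it is cyclic and in particular abelian, so no quasisimple component can survive there; thus $X \le Z_{\mathfrak{U}}(G)$ forces the components of $X$ to be trivial. I would therefore show directly that the Sylow hypothesis precludes nonabelian composition factors in $X$, so that $E(X) = 1$ and $F^{*}(X) = F(X) = \prod_{p} O_{p}(X)$. This components-elimination step is where the two normalities ($S$-quasinormally embedded and $SS$-quasinormal) must be used in their inherited form: I would invoke the persistence of these properties to subgroups and over-groups, in the spirit of Lemmas \ref{18}, \ref{22}, \ref{25}, so that a minimal counterexample can be pushed into a smaller section.

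With the layer gone, the argument becomes a prime-by-prime statement about the normal subgroups $O_{p}(X) \,\mathrm{char}\, X \unlhd G$, hence $O_{p}(X) \unlhd G$. For each prime $p$ I would examine the Sylow $p$-subgroup $P$ of $X$: if $P$ is cyclic then $O_{p}(X)$ is cyclic and all of its $G$-chief factors are automatically cyclic, giving $O_{p}(X) \le Z_{\mathfrak{U}}(G)$ at once; if $P$ is non-cyclic, the hypothesis furnishes a $D$ with every subgroup of order $|D|$ or $2|D|$ being $S$-quasinormally embedded or $SS$-quasinormal in $G$, and restricting to the subgroups lying inside $O_{p}(X) \le P$ I would apply an embedding result of the type collected in Lemmas \ref{3}--\ref{28} for these two normalities to conclude $O_{p}(X) \le Z_{\mathfrak{U}}(G)$. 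Taking the product over all primes yields $F(X) = F^{*}(X) = F^{*}(E) \le Z_{\mathfrak{U}}(G)$, and the opening reduction then delivers $E \le Z_{\mathfrak{U}}(G)$.

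The main obstacle I expect is the mixed-normality embedding step for a single non-cyclic Sylow $p$-subgroup: the hypothesis lets each qualifying subgroup be \emph{either} $S$-quasinormally embedded \emph{or} $SS$-quasinormal, so one cannot simply quote a clean single-property theorem and must run an induction that treats both cases uniformly, together with the subtlety that the condition is stated for subgroups of $P$ while one needs conclusions about the possibly smaller $O_{p}(X)$, and that the $2|D|$-subgroups enter only under the extra index restriction $|P:D|>2$. The elimination of the layer from data given solely on Sylow subgroups is the second delicate point, and both steps lean essentially on the inheritance lemmas for these generalized normalities.
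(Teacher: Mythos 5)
The first thing to observe is that the paper contains no proof of this statement: Lemma \ref{28} is imported verbatim from the literature (it is Lemma 3.2 of the cited paper of C.~Li and F.~Xie) and is used here only as a black box in the proofs of Theorem \ref{8-eternal} and Corollary \ref{SS}. So your proposal must be judged as a reconstruction of an external result, not against an internal argument. Your top-level skeleton is in fact the standard one for results of this type: prove $X \le Z_{\mathfrak{U}}(G)$ (equivalently $F^{*}(X) \le Z_{\mathfrak{U}}(G)$) from the Sylow hypothesis, note $F^{*}(E) = F^{*}(X) \le X$, and then invoke the known reduction theorem that, for $E \unlhd G$, $F^{*}(E) \le Z_{\mathfrak{U}}(G)$ forces $E \le Z_{\mathfrak{U}}(G)$. (Be aware that this reduction is itself a substantial quotable theorem, not a formal consequence of $C_E(F^{*}(E)) \le F^{*}(E)$ as your parenthesis suggests; quoting it is legitimate, deriving it from the self-centralizing property alone is not.)

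There are, however, two genuine gaps in the middle of your argument. First, the component-elimination step $E(X) = 1$ is only asserted: you appeal to ``persistence of the two normalities'' and to pushing a minimal counterexample into a smaller section, but this is exactly where the substance of such theorems lies, and nothing in your sketch shows how a quasisimple component of $X$ actually contradicts the hypothesis on subgroups of order $|D|$ and $2|D|$ of a Sylow subgroup. Second, and more concretely fatal, your prime-by-prime step cannot be executed as stated. The hypothesis provides $D \le P \in {\rm Syl}_p(X)$ with $1 < |D| < |P|$ and controls only the subgroups of $P$ of order $|D|$ or $2|D|$, whereas applying a normal-$p$-subgroup lemma to $O_p(X)$ requires a datum $D'$ with $1 < |D'| < |O_p(X)|$ all of whose subgroups of that order are controlled. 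No such datum need exist, because $O_p(X)$ can be smaller than $|D|$: already for $X \cong S_4$ one has $P \cong D_8$, $O_2(X) \cong C_2 \times C_2$, and with $|D| = 4$ the hypothesis says nothing about the order-$2$ subgroups of $O_2(X)$, which is what any such lemma would need. The proofs in the literature avoid this localization entirely; they induct on the pair $(G,X)$, passing to quotients and using the good behavior of $S$-quasinormally embedded and $SS$-quasinormal subgroups under quotients and intersections, rather than working inside $O_p(X)$. As it stands, your proposal is a correct outer shell around an unproved core.
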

\begin{lemma}[{{\cite[Lemma 2.2]{HX}}}]\label{29}
Let $P$ be a non-trivial normal subgroup of a finite group $G$. If there exists a subgroup $D$ of $P$ such that $1<|D|<|P|$, and any subgroup of $P$ of order $|D|$ or $p|D|$ is weakly $\mathcal{HC}$-embedded {{\cite{LF}}} in $G$, then $P \leq Z_{\mathfrak{U}} (G)$.
\end{lemma}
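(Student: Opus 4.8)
The plan is to argue by minimal counterexample, following the blueprint common to all the $\mathfrak{U}$-hypercenter embedding results quoted above (Lemmas \ref{3}, \ref{4}, \ref{5}, \ref{19}, \ref{20}, \ref{21}, \ref{24}, \ref{26}, \ref{27}): suppose the lemma fails and let $(G,P)$ be a counterexample with $|G|$ as small as possible. The two structural facts I would isolate first are the inheritance properties of the weakly $\mathcal{HC}$-embedded condition as defined in \cite{LF}: (i) it descends to intermediate overgroups, so that if $H$ is weakly $\mathcal{HC}$-embedded in $G$ then $H$ is weakly $\mathcal{HC}$-embedded in every $K$ with $H \le K \le G$; and (ii) it passes to quotients, so that for $N \unlhd G$ with $N \le H$ the image $HN/N$ is weakly $\mathcal{HC}$-embedded in $G/N$. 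These are precisely the analogues, for this normality, of Lemmas \ref{18}, \ref{22}, \ref{23}, \ref{25}, and they are what drive the induction; I would verify them directly from the definition.

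Recall that $P \le Z_{\mathfrak{U}}(G)$ holds if and only if every $G$-chief factor lying below $P$ is cyclic, so it suffices to force all such factors to be cyclic. I would therefore pick a minimal normal subgroup $N$ of $G$ with $N \le P$; since the hypotheses only constrain subgroups of the $p$-power orders $|D|$ and $p|D|$, the relevant $N$ is an elementary abelian $p$-group. Using (ii) together with the standard facts that $Z_{\mathfrak{U}}(G)N/N \le Z_{\mathfrak{U}}(G/N)$ and that an extension of a cyclic chief factor by a $\mathfrak{U}$-hypercentral quotient is again $\mathfrak{U}$-hypercentral, I would check that $(G/N, P/N)$ still satisfies the hypotheses whenever $|D| < |P/N|$, so that minimality gives $P/N \le Z_{\mathfrak{U}}(G/N)$. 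The whole difficulty then collapses onto showing that the single factor $N$ is cyclic.

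This cyclicity of $N$ is the heart of the argument and the step I expect to be hardest, because here one cannot pass to a quotient and must instead work inside $N$ itself. The mechanism is that a weakly $\mathcal{HC}$-embedded $p$-subgroup $H$ supplies a (sub)normal supplement $T$ in $G$ with $H \cap T$ controlled by an $\mathcal{H}$-type or covering-type core; for $H$ of $p$-power order this forces $H$ to cover or avoid the chief factor $N$, or to be normalised strongly enough to pin down the action of $G$ on $N$. Letting $H$ range over the subgroups of $N$ (and of $NN_1$ for a suitable larger $p$-subgroup $N_1$) of orders $|D|$ and $p|D|$, and combining this with a Maschke/coprime decomposition of the $G$-module $N$, I would conclude $|N| = p$. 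The delicate boundary cases are when $|D|$ is as small as $p$, where subgroups of order $|D|$ alone barely constrain $N$, and when $|D|$ is close to $|P|$; this is exactly where the auxiliary order-$p|D|$ hypothesis earns its place, playing the role that the extra ``cyclic subgroups of order $4$'' clause plays in Lemmas \ref{5}, \ref{19}, \ref{20}. Once $N$ is shown to be cyclic, it is a cyclic chief factor, and combined with $P/N \le Z_{\mathfrak{U}}(G/N)$ this contradicts the choice of $(G,P)$ and completes the proof.
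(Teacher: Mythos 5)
You should first be aware that the paper contains no proof of this statement at all: Lemma \ref{29} is imported verbatim from \cite[Lemma 2.2]{HX}, the bracketed citation being its entire justification, and inside this paper it is used purely as a black box — its only role is to verify condition (9) of Definition \ref{eternal} so that the weakly $\mathcal{HC}$-embedded property qualifies as a 9-eternal relation in Corollary \ref{weakly HC-embedded}. So there is no internal argument to measure your sketch against; it has to stand on its own, and as written it does not.

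Two concrete gaps. First, your quotient reduction is broken: a subgroup of $P/N$ of order $|D|$ is the image of a subgroup of $P$ of order $|D|\,|N|$ containing $N$, and the hypothesis of the lemma says nothing about subgroups of that order — it only covers orders $|D|$ and $p|D|$ — so the claim that ``$(G/N,P/N)$ still satisfies the hypotheses whenever $|D|<|P/N|$'' is false as stated, unless $|N| \le p$, which is exactly what you are trying to prove. The standard repair is to apply induction with a new subgroup $\overline{D}$ of order $|D|/|N|$, which requires $|N|<|D|$ and forces a separate treatment of the boundary cases $|N| = |D|$ and $|N| > |D|$; your sketch never notices this, and that case analysis is a substantial part of any correct proof. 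Second, the decisive step — that a minimal normal subgroup $N \le P$ has order $p$ — is not an argument but a wish list: ``cover or avoid,'' ``normalised strongly enough,'' ``Maschke/coprime decomposition.'' You never write down what weakly $\mathcal{HC}$-embedded means (a normal subgroup $T \unlhd G$ with $H^G = HT$ and a control condition on $H \cap T$ of $\mathscr{H}$-type), and you never extract a single concrete consequence from it. Since this is the only point in the whole scheme where the specific generalized normality acts non-formally — everything else is the generic minimal-counterexample frame shared by Lemmas \ref{3}, \ref{4}, \ref{5}, \ref{19}, \ref{20}, \ref{21}, \ref{24}, \ref{26}, \ref{27} — leaving it at this level of generality means the lemma has not been proven.
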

\begin{lemma}[{{\cite[Proposition 1.6]{MCG}}}]\label{30}
Let $G$ be a finite group and $P$ be a normal $p$-subgroup of $G$. Suppose that $P$ has a subgroup $D$ with $1<|D|<|P|$ such that every subgroup of $P$ of order $|D|$ and every cyclic subgroup of $P$ of order 4 (if $P$ is a non-abelian 2-group and $|D|=2$) are generalised $S\Phi$-supplemented {{\cite{MCG}}} in $G$, then $P \leq Z_{\mathfrak{U}} (G)$.
\end{lemma}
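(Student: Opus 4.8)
The plan is to argue by induction on $|G|$, taking $G$ to be a counterexample of least order. Recall that $\mathfrak{U}$ denotes the saturated formation of all finite supersolvable groups, so that the conclusion $P \leq Z_{\mathfrak{U}}(G)$ is equivalent to the assertion that every $G$-chief factor lying below $P$ is cyclic of order $p$ (for a $p$-chief factor $H/K$, centrality in $\mathfrak{U}$ forces $|H/K|=p$, and conversely $G/C_G(H/K)$ then embeds in $\mathrm{Aut}(C_p)$). Before the induction can run I would first record the two structural features of the generalised $S\Phi$-supplemented property on which every argument of this shape depends, both proved in \cite{MCG} in exactly the way the analogous statements for the weakly $S\Phi$-supplemented property are proved in Lemma \ref{25}: that the property passes to the image in a quotient $G/N$ for a normal subgroup $N$ suitably contained in the subgroup, and that a subgroup of $P$ of the relevant order retains enough of the property in $G/N$ so that the hypothesis can be re-imposed on the smaller group.

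Next I would carry out the standard reductions on the minimal counterexample. First one shows $O_{p'}(G)=1$: the subgroups of $P$ inject into $PO_{p'}(G)/O_{p'}(G)$, the hypothesis transfers to that quotient, and minimality would give $P \leq Z_{\mathfrak{U}}(G)$ after pulling back, a contradiction; hence $F(G)=O_p(G)$ and $C_G(O_p(G)) \leq O_p(G)$. I would then pick a minimal normal subgroup $N$ of $G$ with $N \leq P$; since $O_{p'}(G)=1$, this $N$ is an elementary abelian $p$-group. The aim is to verify the hypothesis for the pair $(G/N,\,P/N)$ with an appropriate choice of the distinguished subgroup and to conclude by induction that $P/N \leq Z_{\mathfrak{U}}(G/N)$, so that every $G$-chief factor of $P$ strictly above $N$ is cyclic. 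It then remains only to prove that $N$ itself is cyclic, i.e. $|N|=p$, for then the entire chief series below $P$ has cyclic factors and $P \leq Z_{\mathfrak{U}}(G)$, against the choice of $G$.

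The main obstacle, and the step I expect to cost the most care, is that the hypothesis is stated for subgroups of the single fixed order $|D|$, and this order does not survive passage to $G/N$ cleanly: after factoring out $N$ the relevant order becomes $|D|/|D\cap N|$, so the inductive hypothesis cannot simply be quoted. I would therefore split into cases according to the size of $N$ relative to $D$. When $|N| \leq |D|$ one arranges $N \leq D$ and checks that the subgroups of $P/N$ of order $|D|/|N|$ are still generalised $S\Phi$-supplemented in $G/N$, so the induction reopens; the delicate point is to locate a subgroup of order exactly $|D|$ meeting $N$ correctly, and to use the supplement it provides, together with $C_G(O_p(G)) \leq O_p(G)$ and the fact that $\mathfrak{U}$ is a saturated formation (so that the Frattini/Gasch\"utz machinery applies and one may reduce $N$ to an elementary abelian, Frattini-free situation), to force $N$ to be $G$-isomorphic to a cyclic chief factor. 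The separate clause for a non-abelian $2$-group with $|D|=2$, requiring every cyclic subgroup of order $4$ to be generalised $S\Phi$-supplemented, is precisely what excludes the quaternion-type configurations in which a minimal normal subgroup of order $4$ could otherwise resist decomposition into cyclic pieces; I would dispose of it by the same supplement analysis applied to an order-$4$ cyclic subgroup of $P$. Once $|N|=p$ is established in every case, the induction closes and the lemma follows.
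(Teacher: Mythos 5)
The first thing to note is that the paper does not prove Lemma \ref{30} at all: it is imported verbatim as \cite[Proposition 1.6]{MCG} and used as a black box (in the proof of Corollary \ref{generalised}). So your sketch can only be judged on its own merits, and as it stands it has a genuine gap. Your ``standard reduction'' asserts that once $O_{p'}(G)=1$ is established one gets $C_G(O_p(G))\leq O_p(G)$. From $O_{p'}(G)=1$ one does get $F(G)=O_p(G)$, but the self-centralizing property of $O_p(G)$ is equivalent to $E(G)=1$, i.e.\ to a form of $p$-constraint, which is neither a hypothesis of the lemma nor something you establish for a minimal counterexample. Concretely, $G=A_5\times C_4$ with $p=2$ has $O_{2'}(G)=1$, $P=O_2(G)=C_4$ satisfying the hypothesis of the lemma (its unique subgroup of order $2$ is normal in $G$, hence generalised $S\Phi$-supplemented), yet $C_G(O_2(G))=G$. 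This group satisfies the conclusion, so it is no counterexample to the lemma, but it kills the implication you rely on --- and you rely on it at the decisive moment, when $C_G(O_p(G))\leq O_p(G)$ is invoked to force the minimal normal subgroup $N$ to be cyclic.

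Even setting that aside, the mathematical core is deferred rather than carried out. The entire content of \cite[Proposition 1.6]{MCG} is the step you describe as ``use the supplement it provides $\ldots$ to force $N$ to be $G$-isomorphic to a cyclic chief factor'': nothing in your text actually unpacks the supplement furnished by the generalised $S\Phi$-supplemented hypothesis and derives $|N|=p$ from it, so the proposition is being asserted, not proved. The induction is also not closed: you treat only the case $|N|\leq|D|$, saying nothing about $|N|>|D|$ (where one must instead exploit subgroups of order $|D|$ lying inside $N$), and even within the treated case the boundary $|N|=|D|$ breaks the induction, because the hypothesis for $(G/N,\,P/N)$ requires a subgroup $D'$ with $1<|D'|<|P/N|$ and the transported order $|D|/|N|=1$ provides none; the order-$4$ clause for non-abelian $2$-groups likewise needs its own analysis rather than the parenthetical remark you give it. In short, the proposal is a plausible road map in the Skiba--Asaad style of such embedding theorems, but between the false $p$-constraint step and the unexecuted key argument it does not constitute a proof.
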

\begin{lemma}[{{\cite[Theorem 1.3]{LZ}}}]\label{31}
Let $G$ be $Q_8$-$free$ and $P$ be a non-trivial normal $p$-subgroup of $G$. If all subgroups of $P$ of order $p$ are $NE^{*}$-subgroups {{\cite{LZ}}} of $G$, then $P \leq Z_{\mathfrak{U}} (G)$.
\end{lemma}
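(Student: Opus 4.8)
Because this lemma is quoted from \cite[Theorem 1.3]{LZ}, in the present paper it would be recorded simply by citation; nonetheless, the natural route to a self-contained argument is an induction on $|G|$ organized around a minimal counterexample $(G,P)$. The conclusion $P \leq Z_{\mathfrak{U}}(G)$ is equivalent to the assertion that every $G$-chief factor lying below $P$ is cyclic, so the whole argument reduces to producing, inside $P$, a minimal normal subgroup $N$ of $G$ with $|N| = p$, and then climbing through the quotient $G/N$.

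The first thing I would check is that the hypotheses propagate to quotients: if $N \unlhd G$ with $N \leq P$, then the subgroups of $P/N$ of order $p$ should again be $NE^{*}$-subgroups of $G/N$, so that $(G/N, P/N)$ satisfies the same hypotheses and, by minimality of the counterexample, $P/N \leq Z_{\mathfrak{U}}(G/N)$. Once a chosen minimal normal subgroup $N \leq P$ is shown to be cyclic, hence $N \leq Z_{\mathfrak{U}}(G)$, the standard behaviour of the $\mathfrak{U}$-hypercentre under the correspondence $G \to G/N$ yields $P \leq Z_{\mathfrak{U}}(G)$, contradicting the choice of $(G,P)$. Thus everything hinges on the cyclicity of $N$.

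The decisive step is therefore to take a minimal normal subgroup $N$ of $G$ with $N \leq P$ — necessarily elementary abelian of order $p^{d}$ — and to force $d = 1$. Every subgroup $H \leq N$ of order $p$ is an $NE^{*}$-subgroup of $G$, and the plan is to feed the defining relation of an $NE^{*}$-subgroup (which ties $N_G(H)$ to the normal closure $H^{G}$, or its subnormal analogue) into the $G$-module structure of $N$; the aim is to show that such a relation holding for every line $H$ of $N$ is incompatible with $N$ having rank larger than one, i.e.\ that it forces $N$ to be a single cyclic chief factor. I expect \emph{this} to be the main obstacle, as it is the only place where the $NE^{*}$-condition does genuine work, and it must be carried out delicately because only subgroups of order $p$ are assumed (with no companion condition on cyclic subgroups of order $4$). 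This is precisely where the $Q_{8}$-free hypothesis enters: at the prime $p = 2$ a single involution cannot by itself pin down the action on $N$ when a quaternion section is present, and excluding $Q_{8}$ removes exactly that pathology, allowing the order-$2$ case to be closed on the same footing as the odd-prime case.
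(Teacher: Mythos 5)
The paper does not prove this lemma at all: it is imported verbatim by citation from \cite[Theorem 1.3]{LZ}, so there is no internal argument to compare against, and your sketch must be judged on its own merits. Judged so, it has a genuine gap, and the gap sits exactly where you declare the argument routine. You propose, as ``the first thing to check,'' that the hypotheses propagate to quotients, i.e.\ that every subgroup of $P/N$ of order $p$ is again an $NE^{*}$-subgroup of $G/N$. This is precisely what fails in general: a subgroup of order $p$ in $P/N$ has the form $X/N$ with $|X| = p|N|$, and once $|N|=p$, the group $X$ may be cyclic of order $p^{2}$, in which case its unique subgroup of order $p$ is $N$ itself, so $X/N$ is \emph{not} the image $HN/N$ of any order-$p$ subgroup $H \leq P$ covered by the hypothesis. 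The standard lemma that images of $NE^{*}$-subgroups are $NE^{*}$-subgroups handles only the non-cyclic $X$. This lifting problem is the actual crux of theorems of this type; it is typically resolved not by a naive induction on $G/N$ but by first showing every chief factor of $G$ inside $\Omega_1(P)$ is cyclic and then invoking a quaternion-free lemma that lifts hypercentral embedding from $\Omega_1(P)$ to $P$ --- and \emph{that} is where the $Q_8$-free hypothesis genuinely enters.

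Conversely, the step you single out as ``the main obstacle'' --- forcing a minimal normal subgroup $N \leq P$ of $G$ to have order $p$ --- is in fact easy and uses no $Q_8$-hypothesis. Let $H \leq N$ with $|H|=p$; the $NE^{*}$-condition provides a subnormal subgroup $T$ with $G = HT$ and $H \cap T$ an $NE$-subgroup of $G$. Since $|H|=p$, either $H \leq T$, so $T = G$ and $H^{G} \cap N_G(H) = H$; as $N$ is abelian and normal we have $H^{G} \leq N \leq N_G(H)$, whence $H = H^{G} \unlhd G$ and $H = N$ by minimality. Or $H \cap T = 1$, so $T$ is subnormal of index $p$, hence normal (a subnormal maximal subgroup is normal); then $N \cap T \unlhd G$ is proper in $N$, so $N \cap T = 1$ and $|N| = |NT/T| \leq p$. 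Either way $|N| = p$, with no module-theoretic delicacy and no use of $Q_8$-freeness. So your placement of the difficulty is inverted: the step you flag as decisive is immediate, while the quotient step you treat as bookkeeping is the heart of the matter and, as sketched, does not go through.
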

\section{Characterizations for supersolvability of $\mathcal{F}_S (G)$}\label{1003}
It is well known that many researchers have generalized the normality in finite groups, and obtain a variety of characterizations for $p$-nilpotency. In this section, we first give two alternative ways to axiomize the characterizations for supersolvability of $\mathcal{F}_S (G)$. Then, we give some examples of characterizations for supersolvability of $\mathcal{F}_S (G)$ under the assumption that certain subgroups of $G$ satisfy some kinds of generalized normalities. 

As to axiomize the characterizations for supersolvability of $\mathcal{F}_S (G)$, we need to summarize the similarities of certain generalized normalities, and give them an unified definition. Actually, almost all characterizations for supersolvability of $\mathcal{F}_S (G)$ can be divided into the following 12 cases, and each case provides a method to generalize the characterizations for supersolvability of $\mathcal{F}_S (G)$ under the condition that some  specialized subgroups of $S$ satisfy certain generalised normalities. More precisely, we abstract the generalised normalities into approximately 12 cases, and for each case we use different methods to research the structure of $\mathcal{F}_S (G)$ under the assumption that certain subgroups of $S$ satisfy a generalised version of generalised normalities, namely relation. As a result, we obtain 12 theorems describing the structure of fusion systems of finite groups, and use 12 different examples of generalised normalities to present the applications of those 12 theorems. In fact, almost every characterizations for supersolvability of $\mathcal{F}_S (G)$ can use the similar method to get access to, and what we have done is to give an inspiration to explore the universal language to describe the characterizations for supersolvability of $\mathcal{F}_S (G)$. 

Now, we are going to introduce one alternative way to generalise the characterizations for supersolvability of $\mathcal{F}_S (G)$, and first we would like to introduce the definitions of 12 different  relations of subgroups of  finite groups.
\begin{definition}\label{eternal}
Let $G$ be an arbitrary finite group and $ \rightsquigarrow$ be a relation between the subgroups of $G$ and $G$. Then $\rightsquigarrow$ is said to be 1-eternal, if for any finite group $G$, (0) and (1) hold;  $\rightsquigarrow$ is said to be 2-eternal, if for any finite group $G$, (0) and (2) hold;  $\rightsquigarrow$ is said to be 3-eternal, if for any finite group $G$, (0) and (3) hold;  $\rightsquigarrow$ is said to be 4-eternal, if for any finite group $G$, (0) and (4) hold;  $\rightsquigarrow$ is said to be 5-eternal, if for any finite group $G$, (0) and (5) hold;  $\rightsquigarrow$ is said to be 6-eternal, if for any finite group $G$, (0) and (6) hold;  $\rightsquigarrow$ is said to be 7-eternal, if for any finite group $G$, (0) and (7) hold;  $\rightsquigarrow$ is said to be 8-eternal, if for any finite group $G$, (0) and (8) hold; $\rightsquigarrow$ is said to be 9-eternal, if for any finite group $G$, (0) and (9) hold.  Also, $\rightsquigarrow$ is said to be semi-1-eternal, if for any finite group $G$, (0) and (1') hold;  $\rightsquigarrow$ is said to be semi-2-eternal, if for any finite group $G$, (0) and (2') hold; $\rightsquigarrow$ is said to be semi-3-eternal, if for any finite group $G$, (0) and (3') hold.
\begin{itemize}
\item[(0)] For any subgroups $H \leq K \leq G$, if $H\rightsquigarrow G$, then $H\rightsquigarrow K$.
\item[(1)] For any $p$ a prime divisor of $|G|$, if there exists a subgroup $D$ of $O_p (G)$ such that any subgroup $H$ of $O_p (G)$ of order $1<|D|<|O_p (G)|$ satisfies the relation $H \rightsquigarrow G$, and every cyclic subgroup of $O_p (G)$ of order $4$ satisfies the relation $H \rightsquigarrow G$ (if $O_p (G)$ is non-abelian and $|D|=2$), then $O_p (G) \leq Z_{\mathfrak{U}} (G)$.
\item[(2)] For any $p$ a prime divisor of $|G|$, if there exists a subgroup $D$ of $O_p (G)$ such that any subgroup $H$ of $O_p (G)$ of order $1<|D|<|O_p (G)|$ satisfies the relation $H \rightsquigarrow G$, and every cyclic subgroup of $O_p (G)$ of order $4$ satisfies the relation $H \rightsquigarrow G$ (if $O_p (G)$ is non-abelian, $|O_p (G)| >4$ and $|D|=2$), then $O_p (G) \leq Z_{\mathfrak{U}} (G)$.
\item[(3)] For any $p$ a prime divisor of $|G|$, if there exists a subgroup $D$ of $O_p (G)$ such that any subgroup $H$ of $O_p (G)$ of order $1<|D|<|O_p (G)|$ satisfies the relation $H \rightsquigarrow G$, and every cyclic subgroup of $O_p (G)$ of order $4$ satisfies the relation $H \rightsquigarrow G$ (if $O_p (G)$ is non-abelian  and $p=2$), then $O_p (G) \leq Z_{\mathfrak{U}} (G)$.
\item[(4)] For any $p$ a prime divisor of $|G|$, if there exists a subgroup $D$ of $O_p (G)$ such that any subgroup $H$ of $O_p (G)$ of order $1<|D|<|O_p (G)|$ satisfies the relation $H \rightsquigarrow G$, and every cyclic subgroup of $O_p (G)$ of order $4$ satisfies the relation $H \rightsquigarrow G$ (if   $p=2$), then $O_p (G) \leq Z_{\mathfrak{U}} (G)$.
\item[(5)] For any $p$ an odd prime divisor of $|G|$, if there exists a subgroup $D$ of $O_p (G)$ such that any subgroup $H$ of $O_p (G)$ of order $1\leq |D|<|O_p (G)|$ or $p|D|$ satisfies the relation $H \rightsquigarrow G$, then $O_p (G) \leq Z_{\mathfrak{U}} (G)$.
\item[(6)] For any $p$ an odd prime divisor of $|G|$, if there exists a subgroup $D$ of $O_p (G)$ such that any subgroup $H$ of $O_p (G)$ of order $1\leq |D|<|O_p (G)|$   satisfies the relation $H \rightsquigarrow G$, then $O_p (G) \leq Z_{\mathfrak{U}} (G)$.
\item[(7)] For any $p$ a prime divisor of $|G|$, if there exists a subgroup $D$ of $O_p (G)$ such that any subgroup $H$ of $O_p (G)$ of order $1\leq |D|<|O_p (G)|$ or $p|D|$ satisfies the relation $H \rightsquigarrow G$, and every cyclic subgroup of $O_p (G)$ of order $4$ satisfies the relation $H \rightsquigarrow G$ (if   $|D|=2=p$),  then $O_p (G) \leq Z_{\mathfrak{U}} (G)$.
\item[(8)] For any $p$ a prime divisor of $|G|$, if there exists a subgroup $D$ of $O_p (G)$ such that any subgroup $H$ of $O_p (G)$ of order $1< |D|<|O_p (G)|$  satisfies the relation $H \rightsquigarrow G$, and every cyclic subgroup of $O_p (G)$ of order $2 |D|$ satisfies the relation $H \rightsquigarrow G$ (if $O_p (G)$ is a non-abelian 2-group and $|O_p (G) :D|>2$),  then $O_p (G) \leq Z_{\mathfrak{U}} (G)$.
\item[(9)] For any $p$ a  prime divisor of $|G|$, if there exists a subgroup $D$ of $O_p (G)$ such that any subgroup $H$ of $O_p (G)$ of order $1< |D|<|O_p (G)|$ or $p|D|$ satisfies the relation $H \rightsquigarrow G$, then $O_p (G) \leq Z_{\mathfrak{U}} (G)$.
\item[(1')] For any $p$ a prime divisor of $|G|$, if any subgroup $H$ of $O_p (G)$ of order $p$ satisfies the relation $H \rightsquigarrow G$, and every cyclic subgroup of $O_p (G)$ of order $4$ satisfies the relation $H \rightsquigarrow G$ (if $O_p (G)$ is non-abelian and $p=2$), then $O_p (G) \leq Z_{\mathfrak{U}} (G)$.
\item[(2')] For any $p$ a prime divisor of $|G|$, if any subgroup $H$ of $O_p (G)$ of order $p$ satisfies the relation $H \rightsquigarrow G$, and every cyclic subgroup of $O_p (G)$ of order $4$ satisfies the relation $H \rightsquigarrow G$ (if  $p=2$), then $O_p (G) \leq Z_{\mathfrak{U}} (G)$.
\item[(3')] For any $p$ a prime divisor of $|G|$, if any subgroup $H$ of $O_p (G)$ of order $p$ satisfies the relation $H \rightsquigarrow G$, and every cyclic subgroup of $O_p (G)$ of order $4$ satisfies the relation $H \rightsquigarrow G$ (if  $P$ is a non-cyclic 2-group), then $O_p (G) \leq Z_{\mathfrak{U}} (G)$.
\end{itemize}
\end{definition}

With the definition above, we are now ready to obtain the following theorem, as a way to generalize the work of many specialized results which focus on only one or two normalities.
\begin{theorem}\label{3.1}
Let $\rightsquigarrow$ be a 1-eternal relation, $G$ be a finite group, $p$ be a prime divisor of $|G|$, and $S$ be a Sylow $p$-subgroup of $G$. Suppose that there exists a subgroup $1<D<S$ such that any subgroup of $S$ 
with order $|D|$ or $p|D|$ is abelian, any subgroup $H$ of $S$ of order $|D|$ or any cyclic subgroup $H$ of $S$ of order $4$ (If $S$ is a non-abelian $2$-group and $|D| =2$) satisfies the relation $H \rightsquigarrow G$, then $\mathcal{F}_S (G)$  is supersolvable.
\end{theorem}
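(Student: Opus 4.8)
The plan is to induct on $|G|$: assume the theorem fails and let $G$ be a counterexample of least order. The engine is Lemma~\ref{2}, which reduces the supersolvability of $\mathcal{F}_S(G)$ to two assertions: (a) $O_p(G)\le Z_{\mathfrak{U}}(G)$; and (b) for every proper subgroup $H\le G$ with $O_p(G)<S\cap H\in{\rm Syl}_p(H)$, the fusion system $\mathcal{F}_{S\cap H}(H)$ is supersolvable. A $1$-eternal relation is designed precisely so that its axioms (0) and (1) deliver these two ingredients, so the bulk of the argument consists of feeding the hypotheses into those axioms and then quoting Lemma~\ref{2} for the contradiction. Note that the abelianity clause of the hypothesis plays no role in (a); it is there to make the hypothesis self-reproducing under the induction in (b) and to drive the structural analysis of $S$ in the degenerate cases below.

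For (a), every subgroup of $O_p(G)$ of order $|D|$ is in particular a subgroup of $S$ of order $|D|$, hence satisfies $H\rightsquigarrow G$; and in the exceptional situation each cyclic subgroup of $O_p(G)$ of order $4$ satisfies the relation as well, since $|D|=2$ forces $p=2$ and $O_p(G)$ non-abelian forces $S$ non-abelian, activating the order-$4$ clause of the hypothesis. Thus, as soon as $1<|D|<|O_p(G)|$, axiom (1) of Definition~\ref{eternal} gives $O_p(G)\le Z_{\mathfrak{U}}(G)$ immediately. For (b), fix such an $H$ and, after replacing it by a conjugate, assume $S\cap H\le S$. Every subgroup of $S\cap H$ of order $|D|$ or $p|D|$ is a subgroup of $S$ of that order, hence abelian, and by axiom (0) every such subgroup of order $|D|$ that satisfies $\rightsquigarrow G$ also satisfies $\rightsquigarrow H$ (the order-$4$ subgroups transfer identically). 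Hence, provided $1<|D|<|S\cap H|$, the triple $(H,S\cap H,D)$ satisfies the full hypothesis of the theorem, and minimality of $G$ forces $\mathcal{F}_{S\cap H}(H)$ to be supersolvable. With (a) and (b) in hand, Lemma~\ref{2} makes $\mathcal{F}_S(G)$ supersolvable, contradicting the choice of $G$.

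I expect the main obstacle to be the boundary behaviour of the parameter $|D|$, where the two clean applications above break down: axiom (1) in (a) needs $|D|<|O_p(G)|$, and the induction in (b) needs $|D|<|S\cap H|$, and both can fail. When $|D|\ge|S\cap H|$ one checks that $S\cap H$ embeds into an abelian subgroup of $S$ of order $|D|$ and is therefore abelian, so Burnside's fusion theorem shows $\mathcal{F}_{S\cap H}(H)$ is controlled by $N_H(S\cap H)$ and must be handled directly rather than by the inductive hypothesis. Symmetrically, when $|D|\ge|O_p(G)|$ the subgroup $O_p(G)$ is abelian (the case $O_p(G)=1$ being trivial) but need not be $\mathfrak{U}$-hypercentral, so (a) cannot be read off from axiom (1). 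In these ranges $|D|$ lies close to $|S|$, which constrains $S$ to be (nearly) a minimal non-abelian group or an $\mathcal{A}_2$-group; disposing of them is exactly what the structural lemmas \ref{13}, \ref{14}, and \ref{15} are for, and the remaining work is to combine that classification with the relation $\rightsquigarrow$ and to confirm that the exceptional order-$4$ clause is transmitted correctly throughout. The generic ranges, by contrast, reduce as sketched to a routine invocation of Lemma~\ref{2} together with the induction.
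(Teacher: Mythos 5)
Your overall skeleton (minimal counterexample, feed the hypotheses into axioms (0) and (1), conclude via Lemma~\ref{2}) matches the paper's, but there is a genuine gap at exactly the point you flag as ``the main obstacle'': you never actually prove that the boundary cases cannot occur, and your proposed substitute for handling them is not workable as stated. The paper dissolves these cases with an argument you are missing entirely: it works with the set $\mathcal{E}_{\mathcal{F}}^{*}$ of $\mathcal{F}$-essential subgroups (together with $S$). First, the abelianity hypothesis is used to show that \emph{every} $Q\in\mathcal{E}_{\mathcal{F}}^{*}$ satisfies $|Q|\ge p|D|$: if $|Q|<p|D|$, then $Q$ sits inside an abelian subgroup $R$ of order $p|D|$, whence $R\le C_S(Q)=Z(Q)\le Q$ by $\mathcal{F}$-centricity, a contradiction. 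Second, if $Q\in\mathcal{E}_{\mathcal{F}}^{*}$ is not normal in $G$, then $N_G(Q)$ is proper with $|N_S(Q)|\ge p|D|$, so the inductive step (your (b), the paper's Step~1) makes $N_{\mathcal{F}}(Q)$ supersolvable. If this happened for all members of $\mathcal{E}_{\mathcal{F}}^{*}$, Lemma~\ref{1} would force $\mathcal{F}$ supersolvable, contradicting minimality; hence some $Q\in\mathcal{E}_{\mathcal{F}}^{*}$ is normal in $G$, giving $|O_p(G)|\ge|Q|\ge p|D|$. This is the paper's Step~3, and it is precisely what guarantees that both of your ``degenerate ranges'' ($|D|\ge|O_p(G)|$, and $|D|\ge|S\cap H|$ for the subgroups $H$ relevant to Lemma~\ref{2}, which satisfy $|S\cap H|>|O_p(G)|$) are vacuous in a minimal counterexample. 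Note this is also where the abelianity of subgroups of order $p|D|$ earns its keep — not merely, as you suggest, to make the hypothesis self-reproducing.

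By contrast, your plan for the boundary cases — Burnside's fusion theorem when $S\cap H$ is abelian, plus the structural Lemmas~\ref{13}, \ref{14}, \ref{15} — is not carried out and would not obviously succeed. Control of fusion by $N_H(S\cap H)$ does not by itself yield supersolvability of $\mathcal{F}_{S\cap H}(H)$ (one still needs a chain of strongly closed subgroups with cyclic quotients), and Lemmas~\ref{13}--\ref{15} are used in the paper for the proofs of the genuinely fusion-theoretic results in Section~\ref{Section 4} (e.g.\ Theorem~\ref{D1}), where no abelianity hypothesis on subgroups of order $p|D|$ is available; they play no role in Theorem~\ref{3.1}. To repair your proof, replace the speculative case analysis with the $\mathcal{E}_{\mathcal{F}}^{*}$ argument above: prove $|Q|\ge p|D|$ for all $Q\in\mathcal{E}_{\mathcal{F}}^{*}$, invoke Lemma~\ref{1} to produce a member of $\mathcal{E}_{\mathcal{F}}^{*}$ normal in $G$, conclude $|O_p(G)|\ge p|D|$, and only then apply axiom (1) and Lemma~\ref{2} as you describe.
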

\begin{proof}
Assume that the theorem is false, and let $G$ be a counterexample of  minimal order. Now denote $\mathcal{F}_S (G)$ by $\mathcal{F}$.
\begin{itemize}
\item[\textbf{Step 1.}] Let $H$ be a proper subgroup of $G$ such that $S \cap H \in {\rm Syl}_p (H)$ and $|S \cap H| \geq p |D|$. Then $\mathcal{F}_{S \cap H} (H)$ is supersolvable.
\end{itemize}

By our hypothesis, every subgroup $T$ of $S \cap H$ with order $|D|$ or $4$ (If $S \cap H$ is a non-abelian $2$-group and $|D| =2$)  satisfies the relation $T \rightsquigarrow G$. Then every subgroup $T$ of $S \cap H$ with order $|D|$ or any cyclic subgroup $T$ of $S \cap H$ of order $4$ (If $S \cap H$ is a non-abelian $2$-group and $|D| =2$) satisfies the relation $T \rightsquigarrow H$. Clearly any subgroup of $S \cap H$ with order $|D|$ or $p|D|$ is abelian. Since $1<|D| <|S \cap H|$, $H$ satisfies the hypothesis and it follows from the minimal choice of $G$ that $\mathcal{F}_{S \cap H} (H)$ is supersolvable.
\begin{itemize}
\item[\textbf{Step 2.}] Let $Q \in \mathcal{E}_{\mathcal{F}} ^{*}$, then $|Q| \geq p|D|$. If moreover that $Q \not\unlhd G$, then $N_{\mathcal{F}} (Q)$ is supersolvable.
\end{itemize}

Suppose that there exists a subgroup $Q \in \mathcal{E}_{\mathcal{F}} ^{*}$ such that $|Q| < p|D|$. Then there is a subgroup $R$ of $S$ such that $|R| = p |D|$, and $Q <R$. By our hypothesis, $R$ is an abelian subgroup of $S$, and therefore $R \leq C_S (Q)$. Since $Q < R \leq S$, we conclude from $Q$ is a member of $\mathcal{E}_{\mathcal{F}} ^{*}$ that $Q$ is $\mathcal{F}$-essential. By the definition, $Q$ is $\mathcal{F}$-centric. Hence $R \leq C_S (Q) =Z(Q) \leq Q$, a contradiction. Thus $|Q| \geq p|D|$.

Assume that $Q$ is not normal in $G$. Therefore $N_G (Q)$ is a proper subgroup of $G$. Since $Q \in \mathcal{E}_{\mathcal{F}} ^{*}$, $Q$ is fully $\mathcal{F}$-normalized or $Q=S$. Clearly $S$ is fully $\mathcal{F}$-normalized, hence $Q$ is always fully $\mathcal{F}$-normalized. By the argument below {{\cite[Definition 2.4]{AK}}}, $S \cap N_G (Q) = N_S (Q) \in {\rm Syl}_p (N_G(Q))$. Since $|N_S (Q)| \geq |Q| \geq p|D|$, it yields that $N_G (Q)$ satisfies the hypothesis of Step 1, and so $\mathcal{F}_{N_S (Q)} (N_G(Q))=N_{\mathcal{F}} (Q)$ is supersolvable.
\begin{itemize}
\item[\textbf{Step 3.}] $|O_p (G)| \geq p|D|$.
\end{itemize}

Assume that there does not exist a subgroup $Q \in \mathcal{E}_{\mathcal{F}} ^{*}$ such that $Q \unlhd G$. Then for each $Q \in  \mathcal{E}_{\mathcal{F}} ^{*}$, the fusion system $N_{\mathcal{F}} (Q)$ is supersolvable by Step 2. By Lemma \ref{1}, $\mathcal{F}$ is supersolvable, a contradiction. Thus there exists a subgroup $Q \in \mathcal{E}_{\mathcal{F}} ^{*}$ such that $Q \unlhd G$. Hence we conclude from Step 2 that $|O_p (G)| \geq |Q| \geq p|D|$.
\begin{itemize}
\item[\textbf{Step 4.}] $O_p (G) \leq Z_{\mathfrak{U}} (G)$.
\end{itemize}

It follows from $|O_p (G)| \geq p|D|$ that there exists a subgroup $D_0 \leq O_p (G)$ with order $|D|$ such that any subgroup $T$ of $O_p (G)$ of order $|D|$ or any cyclic subgroup $T$ of $O_p (G)$ of order $4$ (If $O_p (G)$ is a non-abelian $2$-group and $|D| =2$) satisfies the relation $T \rightsquigarrow G$. Since $\rightsquigarrow$ is a 1-eternal relation, it yields that $O_p (G) \leq Z_{\mathfrak{U}} (G)$ and this part is complete.
\begin{itemize}
\item[\textbf{Step 5.}] Final contradiction.
\end{itemize}

Suppose that $H$ is a proper subgroup of $G$ such that $O_p (G) < S \cap H$ and $S \cap H \in {\rm Syl}_p (H)$. By Step 1 and Step 3, $|S \cap H| >|O_p (G)| \geq p|D|$ and so $\mathcal{F}_{S \cap H} (H)$ is supersolvable. Since $O_p (G) \leq Z_{\mathfrak{U}} (G)$ by Step 4, it follows directly from Lemma \ref{2} that $\mathcal{F}_S (G)$ is supersolvable, a contradiction. Hence our proof is complete.
\end{proof}
\begin{corollary}\label{generalised}
Let $G$ be a finite group and $S$ be a Sylow $p$-subgroup of $G$. Assume that there exists a subgroup $D$ of $S$ with $1<|D|<|S|$ such that every subgroup of $S$ of order $|D|$ or $p|D|$ is abelian, every subgroup of $S$ of order $|D|$ and every cyclic subgroup of $S$ of order 4 (if $S$ is a non-abelian 2-group and $|D|=2$) are generalized $S\Phi$-supplemented in $G$, then $\mathcal{F}_S (G)$ is supersolvable.
\end{corollary}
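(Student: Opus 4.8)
The plan is to recognize this corollary as a direct application of Theorem~\ref{3.1}. Thus the whole task reduces to verifying that the relation $\rightsquigarrow$ given by ``$H \rightsquigarrow G$ if and only if $H$ is generalized $S\Phi$-supplemented in $G$'' is $1$-eternal in the sense of Definition~\ref{eternal}. Once this is in hand, the hypotheses of the corollary---that every subgroup of $S$ of order $|D|$ or $p|D|$ is abelian, and that every subgroup of order $|D|$ together with every cyclic subgroup of order $4$ in the non-abelian $2$-group case is generalized $S\Phi$-supplemented in $G$---match verbatim the hypotheses of Theorem~\ref{3.1}, and the supersolvability of $\mathcal{F}_S(G)$ follows at once.

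To show $\rightsquigarrow$ is $1$-eternal I must verify conditions (0) and (1) of Definition~\ref{eternal}. Condition (1) is already available: it is precisely Lemma~\ref{30} applied with $P = O_p(G)$, which states that if $O_p(G)$ has a subgroup $D$ with $1<|D|<|O_p(G)|$ such that every subgroup of $O_p(G)$ of order $|D|$, and every cyclic subgroup of order $4$ in the appropriate non-abelian $2$-group case, is generalized $S\Phi$-supplemented in $G$, then $O_p(G) \leq Z_{\mathfrak{U}}(G)$. Hence (1) holds with no additional argument.

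The remaining ingredient is condition (0): if $H$ is generalized $S\Phi$-supplemented in $G$ and $H \leq K \leq G$, then $H$ is generalized $S\Phi$-supplemented in $K$. I would establish this by mimicking the proof of Lemma~\ref{25}, which settles the weakly $S\Phi$-supplemented case. Unwinding the definition of \cite{MCG}, there is a supplement $T$ to $H$ in $G$ with the relevant intersection $T \cap H$ controlled by $\Phi(H)$ together with the subgroup of $H$ generated by its suitably permutable subgroups; intersecting with $K$ and invoking the Dedekind modular law gives $K = H(T \cap K)$, while the monotonicity of that generated subgroup under restriction from $G$ to $K$ forces $H \cap T \cap K$ into the subgroup of $K$ required by the definition. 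This yields $H \rightsquigarrow K$ and completes the verification.

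I expect the main obstacle to be condition (0). Unlike condition (1), which Lemma~\ref{30} supplies ready-made, the inheritance of the generalized $S\Phi$-supplemented property to intermediate subgroups is not recorded in the preliminaries and must be proved separately. The only delicate point is checking that the particular core-type subgroup appearing in the definition of \cite{MCG} restricts well from $G$ to $K$, exactly as its analogue does in Lemma~\ref{25}; once the definition is written out this is a routine Dedekind modular law computation, after which Theorem~\ref{3.1} closes the proof.
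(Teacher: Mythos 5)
Your proposal is correct and is essentially the paper's own proof: the paper likewise reduces the corollary to Theorem~\ref{3.1} by checking that the generalized $S\Phi$-supplemented property is a 1-eternal relation in the sense of Definition~\ref{eternal}, with condition (1) supplied by Lemma~\ref{30} exactly as you do. The only divergence is condition (0), which the paper dispatches by citing \cite[Lemma 2.3 (1)]{MCG} rather than reproving it; your Dedekind-modular-law argument modelled on Lemma~\ref{25} is a valid substitute, provided that any condition the definition in \cite{MCG} imposes on the supplement (such as subnormality) is also checked to pass to its intersection with $K$.
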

\begin{proof}
It follows from Theorem \ref{3.1} that we only need to prove that generalized $S\Phi$-supplemented property is a 1-eternal relation. Let $G$ be a finite group and $H$ be a subgroup of $G$ which is generalized $S\Phi$-supplemented in $G$. Then we obtain from Lemma {{\cite[Lemma 2.3 (1)]{MCG}}} that $H$ is generalized $S\Phi$-supplemented in $K$ for any $H \leq K \leq G$, which satisfies (0) in Definition \ref{eternal}. By Lemma \ref{30}, we conclude that generalized $S\Phi$-supplemented property  satisfies (1) in Definition \ref{eternal}. Therefore generalized $S\Phi$-supplemented property is an eternal relation and our proof is complete.
\end{proof}
\begin{corollary}\label{ICC}
Let $G$ be a finite group, $p$ be a prime divisor of $|G|$, and $S$ be a Sylow $p$-subgroup of $G$. Suppose that there exists a subgroup $1<D<S$ such that any subgroup of $S$ with order $|D|$ or $p|D|$ is abelian, any subgroup $H$ of $S$ of order $|D|$ or $4$ (If $S$ is a non-abelian $2$-group and $|D| =2$) is a strong $ICC$-subgroup of $G$, then $\mathcal{F}_S (G)$  is supersolvable.
\end{corollary}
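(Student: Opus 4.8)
The plan is to reduce the statement to Theorem~\ref{3.1} by showing that the relation ``$H$ is a strong $ICC$-subgroup of $G$'' is $1$-eternal in the sense of Definition~\ref{eternal}. The hypotheses of the corollary on the existence of $D$ and on the abelianness of the subgroups of order $|D|$ or $p|D|$ coincide verbatim with those of Theorem~\ref{3.1}, so once $1$-eternality is verified the supersolvability of $\mathcal{F}_S(G)$ follows at once, exactly as in the proof of Corollary~\ref{generalised}.

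First I would verify condition $(0)$. Let $H \leq K \leq G$ and suppose $H$ is a strong $ICC$-subgroup of $G$, so that by Definition~\ref{1.4}(1) $H$ is an $ICC$-subgroup of $L$ for every $L$ with $H \leq L \leq G$. The overgroups $L$ satisfying $H \leq L \leq K$ form a subfamily of these, since $K \leq G$; hence $H$ is an $ICC$-subgroup of $L$ for every such $L$, which is precisely the statement that $H$ is a strong $ICC$-subgroup of $K$. Thus $(0)$ holds. It is worth noting that this descent to intermediate overgroups, which an ordinary $ICC$-subgroup need not satisfy, is exactly the reason the ``strong'' strengthening is introduced: it is what makes Step~1 in the proof of Theorem~\ref{3.1} applicable.

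Next I would verify condition $(1)$. Fix a prime $p$ dividing $|G|$ and assume there is a subgroup $D$ of $O_p(G)$ such that every subgroup of $O_p(G)$ of order $|D|$ (with $1 < |D| < |O_p(G)|$), together with every cyclic subgroup of order $4$ when $O_p(G)$ is a non-abelian $2$-group and $|D| = 2$, is a strong $ICC$-subgroup of $G$. Taking $L = G$ in Definition~\ref{1.4}(1) shows each of these subgroups is in particular an $ICC$-subgroup of $G$. Since $O_p(G)$ is a normal $p$-subgroup of $G$, Lemma~\ref{3} now applies with $P = O_p(G)$ and yields $O_p(G) \leq Z_{\mathfrak{U}}(G)$, which is condition $(1)$.

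Having established $(0)$ and $(1)$, the relation is $1$-eternal, and Theorem~\ref{3.1} gives that $\mathcal{F}_S(G)$ is supersolvable. I do not anticipate a genuine obstacle here: the only points demanding attention are matching the order clauses of Lemma~\ref{3} against condition $(1)$ of Definition~\ref{eternal}, and recognizing that for the strong $ICC$ relation condition $(0)$ is a formal consequence of the definition rather than something to be proved.
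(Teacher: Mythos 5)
Your proposal is correct and follows essentially the same route as the paper's own proof: both reduce to Theorem~\ref{3.1} by verifying that the strong $ICC$-property is a 1-eternal relation, with condition (0) of Definition~\ref{eternal} following formally from Definition~\ref{1.4}(1) and condition (1) from Lemma~\ref{3} applied to $P = O_p(G)$. The only difference is that you spell out the details the paper leaves implicit (the subfamily argument for (0), and taking $L = G$ to pass from strong $ICC$ to $ICC$ before invoking Lemma~\ref{3}).
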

\begin{proof}
It follows from Theorem \ref{3.1} that we only need to prove that generalized  strong $ICC$-property is a 1-eternal relation. Let $G$ be a finite group and $H$ be a subgroup of $G$ which is a strong $ICC$-subgroup of $G$. Then we obtain from Definition \ref{1.4} (1)  that $H$ is a strong $ICC$-subgroup of $K$ for any $H \leq K \leq G$, which satisfies (0) in Definition \ref{eternal}. By Lemma \ref{3}, we conclude that  strong $ICC$-property  satisfies (1) in Definition \ref{eternal}. Therefore strong $ICC$-property is an eternal relation and our proof is complete.
\end{proof}
\begin{theorem}\label{2-eternal}
Let $\rightsquigarrow$ be a 2-eternal relation, $G$ be a finite group, $p$ be a prime divisor of $|G|$, and $S$ be a Sylow $p$-subgroup of $G$. Suppose that there exists a subgroup $D$ of $S$ with $1 < |D| < |S|$ such that any subgroup of $S$ 
with order $|D|$ or $p|D|$ is abelian,  any subgroup $H$ of $S$ with order $|D|$ and any cyclic subgroup $H$ of $S$ with order $4$ (if $S$ is non-abelian, $|S| >4$ and  $p=2$) satisfies the relation $H \rightsquigarrow G$, then $\mathcal{F}_S (G)$ is supersolvable.
\end{theorem}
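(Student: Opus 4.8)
The plan is to argue by minimal counterexample, reproducing the five-step skeleton of the proof of Theorem \ref{3.1} almost verbatim and only adjusting the single step where the defining property of the relation is actually invoked. So I would take a counterexample $G$ of minimal order, write $\mathcal{F} = \mathcal{F}_S(G)$, and check that each of the five conclusions forcing supersolvability survives the change from a 1-eternal to a 2-eternal relation.

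The first three steps carry over unchanged, because they never use clause (1) of Definition \ref{eternal} — they rely only on the hereditary clause (0) together with the abelian/centric structure. Concretely, Step 1 shows that every proper subgroup $H$ with $S \cap H \in {\rm Syl}_p(H)$ and $|S\cap H| \ge p|D|$ has $\mathcal{F}_{S\cap H}(H)$ supersolvable, by checking that $H$ inherits the hypotheses (the relation descends from $G$ to $H$ by clause (0), and the abelianness of small subgroups is automatic) and then invoking minimality. Step 2 shows that each $Q \in \mathcal{E}_{\mathcal{F}}^*$ has $|Q| \ge p|D|$ — otherwise an abelian subgroup $R$ of order $p|D|$ containing $Q$ would lie in $C_S(Q) = Z(Q) \le Q$, contradicting $Q < R$ — and that if $Q \not\unlhd G$ then $N_{\mathcal{F}}(Q)$ is supersolvable, via Step 1 applied to $N_G(Q)$ using $N_S(Q)\in{\rm Syl}_p(N_G(Q))$. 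Step 3 then deduces $|O_p(G)| \ge p|D|$: if no member of $\mathcal{E}_{\mathcal{F}}^*$ were normal in $G$, Lemma \ref{1} combined with Step 2 would make $\mathcal{F}$ supersolvable, a contradiction, so some normal $Q \in \mathcal{E}_{\mathcal{F}}^*$ exists and $|O_p(G)| \ge |Q| \ge p|D|$.

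Step 4, where I would establish $O_p(G) \le Z_{\mathfrak{U}}(G)$, is the only place the change matters, and it is the step I expect to need the most care. From $|O_p(G)| \ge p|D|$ I can pick $D_0 \le O_p(G)$ with $|D_0| = |D|$, and by the theorem's hypothesis every subgroup of $O_p(G)$ of order $|D|$ satisfies $\rightsquigarrow G$. The delicate point is transferring the order-$4$ clause, which in the 2-eternal condition (2) is imposed precisely when $O_p(G)$ is non-abelian, $|O_p(G)| > 4$, and $|D| = 2$. Here I would use two bookkeeping observations: first, $|D| = 2$ forces $p = 2$, since $D \le S$ is a $p$-group; second, if $O_p(G)$ is non-abelian with $|O_p(G)| > 4$ then $S$ is non-abelian with $|S| > 4$, so the theorem's own order-$4$ hypothesis on $S$ activates and supplies the relation on all cyclic order-$4$ subgroups of $O_p(G)$. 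The boundary case $|O_p(G)| = 4$ causes no trouble, since every group of order $4$ is abelian, so neither the hypothesis nor clause (2) imposes anything on order-$4$ subgroups. With the premise of clause (2) verified, the fact that $\rightsquigarrow$ is 2-eternal yields $O_p(G) \le Z_{\mathfrak{U}}(G)$.

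Finally, Step 5 derives the contradiction exactly as before: for any proper $H$ with $O_p(G) < S\cap H \in {\rm Syl}_p(H)$ one has $|S\cap H| > |O_p(G)| \ge p|D|$, so $\mathcal{F}_{S\cap H}(H)$ is supersolvable by Steps 1 and 3, and combined with $O_p(G) \le Z_{\mathfrak{U}}(G)$ from Step 4, Lemma \ref{2} forces $\mathcal{F}_S(G)$ to be supersolvable, contradicting the choice of $G$. The genuine content is thus concentrated entirely in the hypothesis-matching of Step 4; everything else is structurally identical to Theorem \ref{3.1}.
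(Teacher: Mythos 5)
Your proposal is correct and takes essentially the same route as the paper: a minimal counterexample following the five-step skeleton of Theorem \ref{3.1} verbatim, with only the step establishing $O_p(G) \leq Z_{\mathfrak{U}}(G)$ invoking 2-eternality, using the same bookkeeping that non-abelianness and order $>4$ pass from $O_p(G)$ (or $S\cap H$) up to $S$ so that the theorem's order-$4$ hypothesis activates exactly when clause (2) demands it. If anything, your handling of the $|D|=2\Rightarrow p=2$ point and the boundary case $|O_p(G)|=4$ is more explicit than the paper's.
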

\begin{proof}
Assume that the theorem is false and let $G$ be a counterexample with $|G|$ minimal. Denote $\mathcal{F}_S (G)$ by $\mathcal{F}$.

Let $H$ be a proper subgroup of $G$ such that $S \cap H \in {\rm Syl}_p (H)$ and $|S \cap H| \geq p|D|$. We predicate that $\mathcal{F}_{S \cap H} (H)$ is supersolvable. By our hypothesis, every subgroup $T_1$ of $S \cap H$ with order $|D|$ and any cyclic subgroup $T_2$ of $S\cap H$ of order $4$ (if $S$ is non-abelian, $|S | >4$ and  $p=2$) satisfies the relation $T_1,T_2 \rightsquigarrow G$. Hence every subgroup $T_1$ of $S \cap H$ with order $|D|$ and any cyclic subgroup $T_2$ of $S\cap H$ of order $4$ (if $S \cap H$ is non-abelian, $|S \cap H| >4$ and  $p=2$) satisfies the relation $T_1,T_2 \rightsquigarrow H$. Since $1<|D|<|S \cap H|$, $H$ satisfies the hypothesis and it follows from the minimal choice of $G$ that $\mathcal{F}_{S \cap H} (H)$ is supersolvable. 
 Now let $Q \in \mathcal{E}_{\mathcal{F}} ^{*}$, Using the same method in Step 2,3 of Theorem \ref{3.1}, we conclude that $|Q| \geq p|D|$, $N_{\mathcal{F}} (Q)$ is supersolvable under the assumption that $Q \not\unlhd G$, and $|O_p (G)| \geq p |D|$.

Now we prove that $O_p (G) \leq Z_{\mathfrak{U}} (G)$. It follows from $|O_p (G)| \geq p |D|$ that there exists a subgroup $D_0 \leq O_p (G)$ with order $|D|$ such that every subgroup $T_1$ of $ O_p (G)$ with order $|D|$ and any cyclic subgroup $T_2$ of $ O_p (G)$ of order $4$ (if $O_p (G)$ is non-abelian, $|O_p (G) |>4$ and $p=2$) satisfies the relation $T_1,T_2 \rightsquigarrow G$. Since $\rightsquigarrow$ is a 2-eternal relation, it yields that $O_p (G) \leq Z_{\mathfrak{U}} (G)$ and this part is complete. Using the same method in Step 5 of Theorem \ref{3.1}, we can derive a contradiction, and the proof is complete.
\end{proof}
\begin{corollary}\label{U}
Let $G$ be a finite group, $p$ be a prime divisor of $|G|$, and $S$ be a Sylow $p$-subgroup of $G$. Suppose that there exists a subgroup $1<D<S$ such that any subgroup of $S$ with order $|D|$ or $p|D|$ is abelian, any subgroup of $S$ of order $|D|$ and any cyclic subgroup of $S$ of order $4$ (If $S$ is a non-abelian $2$-group, $|S|>4$ and $|D| =2$) is strongly $\mathfrak{U}$-embedded in $G$, then $\mathcal{F}_S (G)$ is supersolvable.
\end{corollary}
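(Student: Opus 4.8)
The plan is to invoke Theorem \ref{2-eternal}, exactly as Corollaries \ref{generalised} and \ref{ICC} invoke Theorem \ref{3.1}. The hypothesis of the present statement matches verbatim the hypothesis of Theorem \ref{2-eternal} once one reads ``strongly $\mathfrak{U}$-embedded in $G$'' as the relation $H \rightsquigarrow G$; in particular the exceptional clause (if $S$ is a non-abelian $2$-group, $|S|>4$ and $|D|=2$) is identical. Hence it suffices to verify that being strongly $\mathfrak{U}$-embedded is a $2$-eternal relation in the sense of Definition \ref{eternal}, i.e.\ that this property satisfies both condition (0) and condition (2).

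For condition (0), I would argue directly from Definition \ref{1.4} (2). Suppose $H$ is strongly $\mathfrak{U}$-embedded in $G$ and fix $H \leq K \leq G$. Then for every $L$ with $H \leq L \leq K$ one has $H \leq L \leq G$, so $H$ is $\mathfrak{U}$-embedded in $L$ by hypothesis; since this holds for all such $L$, the definition yields that $H$ is strongly $\mathfrak{U}$-embedded in $K$. This is precisely the persistence-under-overgroups demanded by condition (0).

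For condition (2), I would appeal to Lemma \ref{4} with $P := O_p(G)$. The key observation is that a strongly $\mathfrak{U}$-embedded subgroup of $G$ is in particular $\mathfrak{U}$-embedded in $G$ (take $K = G$ in Definition \ref{1.4} (2)). Writing $|P| = p^n$ and $|D| = p^k$, the orders prescribed in condition (2) --- subgroups of order $|D|$, together with cyclic subgroups of order $4$ when $P$ is a non-abelian $2$-group with $|P|>4$ and $|D|=2$ --- translate precisely into the hypothesis of Lemma \ref{4}: the relation $1<|D|<|O_p(G)|$ gives $n>k\geq 1$, while $|P|>4$ with $|D|=2$ forces $p=2$, $k=1$ and $n\geq 3$, so that $n-1>k=1$ holds in the relevant $2$-group case. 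Lemma \ref{4} then yields $P = O_p(G) \leq Z_{\mathfrak{U}}(G)$, which is exactly condition (2).

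Having verified (0) and (2), the property is $2$-eternal and Theorem \ref{2-eternal} applies, finishing the proof. I do not anticipate a genuine obstacle here: the substantive content is already carried by Theorem \ref{2-eternal} and Lemma \ref{4}, and the only point requiring care is the bookkeeping of the order conditions in the $2$-group exceptional case, namely confirming that $|O_p(G)|>4$ with $|D|=2$ produces the inequality $n-1>k=1$ needed to invoke Lemma \ref{4}.
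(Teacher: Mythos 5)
Your proposal is correct and takes essentially the same route as the paper's own proof: both reduce to Theorem \ref{2-eternal} and verify that the strongly $\mathfrak{U}$-embedded property is a $2$-eternal relation, obtaining condition (0) from Definition \ref{1.4}(2) and condition (2) from Lemma \ref{4}; your write-up is in fact more careful than the paper's, which cites the definition and the lemma without spelling out the inheritance argument, the reduction from ``strongly $\mathfrak{U}$-embedded'' to ``$\mathfrak{U}$-embedded,'' or the numerical translation $n>k\geq 1$, $n-1>k=1$.

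One caveat: your claim that the exceptional clauses match verbatim is not quite accurate. Theorem \ref{2-eternal} triggers its order-$4$ clause when ``$S$ is non-abelian, $|S|>4$ and $p=2$,'' whereas the corollary (like condition (2) and Lemma \ref{4}) triggers it when ``$|D|=2$''; so in the case $p=2$, $S$ non-abelian, $|S|>4$ and $|D|\geq 8$, the corollary's hypotheses supply strictly less than the theorem as stated demands, and the theorem cannot be invoked literally. This slippage is equally present in the paper's own proof, and it is harmless in substance: the order-$4$ hypothesis enters the proof of Theorem \ref{2-eternal} only through condition (2), whose trigger is $|D|=2$, so that proof goes through verbatim under the weaker clause --- but a fully rigorous deduction should either note this or restate Theorem \ref{2-eternal} with ``$|D|=2$'' in place of ``$p=2$.''
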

\begin{proof}
It follows from Theorem \ref{2-eternal} that we only need to prove that strongly $\mathfrak{U}$-embedded property is a 2-eternal relation. Let $G$ be a finite group and $H$ be a subgroup of $G$ which is strongly $\mathfrak{U}$-embedded in $G$. Then we obtain from Definition \ref{1.4} (2) that $H$ is strongly $\mathfrak{U}$-embedded in $K$ for any $H \leq K \leq G$, which satisfies (0) in Definition \ref{eternal}. By Lemma \ref{4}, we conclude that strongly $\mathfrak{U}$-embedded property  satisfies (2) in Definition \ref{eternal}. Therefore strongly $\mathfrak{U}$-embedded property is a 2-eternal relation and our proof is complete.
\end{proof}

\begin{theorem}\label{3-eternal}
Let $\rightsquigarrow$ be a 3-eternal relation, $G$ be a finite group, $p$ be a prime divisor of $|G|$, and $S$ be a Sylow $p$-subgroup of $G$. Suppose that there exists a subgroup $D$ of $S$ with $1 < |D| < |S|$ such that any subgroup of $S$ 
with order $|D|$ or $p|D|$ is abelian,  any subgroup $H$ of $S$ with order $|D|$ and any cyclic subgroup $H$ of $S$ with order $4$ (if $S$ is non-abelian and  $p=2$) satisfies the relation $H \rightsquigarrow G$, then $\mathcal{F}_S (G)$ is supersolvable.
\end{theorem}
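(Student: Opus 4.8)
The plan is to mirror the minimal-counterexample argument used in the proofs of Theorem \ref{3.1} and Theorem \ref{2-eternal}, since the only change is the precise parenthetical condition on cyclic subgroups of order $4$ that accompanies the relation $\rightsquigarrow$. Assume the theorem fails and let $G$ be a counterexample of minimal order, writing $\mathcal{F} = \mathcal{F}_S(G)$. The first step is to show that the hypothesis is inherited by every proper subgroup $H$ of $G$ with $S \cap H \in {\rm Syl}_p(H)$ and $|S \cap H| \geq p|D|$: every subgroup of $S \cap H$ of order $|D|$ or $p|D|$ is abelian because it is such a subgroup of $S$, and the relations $T \rightsquigarrow G$ descend to $T \rightsquigarrow H$ via condition (0) of a $3$-eternal relation. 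Minimality of $G$ then yields that $\mathcal{F}_{S \cap H}(H)$ is supersolvable.

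Next, exactly as in Steps 2 and 3 of Theorem \ref{3.1}, I would analyse $\mathcal{E}_{\mathcal{F}}^*$. The abelian hypothesis on subgroups of order $p|D|$ forces every $Q \in \mathcal{E}_{\mathcal{F}}^*$ to satisfy $|Q| \geq p|D|$, for otherwise an abelian subgroup $R$ of order $p|D|$ properly containing an $\mathcal{F}$-essential, hence $\mathcal{F}$-centric, subgroup $Q$ would give $R \leq C_S(Q) = Z(Q) \leq Q$, a contradiction. When such a $Q$ is not normal in $G$, the fully normalised property together with the first step applied to $N_G(Q)$ shows $N_{\mathcal{F}}(Q) = \mathcal{F}_{N_S(Q)}(N_G(Q))$ is supersolvable. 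If no member of $\mathcal{E}_{\mathcal{F}}^*$ is normal in $G$, then Lemma \ref{1} makes $\mathcal{F}$ supersolvable, contradicting the choice of $G$; hence some $Q \unlhd G$ lies in $\mathcal{E}_{\mathcal{F}}^*$ and $|O_p(G)| \geq |Q| \geq p|D|$.

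The crucial step is to deduce $O_p(G) \leq Z_{\mathfrak{U}}(G)$. Since $|O_p(G)| \geq p|D|$, there is a subgroup $D_0 \leq O_p(G)$ of order $|D|$, and every subgroup of $O_p(G)$ of order $|D|$ satisfies $T \rightsquigarrow G$ by hypothesis. The point requiring care is the order-$4$ clause: condition (3) demands the hypothesis on cyclic subgroups of $O_p(G)$ of order $4$ only when $O_p(G)$ is non-abelian and $p=2$, whereas the theorem supplies it when $S$ is non-abelian and $p=2$. Because $O_p(G) \leq S$, a non-abelian $O_p(G)$ forces $S$ to be non-abelian, so the supplied clause covers precisely the case that condition (3) needs. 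Invoking the defining property (3) of a $3$-eternal relation then gives $O_p(G) \leq Z_{\mathfrak{U}}(G)$.

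Finally, for any proper $H \leq G$ with $O_p(G) < S \cap H$ and $S \cap H \in {\rm Syl}_p(H)$ we have $|S \cap H| > |O_p(G)| \geq p|D|$, so $\mathcal{F}_{S \cap H}(H)$ is supersolvable by the first step; combined with $O_p(G) \leq Z_{\mathfrak{U}}(G)$, Lemma \ref{2} forces $\mathcal{F}_S(G)$ to be supersolvable, the final contradiction. I expect the only genuinely delicate point to be the alignment of the parenthetical order-$4$ hypothesis between the theorem and condition (3); the remainder is a routine transcription of the template already established in Theorems \ref{3.1} and \ref{2-eternal}.
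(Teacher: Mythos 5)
Your proposal is correct and takes essentially the same route as the paper's proof: a minimal counterexample, inheritance of the hypothesis to proper subgroups via condition (0), the analysis of $\mathcal{E}_{\mathcal{F}}^{*}$ borrowed from Steps 2--3 of Theorem \ref{3.1}, an appeal to condition (3) to get $O_p(G) \leq Z_{\mathfrak{U}}(G)$, and the final contradiction via Lemma \ref{2}. Your explicit check that a non-abelian $O_p(G)$ (or $S \cap H$) forces $S$ to be non-abelian, so the theorem's parenthetical order-$4$ clause supplies exactly what condition (3) requires, is a point the paper's proof passes over silently, and it is handled correctly in your write-up.
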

\begin{proof}
Assume that the theorem is false and let $G$ be a counterexample with $|G|$ minimal. Denote $\mathcal{F}_S (G)$ by $\mathcal{F}$.

Let $H$ be a proper subgroup of $G$ such that $S \cap H \in {\rm Syl}_p (H)$ and $|S \cap H| \geq p|D|$. We predicate that $\mathcal{F}_{S \cap H} (H)$ is supersolvable. By our hypothesis, every subgroup $T_1$ of $S \cap H$ with order $|D|$ and any cyclic subgroup $T_2$ of $S\cap H$ of order $4$ (if $S$ is non-abelian  and  $p=2$) satisfies the relation $T_1,T_2 \rightsquigarrow G$. Hence every subgroup $T_1$ of $S \cap H$ with order $|D|$ and any cyclic subgroup $T_2$ of $S\cap H$ of order $4$ (if $S \cap H$ is non-abelian  and  $p=2$) satisfies the relation $T_1,T_2 \rightsquigarrow H$. Since $1<|D|<|S \cap H|$, $H$ satisfies the hypothesis and it follows from the minimal choice of $G$ that $\mathcal{F}_{S \cap H} (H)$ is supersolvable. 
 Now let $Q \in \mathcal{E}_{\mathcal{F}} ^{*}$, Using the same method in Step 2,3 of Theorem \ref{3.1}, we conclude that $|Q| \geq p|D|$, $N_{\mathcal{F}} (Q)$ is supersolvable under the assumption that $Q \not\unlhd G$, and $|O_p (G)| \geq p |D|$.

Now we prove that $O_p (G) \leq Z_{\mathfrak{U}} (G)$. It follows from $|O_p (G)| \geq p |D|$ that there exists a subgroup $D_0 \leq O_p (G)$ with order $|D|$ such that every subgroup $T_1$ of $ O_p (G)$ with order $|D|$ and any cyclic subgroup $T_2$ of $ O_p (G)$ of order $4$ (if $O_p (G)$ is non-abelian   and $p=2$) satisfies the relation $T_1,T_2 \rightsquigarrow G$. Since $\rightsquigarrow$ is a 3-eternal relation, it yields that $O_p (G) \leq Z_{\mathfrak{U}} (G)$ and this part is complete. Using the same method in Step 5 of Theorem \ref{3.1}, we can derive a contradiction, and the proof is complete.
\end{proof}

\begin{theorem}\label{4-eternal}
Let $\rightsquigarrow$ be a 4-eternal relation, $G$ be a finite group, $p$ be a prime divisor of $|G|$, and $S$ be a Sylow $p$-subgroup of $G$. Suppose that there exists a subgroup $D$ of $S$ with $1 < |D| < |S|$ such that any subgroup of $S$ 
with order $|D|$ or $p|D|$ is abelian,  any subgroup $H$ of $S$ with order $|D|$ and any cyclic subgroup $H$ of $S$ with order $4$ (if   $p=2$) satisfies the relation $H \rightsquigarrow G$, then $\mathcal{F}_S (G)$ is supersolvable.
\end{theorem}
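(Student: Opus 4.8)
The plan is to run the same minimal-counterexample argument used for Theorem \ref{3.1}, adapting only the parenthetical hypothesis on cyclic subgroups of order $4$ and the appeal to the axiomatic property. So I would suppose the theorem fails and take a counterexample $G$ of minimal order, writing $\mathcal{F} = \mathcal{F}_S(G)$. The whole scaffolding of the proof is dictated by the earlier theorems; the substance is checking that condition $(4)$ slots into the one place where the axiom is actually invoked.

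First I would reprove the analogue of Step 1: if $H < G$ satisfies $S \cap H \in \mathrm{Syl}_p(H)$ and $|S \cap H| \geq p|D|$, then $\mathcal{F}_{S \cap H}(H)$ is supersolvable. The point is that the hypothesis descends to $H$: every subgroup $T$ of $S \cap H$ of order $|D|$, and every cyclic subgroup of order $4$ when $p = 2$, satisfies $T \rightsquigarrow G$ by hypothesis, whence $T \rightsquigarrow H$ by condition $(0)$ of Definition \ref{eternal} since $T \leq H \leq G$; meanwhile the abelianity of subgroups of order $|D|$ or $p|D|$ is inherited automatically. Because $1 < |D| < |S \cap H|$, the minimality of $G$ forces $\mathcal{F}_{S \cap H}(H)$ to be supersolvable. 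Next I would import Steps 2 and 3 of Theorem \ref{3.1} essentially verbatim, since they use only the abelianity hypothesis, the $\mathcal{F}$-centricity of essential subgroups, Lemma \ref{1}, and the Sylow/normalizer bookkeeping below \cite[Definition 2.4]{AK}, none of which depends on the particular parenthetical clause. This yields $|Q| \geq p|D|$ for every $Q \in \mathcal{E}_{\mathcal{F}}^{*}$, the supersolvability of $N_{\mathcal{F}}(Q)$ whenever $Q \not\unlhd G$, and finally $|O_p(G)| \geq p|D|$.

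The crux, and the only place where $4$-eternality enters, is the analogue of Step 4: showing $O_p(G) \leq Z_{\mathfrak{U}}(G)$. Since $|O_p(G)| \geq p|D| > |D| > 1$, there is a subgroup $D_0 \leq O_p(G)$ of order $|D|$ with $1 < |D_0| < |O_p(G)|$, and every subgroup of $O_p(G)$ of order $|D|$ together with every cyclic subgroup of order $4$ (when $p = 2$) satisfies $\rightsquigarrow G$. This is precisely the hypothesis of condition $(4)$ in Definition \ref{eternal}, so $4$-eternality delivers $O_p(G) \leq Z_{\mathfrak{U}}(G)$.

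Finally I would close exactly as in Step 5 of Theorem \ref{3.1}: for any proper $H < G$ with $O_p(G) < S \cap H \in \mathrm{Syl}_p(H)$ we have $|S \cap H| > |O_p(G)| \geq p|D|$, so $\mathcal{F}_{S \cap H}(H)$ is supersolvable by the Step~1 analogue; combined with $O_p(G) \leq Z_{\mathfrak{U}}(G)$, Lemma \ref{2} yields that $\mathcal{F}_S(G)$ is supersolvable, contradicting the choice of $G$. I do not expect a genuine obstacle here, since the argument is a faithful transcription of the template already established for the $1$-, $2$-, and $3$-eternal cases; the only subtlety is verifying that the theorem's parenthetical clause ``(if $p=2$)'' matches the hypothesis of $(4)$ exactly, so that the axiom applies with no gap. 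Everything else is routine bookkeeping inherited from Theorem \ref{3.1}.
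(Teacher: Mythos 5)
Your proposal is correct and follows essentially the same argument as the paper: a minimal counterexample, the descent of the hypothesis to proper subgroups via condition $(0)$, importation of Steps 2--3 of Theorem \ref{3.1}, the application of $4$-eternality (condition $(4)$) to get $O_p(G) \leq Z_{\mathfrak{U}}(G)$, and the final contradiction via Lemma \ref{2}. The paper's proof is precisely this transcription of the Theorem \ref{3.1} template with the parenthetical clause ``(if $p=2$)'' carried through, exactly as you describe.
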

\begin{proof}
Assume that the theorem is false and let $G$ be a counterexample with $|G|$ minimal. Denote $\mathcal{F}_S (G)$ by $\mathcal{F}$.

Let $H$ be a proper subgroup of $G$ such that $S \cap H \in {\rm Syl}_p (H)$ and $|S \cap H| \geq p|D|$. We predicate that $\mathcal{F}_{S \cap H} (H)$ is supersolvable. By our hypothesis, every subgroup $T_1$ of $S \cap H$ with order $|D|$ and any cyclic subgroup $T_2$ of $S\cap H$ of order $4$ (if   $p=2$) satisfies the relation $T_1,T_2 \rightsquigarrow G$. Hence every subgroup $T_1$ of $S \cap H$ with order $|D|$ and any cyclic subgroup $T_2$ of $S\cap H$ of order $4$ (if    $p=2$) satisfies the relation $T_1,T_2 \rightsquigarrow H$. Since $1<|D|<|S \cap H|$, $H$ satisfies the hypothesis and it follows from the minimal choice of $G$ that $\mathcal{F}_{S \cap H} (H)$ is supersolvable. 
 Now let $Q \in \mathcal{E}_{\mathcal{F}} ^{*}$, Using the same method in Step 2,3 of Theorem \ref{3.1}, we conclude that $|Q| \geq p|D|$, $N_{\mathcal{F}} (Q)$ is supersolvable under the assumption that $Q \not\unlhd G$, and $|O_p (G)| \geq p |D|$.

Now we prove that $O_p (G) \leq Z_{\mathfrak{U}} (G)$. It follows from $|O_p (G)| \geq p |D|$ that there exists a subgroup $D_0 \leq O_p (G)$ with order $|D|$ such that every subgroup $T_1$ of $ O_p (G)$ with order $|D|$ and any cyclic subgroup $T_2$ of $ O_p (G)$ of order $4$ (if   $p=2$) satisfies the relation $T_1,T_2 \rightsquigarrow G$. Since $\rightsquigarrow$ is a 4-eternal relation, it yields that $O_p (G) \leq Z_{\mathfrak{U}} (G)$ and this part is complete. Using the same method in Step 5 of Theorem \ref{3.1}, we can derive a contradiction, and the proof is complete.
\end{proof}

\begin{theorem}\label{5-eternal}
Let $\rightsquigarrow$ be a 5-eternal relation, $G$ be a finite group, $p$ be a prime divisor of $|G|$, and $S$ be a Sylow $p$-subgroup of $G$. Suppose that there exists a subgroup $D$ of $S$ with $1 \leq |D| < |S|$ such that any subgroup of $S$ 
with order $|D|$ or $p|D|$ is abelian and satisfies the relation $H \rightsquigarrow G$, then $\mathcal{F}_S (G)$ is supersolvable.
\end{theorem}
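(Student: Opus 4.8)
The plan is to run the same minimal-counterexample scheme used for Theorem \ref{3.1}, adapting the bookkeeping to the present hypothesis, in which the distinguished subgroups are those of order $|D|$ \emph{or} $p|D|$ (rather than order $|D|$ together with cyclic subgroups of order $4$) and in which $|D|=1$ is now permitted. So I would take $G$ to be a counterexample of minimal order and write $\mathcal{F} = \mathcal{F}_S(G)$. The whole argument is insensitive to the parity of $p$ except at the single step where property (5) of Definition \ref{eternal} is invoked, and that is where I expect the real work to lie.

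First I would record the local reduction (Step 1 of Theorem \ref{3.1}): for any proper subgroup $H$ with $S \cap H \in {\rm Syl}_p(H)$ and $|S\cap H| \geq p|D|$, every subgroup of $S \cap H$ of order $|D|$ or $p|D|$ is abelian and, being a subgroup of $S$, satisfies $\rightsquigarrow G$; by condition (0) it then satisfies $\rightsquigarrow H$. Since $|S\cap H| \geq p|D| > |D| \geq 1$, the group $H$ satisfies the hypothesis (taking as its distinguished subgroup any subgroup of $S\cap H$ of order $|D|$), so minimality gives that $\mathcal{F}_{S\cap H}(H)$ is supersolvable. I would then reuse Steps 2 and 3 of Theorem \ref{3.1} essentially verbatim: if some $Q \in \mathcal{E}_{\mathcal{F}}^*$ had $|Q| < p|D|$, a subgroup $R$ of order $p|D|$ with $Q < R$ would be abelian, whence $R \leq C_S(Q) = Z(Q) \leq Q$ (using that an essential $Q$ is $\mathcal{F}$-centric), a contradiction; hence $|Q| \geq p|D|$ for every $Q \in \mathcal{E}_{\mathcal{F}}^*$. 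For a non-normal such $Q$, which is fully $\mathcal{F}$-normalized, $N_S(Q) \in {\rm Syl}_p(N_G(Q))$ with $|N_S(Q)| \geq |Q| \geq p|D|$, so $N_G(Q)$ falls under Step 1 and $N_{\mathcal{F}}(Q)$ is supersolvable. If no member of $\mathcal{E}_{\mathcal{F}}^*$ were normal in $G$, Lemma \ref{1} would force $\mathcal{F}$ supersolvable; so some normal $Q \in \mathcal{E}_{\mathcal{F}}^*$ exists and $|O_p(G)| \geq |Q| \geq p|D|$.

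The decisive step will be to obtain $O_p(G) \leq Z_{\mathfrak{U}}(G)$. Since $|O_p(G)| \geq p|D|$, there is a subgroup $D_0 \leq O_p(G)$ of order $|D|$ with $1 \leq |D_0| < |O_p(G)|$, and every subgroup of $O_p(G)$ of order $|D|$ or $p|D|$ is a subgroup of $S$ of that same order, hence satisfies $\rightsquigarrow G$; invoking that $\rightsquigarrow$ is $5$-eternal (condition (5)) then yields $O_p(G) \leq Z_{\mathfrak{U}}(G)$. Finally, for every proper $H$ with $O_p(G) < S \cap H \in {\rm Syl}_p(H)$ one has $|S\cap H| > |O_p(G)| \geq p|D|$, so Step 1 applies and $\mathcal{F}_{S\cap H}(H)$ is supersolvable; combining this with $O_p(G) \leq Z_{\mathfrak{U}}(G)$ and applying Lemma \ref{2} gives that $\mathcal{F}_S(G)$ is supersolvable, contradicting the choice of $G$.

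The one point demanding care — and what I regard as the genuine obstacle — is the appeal to condition (5): it is asserted only for \emph{odd} prime divisors, so it is exactly here that the oddness of $p$ enters, and the argument as written establishes the conclusion for $p$ odd, which is precisely the regime of the intended applications (compare Lemma \ref{21}). I would therefore run the final step treating $p$ as odd; all earlier steps go through for any $p$. A secondary bookkeeping matter is the admissibility of $|D|=1$: there $p|D|=p$, the order-$|D|$ clause is vacuous, and the lower bound in the Step 2 argument survives because no essential (hence centric) subgroup can be trivial, so $|Q| \geq p = p|D|$ still holds.
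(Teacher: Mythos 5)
Your proposal is correct and follows essentially the same minimal-counterexample argument as the paper's proof: the identical Step 1 reduction to proper subgroups, the same verbatim reuse of Steps 2--3 and 5 of Theorem \ref{3.1}, and the same invocation of condition (5) to obtain $O_p(G) \leq Z_{\mathfrak{U}}(G)$. Your remark on parity is in fact a refinement the paper silently glosses over: the theorem is stated for an arbitrary prime divisor $p$, yet condition (5) of Definition \ref{eternal} is asserted only for odd primes, so restricting the decisive step to odd $p$ (the regime of Corollary \ref{c-supplemented}) is exactly what is needed to make the proof airtight.
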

\begin{proof}
Assume that the theorem is false and let $G$ be a counterexample with $|G|$ minimal. Denote $\mathcal{F}_S (G)$ by $\mathcal{F}$.

Let $H$ be a proper subgroup of $G$ such that $S \cap H \in {\rm Syl}_p (H)$ and $|S \cap H| \geq p|D|$. We predicate that $\mathcal{F}_{S \cap H} (H)$ is supersolvable. By our hypothesis, every subgroup $T $ of $S \cap H$ with order $|D|$ or $p|D|$   satisfies the relation $T  \rightsquigarrow G$. Hence every subgroup $T $ of $S \cap H$ with order $|D|$ or $p|D|$  satisfies the relation $T  \rightsquigarrow H$. Since $1\leq |D|<|S \cap H|$, $H$ satisfies the hypothesis and it follows from the minimal choice of $G$ that $\mathcal{F}_{S \cap H} (H)$ is supersolvable. 
 Now let $Q \in \mathcal{E}_{\mathcal{F}} ^{*}$, Using the same method in Step 2,3 of Theorem \ref{3.1}, we conclude that $|Q| \geq p|D|$, $N_{\mathcal{F}} (Q)$ is supersolvable under the assumption that $Q \not\unlhd G$, and $|O_p (G)| \geq p |D|$.

Now we prove that $O_p (G) \leq Z_{\mathfrak{U}} (G)$. It follows from $|O_p (G)| \geq p |D|$ that there exists a subgroup $D_0 \leq O_p (G)$ with order $|D|$ such that every subgroup $T $ of $ O_p (G)$ with order $|D|$ or $p|D|$  satisfies the relation $T  \rightsquigarrow G$. Since $\rightsquigarrow$ is a 5-eternal relation, it yields that $O_p (G) \leq Z_{\mathfrak{U}} (G)$ and this part is complete. Using the same method in Step 5 of Theorem \ref{3.1}, we can derive a contradiction, and the proof is complete.
\end{proof}
\begin{corollary}\label{c-supplemented}
Let $G$ be a finite group, $p$ be an odd prime divisor of $|G|$, and $S$ be a Sylow $p$-subgroup of $G$. Suppose that $S$ has a subgroup $D$ such that $1 \leq D <S$, and all subgroups of $S$ of order $|D|$ or $p|D|$ are abelian and $c$-supplemented in $G$, then $\mathcal{F}_S (G)$ is supersolvable.
\end{corollary}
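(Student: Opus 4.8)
The plan is to follow the template already established for Corollaries \ref{generalised}, \ref{ICC} and \ref{U}: I would reduce the statement to the assertion that the $c$-supplemented property is a 5-eternal relation, and then apply Theorem \ref{5-eternal} directly. The hypotheses of the corollary---a subgroup $D$ with $1 \leq |D| < |S|$ such that every subgroup of $S$ of order $|D|$ or $p|D|$ is abelian and $c$-supplemented in $G$---match verbatim the hypotheses of Theorem \ref{5-eternal} once we take $\rightsquigarrow$ to be the $c$-supplemented relation. Hence the entire task reduces to verifying the two defining conditions (0) and (5) of a 5-eternal relation from Definition \ref{eternal}.

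For condition (0), I would let $G$ be any finite group and $H$ a $c$-supplemented subgroup of $G$; then Lemma \ref{22} gives that $H$ remains $c$-supplemented in every intermediate $K$ with $H \leq K \leq G$, which is exactly the descent property (0). For condition (5), I would invoke Lemma \ref{21} with $P = O_p(G)$: since $p$ is odd and $O_p(G)$ is normal in $G$, the existence of a subgroup $D$ with every subgroup of $O_p(G)$ of order $|D|$ or $p|D|$ being $c$-supplemented in $G$ forces $O_p(G) \leq Z_{\mathfrak{U}}(G)$, which is precisely statement (5). Together these two verifications show that the $c$-supplemented property is a 5-eternal relation, and Theorem \ref{5-eternal} then yields at once that $\mathcal{F}_S(G)$ is supersolvable.

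I do not expect a genuine obstacle here, since both of the required properties are supplied ready-made by Lemmas \ref{21} and \ref{22}. The only subtlety worth checking is that the restriction to an odd prime $p$ in the corollary is consistent with the odd-prime hypothesis built into both Lemma \ref{21} and condition (5) of Definition \ref{eternal}; this alignment is exactly what legitimizes the application of Theorem \ref{5-eternal}, so the argument is really a short verification rather than a new proof.
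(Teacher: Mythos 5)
Your proposal is correct and follows essentially the same route as the paper's own proof: reduce the corollary to the claim that $c$-supplementation is a 5-eternal relation, verify the descent condition (0) from Lemma \ref{22}, verify condition (5) by applying Lemma \ref{21} to $P = O_p(G)$, and then invoke Theorem \ref{5-eternal}. In fact your assignment of the two lemmas is the correct one, whereas the paper's printed proof inadvertently swaps the citations (attributing condition (0) to Lemma \ref{21} and condition (5) to Lemma \ref{22}), a harmless transposition.
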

\begin{proof}
It follows from Theorem \ref{5-eternal} that we only need to prove that  $c$-supplemented property is a 5-eternal relation. Let $G$ be a finite group and $H$ be a subgroup of $G$ which is $c$-supplemented in $G$. Then we obtain from Lemma \ref{21} that $H$ is  $c$-supplemented in  $K$ for any $H \leq K \leq G$, which satisfies (0) in Definition \ref{eternal}. By Lemma \ref{22}, we conclude that  $c$-supplemented  property  satisfies (5) in Definition \ref{eternal}. Therefore  $c$-supplemented   property   is a 5-eternal relation and our proof is complete.
\end{proof}

\begin{theorem}\label{6-eternal}
Let $\rightsquigarrow$ be a 6-eternal relation, $G$ be a finite group, $p$ be a prime divisor of $|G|$, and $S$ be a Sylow $p$-subgroup of $G$. Suppose that there exists a subgroup $D$ of $S$ with $1 \leq |D| < |S|$ such that any subgroup of $S$ 
with order $|D|$ or $p|D|$ is abelian,  any subgroup of $S$ 
with order $|D|$   satisfies the relation $H \rightsquigarrow G$, then $\mathcal{F}_S (G)$ is supersolvable.
\end{theorem}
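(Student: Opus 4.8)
The plan is to reproduce the minimal-counterexample scheme used for Theorem \ref{5-eternal}, since the present hypotheses differ from those only in that the relation $H \rightsquigarrow G$ is now required merely for subgroups of order $|D|$, rather than for subgroups of order $|D|$ or $p|D|$. So suppose the theorem fails and let $G$ be a counterexample of minimal order, writing $\mathcal{F} = \mathcal{F}_S(G)$.

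First I would prove the reduction step: if $H$ is a proper subgroup of $G$ with $S \cap H \in {\rm Syl}_p(H)$ and $|S \cap H| \geq p|D|$, then $\mathcal{F}_{S \cap H}(H)$ is supersolvable. Every subgroup $T$ of $S \cap H$ of order $|D|$ satisfies $T \rightsquigarrow G$ by hypothesis, hence $T \rightsquigarrow H$ by property (0) of Definition \ref{eternal}; the abelianness conditions on subgroups of order $|D|$ and $p|D|$ are inherited automatically; and since $1 \leq |D| < |S \cap H|$, the subgroup $H$ satisfies the hypothesis of the theorem, so minimality of $G$ gives supersolvability of $\mathcal{F}_{S \cap H}(H)$.

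Next, arguing exactly as in Steps 2 and 3 of Theorem \ref{3.1}, for any $Q \in \mathcal{E}_{\mathcal{F}}^*$ I would obtain that $|Q| \geq p|D|$ (an $\mathcal{F}$-centric member of $\mathcal{E}_{\mathcal{F}}^*$ cannot sit properly inside an abelian subgroup of order $p|D|$, as that would force $R \leq C_S(Q) = Z(Q) \leq Q$), that $N_{\mathcal{F}}(Q)$ is supersolvable whenever $Q \not\unlhd G$, and, after using Lemma \ref{1} to exclude the possibility that no member of $\mathcal{E}_{\mathcal{F}}^*$ is normal in $G$, that $|O_p(G)| \geq p|D|$.

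The decisive and only genuinely delicate step, as throughout this section, is showing $O_p(G) \leq Z_{\mathfrak{U}}(G)$. Since $|O_p(G)| \geq p|D|$, there is a subgroup $D_0 \leq O_p(G)$ of order $|D|$, and every subgroup of $O_p(G)$ of order $|D|$ satisfies $\rightsquigarrow G$; as $\rightsquigarrow$ is $6$-eternal, property (6) of Definition \ref{eternal} applies directly (the relevant prime is odd there, so no order-$4$ clause intervenes) and yields $O_p(G) \leq Z_{\mathfrak{U}}(G)$. Finally, combining this with the reduction step, so that every proper $H$ with $O_p(G) < S \cap H \in {\rm Syl}_p(H)$ has $\mathcal{F}_{S \cap H}(H)$ supersolvable, Lemma \ref{2} forces $\mathcal{F}_S(G)$ itself to be supersolvable, contradicting the choice of $G$ and completing the argument.
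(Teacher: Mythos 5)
Your proposal follows essentially the same minimal-counterexample scheme as the paper's own proof: the identical reduction step for proper subgroups $H$ with $|S \cap H| \geq p|D|$ using property (0), the same invocation of Steps 2--3 of Theorem \ref{3.1} together with Lemma \ref{1} to get $|O_p(G)| \geq p|D|$, the same application of property (6) to conclude $O_p(G) \leq Z_{\mathfrak{U}}(G)$, and the same final appeal to Lemma \ref{2}. The one subtlety your parenthetical brushes against --- that property (6) of Definition \ref{eternal} is stated only for odd primes while the theorem's hypothesis allows arbitrary $p$ --- is glossed over in exactly the same way in the paper's proof, so your argument matches the paper's.
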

\begin{proof}
Assume that the theorem is false and let $G$ be a counterexample with $|G|$ minimal. Denote $\mathcal{F}_S (G)$ by $\mathcal{F}$.

Let $H$ be a proper subgroup of $G$ such that $S \cap H \in {\rm Syl}_p (H)$ and $|S \cap H| \geq p|D|$. We predicate that $\mathcal{F}_{S \cap H} (H)$ is supersolvable. By our hypothesis, every subgroup $T $ of $S \cap H$ with order $|D|$  satisfies the relation $T  \rightsquigarrow G$. Hence every subgroup $T $ of $S \cap H$ with order $|D|$    satisfies the relation $T  \rightsquigarrow H$. Since $1\leq |D|<|S \cap H|$, $H$ satisfies the hypothesis and it follows from the minimal choice of $G$ that $\mathcal{F}_{S \cap H} (H)$ is supersolvable. 
 Now let $Q \in \mathcal{E}_{\mathcal{F}} ^{*}$, Using the same method in Step 2,3 of Theorem \ref{3.1}, we conclude that $|Q| \geq p|D|$, $N_{\mathcal{F}} (Q)$ is supersolvable under the assumption that $Q \not\unlhd G$, and $|O_p (G)| \geq p |D|$.

Now we prove that $O_p (G) \leq Z_{\mathfrak{U}} (G)$. It follows from $|O_p (G)| \geq p |D|$ that there exists a subgroup $D_0 \leq O_p (G)$ with order $|D|$ such that every subgroup $T $ of $ O_p (G)$ with order $|D|$    satisfies the relation $T  \rightsquigarrow G$. Since $\rightsquigarrow$ is a 6-eternal relation, it yields that $O_p (G) \leq Z_{\mathfrak{U}} (G)$ and this part is complete. Using the same method in Step 5 of Theorem \ref{3.1}, we can derive a contradiction, and the proof is complete.
\end{proof}
\begin{corollary}\label{weakly-H-2}
Let $G$ be a finite group, $p$ be a prime divisor of $|G|$, and $S$ be a Sylow $p$-subgroup of $G$. Suppose that $p$ is odd, and there exists a subgroup $D$ of $S$ such that every subgroup of $S$ of order $|D|$ is a weakly $\mathcal{H}$-subgroup of $G$, every subgroup of $S$ of order $|D|$ or $p|D|$ is abelian. Then $\mathcal{F}_S (G)$ is supersolvable.
\end{corollary}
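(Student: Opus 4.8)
The plan is to obtain this corollary as an immediate application of Theorem \ref{6-eternal}: all I need to check is that ``being a weakly $\mathcal{H}$-subgroup'' defines a 6-eternal relation in the sense of Definition \ref{eternal}. Accordingly, I would declare the relation $H \rightsquigarrow G$ to mean that $H$ is a weakly $\mathcal{H}$-subgroup of $G$, and then verify the two defining axioms (0) and (6) for this relation.

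First I would establish the heredity axiom (0): if $H \leq K \leq G$ and $H$ is a weakly $\mathcal{H}$-subgroup of $G$, then $H$ is a weakly $\mathcal{H}$-subgroup of $K$. This is precisely Lemma \ref{23} (after the harmless relabelling $K \mapsto H$, $T \mapsto K$), so this step is immediate. Next I would verify axiom (6): for an odd prime divisor $p$ of $|G|$, if some subgroup $D$ of $O_p(G)$ has the property that every subgroup of $O_p(G)$ of order $|D|$ is a weakly $\mathcal{H}$-subgroup of $G$, then $O_p(G) \leq Z_{\mathfrak{U}}(G)$. Setting $P = O_p(G)$, a normal $p$-subgroup of $G$, this is exactly conclusion (2) of Lemma \ref{24}, so it too follows directly. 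With (0) and (6) in hand, the weakly $\mathcal{H}$-subgroup property is a 6-eternal relation.

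Finally, I would match hypotheses: the corollary assumes $p$ odd, that every subgroup of $S$ of order $|D|$ or $p|D|$ is abelian, and that every subgroup of $S$ of order $|D|$ is a weakly $\mathcal{H}$-subgroup of $G$ --- precisely the hypotheses of Theorem \ref{6-eternal} for this relation --- so the theorem yields the supersolvability of $\mathcal{F}_S(G)$.

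I do not anticipate a genuine obstacle, since both axioms are delivered verbatim by the cited lemmas; the only point demanding attention is that axiom (6), and hence the passage $O_p(G) \leq Z_{\mathfrak{U}}(G)$ inside the proof of Theorem \ref{6-eternal}, is restricted to odd primes. This is exactly why the hypothesis ``$p$ is odd'' cannot be dropped here, and I would make sure the write-up flags this alignment explicitly rather than treating it as automatic.
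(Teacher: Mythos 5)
Your proposal is correct and follows exactly the paper's own route: invoke Theorem \ref{6-eternal} after checking that the weakly $\mathcal{H}$-subgroup property is a 6-eternal relation, using Lemma \ref{23} for axiom (0) and Lemma \ref{24}(2) for axiom (6). Your explicit remark that the oddness of $p$ is what makes axiom (6) applicable is a sensible clarification, but the argument itself is the same as the paper's.
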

\begin{proof}
It follows from Theorem \ref{6-eternal} that we only need to prove that   weakly $\mathcal{H}$-property is a 6-eternal relation. Let $G$ be a finite group and $H$ be a subgroup of $G$ which is  weakly $\mathcal{H}$ in $G$. Then we obtain from Lemma \ref{23} that $H$ is  a  weakly $\mathcal{H}$-subgroup of  $K$ for any $H \leq K \leq G$, which satisfies (0) in Definition \ref{eternal}. By Lemma \ref{24} (2), we conclude that  weakly $\mathcal{H}$-property  satisfies (6) in Definition \ref{eternal}. Therefore  weakly $\mathcal{H}$-property   is a 6-eternal relation and our proof is complete.
\end{proof}

\begin{theorem}\label{7-eternal}
Let $\rightsquigarrow$ be a 7-eternal relation, $G$ be a finite group, $p$ be a prime divisor of $|G|$, and $S$ be a Sylow $p$-subgroup of $G$. Suppose that there exists a subgroup $D$ of $S$ with $1 \leq |D| < |S|$ such that any subgroup of $S$ 
with order $|D|$ or $p|D|$ is abelian,  any subgroup of $S$ 
with order $|D|$   satisfies the relation $H \rightsquigarrow G$, and any subgroup of $S$ 
with order $4$ (if $|D| = 2 = p$)  satisfies the relation $H \rightsquigarrow G$,  then $\mathcal{F}_S (G)$ is supersolvable.
\end{theorem}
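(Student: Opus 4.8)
The plan is to run exactly the minimal-counterexample argument already established for Theorem \ref{3.1}, adapting only the bookkeeping of subgroup orders to the shape of clause (7) in Definition \ref{eternal}. So I would assume the statement fails, choose a counterexample $G$ of minimal order, and abbreviate $\mathcal{F} = \mathcal{F}_S(G)$.

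First, mirroring Step 1 of Theorem \ref{3.1}, for every proper subgroup $H$ with $S \cap H \in {\rm Syl}_p(H)$ and $|S \cap H| \geq p|D|$ I would verify that $H$ inherits the full hypothesis and conclude by minimality that $\mathcal{F}_{S \cap H}(H)$ is supersolvable. The key point is that each subgroup $T$ of $S \cap H$ of order $|D|$ or $p|D|$, together with each cyclic subgroup of order $4$ in the exceptional case $|D| = 2 = p$, is a subgroup of $S$ of the same order and therefore satisfies $T \rightsquigarrow G$ by hypothesis, whence $T \rightsquigarrow H$ by clause (0) of the relation; the abelianness of subgroups of order $|D|$ or $p|D|$ is inherited verbatim. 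Next, repeating Steps 2 and 3 of Theorem \ref{3.1} without change, I would show that every $Q \in \mathcal{E}_{\mathcal{F}}^{*}$ satisfies $|Q| \geq p|D|$ (a subgroup of order $p|D|$ strictly containing $Q$ would be abelian and hence, by $\mathcal{F}$-centricity of the $\mathcal{F}$-essential $Q$, forced inside $Q$, a contradiction), that $N_{\mathcal{F}}(Q)$ is supersolvable whenever $Q \not\unlhd G$ (apply the first step to the proper subgroup $N_G(Q)$, whose Sylow $p$-subgroup $N_S(Q)$ has order at least $|Q| \geq p|D|$), and finally, invoking Lemma \ref{1}, that some $Q \in \mathcal{E}_{\mathcal{F}}^{*}$ must be normal in $G$, so that $|O_p(G)| \geq |Q| \geq p|D|$.

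The substantive step is then $O_p(G) \leq Z_{\mathfrak{U}}(G)$: since $|O_p(G)| \geq p|D|$, there is a subgroup $D_0 \leq O_p(G)$ of order $|D|$, and every subgroup of $O_p(G)$ of order $|D|$ or $p|D|$, together with the order-$4$ cyclic subgroups when $|D| = 2 = p$, satisfies the relation with $G$ (being a subgroup of $S$ of the indicated order), so that clause (7) of the $7$-eternal relation applies directly and yields $O_p(G) \leq Z_{\mathfrak{U}}(G)$. With this the final contradiction follows as in Step 5 of Theorem \ref{3.1}: any proper $H$ with $O_p(G) < S \cap H \in {\rm Syl}_p(H)$ has $\mathcal{F}_{S \cap H}(H)$ supersolvable by the first step, and Lemma \ref{2} then forces $\mathcal{F}_S(G)$ to be supersolvable, contradicting the choice of $G$. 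I expect the only genuinely delicate point to be matching the order conditions of clause (7) — in particular ensuring that the exceptional order-$4$ clause, which is keyed to the global data $|D| = 2 = p$ rather than to any property of $S \cap H$ or $O_p(G)$, transfers cleanly — while the essential-subgroup bound and the applications of Lemmas \ref{1} and \ref{2} are routine copies of the corresponding parts of Theorem \ref{3.1}.
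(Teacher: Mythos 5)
Your proposal reproduces the paper's argument step for step: minimal counterexample, inheritance of the hypothesis to proper subgroups $H$ with $S\cap H\in{\rm Syl}_p(H)$ and $|S\cap H|\geq p|D|$, the bound $|Q|\geq p|D|$ for $Q\in\mathcal{E}_{\mathcal{F}}^{*}$ via abelianness and $\mathcal{F}$-centricity, the existence (via Lemma \ref{1}) of a normal member of $\mathcal{E}_{\mathcal{F}}^{*}$ giving $|O_p(G)|\geq p|D|$, the eternal clause to get $O_p(G)\leq Z_{\mathfrak{U}}(G)$, and the final contradiction via Lemma \ref{2}. However, there is one false assertion that you lean on, and it sits exactly at the point you yourself flag as delicate. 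You claim, both in the inheritance step and in the key step, that every subgroup of $S$ (hence of $S\cap H$, hence of $O_p(G)$) of order $|D|$ \emph{or} $p|D|$ satisfies the relation with $G$ ``by hypothesis.'' The hypothesis of Theorem \ref{7-eternal} grants the relation only for subgroups of order $|D|$ and, in the exceptional case $|D|=2=p$, for subgroups of order $4$; subgroups of order $p|D|$ are assumed \emph{abelian}, nothing more. In the inheritance step this overstatement is harmless, since the hypothesis being verified for $H$ likewise demands the relation only for order $|D|$ and order $4$. But in the step $O_p(G)\leq Z_{\mathfrak{U}}(G)$ it is load-bearing: clause (7) of Definition \ref{eternal}, as literally written, requires every subgroup of $O_p(G)$ of order $|D|$ \emph{or} $p|D|$ to satisfy the relation, and the order-$p|D|$ part is precisely what the hypothesis does not supply. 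So your ``clause (7) applies directly'' rests on a premise you cannot justify.

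For comparison, the paper's proof verifies only that the order-$|D|$ subgroups and (conditionally) the order-$4$ subgroups of $O_p(G)$ satisfy the relation and then invokes $7$-eternality --- in effect reading clause (7) without the ``or $p|D|$'' requirement. That reading is the one consistent with the rest of the paper: Lemma \ref{27}, on which clause (7) is modelled, and Corollary \ref{HC}, which is deduced from this theorem, both involve only subgroups of order $|D|$ or $4$. So the mismatch you ran into is genuinely present in Definition \ref{eternal}, but the correct way to handle it is to note that the ``or $p|D|$'' condition in clause (7) is extraneous (or, alternatively, to strengthen the theorem's hypothesis so that order-$p|D|$ subgroups also satisfy the relation, as is done in Theorem \ref{9-eternal}); it is not to assert that the stated hypothesis already covers order $p|D|$, which it does not.
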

\begin{proof}
Assume that the theorem is false and let $G$ be a counterexample with $|G|$ minimal. Denote $\mathcal{F}_S (G)$ by $\mathcal{F}$.

Let $H$ be a proper subgroup of $G$ such that $S \cap H \in {\rm Syl}_p (H)$ and $|S \cap H| \geq p|D|$. We predicate that $\mathcal{F}_{S \cap H} (H)$ is supersolvable. By our hypothesis, every subgroup $T_1 $ of $S \cap H$ with order $|D|$ and every subgroup $T_2$ of $S$ 
with order $4$ (if $|D| = 2 = p$)   satisfies the relation $T_1, T_2  \rightsquigarrow G$. Hence every subgroup $T_1$ of $S \cap H$ with order $|D|$  and every subgroup $T_2$ of $S$ 
with order $4$ (if $|D| = 2 = p$)   satisfies the relation $T_1,T_2   \rightsquigarrow H$. Since $1\leq |D|<|S \cap H|$, $H$ satisfies the hypothesis and it follows from the minimal choice of $G$ that $\mathcal{F}_{S \cap H} (H)$ is supersolvable. 
 Now let $Q \in \mathcal{E}_{\mathcal{F}} ^{*}$, Using the same method in Step 2,3 of Theorem \ref{3.1}, we conclude that $|Q| \geq p|D|$, $N_{\mathcal{F}} (Q)$ is supersolvable under the assumption that $Q \not\unlhd G$, and $|O_p (G)| \geq p |D|$.

Now we prove that $O_p (G) \leq Z_{\mathfrak{U}} (G)$. It follows from $|O_p (G)| \geq p |D|$ that there exists a subgroup $D_0 \leq O_p (G)$ with order $|D|$ such that every subgroup $T_1 $ of $ O_p (G)$ with order $|D|$  and every subgroup $T_2$ of $O_p (G)$ 
with order $4$ (if $|D| = 2 = p$)   satisfies the relation $T_1, T_2  \rightsquigarrow G$. Since $\rightsquigarrow$ is a 7-eternal relation, it yields that $O_p (G) \leq Z_{\mathfrak{U}} (G)$ and this part is complete. Using the same method in Step 5 of Theorem \ref{3.1}, we can derive a contradiction, and the proof is complete.
\end{proof}
\begin{corollary}\label{HC}
Let $G$ be a finite group, $p$ be a prime divisor of $|G|$, and $S$ be a Sylow $p$-subgroup of $G$. Suppose that $S$ has a subgroup $D$ such that $1 < |D |<|S|$, and all subgroups of $S$ of order $|D|$ or $p|D|$ are abelian, every subgroup of order $|D|$ or $4$ (if $p = |D|=2$) is an $\mathscr{H} C$-subgroup of $G$.  Then $\mathcal{F}_S (G)$ is supersolvable. 
\end{corollary}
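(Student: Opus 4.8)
The plan is to obtain the statement as an immediate application of Theorem \ref{7-eternal}: taking the relation $\rightsquigarrow$ to be ``is an $\mathscr{H}C$-subgroup of'', the hypotheses of the corollary coincide exactly with those of that theorem, since the abelianness condition on subgroups of order $|D|$ or $p|D|$ is identical and the requirement that every subgroup of order $|D|$, together with every subgroup of order $4$ when $p=|D|=2$, be an $\mathscr{H}C$-subgroup of $G$ is precisely the relational hypothesis. Thus the whole proof reduces to checking that the $\mathscr{H}C$-subgroup property is a $7$-eternal relation, that is, that it satisfies conditions (0) and (7) of Definition \ref{eternal} for every finite group $G$.

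Condition (7) follows directly from Lemma \ref{27}. Taking the normal subgroup there to be $E := O_p(G)$, I observe that the only Sylow $p$-subgroup of the $p$-group $O_p(G)$ is $O_p(G)$ itself. If $O_p(G)$ is non-cyclic, the hypothesis of Lemma \ref{27} reads exactly as the assumption in (7) that every subgroup of $O_p(G)$ of order $|D|$, and every subgroup of order $4$ when $p=|D|=2$, is an $\mathscr{H}C$-subgroup of $G$, so $O_p(G) \le Z_{\mathfrak{U}}(G)$; if $O_p(G)$ is cyclic the hypothesis is vacuous and the conclusion is immediate. Hence (7) holds.

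The only genuine obstacle is condition (0): I must show that whenever $H$ is an $\mathscr{H}C$-subgroup of $G$ and $H \le K \le G$, then $H$ is an $\mathscr{H}C$-subgroup of $K$. I would establish this by unwinding the definition of $\mathscr{H}C$-subgroup from {{\cite{GX}}}, fixing the subgroup $T$ witnessing the property in $G$, intersecting with $K$, and invoking the Dedekind modular law to produce the required supplement of $H$ inside $K$, in the same spirit as the inheritance arguments for $c$-supplemented subgroups (Lemma \ref{22}) and weakly $S\Phi$-supplemented subgroups (Lemma \ref{25}); the delicate point is to verify that the defining containment of $H \cap T$ into the relevant core is inherited on passing from $G$ down to $K$. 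Granting (0), the $\mathscr{H}C$-subgroup property is $7$-eternal, and Theorem \ref{7-eternal} then yields that $\mathcal{F}_S(G)$ is supersolvable.
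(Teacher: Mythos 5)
Your proposal is correct and takes essentially the same route as the paper: reduce via Theorem \ref{7-eternal} to showing that the $\mathscr{H}C$-property is a $7$-eternal relation, obtain condition (7) from Lemma \ref{27} applied to $E = O_p(G)$, and verify the inheritance condition (0). The only difference is that you leave (0) as an uncompleted sketch (``granting (0)\dots''), whereas the paper simply cites \cite[Lemma 2.3 (1)]{GX}, which asserts exactly this inheritance; your Dedekind-modular-law plan is indeed how such a proof goes (with $G = HT$, $T \trianglelefteq G$, one gets $K = H(T\cap K)$, $T \cap K \trianglelefteq K$ and $H \cap (T\cap K) = H \cap T$), and the ``delicate point'' you flag is settled by the standard fact that an $\mathscr{H}$-subgroup of $G$ remains an $\mathscr{H}$-subgroup of every intermediate subgroup, applied to $H \cap T$.
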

\begin{proof}
It follows from Theorem \ref{7-eternal} that we only need to prove that   $\mathscr{H} C$-property is a 7-eternal relation. Let $G$ be a finite group and $H$ be a subgroup of $G$ which    is an $\mathscr{H} C$-subgroup of  $G$. Then we obtain from {{\cite[Lemma 2.3 (1)]{GX}}} that $H$ is an $\mathscr{H} C$-subgroup of $K$ for any $H \leq K \leq G$, which satisfies (0) in Definition \ref{eternal}. By Lemma \ref{27}, we conclude that  $\mathscr{H} C$-property  satisfies (7) in Definition \ref{eternal}. Therefore  $\mathscr{H} C$-property   is a 7-eternal relation and our proof is complete.
\end{proof}

\begin{theorem}\label{8-eternal}
Let $\rightsquigarrow$ be a 8-eternal relation, $G$ be a finite group, $p$ be a prime divisor of $|G|$, and $S$ be a Sylow $p$-subgroup of $G$. Suppose that there exists a subgroup $D$ of $S$ with $1 < |D| < |S|$ such that any subgroup of $S$ 
with order $|D|$ or $p|D|$ is abelian,  any subgroup of $S$ 
with order $|D|$   satisfies the relation $H \rightsquigarrow G$, and any subgroup of $S$ 
with order $2|D|$ (if $S$ is non-abelian, $p=2$ and $|S:D|>2$)   satisfies the relation $H \rightsquigarrow G$,  then $\mathcal{F}_S (G)$ is supersolvable.
\end{theorem}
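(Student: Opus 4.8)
The plan is to mimic the minimal-counterexample argument of Theorem \ref{3.1} essentially verbatim, since the only genuine difference between the $1$-eternal hypothesis and the $8$-eternal hypothesis lies in the bookkeeping of which subgroups are required to carry the relation $\rightsquigarrow$. So I would assume the theorem fails, choose $G$ to be a counterexample of minimal order, and set $\mathcal{F} = \mathcal{F}_S(G)$.

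First I would establish the analogue of Step 1: for every proper subgroup $H \le G$ with $S \cap H \in {\rm Syl}_p(H)$ and $|S \cap H| \ge p|D|$, the fusion system $\mathcal{F}_{S \cap H}(H)$ is supersolvable. The point here is purely to check that the hypothesis descends to $H$. Every subgroup of $S \cap H$ of order $|D|$ is already a subgroup of $S$ of order $|D|$, hence satisfies $T \rightsquigarrow G$, and by property (0) of Definition \ref{eternal} it therefore satisfies $T \rightsquigarrow H$. The one point requiring care is the parenthetical special case: I must verify that when $S \cap H$ is a non-abelian $2$-group with $|S \cap H : D| > 2$, the order-$2|D|$ subgroups of $S\cap H$ still carry the relation. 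This follows because $S \cap H \le S$ forces $S$ to be a non-abelian $2$-group with $|S:D| \ge |S \cap H : D| > 2$, so the ambient special-case hypothesis applies and those subgroups satisfy $\rightsquigarrow G$, whence $\rightsquigarrow H$ by (0). Since moreover every subgroup of $S \cap H$ of order $|D|$ or $p|D|$ is abelian and $1 < |D| < |S \cap H|$, the group $H$ satisfies the theorem's hypothesis, and minimality of $G$ yields supersolvability of $\mathcal{F}_{S \cap H}(H)$.

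Next I would invoke Steps 2 and 3 of Theorem \ref{3.1} unchanged: for $Q \in \mathcal{E}_{\mathcal{F}}^{*}$ one gets $|Q| \ge p|D|$ (using the abelian hypothesis on order-$p|D|$ subgroups together with $Q$ being $\mathcal{F}$-centric), $N_{\mathcal{F}}(Q)$ is supersolvable whenever $Q \not\unlhd G$ (via Step 1 applied to $N_G(Q)$), and consequently $|O_p(G)| \ge p|D|$ by Lemma \ref{1}. The crucial step is then to show $O_p(G) \le Z_{\mathfrak{U}}(G)$: since $|O_p(G)| \ge p|D|$ there is a subgroup $D_0 \le O_p(G)$ of order $|D|$, every order-$|D|$ subgroup of $O_p(G)$ satisfies $\rightsquigarrow G$, and, by the same containment argument as above, whenever $O_p(G)$ is a non-abelian $2$-group with $|O_p(G):D| > 2$ the order-$2|D|$ cyclic subgroups of $O_p(G)$ satisfy $\rightsquigarrow G$ as well; the defining property of an $8$-eternal relation then delivers $O_p(G) \le Z_{\mathfrak{U}}(G)$. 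Finally I would copy Step 5: any proper $H \le G$ with $O_p(G) < S \cap H \in {\rm Syl}_p(H)$ has $\mathcal{F}_{S \cap H}(H)$ supersolvable by Step 1, so Lemma \ref{2} forces $\mathcal{F}_S(G)$ to be supersolvable, contradicting the choice of $G$.

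The main obstacle is not computational but a matter of careful bookkeeping: the only place the argument could break is in propagating the parenthetical special case ``$S$ non-abelian, $p=2$, $|S:D|>2$'' correctly both downward to $S \cap H$ and across to $O_p(G)$. As indicated, this is handled uniformly by the observation that a non-abelian $2$-subgroup forces its overgroup to be a non-abelian $2$-group, and that the index relative to $D$ can only grow on passing to a larger subgroup, so the ambient hypothesis always covers the subgroups one needs. Everything else transcribes verbatim from the $1$-eternal template.
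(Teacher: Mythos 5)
Your proposal is correct and follows essentially the same route as the paper's own proof: a minimal counterexample, descent of the hypothesis to proper subgroups $H$ with $|S\cap H|\ge p|D|$ via property (0), the unchanged Steps 2--3 of Theorem \ref{3.1} to get $|O_p(G)|\ge p|D|$, the $8$-eternal property (8) applied to $O_p(G)$, and Lemma \ref{2} for the final contradiction. Your explicit justification that the parenthetical condition propagates (a non-abelian $2$-subgroup forces the ambient $S$ to be non-abelian with $p=2$, and the index over $D$ only grows) is exactly the point the paper uses implicitly, so there is no divergence in substance.
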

\begin{proof}
Assume that the theorem is false and let $G$ be a counterexample with $|G|$ minimal. Denote $\mathcal{F}_S (G)$ by $\mathcal{F}$.

Let $H$ be a proper subgroup of $G$ such that $S \cap H \in {\rm Syl}_p (H)$ and $|S \cap H| \geq p|D|$. We predicate that $\mathcal{F}_{S \cap H} (H)$ is supersolvable. By our hypothesis, every subgroup $T_1 $ of $S \cap H$ with order $|D|$ and every subgroup $T_2$ of $S$ 
with order $2|D|$ (if $S \cap H$ is non-abelian, $p=2$ and $|S\cap H :D|>2$)    satisfies the relation $T_1, T_2  \rightsquigarrow G$. Hence every subgroup $T_1$ of $S \cap H$ with order $|D|$  and every subgroup $T_2$ of $S$ 
with order $2|D|$ (if $S \cap H$ is non-abelian, $p=2$ and $|S \cap H :D|>2$)   satisfies the relation $T_1,T_2   \rightsquigarrow H$. Since $1< |D|<|S \cap H|$, $H$ satisfies the hypothesis and it follows from the minimal choice of $G$ that $\mathcal{F}_{S \cap H} (H)$ is supersolvable. 
 Now let $Q \in \mathcal{E}_{\mathcal{F}} ^{*}$, Using the same method in Step 2,3 of Theorem \ref{3.1}, we conclude that $|Q| \geq p|D|$, $N_{\mathcal{F}} (Q)$ is supersolvable under the assumption that $Q \not\unlhd G$, and $|O_p (G)| \geq p |D|$.

Now we prove that $O_p (G) \leq Z_{\mathfrak{U}} (G)$. It follows from $|O_p (G)| \geq p |D|$ that there exists a subgroup $D_0 \leq O_p (G)$ with order $|D|$ such that every subgroup $T_1 $ of $ O_p (G)$ with order $|D|$  and every subgroup $T_2$ of $O_p (G)$ 
with order $2 |D|$ (if $O_p (G)$ is non-abelian, $p=2$ and $|O_p (G):D|>2$)  satisfies the relation $T_1, T_2  \rightsquigarrow G$. Since $\rightsquigarrow$ is a 8-eternal relation, it yields that $O_p (G) \leq Z_{\mathfrak{U}} (G)$ and this part is complete. Using the same method in Step 5 of Theorem \ref{3.1}, we can derive a contradiction, and the proof is complete.
\end{proof}
\begin{corollary}\label{SS}
Let $G$ be a finite group, $p$ be a prime divisor of $|G|$ and $S$ be a Sylow $p$-subgroup of $G$. Suppose that there exists a subgroup $D$ of $S$ such that $1<|D|<|S|$, every subgroup of $S$ of order $|D|$ or $p|D|$ is abelian, every subgroup of $S$ of order $|D|$ or $2|D|$ (if $S$ is non-abelian, $p=2$ and $|S:D|>2$) is either $S$-quasinormally embedded or $SS$-quasinormal in $G$, then $\mathcal{F}_S (G)$ is supersolvable. 
\end{corollary}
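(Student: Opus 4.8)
The plan is to deduce this corollary from Theorem \ref{8-eternal} in exactly the same manner as the preceding corollaries are deduced from their respective eternal theorems. Thus it suffices to show that the relation ``$H$ is $S$-quasinormally embedded or $SS$-quasinormal in $G$'' is an $8$-eternal relation, i.e.\ that it satisfies conditions (0) and (8) of Definition \ref{eternal}. Once this is established, the hypotheses of the corollary --- every subgroup of $S$ of order $|D|$ or $p|D|$ abelian, and every subgroup of $S$ of order $|D|$ or $2|D|$ (under the stated $2$-group restriction) satisfying the relation --- are precisely the hypotheses of Theorem \ref{8-eternal} for this relation, and the supersolvability of $\mathcal{F}_S(G)$ follows at once.

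First I would verify condition (0): if $H \leq K \leq G$ and $H$ is $S$-quasinormally embedded or $SS$-quasinormal in $G$, then the same holds in $K$. Since the property in question is a disjunction, it is enough to check that each of the two component properties is inherited by intermediate overgroups --- an $S$-quasinormally embedded subgroup of $G$ remaining $S$-quasinormally embedded in every $K$ with $H \leq K \leq G$, and likewise for an $SS$-quasinormal subgroup. These inheritance facts can be extracted from the sources \cite{XX} and \cite{LSK} cited in the statement of Lemma \ref{28}, and combining the two disjuncts yields (0).

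Next I would verify condition (8) by invoking Lemma \ref{28} with $E := O_p(G)$. The key observation is that $O_p(G)$ is a $p$-group, hence nilpotent, so $F^{*}(O_p(G)) = O_p(G)$; one may therefore take $X := E = O_p(G)$, which is a normal subgroup of $G$ satisfying $F^{*}(E) \leq X \leq E$. The unique Sylow subgroup of $X$ is $O_p(G)$ itself, so the subgroup-theoretic hypothesis of Lemma \ref{28} collapses to the hypothesis of (8) on the subgroups of $O_p(G)$ of order $|D|$ and $2|D|$ (when $O_p(G)$ is a non-abelian $2$-group with $|O_p(G):D|>2$); if $O_p(G)$ is cyclic the Sylow condition is vacuous. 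In either case Lemma \ref{28} gives $O_p(G) \leq Z_{\mathfrak{U}}(G)$, which is exactly (8).

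The main obstacle is condition (0): because the relation is a disjunction of two genuinely different embedding properties, one cannot simply quote a single inheritance lemma as in the earlier corollaries, but must secure a separate passage-to-overgroups result for each of the $S$-quasinormally embedded and $SS$-quasinormal properties. A secondary point requiring care is the application of Lemma \ref{28}, where one must recognize that $O_p(G)$ coincides with its own generalized Fitting subgroup in order to legitimately set $X = E = O_p(G)$ and reduce the multi-prime statement of the lemma to the single prime $p$.
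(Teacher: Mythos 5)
Your proposal is correct and follows essentially the same route as the paper: reduce via Theorem \ref{8-eternal} to showing that ``$S$-quasinormally embedded or $SS$-quasinormal'' is an $8$-eternal relation, verify condition (0) disjunct-by-disjunct using the inheritance lemmas from \cite{LSK} and \cite{XX} (the paper cites {\cite[Lemma 2.1 (i)]{LSK}} and {\cite[Lemma 2.1 (a)]{XX}}), and verify condition (8) by Lemma \ref{28}. Your explicit justification for applying Lemma \ref{28} --- taking $E = X = O_p(G)$ and noting $F^{*}(O_p(G)) = O_p(G)$ since $O_p(G)$ is nilpotent --- is a detail the paper leaves implicit, but it is the intended reading.
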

\begin{proof}
It follows from Theorem \ref{8-eternal} that we only need to prove that   either $S$-quasinormally embedded or $SS$-quasinormal property is a 8-eternal relation. Let $G$ be a finite group and $H$ be a subgroup of $G$ which    is either $S$-quasinormally embedded or $SS$-quasinormal in  $G$. Then we obtain from {{\cite[Lemma 2.1 (i)]{LSK}}} and {{\cite[Lemma 2.1 (a)]{XX}}} that $H$ is either $S$-quasinormally embedded or $SS$-quasinormal in  $K$ for any $H \leq K \leq G$, which satisfies (0) in Definition \ref{eternal}. By Lemma \ref{28}, we conclude that  $\mathscr{H} C$-property  satisfies (8) in Definition \ref{eternal}. Therefore  $\mathscr{H} C$-property   is a 8-eternal relation and our proof is complete.
\end{proof}
\begin{theorem}\label{9-eternal}
Let $\rightsquigarrow$ be a 9-eternal relation, $G$ be a finite group, $p$ be a prime divisor of $|G|$, and $S$ be a Sylow $p$-subgroup of $G$. Suppose that there exists a subgroup $D$ of $S$ with $1 < |D| < |S|$ such that any subgroup of $S$ 
with order $|D|$ or $p|D|$ is abelian,  any subgroup of $S$ 
with order $|D|$ and $p|D|$   satisfies the relation $H \rightsquigarrow G$,  then $\mathcal{F}_S (G)$ is supersolvable.
\end{theorem}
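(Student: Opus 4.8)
The plan is to mirror the minimal counterexample argument used in Theorem \ref{3.1} and its successors, since the present hypothesis differs from those only in which orders of subgroups are required to satisfy the relation $\rightsquigarrow$. First I would suppose the theorem fails, choose a counterexample $G$ of minimal order, and write $\mathcal{F} = \mathcal{F}_S (G)$.

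The opening step is to check that the hypothesis is inherited by the relevant proper subgroups: for any proper $H \leq G$ with $S \cap H \in {\rm Syl}_p (H)$ and $|S \cap H| \geq p|D|$, I would show that $\mathcal{F}_{S \cap H} (H)$ is supersolvable. The abelianness of subgroups of order $|D|$ or $p|D|$ passes down to $S \cap H$ trivially, while property $(0)$ of the relation guarantees that any subgroup $T$ of $S \cap H$ of order $|D|$ or $p|D|$ with $T \rightsquigarrow G$ also satisfies $T \rightsquigarrow H$. Since $1 < |D| < |S \cap H|$, the subgroup $H$ meets the hypothesis, so minimality of $G$ forces $\mathcal{F}_{S \cap H} (H)$ to be supersolvable.

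Next I would reproduce verbatim the reasoning of Steps 2 and 3 of Theorem \ref{3.1}: if $Q \in \mathcal{E}_{\mathcal{F}} ^{*}$ then $|Q| \geq p|D|$, for otherwise an abelian overgroup $R$ of order $p|D|$ would satisfy $R \leq C_S (Q) = Z(Q) \leq Q$, a contradiction; and when $Q \not\unlhd G$ the normalizer system $N_{\mathcal{F}} (Q)$ is supersolvable by applying the inherited hypothesis to $N_G (Q)$. Combining this with Lemma \ref{1} rules out the possibility that every member of $\mathcal{E}_{\mathcal{F}} ^{*}$ fails to be normal in $G$, so some $Q \in \mathcal{E}_{\mathcal{F}} ^{*}$ is normal, whence $|O_p (G)| \geq |Q| \geq p|D|$.

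With $|O_p (G)| \geq p|D|$ in hand, there exists a subgroup $D_0 \leq O_p (G)$ of order $|D|$ such that every subgroup of $O_p (G)$ of order $|D|$ or $p|D|$ satisfies $\rightsquigarrow G$; since $\rightsquigarrow$ is $9$-eternal, axiom $(9)$ of Definition \ref{eternal} immediately yields $O_p (G) \leq Z_{\mathfrak{U}} (G)$. Finally, repeating Step 5 of Theorem \ref{3.1} — for each proper $H$ with $O_p (G) < S \cap H$ and $S \cap H \in {\rm Syl}_p (H)$ the system $\mathcal{F}_{S \cap H} (H)$ is supersolvable, so Lemma \ref{2} applies — gives that $\mathcal{F}_S (G)$ is supersolvable, contradicting the choice of $G$. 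The only point demanding genuine care is the descent of the relation to proper subgroups in the opening step, which is precisely what property $(0)$ is designed to supply; once that is secured, the remainder is a direct transcription of the established framework.
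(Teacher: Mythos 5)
Your proposal is correct and follows essentially the same route as the paper's proof: a minimal counterexample, descent of the hypothesis to proper subgroups via property $(0)$, the citation of Steps 2, 3 and 5 of Theorem \ref{3.1} together with Lemma \ref{1} to force $|O_p(G)| \geq p|D|$, axiom $(9)$ to place $O_p(G)$ in $Z_{\mathfrak{U}}(G)$, and Lemma \ref{2} for the final contradiction. No discrepancies to report.
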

\begin{proof}
Assume that the theorem is false and let $G$ be a counterexample with $|G|$ minimal. Denote $\mathcal{F}_S (G)$ by $\mathcal{F}$.

Let $H$ be a proper subgroup of $G$ such that $S \cap H \in {\rm Syl}_p (H)$ and $|S \cap H| \geq p|D|$. We predicate that $\mathcal{F}_{S \cap H} (H)$ is supersolvable. By our hypothesis, every subgroup $T_1 $ of $S \cap H$ with order $|D|$ and every subgroup $T_2$ of $S$ 
with order $p|D|$ satisfies the relation $T_1, T_2  \rightsquigarrow G$. Hence every subgroup $T_1$ of $S \cap H$ with order $|D|$  and every subgroup $T_2$ of $S$ 
with order $p|D|$    satisfies the relation $T_1,T_2   \rightsquigarrow H$. Since $1< |D|<|S \cap H|$, $H$ satisfies the hypothesis and it follows from the minimal choice of $G$ that $\mathcal{F}_{S \cap H} (H)$ is supersolvable. 
 Now let $Q \in \mathcal{E}_{\mathcal{F}} ^{*}$, Using the same method in Step 2,3 of Theorem \ref{3.1}, we conclude that $|Q| \geq p|D|$, $N_{\mathcal{F}} (Q)$ is supersolvable under the assumption that $Q \not\unlhd G$, and $|O_p (G)| \geq p |D|$.

Now we prove that $O_p (G) \leq Z_{\mathfrak{U}} (G)$. It follows from $|O_p (G)| \geq p |D|$ that there exists a subgroup $D_0 \leq O_p (G)$ with order $|D|$ such that every subgroup $T_1 $ of $ O_p (G)$ with order $|D|$  and every subgroup $T_2$ of $O_p (G)$ 
with order $p|D|$    satisfies the relation $T_1, T_2  \rightsquigarrow G$. Since $\rightsquigarrow$ is a 9-eternal relation, it yields that $O_p (G) \leq Z_{\mathfrak{U}} (G)$ and this part is complete. Using the same method in Step 5 of Theorem \ref{3.1}, we can derive a contradiction, and the proof is complete.
\end{proof}
\begin{corollary}\label{weakly HC-embedded}
Let $G$ be a finite group, $p$ be a prime divisor of $|G|$ and $S$ be a Sylow $p$-subgroup of $G$. Suppose that there exists a subgroup $D$ of $S$ such that $1<|D|<|S|$, every subgroup of $S$ of order $|D|$ or $p|D|$ is abelian and weakly $\mathcal{HC}$-embedded in $G$, then $\mathcal{F}_S (G)$ is supersolvable. 
\end{corollary}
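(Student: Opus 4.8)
The plan is to invoke Theorem \ref{9-eternal} and thereby reduce the entire statement to the single assertion that the weakly $\mathcal{HC}$-embedded property is a $9$-eternal relation in the sense of Definition \ref{eternal}. First I would check that the hypotheses line up: the corollary posits a subgroup $D$ of $S$ with $1<|D|<|S|$ such that every subgroup of $S$ of order $|D|$ or $p|D|$ is abelian, together with the requirement that every subgroup of order $|D|$ or $p|D|$ is weakly $\mathcal{HC}$-embedded in $G$. This is exactly the shape of Theorem \ref{9-eternal}, with the relation $H\rightsquigarrow G$ instantiated as ``$H$ is weakly $\mathcal{HC}$-embedded in $G$''. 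So once the relation is shown to be $9$-eternal, the conclusion that $\mathcal{F}_S(G)$ is supersolvable follows immediately.

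To verify $9$-eternality I must confirm the two clauses that Definition \ref{eternal} demands of a $9$-eternal relation, namely clause (0) and clause (9). Clause (9) is handed to us directly: it is precisely the statement of Lemma \ref{29}, applied with $P=O_p(G)$, which is a non-trivial normal $p$-subgroup of $G$. Indeed, Lemma \ref{29} asserts that if $O_p(G)$ has a subgroup $D$ with $1<|D|<|O_p(G)|$ such that every subgroup of $O_p(G)$ of order $|D|$ or $p|D|$ is weakly $\mathcal{HC}$-embedded in $G$, then $O_p(G)\le Z_{\mathfrak{U}}(G)$, which is the conclusion of (9). No further work is needed here.

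The remaining point, and the one I expect to be the only real obstacle, is clause (0): if $H$ is weakly $\mathcal{HC}$-embedded in $G$ and $H\le K\le G$, then $H$ must be weakly $\mathcal{HC}$-embedded in $K$. This is an inheritance-to-intermediate-subgroups property, entirely analogous to the persistence lemmas already recorded for the other generalized normalities (for instance Lemma \ref{22} for $c$-supplemented subgroups, Lemma \ref{23} for weakly $\mathcal{H}$-subgroups, and Lemma \ref{25} for weakly $S\Phi$-supplemented subgroups). I would establish it either by citing the corresponding intermediate-subgroup lemma for weakly $\mathcal{HC}$-embedded subgroups from the source \cite{LF} in which the notion is defined, or, if no such statement is available off the shelf, by unwinding the definition and running a Dedekind-modular-law argument of the same flavour as in the proof of Lemma \ref{22}: given a supplement and an embedded core witnessing that $H$ is weakly $\mathcal{HC}$-embedded in $G$, intersect with $K$ to produce the witnessing data inside $K$. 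With clause (0) in hand and clause (9) supplied by Lemma \ref{29}, the weakly $\mathcal{HC}$-embedded property is $9$-eternal, and the corollary is complete by Theorem \ref{9-eternal}.
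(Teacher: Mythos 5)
Your proposal matches the paper's proof essentially step for step: both reduce the corollary to Theorem \ref{9-eternal}, obtain clause (9) of Definition \ref{eternal} directly from Lemma \ref{29}, and obtain clause (0) from an inheritance-to-intermediate-subgroups lemma in the literature. The only difference is the citation for clause (0) — the paper quotes {{\cite[Lemma 2.1 (1)]{HX}}} rather than \cite{LF} or a hand-rolled Dedekind argument — which is immaterial to the structure of the argument.
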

\begin{proof}
It follows from Theorem \ref{9-eternal} that we only need to prove that  weakly $\mathcal{HC}$-embedded inproperty is a 9-eternal relation. Let $G$ be a finite group and $H$ be a subgroup of $G$ which    is weakly $\mathcal{HC}$-embedded   in  $G$. Then we obtain from{{\cite[Lemma 2.1 (1)]{HX}}} that $H$ is weakly $\mathcal{HC}$-embedded   in  $K$ for any $H \leq K \leq G$, which satisfies (0) in Definition \ref{eternal}. By Lemma \ref{29}, we conclude that  weakly $\mathcal{HC}$-embedded   property  satisfies (9) in Definition \ref{eternal}. Therefore  weakly $\mathcal{HC}$-embedded  property   is a 9-eternal relation and our proof is complete.
\end{proof}

\begin{theorem}\label{semi-eternal}
Let $\rightsquigarrow$ be a 1-semi-eternal relation, $G$ be a finite group, $p$ be a prime divisor of $|G|$, and $S$ be a Sylow $p$-subgroup of $G$. Suppose that any subgroup $H$ of $S$ with order $p$ or any cyclic subgroup $H$ of $S$ with order $4$ (if $S$ is non-abelian and $p=2$) satisfies the relation $H \rightsquigarrow G$, then $\mathcal{F}_S (G)$ is supersolvable.
\end{theorem}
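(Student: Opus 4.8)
The plan is to run the same minimal-counterexample argument as in Theorem~\ref{3.1}, but to exploit the fact that here we only control subgroups of $S$ of order $p$ (together with the exceptional cyclic subgroups of order $4$) rather than an entire layer of order $|D|$. This removes the need for any essential-subgroup bookkeeping (Steps 2 and 3 of Theorem~\ref{3.1}, and the appeal to Lemma~\ref{1}) and lets me invoke Lemma~\ref{2} directly. So I would suppose $\mathcal{F}_S(G)$ is not supersolvable, choose $G$ of minimal order among counterexamples, and write $\mathcal{F}=\mathcal{F}_S(G)$.

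First I would verify the inductive hypothesis: for every proper subgroup $H$ of $G$ with $S\cap H\in\mathrm{Syl}_p(H)$, the fusion system $\mathcal{F}_{S\cap H}(H)$ is supersolvable. To do this I would check that $H$ itself satisfies the theorem's hypothesis. Any subgroup $T$ of $S\cap H$ of order $p$ is a subgroup of $S$ of order $p$, so $T\rightsquigarrow G$ by hypothesis, whence $T\rightsquigarrow H$ by clause (0) of Definition~\ref{eternal}. In the exceptional case, if $S\cap H$ is non-abelian and $p=2$ then $S$ is non-abelian as well (non-abelianness of a subgroup forces non-abelianness of the ambient group), so every cyclic subgroup of $S$ of order $4$ satisfies the relation with $G$; in particular those contained in $S\cap H$ do, and (0) pushes this down to $H$. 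Thus $H$ meets the hypothesis, and minimality of $G$ forces $\mathcal{F}_{S\cap H}(H)$ to be supersolvable.

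Next I would establish $O_p(G)\le Z_{\mathfrak{U}}(G)$. Since $O_p(G)\le S$, every subgroup of $O_p(G)$ of order $p$ is a subgroup of $S$ of order $p$ and hence satisfies the relation with $G$; and if $O_p(G)$ is non-abelian with $p=2$, then $S$ is non-abelian, so the required cyclic subgroups of order $4$ again satisfy the relation. Therefore the premise of clause (1') holds (vacuously when $O_p(G)=1$), and since $\rightsquigarrow$ is $1$-semi-eternal, clause (1') yields $O_p(G)\le Z_{\mathfrak{U}}(G)$. Finally I would combine the two parts: the inductive step supplies supersolvability of $\mathcal{F}_{S\cap H}(H)$ for every proper $H$ with $O_p(G)<S\cap H\in\mathrm{Syl}_p(H)$, and the second part supplies $O_p(G)\le Z_{\mathfrak{U}}(G)$, so Lemma~\ref{2} forces $\mathcal{F}_S(G)$ to be supersolvable, contradicting the choice of $G$. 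The proof is short, and the only delicate point is keeping track of the exceptional $p=2$ clause as one descends from $S$ to $S\cap H$ and to $O_p(G)$; since non-abelianness only ever transfers upward from a subgroup to the ambient group, the cyclic subgroups of order $4$ are available precisely when the relevant clause calls for them, so no genuine obstruction arises.
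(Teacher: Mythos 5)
Your proof is correct, and it takes a genuinely leaner route than the paper's. The paper proves this theorem by running the full five-step template of Theorem \ref{3.1}: its inductive step is stated only for proper subgroups $H$ with $|S\cap H|\ge p^2$, and it retains the essential-subgroup bookkeeping (quoting Steps 2--3 of Theorem \ref{3.1}, hence Lemma \ref{1} and the set $\mathcal{E}_{\mathcal{F}}^{*}$) precisely in order to produce the bound $|O_p(G)|\ge p^2$ before applying clause (1') of Definition \ref{eternal} and Lemma \ref{2}. You correctly identify that this bookkeeping is superfluous here, for exactly the two reasons you give: clause (1'), unlike clause (1), does not posit a subgroup $D_0$ with $1<|D_0|<|O_p(G)|$, so it applies to $O_p(G)$ of any order (with vacuous premise when $O_p(G)=1$); and the inductive step needs no lower bound on $|S\cap H|$, since the hypothesis to be verified for $H$ concerns only subgroups of order $p$ and the exceptional cyclic subgroups of order $4$, which the hypothesis on $G$ supplies whenever needed because non-abelianness of $S\cap H$ (or of $O_p(G)$) forces non-abelianness of $S$. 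With both hypotheses of Lemma \ref{2} verified directly, the contradiction follows. One small imprecision: your inductive claim as stated covers $H$ with $S\cap H=1$, where $p\nmid|H|$, the theorem's hypothesis fails for $H$, and minimality cannot be invoked; but such $H$ never occur in Lemma \ref{2}'s hypothesis, which forces $O_p(G)<S\cap H$ and hence $|S\cap H|\ge p$, so nothing is lost. What the paper's longer route buys is uniformity, since it reuses the same Steps 2, 3 and 5 across all of its theorems; what yours buys is brevity and the complete avoidance of Lemma \ref{1} and the essential-subgroup analysis.
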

\begin{proof}
Assume that the theorem is false and let $G$ be a counterexample with $|G|$ minimal. Denote $\mathcal{F}_S (G)$ by $\mathcal{F}$.

Let $H$ be a proper subgroup of $G$ such that $S \cap H \in {\rm Syl}_p (H)$ and $|S \cap H| \geq p^2$. We predicate that $\mathcal{F}_{S \cap H} (H)$ is supersolvable. By our hypothesis, every subgroup $T_1$ of $S \cap H$ with order $p$ and any cyclic subgroup $T_2$ of $S\cap H$ of order $4$ (If $S \cap H$ is non-abelian and $p=2$) satisfies the relation $T_1,T_2 \rightsquigarrow G$. Hence every subgroup $T_1$ of $S \cap H$ with order $p$ and any cyclic subgroup $T_2$ of $S\cap H$ of order $4$ (if $S \cap H$ is non-abelian and $p=2$) satisfies the relation $T_1,T_2 \rightsquigarrow H$. Since $1<p<|S \cap H|$, $H$ satisfies the hypothesis and it follows from the minimal choice of $G$ that $\mathcal{F}_{S \cap H} (H)$ is supersolvable. 
 Now let $Q \in \mathcal{E}_{\mathcal{F}} ^{*}$, Using the same method in Step 2,3 of Theorem \ref{3.1}, we conclude that $|Q| \geq p^2$, $N_{\mathcal{F}} (Q)$ is supersolvable under the assumption that $Q \not\unlhd G$, and $|O_p (G)| \geq p^2$.

Now we prove that $O_p (G) \leq Z_{\mathfrak{U}} (G)$. It follows from $|O_p (G)| \geq p^2$ that there exists a subgroup $D_0 \leq O_p (G)$ with order $p$ such that every subgroup $T_1$ of $ O_p (G)$ with order $p$ and any cyclic subgroup $T_2$ of $ O_p (G)$ of order $4$ (If $O_p (G)$ is non-abelian and $p=2$) satisfies the relation $T_1,T_2 \rightsquigarrow G$. Since $\rightsquigarrow$ is a 1-semi-eternal relation, it yields that $O_p (G) \leq Z_{\mathfrak{U}} (G)$ and this part is complete. Using the same method in Step 5 of Theorem \ref{3.1}, we can derive a contradiction, and the proof is complete.
\end{proof}
\begin{corollary}\label{ICSC}
Let $G$ be a finite group, $p$ be a prime divisor of $|G|$, and $S$ be a Sylow $p$-subgroup of $G$. Suppose that any subgroup $H$ of $S$ with order $p$ or any cyclic subgroup $H$ of $S$ with order $4$ (if $S$ is non-abelian and $p=2$)  is an $ICSC$-subgroup of $G$, then $\mathcal{F}_S (G)$ is supersolvable.
\end{corollary}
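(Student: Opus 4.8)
The plan is to follow the same template as the preceding corollaries and reduce the statement to verifying that the $ICSC$-property is a 1-semi-eternal relation, after which Theorem \ref{semi-eternal} applies directly. The first thing to observe is that the hypothesis of this corollary is literally the hypothesis of Theorem \ref{semi-eternal} with the relation $\rightsquigarrow$ instantiated as ``$H$ is an $ICSC$-subgroup of $G$''; in particular, a subgroup of order $p$ is automatically cyclic, so the stated conditions on subgroups of order $p$ and on cyclic subgroups of order $4$ (when $S$ is a non-abelian $2$-group) line up exactly with the formulation of (1') in Definition \ref{eternal}. Thus all that remains is to certify that the $ICSC$-property satisfies clauses (0) and (1') of Definition \ref{eternal}.

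First I would check clause (0): if $H$ is an $ICSC$-subgroup of $G$, then $H$ is an $ICSC$-subgroup of $K$ for every $H \leq K \leq G$. This is precisely the content of Lemma \ref{18}, so no new argument is needed. Next I would verify clause (1'). Setting $P = O_p(G)$, which is a normal $p$-subgroup of $G$, the standing hypothesis guarantees that every subgroup of $O_p(G)$ of order $p$ (equivalently, every cyclic subgroup of order $p$) and every cyclic subgroup of order $4$ (in the case where $O_p(G)$ is a non-abelian $2$-group) is an $ICSC$-subgroup of $G$. Lemma \ref{19} then delivers $O_p(G) \leq Z_{\mathfrak{U}}(G)$, which is exactly what clause (1') demands.

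Having confirmed (0) and (1'), the $ICSC$-property is a 1-semi-eternal relation, and Theorem \ref{semi-eternal} immediately yields that $\mathcal{F}_S(G)$ is supersolvable. There is no genuine obstacle here in the analytic sense; the only point requiring care is the bookkeeping of matching the $ICSC$ hypotheses to the abstract clauses, and since Lemmas \ref{18} and \ref{19} supply precisely the persistence property (0) and the $Z_{\mathfrak{U}}$-embedding property (1'), the argument is essentially immediate once one identifies that the relevant axiomatization is the 1-semi-eternal case rather than one of the size-$|D|$ variants.
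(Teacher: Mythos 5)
Your proposal is correct and follows exactly the same route as the paper's own proof: it reduces the corollary to Theorem \ref{semi-eternal} by verifying that the $ICSC$-property satisfies clause (0) of Definition \ref{eternal} via Lemma \ref{18} and clause (1') via Lemma \ref{19}, hence is a 1-semi-eternal relation. No further comment is needed.
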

\begin{proof}
It follows from Theorem \ref{semi-eternal} that we only need to prove that $ICSC$-property is a semi-eternal relation. Let $G$ be a finite group and $H$ be a subgroup of $G$ which is an $ICSC$-subgroup of $G$. Then we obtain from Lemma \ref{18} that $H$ is an $ICSC$-subgroup of $K$ for any $H \leq K \leq G$, which satisfies (0) in Definition \ref{eternal}. By Lemma \ref{19}, we conclude that $ICSC$-property  satisfies (1') in Definition \ref{eternal}. Therefore $ICSC$-property is a 1-semi-eternal relation and our proof is complete.
\end{proof} 
\begin{corollary}\label{E-supplemented}
Let $G$ be a finite group, $p$ be a prime divisor of $|G|$, and $S$ be a Sylow $p$-subgroup of $G$. Suppose that any subgroup $H$ of $S$ with order $p$ or any cyclic subgroup $H$ of $S$ with order $4$ (if $S$ is non-abelian and $p=2$)  is $E$-supplemented in $G$, then $\mathcal{F}_S (G)$ is supersolvable.
\end{corollary}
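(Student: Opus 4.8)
The plan is to follow the same template as the preceding corollaries and reduce everything to Theorem~\ref{semi-eternal}. The hypotheses imposed on $S$ in the statement match exactly the hypotheses of Theorem~\ref{semi-eternal} applied to the relation ``$H$ is $E$-supplemented in $G$''. Hence it suffices to verify that being $E$-supplemented is a $1$-semi-eternal relation, i.e.\ that conditions (0) and (1') of Definition~\ref{eternal} hold for this relation; once this is established, the supersolvability of $\mathcal{F}_S(G)$ follows immediately.

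For condition (0), I would show that the $E$-supplemented property persists to intermediate overgroups: if $H$ is $E$-supplemented in $G$ and $H \leq K \leq G$, then $H$ is $E$-supplemented in $K$. This is proved in the same spirit as Lemmas~\ref{22} and~\ref{25}, namely by taking a supplement $T$ of $H$ in $G$ witnessing the $E$-supplemented property, applying the Dedekind modular law $K = K \cap HT = H(K \cap T)$, and checking that the intersection $H \cap T \cap K$ still lies in the prescribed core/Frattini-type factor computed inside $K$; alternatively one simply cites the corresponding persistence statement from \cite{LI}.

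For condition (1'), I would apply Lemma~\ref{17} with the normal subgroup $H := O_p(G)$. Since $O_p(G)$ is a normal $p$-subgroup of $G$, its only possibly non-cyclic Sylow subgroup is $O_p(G)$ itself. If $O_p(G)$ is cyclic, then the hypothesis of Lemma~\ref{17} is vacuous and every $G$-chief factor below $O_p(G)$ is cyclic automatically. If $O_p(G)$ is non-cyclic, then (1') supplies that every subgroup of $O_p(G)$ of order $p$, and hence every cyclic subgroup of prime order, is $E$-supplemented in $G$, together with the order-$4$ clause when $O_p(G)$ is a non-abelian $2$-group; this meets the second alternative of Lemma~\ref{17}, so again every $G$-chief factor below $O_p(G)$ is cyclic. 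In either case, since $Z_{\mathfrak{U}}(G)$ is precisely the largest normal subgroup of $G$ all of whose $G$-chief factors are cyclic, we conclude $O_p(G) \leq Z_{\mathfrak{U}}(G)$, which is exactly (1').

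The main obstacle I anticipate is the bookkeeping required to align the hypotheses of Lemma~\ref{17} with those of condition (1'): Lemma~\ref{17} is phrased in terms of cyclic subgroups of prime order \emph{and} of order $4$ for every non-cyclic Sylow subgroup, whereas (1') activates the order-$4$ requirement only when $O_p(G)$ is a non-abelian $2$-group. I would therefore need to confirm that, under the conventions of \cite{LI}, the order-$4$ clause is understood to be in force precisely in the non-abelian $2$-group case, so that no hypothesis is silently dropped when specializing to $H = O_p(G)$. Verifying this compatibility, together with the persistence statement~(0), is the only non-formal part of the argument; after that the conclusion is a mechanical application of Theorem~\ref{semi-eternal}.
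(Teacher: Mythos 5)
Your proposal follows essentially the same route as the paper's own proof: reduce to Theorem~\ref{semi-eternal} by showing that the $E$-supplemented property is a $1$-semi-eternal relation, with condition (0) supplied by \cite[Lemma 2.3 (1)]{LI} and condition (1') supplied by Lemma~\ref{17} applied to $O_p(G)$. The bookkeeping concern you flag about the order-$4$ clause (Lemma~\ref{17} as stated requires order-$4$ cyclic subgroups for every non-cyclic Sylow subgroup, whereas (1') activates that clause only when $O_p(G)$ is a non-abelian $2$-group) is genuine but is equally unaddressed in the paper, whose proof of (1') consists of the bare citation of Lemma~\ref{17}; so your write-up is, if anything, more careful than the original.
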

\begin{proof}
It follows from Theorem \ref{semi-eternal} that we only need to prove that  $E$-supplemented property is a 1-semi-eternal relation. Let $G$ be a finite group and $H$ be a subgroup of $G$ which is $E$-supplemented in $G$. Then we obtain from {{\cite[Lemma 2.3 (1)]{LI}}} that $H$ is $E$-supplemented in $K$ for any $H \leq K \leq G$, which satisfies (0) in Definition \ref{eternal}. By Lemma \ref{17}, we conclude that $E$-supplemented   property  satisfies (1') in Definition \ref{eternal}. Therefore $E$-supplemented property   is a 1-semi-eternal relation and our proof is complete.
\end{proof}
 
 \begin{theorem}\label{2-semi-eternal}
Let $\rightsquigarrow$ be a 2-semi-eternal relation, $G$ be a finite group, $p$ be a prime divisor of $|G|$, and $S$ be a Sylow $p$-subgroup of $G$. Suppose that any subgroup $H$ of $S$ with order $p$ or any cyclic subgroup $H$ of $S$ with order $4$ (if $p=2$) satisfies the relation $H \rightsquigarrow G$, then $\mathcal{F}_S (G)$ is supersolvable.
\end{theorem}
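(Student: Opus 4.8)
The plan is to reuse the minimal-counterexample machinery that drives every theorem of this section, transcribing the proof of Theorem \ref{semi-eternal} and adjusting only the clause governing cyclic subgroups of order $4$: for a semi-$2$-eternal relation this clause is triggered by $p=2$ alone (condition (2')), not by $S$ being non-abelian together with $p=2$. So I would assume the statement fails, choose a counterexample $G$ of minimal order, and write $\mathcal{F} = \mathcal{F}_S(G)$; one may assume $|S| > p$, since otherwise $S$ is cyclic and $1 \leq S$ already witnesses supersolvability.

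First I would establish the inductive step: for any proper subgroup $H$ of $G$ with $S \cap H \in {\rm Syl}_p(H)$ and $|S \cap H| \geq p^2$, the fusion system $\mathcal{F}_{S \cap H}(H)$ is supersolvable. This rests on clause (0) of a semi-$2$-eternal relation: every subgroup $T_1$ of $S \cap H$ of order $p$, and (if $p=2$) every cyclic subgroup $T_2$ of order $4$, satisfies $T_i \rightsquigarrow G$ and hence $T_i \rightsquigarrow H$; since $1 < p < |S \cap H|$, the hypotheses hold for $H$ and minimality of $G$ applies.

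Next I would run the arguments of Steps 2 and 3 of Theorem \ref{3.1} with the threshold $p|D|$ replaced by $p^2$. For $Q \in \mathcal{E}_{\mathcal{F}}^{*}$ one gets $|Q| \geq p^2$: if $Q = S$ this is clear since $|S| > p$, and if $Q$ is $\mathcal{F}$-essential then $Q$ is $\mathcal{F}$-centric and ${\rm Out}_{\mathcal{F}}(Q)$ contains a strongly $p$-embedded subgroup, which forces $p \mid |{\rm Out}_{\mathcal{F}}(Q)|$ and hence rules out $|Q| = p$ (as ${\rm Aut}(C_p) \cong C_{p-1}$ is a $p'$-group). If moreover $Q \not\unlhd G$, then $N_S(Q) = S \cap N_G(Q) \in {\rm Syl}_p(N_G(Q))$ has order at least $|Q| \geq p^2$, so the inductive step gives $N_{\mathcal{F}}(Q) = \mathcal{F}_{N_S(Q)}(N_G(Q))$ supersolvable. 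Ruling out (via Lemma \ref{1}) the case that no member of $\mathcal{E}_{\mathcal{F}}^{*}$ is normal in $G$ produces a normal $Q$, whence $|O_p(G)| \geq |Q| \geq p^2$. Since $|O_p(G)| \geq p^2$ there is a subgroup of $O_p(G)$ of order $p$, and every subgroup of $O_p(G)$ of order $p$ together with (if $p=2$) every cyclic subgroup of order $4$ satisfies $\rightsquigarrow G$; because $\rightsquigarrow$ is semi-$2$-eternal, condition (2') yields $O_p(G) \leq Z_{\mathfrak{U}}(G)$.

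Finally I would close as in Step 5 of Theorem \ref{3.1}: any proper $H \leq G$ with $O_p(G) < S \cap H \in {\rm Syl}_p(H)$ satisfies $|S \cap H| > |O_p(G)| \geq p^2$, so $\mathcal{F}_{S \cap H}(H)$ is supersolvable by the inductive step, and with $O_p(G) \leq Z_{\mathfrak{U}}(G)$ Lemma \ref{2} forces $\mathcal{F}_S(G)$ to be supersolvable, the desired contradiction. I expect no genuine obstacle, since the proof is a direct instantiation of the established template; the only point needing care is bookkeeping the order-$4$ clause, and here the semi-$2$-eternal version is in fact slightly cleaner than the semi-$1$-eternal one, because dropping the ``non-abelian'' requirement makes the order-$4$ condition transfer uniformly to each relevant subquotient whenever $p=2$, with no case distinction on whether $S \cap H$ or $O_p(G)$ is abelian.
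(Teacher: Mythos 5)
Your proposal is correct and takes essentially the same route as the paper's own proof, which likewise runs the minimal-counterexample template of Theorem \ref{3.1} with $p|D|$ replaced by $p^2$, invokes condition (2') of Definition \ref{eternal} to get $O_p(G) \leq Z_{\mathfrak{U}}(G)$, and closes with Lemma \ref{2}. The only local deviations --- your explicit disposal of the case $|S| = p$, and your strongly-$p$-embedded argument for $|Q| \geq p^2$ where the paper implicitly reuses Step 2 of Theorem \ref{3.1} (valid there because every group of order $p^2$ is abelian, so the required abelian overgroup $R$ of order $p^2$ exists automatically) --- are harmless refinements of the same argument.
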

\begin{proof}
Assume that the theorem is false and let $G$ be a counterexample with $|G|$ minimal. Denote $\mathcal{F}_S (G)$ by $\mathcal{F}$.

Let $H$ be a proper subgroup of $G$ such that $S \cap H \in {\rm Syl}_p (H)$ and $|S \cap H| \geq p^2$. We predicate that $\mathcal{F}_{S \cap H} (H)$ is supersolvable. By our hypothesis, every subgroup $T_1$ of $S \cap H$ with order $p$ and any cyclic subgroup $T_2$ of $S\cap H$ of order $4$ (if $p=2$) satisfies the relation $T_1,T_2 \rightsquigarrow G$. Hence every subgroup $T_1$ of $S \cap H$ with order $p$ and any cyclic subgroup $T_2$ of $S\cap H$ of order $4$ (if $p=2$) satisfies the relation $T_1,T_2 \rightsquigarrow H$. Since $1<p<|S \cap H|$, $H$ satisfies the hypothesis and it follows from the minimal choice of $G$ that $\mathcal{F}_{S \cap H} (H)$ is supersolvable. 
 Now let $Q \in \mathcal{E}_{\mathcal{F}} ^{*}$, Using the same method in Step 2,3 of Theorem \ref{3.1}, we conclude that $|Q| \geq p^2$, $N_{\mathcal{F}} (Q)$ is supersolvable under the assumption that $Q \not\unlhd G$, and $|O_p (G)| \geq p^2$.

Now we prove that $O_p (G) \leq Z_{\mathfrak{U}} (G)$. It follows from $|O_p (G)| \geq p^2$ that there exists a subgroup $D_0 \leq O_p (G)$ with order $p$ such that every subgroup $T_1$ of $ O_p (G)$ with order $p$ and any cyclic subgroup $T_2$ of $ O_p (G)$ of order $4$ (if $p=2$) satisfies the relation $T_1,T_2 \rightsquigarrow G$. Since $\rightsquigarrow$ is a 2-semi-eternal relation, it yields that $O_p (G) \leq Z_{\mathfrak{U}} (G)$ and this part is complete. Using the same method in Step 5 of Theorem \ref{3.1}, we can derive a contradiction, and the proof is complete.
\end{proof}

\begin{corollary}\label{pCAP}
Let $G$ be a finite group, $p$ be a prime divisor of $|G|$, and $S$ be a Sylow $p$-subgroup of $G$. Suppose that any subgroup $H$ of $S$ with order $p$ or any cyclic subgroup $H$ of $S$ with order $4$ (if $p=2$)  is a partial $CAP$-subgroup of $G$, then $\mathcal{F}_S (G)$ is supersolvable.
\end{corollary}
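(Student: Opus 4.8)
The plan is to deduce the result from Theorem \ref{2-semi-eternal}; thus it suffices to show that the partial $CAP$-subgroup property is a 2-semi-eternal relation, which amounts to verifying conditions (0) and (2') of Definition \ref{eternal}.

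First I would check condition (0): if $H$ is a partial $CAP$-subgroup of $G$, then $H$ is a partial $CAP$-subgroup of $K$ for every $H \leq K \leq G$. Recall that $H$ being a partial (equivalently, semi-) $CAP$-subgroup means that $H$ covers or avoids each chief factor of some chief series of $G$. The persistence of this covering--avoidance behaviour under restriction to an overgroup $K$ is exactly the content of \cite[Lemma 2.1]{BB}, which was already invoked in the proof of Lemma \ref{18}; I would reuse it verbatim here, so that condition (0) holds.

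For condition (2') I would appeal directly to Lemma \ref{20}: taking $P = O_p(G)$, the hypothesis that every cyclic subgroup of $O_p(G)$ of order $p$, together with every cyclic subgroup of order $4$ when $p = 2$, is a partial $CAP$-subgroup of $G$ forces $O_p(G) \leq Z_{\mathfrak{U}}(G)$. This is precisely the statement of (2'). Having verified (0) and (2'), the partial $CAP$-subgroup property is 2-semi-eternal, and Theorem \ref{2-semi-eternal} then yields that $\mathcal{F}_S(G)$ is supersolvable.

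The step I expect to be the main obstacle is condition (0), since a chief series of $G$ does not in general restrict to a chief series of an intermediate subgroup $K$, so the covering--avoidance property of $H$ with respect to $G$ must be transferred to a suitable chief series of $K$. This is the only genuinely non-formal point; once \cite[Lemma 2.1]{BB} settles it, the remainder of the argument is a direct quotation of Lemma \ref{20} and Theorem \ref{2-semi-eternal}, exactly paralleling the proofs of Corollaries \ref{ICSC} and \ref{E-supplemented}.
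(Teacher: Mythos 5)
Your proposal is correct and follows essentially the same route as the paper: condition (0) via \cite[Lemma 2.1]{BB}, the hypercentre condition via Lemma \ref{20}, and then Theorem \ref{2-semi-eternal}. In fact your bookkeeping is more accurate than the paper's own proof, which misstates that the partial $CAP$ property is a ``4-eternal'' relation satisfying (4), whereas the appeal to Theorem \ref{2-semi-eternal} requires exactly the conditions (0) and (2') that you verify.
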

\begin{proof}
It follows from Theorem \ref{2-semi-eternal} that we only need to prove that  partial $CAP$-property is a 4-eternal relation. Let $G$ be a finite group and $H$ be a subgroup of $G$ which is a partial $CAP$-subgroup of $G$. Then we obtain from {{\cite[Lemma 2.1]{BB}}} that $H$ is a partial $CAP$-subgroup of $K$ for any $H \leq K \leq G$, which satisfies (0) in Definition \ref{eternal}. By Lemma \ref{20}, we conclude that  partial $CAP$-property  satisfies (4) in Definition \ref{eternal}. Therefore  partial $CAP$-property   is a 4-eternal relation and our proof is complete.
\end{proof}

\begin{theorem}\label{3-semi-eternal}
Let $\rightsquigarrow$ be a 3-semi-eternal relation, $G$ be a finite group, $p$ be a prime divisor of $|G|$, and $S$ be a Sylow $p$-subgroup of $G$. Suppose that any subgroup $H$ of $S$ with order $p$ or any cyclic subgroup $H$ of $S$ with order $4$ (if $S$ is non-cyclic 2-group) satisfies the relation $H \rightsquigarrow G$, then $\mathcal{F}_S (G)$ is supersolvable.
\end{theorem}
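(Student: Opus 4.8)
The plan is to mirror the minimal-counterexample argument used for Theorems \ref{semi-eternal} and \ref{2-semi-eternal}, since the only difference between this statement and those lies in the parenthetical clause that governs the order-$4$ cyclic subgroups. I would assume the theorem fails, choose a counterexample $G$ of minimal order, and write $\mathcal{F} = \mathcal{F}_S(G)$.

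First I would establish the descent step: for every proper subgroup $H$ of $G$ with $S \cap H \in {\rm Syl}_p(H)$ and $|S \cap H| \geq p^2$, the fusion system $\mathcal{F}_{S \cap H}(H)$ is supersolvable. The observation that makes this transfer clean—and which I expect to be the main technical point—is that being a non-cyclic $2$-group is inherited downward: if $S \cap H$ is a non-cyclic $2$-group, then so is $S$, because every subgroup of a cyclic group is cyclic. Hence whenever the order-$4$ clause is triggered for $H$, the ambient hypothesis on $S$ already supplies that each relevant cyclic subgroup $T$ of order $4$ satisfies $T \rightsquigarrow G$; condition (0) of Definition \ref{eternal} then pushes this down to $T \rightsquigarrow H$, and likewise for the order-$p$ subgroups. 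Minimality of $G$ now yields supersolvability of $\mathcal{F}_{S\cap H}(H)$.

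Next, invoking the essential-subgroup analysis of Steps 2 and 3 of Theorem \ref{3.1} verbatim, I would deduce that every $Q \in \mathcal{E}_{\mathcal{F}}^{*}$ satisfies $|Q| \geq p^2$, that $N_{\mathcal{F}}(Q)$ is supersolvable whenever $Q \not\unlhd G$, and consequently that $|O_p(G)| \geq p^2$. With this size bound in hand, I would verify that $O_p(G)$ meets the hypotheses of condition (3'): its subgroups of order $p$ are subgroups of $S$ of order $p$ and hence relate to $G$; and if $O_p(G)$ is a non-cyclic $2$-group, the same downward inheritance forces $S$ non-cyclic, so the cyclic subgroups of $O_p(G)$ of order $4$ relate to $G$ as well. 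Since $\rightsquigarrow$ is $3$-semi-eternal, condition (3') then gives $O_p(G) \leq Z_{\mathfrak{U}}(G)$.

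Finally, I would close exactly as in Step 5 of Theorem \ref{3.1}: any proper $H$ with $O_p(G) < S \cap H \in {\rm Syl}_p(H)$ satisfies $|S \cap H| > |O_p(G)| \geq p^2$, so $\mathcal{F}_{S\cap H}(H)$ is supersolvable by the descent step, and combining this with $O_p(G) \leq Z_{\mathfrak{U}}(G)$ and Lemma \ref{2} forces $\mathcal{F}_S(G)$ to be supersolvable, contradicting the choice of $G$. The only place demanding real care is the matching of the non-cyclic $2$-group clause across the two passages (to $H$ and to $O_p(G)$); everything else is a faithful transcription of the preceding proofs.
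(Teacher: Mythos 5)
Your proposal is correct and follows essentially the same route as the paper's own proof: a minimal counterexample, the descent to proper subgroups $H$ with $|S\cap H|\geq p^2$ via condition (0), the verbatim appeal to Steps 2, 3 and 5 of Theorem \ref{3.1}, and the application of condition (3') to $O_p(G)$ to get $O_p(G)\leq Z_{\mathfrak{U}}(G)$ before closing with Lemma \ref{2}. The one point you elaborate—that non-cyclicity of a $2$-group passes upward from $S\cap H$ (resp.\ $O_p(G)$) to $S$, so the order-$4$ clause of the ambient hypothesis is indeed available—is left implicit in the paper but is exactly the justification its proof relies on.
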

\begin{proof}
Assume that the theorem is false and let $G$ be a counterexample with $|G|$ minimal. Denote $\mathcal{F}_S (G)$ by $\mathcal{F}$.

Let $H$ be a proper subgroup of $G$ such that $S \cap H \in {\rm Syl}_p (H)$ and $|S \cap H| \geq p^2$. We predicate that $\mathcal{F}_{S \cap H} (H)$ is supersolvable. By our hypothesis, every subgroup $T_1$ of $S \cap H$ with order $p$ and any cyclic subgroup $T_2$ of $S\cap H$ of order $4$ (if $S \cap H$ is non-cyclic 2-group) satisfies the relation $T_1,T_2 \rightsquigarrow G$. Hence every subgroup $T_1$ of $S \cap H$ with order $p$ and any cyclic subgroup $T_2$ of $S\cap H$ of order $4$ (if $S \cap H$ is non-cyclic 2-group) satisfies the relation $T_1,T_2 \rightsquigarrow H$. Since $1<p<|S \cap H|$, $H$ satisfies the hypothesis and it follows from the minimal choice of $G$ that $\mathcal{F}_{S \cap H} (H)$ is supersolvable. 
 Now let $Q \in \mathcal{E}_{\mathcal{F}} ^{*}$, Using the same method in Step 2,3 of Theorem \ref{3.1}, we conclude that $|Q| \geq p^2$, $N_{\mathcal{F}} (Q)$ is supersolvable under the assumption that $Q \not\unlhd G$, and $|O_p (G)| \geq p^2$.

Now we prove that $O_p (G) \leq Z_{\mathfrak{U}} (G)$. It follows from $|O_p (G)| \geq p^2$ that there exists a subgroup $D_0 \leq O_p (G)$ with order $p$ such that every subgroup $T_1$ of $ O_p (G)$ with order $p$ and any cyclic subgroup $T_2$ of $ O_p (G)$ of order $4$ (if $O_p (G)$ is non-cyclic 2-group) satisfies the relation $T_1,T_2 \rightsquigarrow G$. Since $\rightsquigarrow$ is a 3-semi-eternal relation, it yields that $O_p (G) \leq Z_{\mathfrak{U}} (G)$ and this part is complete. Using the same method in Step 5 of Theorem \ref{3.1}, we can derive a contradiction, and the proof is complete.
\end{proof}
\begin{corollary}\label{weakly-H-1}
Let $G$ be a finite group, $p$ be a prime divisor of $|G|$, and $S$ be a Sylow $p$-subgroup of $G$. Suppose that every cyclic subgroup of $S$ of order $p$ or $4$ (if $P$ is a non-cyclic $2$-group)  is a weakly $\mathcal{H}$-subgroup of $G$, then $\mathcal{F}_S (G)$ is supersolvable.
\end{corollary}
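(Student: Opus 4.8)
The plan is to recognize this corollary as a direct instance of the axiomatization recorded in Theorem \ref{3-semi-eternal}, so that the entire task reduces to verifying that the weakly $\mathcal{H}$-subgroup property is a 3-semi-eternal relation in the sense of Definition \ref{eternal}. Once this is established, the supersolvability of $\mathcal{F}_S(G)$ follows immediately from Theorem \ref{3-semi-eternal}, because the present hypothesis---that every cyclic subgroup of $S$ of order $p$, together with every cyclic subgroup of order $4$ when $S$ is a non-cyclic $2$-group, is a weakly $\mathcal{H}$-subgroup of $G$---is exactly the hypothesis of that theorem specialized to the relation $\rightsquigarrow$ given by ``is a weakly $\mathcal{H}$-subgroup of''.

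To verify the 3-semi-eternal property I would check the two defining clauses (0) and (3') of Definition \ref{eternal}. Clause (0) is the inheritance of the property to intermediate overgroups: if $H$ is a weakly $\mathcal{H}$-subgroup of $G$ and $H \leq K \leq G$, then $H$ is a weakly $\mathcal{H}$-subgroup of $K$. This is precisely the content of Lemma \ref{23}, so nothing new is required there. Clause (3') asserts that if every subgroup of $O_p(G)$ of order $p$---together with every cyclic subgroup of order $4$ when $O_p(G)$ is a non-cyclic $2$-group---is a weakly $\mathcal{H}$-subgroup of $G$, then $O_p(G) \leq Z_{\mathfrak{U}}(G)$. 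This follows from Lemma \ref{24}(1) applied with $P = O_p(G)$: the alternative hypothesis there, namely ``either a weakly $\mathcal{H}$-subgroup of $G$ or has a supersoluble supplement in $G$'', is strictly weaker than being a weakly $\mathcal{H}$-subgroup, so its conclusion $P \leq Z_{\mathfrak{U}}(G)$ certainly holds in our situation.

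Having matched both clauses, I would conclude that the weakly $\mathcal{H}$-subgroup property is a 3-semi-eternal relation, whence the corollary is a one-line invocation of Theorem \ref{3-semi-eternal}. The only delicate point, and the step I would double-check most carefully, is the bookkeeping of the ``non-cyclic $2$-group'' caveat attached to the order-$4$ subgroups: one must confirm that this exceptional clause refers to the same condition on the relevant $p$-group in the corollary's hypothesis, in Definition \ref{eternal}(3'), and in Lemma \ref{24}(1), so that the three statements genuinely line up (and in particular that no gap arises between the hypothesis phrased in terms of $S$ and the application of Lemma \ref{24}(1) to $O_p(G)$). Once this alignment is confirmed, there is no remaining obstacle.
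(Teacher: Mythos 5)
Your proposal matches the paper's own proof essentially verbatim: the paper likewise reduces the corollary to Theorem \ref{3-semi-eternal}, verifies clause (0) of Definition \ref{eternal} via Lemma \ref{23}, and verifies clause (3') via Lemma \ref{24}(1) (with the same implicit observation that being a weakly $\mathcal{H}$-subgroup implies the disjunctive hypothesis of that lemma). Your extra care about aligning the ``non-cyclic $2$-group'' caveat across the corollary, Definition \ref{eternal}(3'), and Lemma \ref{24}(1) is sound and resolves correctly, so there is nothing to fix.
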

\begin{proof}
It follows from Theorem \ref{3-semi-eternal} that we only need to prove that weakly $\mathcal{H}$-property is a 3-semi-eternal relation. Let $G$ be a finite group and $H$ be a subgroup of $G$ which is a  weakly $\mathcal{H}$-subgroup of $G$. Then we obtain from Lemma \ref{23} that $H$ is an weakly $\mathcal{H}$-subgroup of $K$ for any $H \leq K \leq G$, which satisfies (0) in Definition \ref{eternal}. By Lemma \ref{24} (1), we conclude that weakly $\mathcal{H}$-property  satisfies (3') in Definition \ref{eternal}. Therefore weakly $\mathcal{H}$-property is a 3-semi-eternal relation and our proof is complete.
\end{proof} 
Now,our generalizations seem to be less powerful when we encounter with the following generalized normalities. However, the main part of the proof is still the same, just some slight differences need to be changed.

\begin{theorem}\label{weakly SPhi}
Let $G$ be a finite group, $p$ be a prime divisor of $|G|$, and $S$ be a Sylow $p$-subgroup of $G$. Suppose that $S$ has a subgroup $D$ such that $1 \leq D <S$, and all subgroups of $S$ of order $|D|$ or $p|D|$ are abelian and weakly $S\Phi$-supplemented in $G$. If $S$ is a non-abelian $2$-group and $|D|=1$, we have additionally that every cyclic subgroup of $G$ of order $4$ is a weakly $S\Phi$-supplemented subgroup of $G$.  Then $\mathcal{F}_S (G)$ is supersolvable. 
\end{theorem}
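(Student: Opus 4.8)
The plan is to argue by a minimal counterexample and to reproduce, almost verbatim, the five-step scheme used in the proof of Theorem \ref{3.1}. I would emphasize at the outset why a direct argument is needed: this statement cannot be subsumed under a single eternal relation, because its hypothesis couples subgroups of order $|D|$ \emph{and} $p|D|$ with the degenerate value $|D|=1$ and the extra order-$4$ clause for non-abelian $2$-groups, a combination matching none of the cases (1)--(9), (1')--(3') of Definition \ref{eternal}. Accordingly, let $G$ be a counterexample of minimal order and write $\mathcal{F} = \mathcal{F}_S(G)$.

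For the inductive step, let $H<G$ satisfy $S\cap H\in\mathrm{Syl}_p(H)$ and $|S\cap H|\geq p|D|$. Every subgroup of $S\cap H$ of order $|D|$ or $p|D|$ is a subgroup of $S$ of that order, hence abelian and weakly $S\Phi$-supplemented in $G$; by Lemma \ref{25} it is weakly $S\Phi$-supplemented in $H$. If $S\cap H$ is a non-abelian $2$-group with $|D|=1$, then $S$ is a non-abelian $2$-group too, so the extra hypothesis supplies the order-$4$ clause, which again descends to $H$ via Lemma \ref{25}. As $1\leq |D|<|S\cap H|$, minimality yields that $\mathcal{F}_{S\cap H}(H)$ is supersolvable. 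The essential-subgroup analysis of Steps 2 and 3 of Theorem \ref{3.1} then carries over unchanged: for $Q\in\mathcal{E}_{\mathcal{F}}^{*}$ one shows $|Q|\geq p|D|$ (if $|Q|<p|D|$, pick an abelian $R\leq S$ of order $p|D|$ with $Q<R$, whence $R\leq C_S(Q)=Z(Q)\leq Q$, a contradiction; when $|D|=1$ this is automatic, since essential subgroups are non-trivial), that $N_{\mathcal{F}}(Q)$ is supersolvable whenever $Q\not\unlhd G$, and that $|O_p(G)|\geq p|D|$.

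The decisive step is to prove $O_p(G)\leq Z_{\mathfrak{U}}(G)$. Here, rather than invoking an eternal relation, I would apply Lemma \ref{26} directly to the normal $p$-subgroup $P:=O_p(G)$. Since $|O_p(G)|\geq p|D|$ and $O_p(G)\leq S$, every subgroup of $O_p(G)$ of order $|D|$ or $p|D|$ is weakly $S\Phi$-supplemented in $G$ by hypothesis; and if $O_p(G)$ happens to be a non-abelian $2$-group with $|D|=1$, then $S$ is a non-abelian $2$-group, so the global hypothesis delivers exactly the order-$4$ clause required by Lemma \ref{26}. Hence $O_p(G)\leq Z_{\mathfrak{U}}(G)$.

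Finally, for any proper $H<G$ with $O_p(G)<S\cap H\in\mathrm{Syl}_p(H)$, the inductive step gives $\mathcal{F}_{S\cap H}(H)$ supersolvable, so Lemma \ref{2} forces $\mathcal{F}_S(G)$ to be supersolvable, contradicting the choice of $G$. The main obstacle is not any single computation but the bookkeeping around the boundary value $|D|=1$: one must verify that every hypothesis survives restriction in this degenerate case and, above all, that the order-$4$ clause, stated only for $S$, is available precisely when Lemma \ref{26} demands it for $P=O_p(G)$. The saving observation throughout is that $O_p(G)$ (or $S\cap H$) being non-abelian of $2$-power order forces $S$ itself to be a non-abelian $2$-group, so the single clause stated for $S$ propagates to wherever it is needed.
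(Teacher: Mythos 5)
Your proposal is correct and follows essentially the same route as the paper's proof: a minimal counterexample, the inductive step for proper subgroups $H$ with $|S\cap H|\geq p|D|$ via Lemma \ref{25}, the transfer of Steps 2, 3 and 5 of Theorem \ref{3.1}, and the direct application of Lemma \ref{26} to $O_p(G)$ in place of an eternal-relation argument. Your explicit bookkeeping for the $|D|=1$ boundary case --- noting that $S\cap H$ or $O_p(G)$ being a non-abelian $2$-group forces $S$ itself to be one, so the order-$4$ clause stated for $G$ is available exactly where Lemma \ref{26} needs it --- is precisely the point the paper relies on (and states somewhat more loosely).
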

\begin{proof}
Assume that the theorem is false and let $G$ be a counterexample with $|G|$ minimal. Denote $\mathcal{F}_S (G)$ by $\mathcal{F}$.

Let $H$ be a proper subgroup of $G$ such that $S \cap H \in {\rm Syl}_p (H)$ and $|S \cap H| \geq p |D|$. We predicate that $\mathcal{F}_{S \cap H} (H)$ is supersolvable. By our hypothesis, every subgroup of $S \cap H$ with order $|D|$ or $p|D|$ is weakly $S\Phi$-supplemented in $G$. If $S \cap H$ is a non-abelian $2$-group and $|D|=1$, then every cyclic subgroup of $S \cap H$ of order $4$ is a weakly $S\Phi$-supplemented subgroup of $G$. Hence every subgroup of $S \cap H$ with order $|D|$ or $p|D|$ is weakly $S\Phi$-supplemented in $H$, and every cyclic subgroup of $G$ of order $4$ is a weakly $S\Phi$-supplemented subgroup of $H$ if $S \cap H$ is a non-abelian $2$-group and $|D|=1$ by Lemma \ref{25}. Clearly any subgroup of $S \cap H$ with order $|D|$ or $p|D|$ is abelian. Since $1\leq |D| <|S \cap H|$, $H$ satisfies the hypothesis and it follows from the minimal choice of $G$ that $\mathcal{F}_{S \cap H} (H)$ is supersolvable. 
 Now let $Q \in \mathcal{E}_{\mathcal{F}} ^{*}$, Using the same method in Step 2,3 of Theorem \ref{3.1}, we conclude that $|Q| \geq p|D|$, $N_{\mathcal{F}} (Q)$ is supersolvable under the assumption that $Q \not\unlhd G$, and $|O_p (G)| \geq p|D|$.

Now we prove that $O_p (G) \leq Z_{\mathfrak{U}} (G)$. It follows from $|O_p (G)| \geq p|D|$ that there exists a subgroup $D_0 \leq O_p (G)$ with order $|D|$ such that any subgroup of $O_p (G)$ of order $|D|$ or $p|D|$ is abelian and weakly $S\Phi$-supplemented in $G$. If $O_p (G)$ is a non-abelian $2$-group and $|D|=1$, then every cyclic subgroup of $O_p (G)$ of order $4$ is a weakly $S\Phi$-supplemented subgroup of $G$. By Lemma \ref{26}, it yields that $O_p (G) \leq Z_{\mathfrak{U}} (G)$ and this part is complete. Using the same method in Step 5 of Theorem \ref{3.1}, we can derive a contradiction, and the proof is complete.
\end{proof}
\begin{theorem}\label{NE*}
Let $G$ be a $Q_8$-$free$ finite group, $p$ be a prime divisor of $|G|$, and $S$ be a Sylow $p$-subgroup of $G$. Suppose that any subgroup $H$ of $S$ with order $p$ is an $NE^{*}$-subgroup of $G$, then $\mathcal{F}_S (G)$ is supersolvable.
\end{theorem}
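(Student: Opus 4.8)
The plan is to run the minimal counterexample argument exactly as in Theorem \ref{semi-eternal}, with the role of the abstract $Z_{\mathfrak{U}}$-statement now played directly by Lemma \ref{31}. Because the hypothesis only constrains subgroups of $S$ of order $p$, the situation parallels Theorem \ref{semi-eternal} with the role of $|D|$ played by $p$, so the relevant size bounds throughout are $|S \cap H| \geq p^2$ and $|O_p(G)| \geq p^2$. The single structural novelty is the global hypothesis that $G$ is $Q_8$-free: this is exactly what Lemma \ref{31} demands, it is inherited by every subgroup of $G$, and it is what allows the clause on cyclic subgroups of order $4$ present in Theorem \ref{semi-eternal} to be dropped entirely.

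So I would assume the theorem fails, fix a counterexample $G$ of minimal order, and write $\mathcal{F} = \mathcal{F}_S(G)$. For the reduction step, let $H < G$ satisfy $S \cap H \in \mathrm{Syl}_p(H)$ and $|S \cap H| \geq p^2$. Each subgroup $T \leq S \cap H$ of order $p$ is a subgroup of $S$ of order $p$, hence an $NE^*$-subgroup of $G$; granting that $T$ remains an $NE^*$-subgroup of $H$, and noting that $H$ is again $Q_8$-free, minimality yields that $\mathcal{F}_{S \cap H}(H)$ is supersolvable. With this in hand, Steps 2 and 3 of Theorem \ref{3.1} transfer essentially verbatim: every $Q \in \mathcal{E}_{\mathcal{F}}^{*}$ satisfies $|Q| \geq p^2$, the fusion system $N_{\mathcal{F}}(Q)$ is supersolvable whenever $Q \not\unlhd G$, and consequently $|O_p(G)| \geq p^2$.

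The heart of the argument is then immediate: since $O_p(G)$ is a non-trivial normal $p$-subgroup of the $Q_8$-free group $G$ and every subgroup of $O_p(G)$ of order $p$ is an $NE^*$-subgroup of $G$ by hypothesis, Lemma \ref{31} gives $O_p(G) \leq Z_{\mathfrak{U}}(G)$ directly. Finally I would reproduce Step 5 of Theorem \ref{3.1}: any proper $H$ with $O_p(G) < S \cap H \in \mathrm{Syl}_p(H)$ has $|S \cap H| > |O_p(G)| \geq p^2$, so its fusion system is supersolvable by the reduction, whence Lemma \ref{2} forces $\mathcal{F}_S(G)$ itself to be supersolvable, contradicting the choice of $G$.

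The step I expect to be the main obstacle is the inheritance claim used in the reduction, namely that an order-$p$ subgroup which is an $NE^*$-subgroup of $G$ remains an $NE^*$-subgroup of each intermediate subgroup $H$ with $T \leq H \leq G$ --- precisely condition (0) of Definition \ref{eternal}. This is exactly the point at which the present relation resists the earlier axiomatisation, which is why the statement is proved on its own rather than deduced from one of the eternal-relation theorems, and it is what the earlier remark that ``some slight differences need to be changed'' alludes to. I would settle it by unpacking the definition of $NE^*$-subgroup from \cite{LZ} and checking directly that its defining data restrict along the inclusion $H \leq G$; once that verification is secured, every remaining step coincides with the proof of Theorem \ref{semi-eternal}, with the $Q_8$-freeness entering only through Lemma \ref{31}.
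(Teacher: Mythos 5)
Your proposal follows the paper's proof essentially step for step: a minimal counterexample, the same reduction showing $\mathcal{F}_{S\cap H}(H)$ is supersolvable for proper $H$ with $|S\cap H|\geq p^2$, the transfer of Steps 2, 3 and 5 of Theorem \ref{3.1}, and the direct application of Lemma \ref{31} to $O_p(G)$. The one point you flag as the main obstacle --- that an $NE^{*}$-subgroup of $G$ of order $p$ remains an $NE^{*}$-subgroup of every intermediate subgroup $H$ --- is precisely what the paper settles by citing \cite[Lemma 2.2 (1)]{LZ}, so the verification you propose to do by hand is already available in the literature and the rest of your argument goes through as written.
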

\begin{proof}
Assume that the theorem is false and let $G$ be a counterexample with $|G|$ minimal. Denote $\mathcal{F}_S (G)$ by $\mathcal{F}$.

Let $H$ be a proper subgroup of $G$ such that $S \cap H \in {\rm Syl}_p (H)$ and $|S \cap H| \geq p^2$. We predicate that $\mathcal{F}_{S \cap H} (H)$ is supersolvable. By our hypothesis, every subgroup of $S \cap H$ with order $p$ is an $NE^{*}$-subgroup of $G$. By lemma {{\cite[Lemma 2.2 (1)]{LZ}}}, every subgroup of $S \cap H$ with order $p$ is an $NE^{*}$-subgroup of $H$. Since $1<p<|S \cap H|$, $H$ satisfies the hypothesis and it follows from the minimal choice of $G$ that $\mathcal{F}_{S \cap H} (H)$ is supersolvable. 
 Now let $Q \in \mathcal{E}_{\mathcal{F}} ^{*}$, Using the same method in Step 2,3 of Theorem \ref{3.1}, we conclude that $|Q| \geq p^2$, $N_{\mathcal{F}} (Q)$ is supersolvable under the assumption that $Q \not\unlhd G$, and $|O_p (G)| \geq p^2$.

Now we prove that $O_p (G) \leq Z_{\mathfrak{U}} (G)$. It follows from $|O_p (G)| \geq p^2$ that there exists a subgroup $D_0 \leq O_p (G)$ with order $p$ such that every subgroup of $O_p (G)$ with order $p$ is an $NE^{*}$-subgroup of $G$. By Lemma \ref{31}, it yields that $O_p (G) \leq Z_{\mathfrak{U}} (G)$ and this part is complete. Using the same method in Step 5 of Theorem \ref{3.1}, we can derive a contradiction, and the proof is complete.
\end{proof}

Now we would like to give some characterizations for supersolvability of $\mathcal{F}_S (G)$, which can be seen as corollaries from our main theorems in section \ref{Section 4}. We will prove these corollaries in the next section with the help of stronger results.
\begin{corollary}\label{Corollary 1}
Let $G$ be a finite group and $S$ be a Sylow $p$-subgroup of $G$. Assume that there exists a subgroup $D$ of $S$ such that $1 <|D|<|S|$, and any subgroup of $S$ of order $|D|$ is abelian and satisfies the $S$-subnormalizer condition in $G$, any subgroup of $S$ of order $p|D|$ is abelian. If $S$ is non-abelian, $p=2$ and $|D|=2$, then assume moreover that any cyclic subgroup of $S$ of order $4$ satisfies the $S$-subnormalizer condition in $G$. Then $\mathcal{F}_S (G)$ is supersolvable.
\end{corollary}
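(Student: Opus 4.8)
The plan is to transport the statement into the language of fusion systems and then invoke the main results of Section~\ref{Section 4}. The key is Remark~\ref{Equivalence}, which says that a subgroup $H \le S$ satisfies the $S$-subnormalizer condition in $G$ precisely when $H$ is semi-invariant in $\mathcal{F} := \mathcal{F}_S(G)$. First I would use this to rewrite the hypothesis entirely inside $\mathcal{F}$: there exists $D \le S$ with $1 < |D| < |S|$ such that every subgroup of $S$ of order $|D|$ or $p|D|$ is abelian, every subgroup of order $|D|$ is semi-invariant in $\mathcal{F}$, and---in the exceptional case where $p = 2$, $S$ is non-abelian and $|D| = 2$---every cyclic subgroup of order $4$ is semi-invariant in $\mathcal{F}$. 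Note that semi-invariance is demanded only at order $|D|$ (and order $4$), while order $p|D|$ carries only the abelianity condition; this is exactly the weakening over the weakly pronormal hypothesis of \cite[Theorem~A]{FJ} that we are after.

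Once phrased this way, the corollary is the specialization $\mathcal{F} = \mathcal{F}_S(G)$ of the semi-invariant analogue of \cite[Theorem~A]{FJ} proved in Section~\ref{Section 4}; applying that theorem gives at once that $\mathcal{F}_S(G)$ is supersolvable. Thus the genuine content lies in Remark~\ref{Equivalence} together with the Section~\ref{Section 4} machinery, and the corollary itself is a one-step deduction.

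For completeness I would indicate how the Section~\ref{Section 4} theorem is obtained, since that is where the work is. It follows the minimal-counterexample skeleton of Theorem~\ref{3.1}: one passes to proper saturated subsystems on $p$-groups $R$ with $O_p(\mathcal{F}) < R$, on which the inherited hypothesis persists and which are supersolvable by induction; one controls the members $Q$ of $\mathcal{E}_{\mathcal{F}}^{*}$ through Lemma~\ref{1}, using the abelianity at orders $|D|$ and $p|D|$ exactly as in Step~2 of Theorem~\ref{3.1} to force $|Q| \ge p|D|$ and ultimately $\mathcal{E}_{\mathcal{F}}^{*} = \lbrace S \rbrace$; semi-invariance is then upgraded to strong $\mathcal{F}$-closure by Lemma~\ref{9}, and the required cyclic series is assembled from $O_p(\mathcal{F})$ upward via Lemmas~\ref{6}, \ref{10} and~\ref{12} (with Lemma~\ref{7} governing the passage to quotients). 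The main obstacle I anticipate is the exceptional $2$-group case: when $p = 2$, $S$ is non-abelian and $|D| = 2$, the order-$2$ subgroups do not by themselves determine the cyclic factors, and one must exploit the semi-invariance of the order-$4$ cyclic subgroups through the $\mathcal{A}_2$-group classification of Lemma~\ref{15} together with Lemma~\ref{8}; verifying that this extra hypothesis descends correctly to every relevant proper subsystem and to $O_p(\mathcal{F})$ is the delicate point, while everything else is bookkeeping inherited from the framework of Section~\ref{1003}.
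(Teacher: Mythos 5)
Your proposal is correct and follows exactly the paper's route: translate the $S$-subnormalizer condition into semi-invariance in $\mathcal{F}_S(G)$ via Remark \ref{Equivalence}, then invoke Theorem \ref{Theorem B}; this two-step deduction is precisely the paper's own proof of the corollary. One minor caveat: your supplementary sketch of how Theorem \ref{Theorem B} itself is established does not match the paper's argument---the paper proves it by forming the model $G = O_p(\mathcal{F}) \rtimes {\rm Aut}_{\mathcal{F}}(O_p(\mathcal{F}))$, showing semi-invariant subgroups of $O_p(\mathcal{F})$ are normal there, and applying Lemma \ref{5} and Lemma \ref{6} (the exceptional order-$4$ case is absorbed directly by Lemma \ref{5}), whereas the machinery you cite (Lemmas \ref{9}, \ref{10}, \ref{12}, \ref{15}, \ref{8}) belongs to the proofs of Theorems \ref{C} and \ref{D1}---but since Theorem \ref{Theorem B} is an established result of Section \ref{Section 4}, this does not affect the validity of your deduction of the corollary.
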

\begin{corollary}\label{Corollary 2}
Let $G$ be a finite group and $S$ be a Sylow $p$-subgroup of $G$, where $p$ is an odd prime divisor of $|G|$. Suppose that any maximal subgroup of $S$ satisfies the $S$-subnormalizer condition in $G$, and if $S$ is not cyclic, we have additionally that $S$ has more than one abelian maximal subgroup. Then $\mathcal{F}_S (G)$ is supersolvable.
\end{corollary}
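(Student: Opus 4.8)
The plan is to translate the hypothesis into fusion-theoretic language and then argue intrinsically inside $\mathcal{F} := \mathcal{F}_S(G)$, since the assumption bears on maximal subgroups of $S$, which do not restrict well to proper subgroups $H \le G$; consequently the ``restrict to $H$, induct on $|G|$'' scheme used in Theorem \ref{3.1} is unavailable, and the argument must be carried out at the level of the saturated fusion system (this is exactly what the stronger results of Section \ref{Section 4} supply). By Remark \ref{Equivalence}, the condition that every maximal subgroup of $S$ satisfies the $S$-subnormalizer condition in $G$ is equivalent to the statement that every maximal subgroup of $S$ is semi-invariant in $\mathcal{F}$, i.e. is invariant under $\mathrm{Aut}_{\mathcal{F}}(S)$ (the only overgroups of a maximal subgroup being itself and $S$). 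So it suffices to show: if $p$ is odd, every maximal subgroup of $S$ is semi-invariant in $\mathcal{F}$, and (when $S$ is non-cyclic) $S$ has more than one abelian maximal subgroup, then $\mathcal{F}$ is supersolvable.

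First I would dispose of the case $S$ cyclic. For odd $p$ every subgroup $Q \le S$ is cyclic, so $\mathrm{Out}_{\mathcal{F}}(Q) \le \mathrm{Aut}(Q)$ is cyclic, hence $p$-closed; by the result cited in the proof of Lemma \ref{1} a $p$-closed group has no strongly $p$-embedded subgroup, so no $Q$ is $\mathcal{F}$-essential. Thus $\mathcal{E}_{\mathcal{F}}^{*} = \lbrace S \rbrace$ and $S \unlhd \mathcal{F}$; since every subgroup of the cyclic group $S$ is characteristic, the natural chain $1 \le S_1 \le \cdots \le S$ is strongly $\mathcal{F}$-closed with cyclic quotients, so $\mathcal{F}$ is supersolvable.

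Now suppose $S$ is non-cyclic and fix two distinct abelian maximal subgroups $A_1, A_2$. The classical computation gives $Z(S) = A_1 \cap A_2$ with $|S : Z(S)| = p^2$ and $S' \le Z(S)$. The decisive observation is that, since every maximal subgroup is semi-invariant, every hyperplane of $S/\Phi(S)$ is $\mathrm{Aut}_{\mathcal{F}}(S)$-invariant; hence $\mathrm{Aut}_{\mathcal{F}}(S)$ acts by scalars on $S/\Phi(S)$. This forces $\mathrm{Aut}_{\mathcal{F}}(S)$ to be a $p$-group extended by a cyclic group, in particular supersolvable, and it makes every subspace of $S/\Phi(S)$ invariant. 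I would then show $\mathcal{E}_{\mathcal{F}}^{*} = \lbrace S \rbrace$, so that $S \unlhd \mathcal{F}$ and strongly $\mathcal{F}$-closed subgroups coincide with $\mathrm{Aut}_{\mathcal{F}}(S)$-invariant subgroups: for an essential $Q$ one analyses $\mathrm{Out}_{\mathcal{F}}(Q)$, using $p$ odd together with the scalar action and the presence of two invariant abelian maximal subgroups to contradict the existence of a strongly $p$-embedded subgroup (the centric condition $C_S(Q) = Z(Q)$ together with $A_1, A_2$ being semi-invariant pins down $Q$ enough to apply Lemma \ref{14} and Lemma \ref{15}).

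Finally, with $O_p(\mathcal{F}) = S$ in hand, I would construct the required series. The scalar action yields an $\mathrm{Aut}_{\mathcal{F}}(S)$-invariant flag from $S$ down to $\Phi(S)$ with cyclic quotients; below $\Phi(S)$ one descends along the characteristic (hence strongly $\mathcal{F}$-closed) lower central and Frattini subgroups, where the induced action on each graded piece is again governed by powers of the same scalar, so each piece refines into $\mathrm{Aut}_{\mathcal{F}}(S)$-invariant cyclic quotients; here Lemma \ref{8} handles the abelian case and Lemma \ref{15} the non-metacyclic $\mathcal{A}_2$ case, and Lemma \ref{12} together with Lemma \ref{10} let me splice the refinements through the quotient $\mathcal{F}/Z(S)$. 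Invoking Lemma \ref{6} with $O_p(\mathcal{F}) = S$ then gives supersolvability of $\mathcal{F}$, completing the proof. The main obstacle is precisely this last structural step together with the elimination of essential subgroups: because the maximal-subgroup hypothesis does not pass to sections, one cannot induct by restricting to subgroups of $G$, and the scalar action controls only the top Frattini quotient, so the deeper levels must be governed intrinsically via the $p$-group classification lemmas — this is the crux that the Section \ref{Section 4} machinery is designed to handle.
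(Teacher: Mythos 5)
Your reduction is exactly the paper's: by Remark \ref{Equivalence}, the $S$-subnormalizer hypothesis is equivalent to every maximal subgroup of $S$ being semi-invariant in $\mathcal{F}=\mathcal{F}_S(G)$ (equivalently, $\mathrm{Aut}_{\mathcal{F}}(S)$-invariant, a maximal subgroup having no overgroups other than itself and $S$). But the statement you then announce ``it suffices to show'' is word-for-word Theorem \ref{C}, which the paper proves in Section \ref{Section 4} independently of this corollary; the paper's entire proof of the corollary is that two-step citation, so at this point you could simply have invoked Theorem \ref{C} and stopped. Instead you attempt to reprove it, and the attempt has a genuine gap at its central step.

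The gap is your claim that $\mathcal{E}_{\mathcal{F}}^{*}=\lbrace S\rbrace$ can be established directly from the scalar action of $\mathrm{Aut}_{\mathcal{F}}(S)$ on $S/\Phi(S)$, ``pinning down'' an essential $Q$ enough to apply Lemma \ref{14} and Lemma \ref{15}. The hypothesis constrains only $\mathrm{Aut}_{\mathcal{F}}(S)$, whereas essentiality of a (necessarily abelian, maximal, centric) subgroup $Q$ is a condition on $\mathrm{Aut}_{\mathcal{F}}(Q)$: by the extension axiom, only $N_{\mathrm{Aut}_{\mathcal{F}}(Q)}(\mathrm{Aut}_S(Q))$ consists of restrictions of elements of $\mathrm{Aut}_{\mathcal{F}}(S)$, and the rest of $\mathrm{Aut}_{\mathcal{F}}(Q)$ is invisible to any analysis of the action on $S/\Phi(S)$. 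This is not a removable technicality: saturated fusion systems on the extraspecial group of order $p^3$ and exponent $p$ in which abelian maximal subgroups are essential do exist (Ruiz--Viruel), so excluding them under your hypotheses requires genuine global input. The paper's proof of Theorem \ref{C} supplies it by a minimal-counterexample induction on $|\mathcal{F}|$, passing through quotient systems (Lemmas \ref{7}, \ref{9}, \ref{10}), the analysis of $\mathrm{Aut}_{\mathcal{F}}(O_p(\mathcal{F}))$ with Oliver's Lemma \ref{8} and {\cite[Theorem C]{BO}} to force $S$ extraspecial of order $p^3$ and exponent $p$, and finally the classification {\cite[Lemma 4.7]{RV}} together with {\cite[Lemma 2.11]{FJ}}, which shows every such system moves some maximal subgroup by $\mathrm{Aut}_{\mathcal{F}}(S)$, contradicting semi-invariance. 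None of this machinery appears in your sketch, and the lemmas you do cite (\ref{14}, \ref{15}) are tools for Theorem \ref{D1}, where the hypothesis concerns subgroups of a fixed order $|D|$; they give nothing here, where $|S:Z(S)|=p^2$ and the candidate essentials are maximal. Two lesser points: your ``classical computation'' $Z(S)=A_1\cap A_2$ with $|S:Z(S)|=p^2$ presupposes $S$ non-abelian, so the abelian non-cyclic case needs separate (easy) treatment; and once $S\unlhd\mathcal{F}$ is known, your filtration argument with ``powers of the same scalar'' on deeper graded pieces is both delicate and unnecessary --- one passes to the model (or to $S\rtimes\mathrm{Aut}_{\mathcal{F}}(S)$), where all maximal subgroups of $S$ are normal, and quotes Lemma \ref{5} to get $S\le Z_{\mathfrak{U}}(G)$, as the paper does in Step 1 of Theorem \ref{C}.
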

\begin{corollary}\label{Corollary 3}
Let $G$ be a finite group, $p$ be an odd prime, $S$ be a Sylow $p$-subgroup of $G$. Assume that there is a subgroup $D$ of $S$ such that any subgroup of $S$ of order $1 < |D| <|S|$ is abelian and satisfies the $S$-subnormalizer condition in $G$. Then $\mathcal{F}_S (G)$ is supersolvable. 
\end{corollary}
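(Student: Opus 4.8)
The plan is to reduce the statement to a purely fusion-theoretic one and then run the minimal-counterexample machinery of Theorem \ref{3.1}. By Remark \ref{Equivalence}, a subgroup $H \le S$ satisfies the $S$-subnormalizer condition in $G$ precisely when $H$ is semi-invariant in $\mathcal{F} := \mathcal{F}_S(G)$. Hence the hypothesis is equivalent to the assertion that $p$ is odd and there is a $D$ with $1<|D|<|S|$ such that every subgroup of $S$ of order $|D|$ is abelian and semi-invariant in $\mathcal{F}$. It therefore suffices to prove the fusion-system statement that occupies Section \ref{Section 4}: if $\mathcal{F}$ is a saturated fusion system over a $p$-group $S$ with $p$ odd, and for some fixed $1<|D|<|S|$ every subgroup of $S$ of order $|D|$ is abelian and semi-invariant in $\mathcal{F}$, then $\mathcal{F}$ is supersolvable.

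To establish this I would argue by induction, taking a counterexample $\mathcal{F}$ over $S$ with $|S|$ minimal. The hypothesis is inherited by each sub-fusion-system $\mathcal{F}_{S\cap H}(H)$ of a proper subgroup $H$ with $S\cap H\in\mathrm{Syl}_p(H)$ and $|D|<|S\cap H|$, since $\mathrm{Aut}_{\mathcal{F}_{S\cap H}(H)}(K)\le\mathrm{Aut}_{\mathcal{F}}(K)$ keeps every order-$|D|$ subgroup semi-invariant there; minimality then forces each such subsystem to be supersolvable. Next, for $Q\in\mathcal{E}_{\mathcal{F}}^{*}$ one checks $|Q|\ge|D|$: otherwise an abelian subgroup $R$ of order $|D|$ with $Q<R$ would yield $R\le C_S(Q)=Z(Q)\le Q$ because $Q$ is $\mathcal{F}$-centric, a contradiction. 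As $Q<N_S(Q)$ then gives $|N_S(Q)|>|D|$, the normalizer subsystem $N_{\mathcal{F}}(Q)$ again inherits the hypothesis and is supersolvable whenever $Q\not\unlhd\mathcal{F}$. Lemma \ref{1} therefore forces the obstruction into $O_p(\mathcal{F})\ne 1$, and Lemma \ref{6} reduces the whole problem to manufacturing a strongly $\mathcal{F}$-closed series with cyclic factors inside $O_p(\mathcal{F})$, the proper subsystems on the overgroups of $O_p(\mathcal{F})$ being supersolvable by minimality.

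The hard part is exactly this base case. Were $S$ weakly $\mathcal{F}$-closed one could invoke the classical $Z_{\mathfrak{U}}$-embedding lemmas, but by Lemma \ref{cover} semi-invariance is strictly weaker than weak closure, so those are unavailable and the argument must remain internal to $\mathcal{F}$. The decisive leverage is Lemma \ref{9}: once the relevant quotients are supersolvable, every semi-invariant subgroup of order $|D|$ is in fact strongly $\mathcal{F}$-closed. I would then dissect $\mathrm{Aut}_{\mathcal{F}}(O_p(\mathcal{F}))$ by means of the automizer dichotomy of Lemma \ref{8}, using the oddness of $p$ through Lemma \ref{14} to pin down the cyclic $p'$-action on the relevant sections, and the $\mathcal{A}_2$-structure of Lemma \ref{15} to dispose of the awkward non-abelian configurations. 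Splicing the strongly $\mathcal{F}$-closed subgroups so produced together by way of the quotient behaviour of semi-invariance (Lemma \ref{7}) and the strongly-closed lifting of Lemma \ref{10} yields a strongly $\mathcal{F}$-closed chain with cyclic quotients; feeding it into Lemma \ref{6} gives the supersolvability of $\mathcal{F}$ and contradicts the choice of counterexample. The genuinely new difficulty, relative to the weakly-closed setting, is controlling these automizers under the weaker semi-invariance hypothesis, and that is where the bulk of the work will lie.
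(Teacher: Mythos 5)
Your opening reduction is exactly the paper's proof of this corollary: by Remark \ref{Equivalence} the $S$-subnormalizer condition coincides with semi-invariance in $\mathcal{F}_S(G)$, and then Theorem \ref{D1} of Section \ref{Section 4} applies verbatim. Had you stopped there and cited that theorem, your proof would be complete and identical to the paper's. The problem is that you instead undertake to prove the Section \ref{Section 4} statement yourself, and that sketch has genuine gaps. First, your induction measure is wrong: taking $|S|$ minimal does not cover the subsystems your argument relies on. The subsystems $\mathcal{F}_{S\cap H}(H)$ with $S \le H < G$, and the normalizer subsystems $N_{\mathcal{F}}(Q)$ in the case $N_S(Q)=S$ (e.g.\ $Q \unlhd S$ but $Q \not\unlhd \mathcal{F}$), live over the same $p$-group $S$, so minimality of $|S|$ says nothing about them. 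This is precisely why the paper takes $|G|$ minimal in Theorem \ref{D1}, and the number of morphisms $|\mathcal{F}|$ minimal in Theorems \ref{Theorem B} and \ref{C}.

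Second, and more seriously, your endgame --- manufacture a strongly $\mathcal{F}$-closed series with cyclic factors inside $O_p(\mathcal{F})$ and feed it to Lemma \ref{6} --- is the template of Theorem \ref{Theorem B}, and it cannot work here. It succeeds there because that hypothesis controls subgroups of orders $|D|$ \emph{and} $p|D|$, one first shows $|O_p(\mathcal{F})| \ge p|D|$, and then Lemma \ref{5} applied to $O_p(\mathcal{F}) \rtimes \mathrm{Aut}_{\mathcal{F}}(O_p(\mathcal{F}))$ produces the series. Under the present hypothesis only order-$|D|$ subgroups are controlled, and the hard case of Theorem \ref{D1} is exactly $|O_p(G)| = |D|$: the case $|O_p(G)|>|D|$ is dispatched quickly (semi-invariance makes order-$|D|$ subgroups of $O_p(G)$ normal in $G$, then Lemma \ref{5} and Lemma \ref{2} finish), and the paper's Step 3 shows a minimal counterexample must satisfy $|O_p(G)| = |D|$. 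In that case the hypothesis says nothing about any proper subgroup of $O_p(G)$, so no series can be extracted from it; the paper is instead forced into a long structural analysis: $O_{p'}(G)=1$ so that $G$ is the model of the constrained system $\mathcal{F}$, $O_p(G)$ is elementary abelian, every subgroup containing $O_p(G)$ is $p$-closed, $S/O_p(G)$ is cyclic (via minimal non-$p$-closed groups), $|O_p(G)| \ge p^4$ (via Theorem \ref{C} and Lemma \ref{14}), and finally a contradiction through the $\mathcal{A}_2$-group structure of Lemma \ref{15} and {{\cite[Lemma 1.9]{BO}}}. Your third paragraph name-drops some of these lemmas but contains no argument; note also that Lemma \ref{9} presupposes supersolvability of the ambient fusion system, which for a minimal counterexample is precisely what is unknown. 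So the proposal is correct only up to the reduction; the actual substance of the corollary, namely Theorem \ref{D1}, is left unproved.
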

\section{Characterizations for supersolvability of saturated fusion systems}\label{Section 4}
In this section, we give characterizations for supersolvability of saturated fusion system $\mathcal{F}$ over $S$ (which even holds if $\mathcal{F}$ is exotic) under the assumption that certain subgroups of $S$ are semi-invariant in $\mathcal{F}$ as our main results in this paper. The proofs of the following theorems refer to the proofs in \cite{FJ}. Since all of the conclusions can be parallel migrated,   there is a strong relationship between semi-invariant and weakly $\mathcal{F}$-closed, as there are lots of similarities between the two concepts.
\begin{theorem}\label{Theorem B}
Let $\mathcal{F}$ be a saturated fusion system on a finite $p$-group $S$. Suppose that there is a subgroup $D$ of $S$ with $1<D<S$ such that any subgroup of $S$ of order $|D|$ is abelian and semi-invariant in $\mathcal{F}$, any subgroup of $S$ of order $p|D|$ is abelian. If $S$ is non-abelian, $p=2$ and $|D|=2$, then assume moreover that any cyclic subgroup of $S$ of order $4$ is semi-invariant in $\mathcal{F}$.  Then $\mathcal{F}$ is supersolvable.
\end{theorem}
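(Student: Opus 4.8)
The plan is to run a minimal counterexample argument parallel to that of Theorem \ref{3.1}, but carried out intrinsically inside the fusion system and using the semi-invariant hypothesis in place of any ambient group. So suppose the statement fails and choose a counterexample $\mathcal{F}$ over $S$ for which $|S|$ is minimal and, among those, the number of morphisms of $\mathcal{F}$ is minimal; this renders both saturated fusion systems over proper subgroups of $S$ and proper saturated fusion subsystems over $S$ available as strictly smaller objects. The first observation is that the hypotheses descend to every saturated subsystem $\mathcal{E}$ on a subgroup $R \leq S$ with $|R| > |D|$: if $H \leq R$ has order $|D|$ and is semi-invariant in $\mathcal{F}$, then since ${\rm Aut}_{\mathcal{E}}(K) \subseteq {\rm Aut}_{\mathcal{F}}(K)$ for every $K \leq R$, $H$ stays semi-invariant in $\mathcal{E}$; the abelianness of subgroups of order $|D|$ or $p|D|$ and the order-$4$ clause are intrinsic to $S$ and are inherited automatically. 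Hence every such proper $\mathcal{E}$ is supersolvable by minimality.

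Next I would reproduce Steps 2 and 3 of Theorem \ref{3.1} verbatim. For $Q \in \mathcal{E}_{\mathcal{F}}^{*}$ with $Q \ne S$, if $|Q| < p|D|$ I choose $R$ with $Q < R \leq S$ and $|R| = p|D|$; then $R$ is abelian, so $R \leq C_S(Q) = Z(Q) \leq Q$ because the essential subgroup $Q$ is $\mathcal{F}$-centric, a contradiction. Thus $|Q| \geq p|D|$. Since an essential $Q$ is fully normalized, $N_{\mathcal{F}}(Q)$ is saturated over $N_S(Q)$ with $|N_S(Q)| \geq p|D|$, and when $Q \not\unlhd \mathcal{F}$ this normalizer is a strictly smaller object (over $N_S(Q) < S$, or a proper subsystem over $S$), hence supersolvable by induction. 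Therefore either $N_{\mathcal{F}}(Q)$ is supersolvable for every $Q \in \mathcal{E}_{\mathcal{F}}^{*}$ --- in which case Lemma \ref{1} forces $\mathcal{F}$ supersolvable, a contradiction --- or some $Q \in \mathcal{E}_{\mathcal{F}}^{*}$ is normal in $\mathcal{F}$, whence $|O_p(\mathcal{F})| \geq p|D| > |D|$.

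The decisive step is to construct a series $1 = S_0 \leq \cdots \leq S_n = O_p(\mathcal{F})$ of strongly $\mathcal{F}$-closed subgroups with cyclic quotients; granting it, Lemma \ref{6} finishes, since every $R$ with $O_p(\mathcal{F}) < R$ satisfies $|R| > |D|$ and so proper subsystems on $R$ are supersolvable by the first paragraph. To build the series I would descend to group theory: put $P = O_p(\mathcal{F})$, $A = {\rm Aut}_{\mathcal{F}}(P)$, and form $L = P \rtimes A$, in which $P$ is a normal $p$-subgroup. Taking $K = P$ in the definition of semi-invariance shows that every subgroup of $P$ of order $|D|$ is $A$-invariant, i.e. normal in $L$; the exceptional order-$4$ subgroups transfer identically because $P$ non-abelian forces $S$ non-abelian. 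Lemma \ref{5} then yields $P \leq Z_{\mathfrak{U}}(L)$, so an $L$-chief series through $P$ consists of $A$-invariant terms with cyclic quotients.

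The point that must be nailed down --- and where I expect the only real friction --- is that an $A$-invariant subgroup $T \leq P$ is automatically strongly $\mathcal{F}$-closed. Here I would use that $P = O_p(\mathcal{F})$ is normal in $\mathcal{F}$: for $U \leq T$ and $\phi \in {\rm Hom}_{\mathcal{F}}(U,S)$, the extension property of a normal subgroup produces $\overline{\phi} \in {\rm Hom}_{\mathcal{F}}(UP, U^{\phi}P)$ with $P^{\overline{\phi}} = P$, so that $\overline{\phi}|_P \in A$ and $U^{\phi} = U^{\overline{\phi}} \leq T^{\overline{\phi}|_P} = T$ --- exactly the mechanism already used in Lemmas \ref{9} and \ref{10}. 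Thus the $A$-invariant terms of the chief series are strongly $\mathcal{F}$-closed, yielding the required series and the final contradiction through Lemma \ref{6}. The main obstacle is therefore this dictionary between the fusion-theoretic notions (semi-invariance, strong closure) and the group-theoretic ones (normality in $L$, $Z_{\mathfrak{U}}$), together with the careful matching of the $p = 2$, $|D| = 2$ exceptional clause of Theorem \ref{Theorem B} against that of Lemma \ref{5}.
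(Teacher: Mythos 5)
Your proposal is correct and follows essentially the same route as the paper's own proof: a minimal counterexample, inheritance of the hypotheses to proper saturated subsystems, the bound $|Q| \geq p|D|$ for $Q \in \mathcal{E}_{\mathcal{F}}^{*}$ leading via Lemma \ref{1} to $|O_p(\mathcal{F})| \geq p|D|$, the semidirect product $O_p(\mathcal{F}) \rtimes {\rm Aut}_{\mathcal{F}}(O_p(\mathcal{F}))$ combined with Lemma \ref{5} to produce the series, strong $\mathcal{F}$-closure of its terms via the extension property of the normal subgroup $O_p(\mathcal{F})$, and Lemma \ref{6} for the final contradiction. The differences are only cosmetic: you order the minimality by $|S|$ and then by morphism count where the paper minimizes $|\mathcal{F}|$ directly, and you verify strong closure on arbitrary subgroups $U \leq T$ where the paper checks cyclic subgroups.
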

\begin{proof}
Assume that the theorem is false, and without loss of generality,  let $\mathcal{F}$ be a counterexample such that $|\mathcal{F}|$, which denotes the number of morphisms, is minimal. For the ease of reading, we break the proof into the following steps.
\begin{itemize}
\item[\textbf{Step 1.}] Let $R$ be a subgroup of $S$ with $|R| \geq p|D|$ and $\mathcal{F}_0$ be a proper saturated subsystem of $\mathcal{F}$ on $R$. Then $\mathcal{F}_0$ is supersolvable.
\end{itemize}

It follows immediately from the hypothesis that any subgroup of $R$ of order $|D|$ is abelian and semi-invariant in $\mathcal{F}$, any subgroup of $R$ of order $p|D|$ is abelian if $R$ is non-abelian. Now assume that $H$ is a subgroup of $S$ such that $H$ is semi-invariant in $\mathcal{F}$. Then for any $H \leq K \leq S$ and any $\alpha \in {\rm Aut}_{\mathcal{F}} (K)$, $H^{\alpha} = H$. Therefore  for any $H \leq K \leq R \leq S$ and $\alpha_0 \in {\rm Aut}_{\mathcal{F}_0} (K) \subseteq {\rm Aut}_{\mathcal{F}} (K)$, $H^{\alpha_0} = H$. Hence we conclude that $H$ is semi-invariant in $\mathcal{F}_0$. Consequently, any subgroup of $R$ of order $|D|$ is abelian and semi-invariant in $\mathcal{F}_0$, any subgroup of $R$ of order $p|D|$ is abelian, which indicates that $\mathcal{F}_0$ satisfies the hypothesis of the theorem and so $\mathcal{F}_0$ is supersolvable by the minimal choice of $\mathcal{F}$.
\begin{itemize}
\item[\textbf{Step 2.}]  Let $Q \in \mathcal{E}_0 ^{*}$. Then $|Q| \geq p|D|$, and suppose moreover that $Q$ is not normal in $\mathcal{F}$, then $N_{\mathcal{F}} (Q)$ is supersolvable.  
\end{itemize}

Assume that $|Q| <p |D|$. Then there is a subgroup $R$ of $S$ such that $|R| = p|D|$ and $Q < R$. By our hypothesis, $R$ is abelian, and so $R \leq C_S (Q)$. Since $Q$ is a member of $\mathcal{E}_0 ^{*}$, we have that $Q$ is $\mathcal{F}$-centric. Hence it follows that $R \leq C_S (Q) = Z(Q) \leq Q$, a contradiction. Thus $|Q| <p|D|$.

Now suppose that $Q$ is not normal in $\mathcal{F}$. Then clearly $N_{\mathcal{F}} (Q)$ is a proper saturated fusion subsystem of $\mathcal{F}$ on $N_{\mathcal{F}} (Q)$. Note that $|N_S (Q)| \geq |Q| \geq p|D|$, it follows directly from Step 1 that $N_{\mathcal{F}} (Q)$ is supersolvable. Now we claim that  $|O_p (\mathcal{F})| \geq p|D|$.  Suppose that there is no $Q \in \mathcal{E}_0 ^{*}$ such that $Q \unlhd \mathcal{F}$. Therefore $N_{\mathcal{F}} (Q)$ is supersolvable for every $Q \in  \mathcal{E}_0 ^{*}$. By Lemma \ref{1}, we conclude that $\mathcal{F}$ is supersolvable, a contradiction. Hence there exists a subgroup $Q \in  \mathcal{E}_0 ^{*}$ such that $Q$ is normal in $\mathcal{F}$. Thus we get that $|O_p (\mathcal{F})| \geq |Q| \geq p|D|$ by Step 2.
\begin{itemize}
\item[\textbf{Step 3.}] There exists a series $1 = S_0 \leq S_1 \leq \cdots \leq S_n = O_p (\mathcal{F})$ such that $S_i$ is strongly $\mathcal{F}$-closed, $i=0,1,\cdots,n$, and $S_{i+1} / S_i$ is cyclic, $i=0,1,\cdots,n-1$.
\end{itemize}

Now let $N:=O_p (\mathcal{F})$, $A:={\rm Aut}_{\mathcal{F}} (N)$, and $G = N \rtimes A$ be the outer semi-direct product of $N$ and $A$, where the action induced by $A$ on $N$ is natural. Without loss of generality, we may consider $N$ and $A$ as subgroups of $G$, i.e. $G = NA$, $N \cap A =1$. Clearly, for $x \in N$ and $\alpha \in A$, $x^{\alpha} \in N$.

Now let $P$ be a subgroup of $N$ such that $P$ is semi-invariant in $\mathcal{F}$. We claim that $P \unlhd G$. It is clear that $P \unlhd S$, therefore we have $S \leq N_G (P)$. Since $P \leq N$, we assert from the definition of $\mathcal{F}_0$-semi-invariance that $P^{\alpha} =P$ for any $\alpha \in {\rm Aut}_{\mathcal{F}} (N)=A$. Hence $A \leq N_G (P)$, and so $P$ is normal in $G$.

Since any subgroup of $N$ of order $|D|$ is semi-invariant in $\mathcal{F}$, we conclude from the proceeding paragraph that any subgroup of $N$ of order $|D|$ is normal in $G$. Suppose that $N$ is a non-abelian $2$-group and $|D|=2$, then any cyclic subgroup of $N$ of order $4$ is semi-invariant in $\mathcal{F}$, which yields that any cyclic subgroup of $N$ of order $4$ is normal in $G$. Hence we assert from Lemma \ref{5} that $N \leq Z_{\mathfrak{U}} (G)$. Thus there exists a series $1 = S_0 \leq S_1 \leq \cdots \leq S_n =N$ such that $S_i \unlhd G$, $i=0,1,\cdots,n$, $S_{i+1}/S_i$ is cyclic, $i=0,1,\cdots,n-1$. Now we claim that $S_i$ is strongly $\mathcal{F}$-closed, $i=0,1,\cdots,n$. Let $P = \langle x \rangle$ be a cyclic subgroup of $S_i$, $P_1 = \langle y \rangle$ be a cyclic subgroup of $S$ and $\phi \in {\rm Iso}_{\mathcal{F}} (P,P_1)$, where $x^{\phi} = y$. Since $P \leq S_i \leq N$ and $N$ is strongly $\mathcal{F}$-closed, it follows that $P_1 \leq N$. Note that $N$ is normal in $\mathcal{F}$, there exists an extension $\overline{\phi} \in {\rm Hom}_{\mathcal{F}} (N,N) = A$ of $\phi$. Hence we conclude that $P_1 = P^{\phi} = P^{\overline{\phi}} \leq (S_i)^{\overline{\phi}}$. It indicates from $S_i \unlhd G = NA$ that $(S_i)^{\overline{\phi}} = S_i$, and so $P_1 \leq S_i$, which implies that $S_i$ is strongly $\mathcal{F}$-closed and this step is complete.
\begin{itemize}
\item[\textbf{Step 4.}] Final contradiction. 
\end{itemize}
It follows from Step 1, Step 2, Step 3 that $\mathcal{F}$ satisfies the hypothesis of Lemma \ref{6}. Therefore $\mathcal{F}$ is supersolvable, a contradiction and the proof is complete.
\end{proof}

\begin{theorem}\label{C}
Let $p$ be an odd prime, and $\mathcal{F}$ be a saturated fusion system over a $p$-group $S$. Suppose that any maximal subgroup of $S$ is semi-invariant in $\mathcal{F}$, and if $S$ is not cyclic, we have additionally that $S$ has more than one abelian maximal subgroup. Then $\mathcal{F}$ is supersolvable.
\end{theorem}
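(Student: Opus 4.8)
The plan is to run a minimal counterexample argument, choosing $\mathcal{F}$ on $S$ with the number of morphisms minimal, in the spirit of Theorem \ref{Theorem B} but exploiting the distinguished order of maximal subgroups. First I would dispose of the case that $S$ is cyclic: a cyclic $p$-group has a unique subgroup of each order, each such subgroup is strongly $\mathcal{F}$-closed because an $\mathcal{F}$-morphism preserves order, and the resulting chain has cyclic quotients, so $\mathcal{F}$ is supersolvable. Hence in the counterexample $S$ is non-cyclic and possesses more than one abelian maximal subgroup. Since the only overgroups in $S$ of a maximal subgroup $M$ are $M$ and $S$, semi-invariance of $M$ says exactly that $M$ is invariant under $\mathrm{Aut}_{\mathcal{F}}(S)$; thus every maximal subgroup of $S$ is $\mathrm{Aut}_{\mathcal{F}}(S)$-invariant.

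The heart of the argument, and the step I expect to be the main obstacle, is reducing to $O_p(\mathcal{F}) = S$ by ruling out proper essential subgroups via Lemma \ref{1}. Suppose $Q$ is a proper $\mathcal{F}$-essential subgroup. As $Q$ is $\mathcal{F}$-centric, $Z(S) \le C_S(Q) = Z(Q) \le Q$, so $Z(S) \le Q$; if $S$ were abelian this would force $Q = S$, so $S$ is non-abelian. Fixing two distinct abelian maximal subgroups $M_1, M_2$, one has $S = M_1 M_2$ and $M_1 \cap M_2 \le Z(S)$ of index $p^2$, whence $|S : Z(S)| = p^2$ because $S/Z(S)$ is non-cyclic. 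Therefore $|S : Q| \le p^2$; the value $p^2$ is excluded since it would give $Q = Z(S)$, which is not centric as $S$ is non-abelian. Hence $Q$ is a maximal subgroup, so $N_S(Q) = S$ and $N_{\mathcal{F}}(Q)$ is a saturated fusion system on all of $S$. The maximal subgroups of $S$ stay semi-invariant in the subsystem $N_{\mathcal{F}}(Q)$ and the abelian-maximal hypothesis is unchanged, so $N_{\mathcal{F}}(Q)$ satisfies the hypotheses of the theorem; being proper (essential subgroups are not normal in $\mathcal{F}$), it is supersolvable by minimality. The same minimality applied to the proper subsystem $N_{\mathcal{F}}(S)$ disposes of $Q = S$. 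Lemma \ref{1} then makes $\mathcal{F}$ supersolvable, a contradiction, so $\mathcal{F}$ has no proper essential subgroup and $S = O_p(\mathcal{F})$. This is precisely where both structural inputs are consumed: the centric-implies-$Z(S) \le Q$ fact together with $|S:Z(S)| = p^2$, the latter being the only place the hypothesis on abelian maximal subgroups enters.

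Finally, with $S = O_p(\mathcal{F})$, I would build the strongly closed series. Form $G = S \rtimes \mathrm{Aut}_{\mathcal{F}}(S)$ as in Step 3 of Theorem \ref{Theorem B}; that argument shows every subgroup of $S$ which is semi-invariant in $\mathcal{F}$ is normal in $G$, so every maximal subgroup of $S$ is normal in $G$. Taking $D$ to be any maximal subgroup, so that $1 < |D| = |S|/p < |S|$ and, as $p$ is odd, no $2$-group clause intervenes, Lemma \ref{5} yields $S \le Z_{\mathfrak{U}}(G)$. Hence there is a series $1 = S_0 \le S_1 \le \cdots \le S_n = S$ with each $S_i \unlhd G$ and each $S_{i+1}/S_i$ cyclic. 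Exactly as in Theorem \ref{Theorem B}, each $S_i$ is strongly $\mathcal{F}$-closed: a cyclic subgroup of $S_i$ and any $\mathcal{F}$-isomorphic image in $S$ are related by the restriction of some $\overline{\phi} \in \mathrm{Aut}_{\mathcal{F}}(S)$ (using that $S$ is normal in $\mathcal{F}$), and $S_i$ is $\overline{\phi}$-invariant because $S_i \unlhd G$. This series witnesses the supersolvability of $\mathcal{F}$, contradicting the choice of $\mathcal{F}$ and completing the proof.
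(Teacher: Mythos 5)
Your reduction to the case $O_p(\mathcal{F})=S$ hinges on the parenthetical claim that ``essential subgroups are not normal in $\mathcal{F}$,'' and this claim is false; it is exactly where the proposal breaks down. An $\mathcal{F}$-essential subgroup can be normal in $\mathcal{F}$ (in which case it equals $O_p(\mathcal{F})$). For instance, with $p$ odd take $G=\mathrm{AGL}_2(p)$, $S\in\mathrm{Syl}_p(G)$ extraspecial of order $p^3$ and exponent $p$, and $V=O_p(G)\cong C_p\times C_p$ the translation subgroup: then $V$ is $\mathcal{F}_S(G)$-centric and fully normalized, and $\mathrm{Out}_{\mathcal{F}_S(G)}(V)\cong \mathrm{GL}_2(p)$ has a strongly $p$-embedded subgroup (its Sylow $p$-subgroups have order $p$ and intersect trivially), so $V$ is essential while also being normal in $\mathcal{F}_S(G)$; the same phenomenon occurs for $V_4$ inside $\mathcal{F}_{D_8}(S_4)$. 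Consequently, in your argument, when the abelian maximal essential subgroup $Q$ happens to be normal in $\mathcal{F}$, one has $N_{\mathcal{F}}(Q)=\mathcal{F}$: the subsystem is not proper, minimality of $\mathcal{F}$ yields nothing, and the route through Lemma \ref{1} collapses. (Your treatment of $Q=S$ has a similar properness issue, since $N_{\mathcal{F}}(S)$ is proper only when $S\not\unlhd\mathcal{F}$, but that one is repairable because your final paragraph already shows that $S\unlhd\mathcal{F}$ forces supersolvability.)

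This is not a peripheral defect: the configuration you dismiss in one parenthesis is precisely the hard case, and it occupies roughly half of the paper's proof. The paper makes the same reduction you do (members of $\mathcal{E}_{\mathcal{F}}^{*}$ other than $S$ are abelian maximal subgroups, via $Z(S)\leq Q$ and $|S:Z(S)|=p^2$), but from Lemma \ref{1} it can only conclude that a minimal counterexample must contain some $Q\in\mathcal{E}_{\mathcal{F}}^{*}$ that \emph{is} normal in $\mathcal{F}$, so that $O_p(\mathcal{F})$ is an abelian maximal subgroup which is essential. Eliminating this configuration is the content of the paper's Steps 4--7: no non-trivial subgroup of $Z(S)$ is normal in $\mathcal{F}$ (via the quotient system and Lemmas \ref{7}, \ref{9}, \ref{10}); the Sylow $p$-subgroups of $\mathrm{Aut}_{\mathcal{F}}(O_p(\mathcal{F}))$ have order $p$ and $\mathrm{Aut}_{\mathcal{F}}(O_p(\mathcal{F}))$ is not $p$-closed; Lemma \ref{8} then forces $S$ to be extraspecial of order $p^3$ and exponent $p$; and the final contradiction requires the Ruiz--Viruel classification of fusion systems over such groups (\cite[Lemma 4.7]{RV} combined with \cite[Lemma 2.11]{FJ}). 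None of that machinery can be bypassed by a properness claim. For what it is worth, the remainder of your proposal is sound and parallels the paper's Steps 1--3: the cyclic case, the computation showing proper essential subgroups are abelian maximal, and the concluding step when $O_p(\mathcal{F})=S$ (where you use $G=S\rtimes\mathrm{Aut}_{\mathcal{F}}(S)$ and Lemma \ref{5} in place of the paper's appeal to the model theorem --- both routes work).
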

\begin{proof}
Assume that our theorem is false, and without loss of generality, let $\mathcal{F}$ be a counterexample such that $|\mathcal{F}|$ is minimal. For the ease of reading, we divide our argument into the following steps.
\begin{itemize}
\item[\textbf{Step 1.}] $S$ is not normal in $\mathcal{F}$.
\end{itemize}

Now suppose that $S \unlhd \mathcal{F}$. Then clearly $\mathcal{F}$ is constrained. Hence we conclude from Model theorem for constrained fusion systems in {{\cite[Part III, Theorem 5.10]{AK}}} that there exists a $p$-closed finite group $G$ with $S \in {\rm Syl}_p (G)$ and $\mathcal{F} = \mathcal{F}_S (G)$. Since any maximal subgroup $H$ of $S$ is semi-invariant in $\mathcal{F}$, $H$ is invariant under the action induced by ${\rm Aut}_{\mathcal{F}} (S) = {\rm Aut}_{\mathcal{F}_S (G)} (S) $. As $S$ is normal in $G$,  it yields that $H$ is normal in $G$ as well. By Lemma \ref{5}, it follows from $p$ is odd that $S \leq Z_{\mathfrak{U}} (G)$. Thus there exists a series of subgroups of $S$, namely
$$1 =S_0 \leq S_1 \leq \cdots \leq S_{m-1} \leq S_m =S,$$
such that $S_i \unlhd G$, $i=0,1,\cdots,m$, $S_{i+1} / S_i$ is cyclic, $i=0,1,\cdots, m-1$. Since $S_i$ is normal in $G$, it implies that $S_i$ is strong $\mathcal{F}$-closed, $i=0,1,\cdots,m$. By the definition of supersolvability, we assert that $\mathcal{F}$ is supersolvable, a contradiction and so $S$ is not normal in $\mathcal{F}$.
\begin{itemize}
\item[\textbf{Step 2.}] $S$ possesses at least two abelian normal subgroups, and $Z(S)$ is contained in any abelian maximal subgroup of $S$. Moreover, $|[S,S]|=p$, $p^2 |Z(S)| = |S|$.
\end{itemize}

Suppose that $S$ is abelian. It indicates from $S$ is strongly $\mathcal{F}$-closed and {{\cite[Part I, Corollary 4.7(a)]{AK}}} that $S$ is normal in $\mathcal{F}$, a contradiction to Step 1. Therefore $S$ is not abelian, and so $S$ is not cyclic. By our hypothesis, $S$ possesses more than one abelian maximal subgroup. Now let $R$ be an abelian maximal subgroup of $S$. If $Z(S)$ is not contained in $R$, then $Z(S) R = S$, and so $S$ is abelian, a contradiction. Thus $Z(S) \leq R$ for any abelian maximal subgroup $R$ of $S$. By {{\cite[Lemma 1.9]{BO}}}, we conclude directly that  $|[S,S]|=p$ and $p^2 |Z(S)| = |S|$.
\begin{itemize}
\item[\textbf{Step 3.}] $O_p (\mathcal{F})$ is an abelian maximal subgroup of $S$, $\mathcal{E}_{\mathcal{F}} ^{*} = \lbrace O_p (\mathcal{F}),S \rbrace$, and $O_p (\mathcal{F})$, $S$ are exactly the subgroups of $S$ which are both $\mathcal{F}$-centric and $\mathcal{F}$-radical.
\end{itemize}

Let $Q$ be a member of $\mathcal{E}_{\mathcal{F}} ^{*}\setminus \lbrace  S \rbrace$. Since $Q$ is $\mathcal{F}$-centric, we conclude that $Z(S) \leq C_S (Q) = Z(Q) \leq Q$. If $Z(S) = Q$, then $C_S (Q) = S = Z(Z(S)) = Z(S)$, a contradiction. Hence $Z(S) <Q<S$, and it follows from $p^2 |Z(S)| = |S|$ in Step 2 that $Q$ is a maximal subgroup of $S$. Since $Q / Z(S) \cong C_p$, it yields that $Q$ is abelian.

Now let $Q \in \mathcal{E}_{\mathcal{F}} ^{*}$ such that $Q$ is not normal in $\mathcal{F}$. Then $N_{\mathcal{F}} (Q)$ is a proper saturated fusion subsystem of $\mathcal{F}$ on $S$ since $Q$ is a maximal subgroup of $S$. Note that for any $T \leq S$, we have ${\rm Aut}_{N_{\mathcal{F}} (Q)} (T) \subseteq {\rm Aut}_{\mathcal{F}} (T)$. Hence for any $H \leq S$ such that $H$ is semi-invariant in $\mathcal{F}$, i.e. $H$ is invariant under the action induced by ${\rm Aut}_{\mathcal{F}} (K)$ for any $H \leq K \leq S$, we get that $H$ is invariant under the action induced by ${\rm Aut}_{N_{\mathcal{F}}(Q)} (K)$ for any $H \leq K \leq S$, which indicates that $H$ is semi-invariant in $N_{\mathcal{F}}(Q)$. Thus $N_{\mathcal{F}}(Q)$ satisfies the hypothesis of the theorem, and by the minimal choice of $\mathcal{F}$, it yields that $N_{\mathcal{F}}(Q)$ is supersolvable. If all members of $\mathcal{E}_{\mathcal{F}} ^{*}$ is not normal in $\mathcal{F}$, it implies that $N_{\mathcal{F}}(Q)$ is supersolvable for all $Q \in \mathcal{E}_{\mathcal{F}} ^{*}$. By Lemma \ref{1}, $\mathcal{F}$ is supersolvable, a contradiction. Hence there exists $Q \in \mathcal{E}_{\mathcal{F}} ^{*}$ such that $Q$ is normal in $\mathcal{F}$. By Step 1, $Q <S$. Hence $Q$ is an abelian maximal subgroup of $S$. It is clear that $Q = O_p (\mathcal{F})$. Thus $O_p (\mathcal{F})$ is an abelian maximal subgroup of $S$.

Clearly $S$ is $\mathcal{F}$-centric and $\mathcal{F}$-radical. Since $O_p (\mathcal{F})$ is essential, it follows from {{\cite[Part I, Proposition 3.3(a)]{AK}}} that $O_p (\mathcal{F})$ is $\mathcal{F}$-centric and $\mathcal{F}$-radical. On the other hand, Let $R$ be a subgroup of $S$ such that $R$ is $\mathcal{F}$-centric and $\mathcal{F}$-radical. Then we conclude from {{\cite[Part I, Proposition 4.5]{AK}}} that $O_p (\mathcal{F}) \leq R$. The maximality of $O_p (\mathcal{F})$ implies that $R$ is equal to $S$ or $O_p (\mathcal{F})$. Hence the subgroups of $S$ which are $\mathcal{F}$-centric and $\mathcal{F}$-radical are exactly $S$ and $O_p (\mathcal{F})$.

Finally, it follows again from {{\cite[Part I, Proposition 3.3(a)]{AK}}} that $\mathcal{E}_{\mathcal{F}} ^{*} \subseteq \lbrace O_p (\mathcal{F}),S\rbrace$, which yields that  $\mathcal{E}_{\mathcal{F}} ^{*} = \lbrace O_p (\mathcal{F}),S\rbrace$.
\begin{itemize}
\item[\textbf{Step 4.}] There is no non-trivial subgroup of $Z(S)$ which is normal in $\mathcal{F}$.  
\end{itemize}

Assume that there exists a subgroup $Z$ of $Z(S)$ such that $1 <Z \leq Z(S)$ and $Z \unlhd \mathcal{F}$. Let $H/Z$ be any maximal subgroup of $S/Z$. Clearly $H$ is a maximal subgroup of $S$, and we conclude from the hypothesis of the theorem that $H$ is semi-invariant in $\mathcal{F}$. By Lemma \ref{7}, $H/Z$ is semi-invariant in $\mathcal{F}/Z$. By Step 2 and $Z(S) < Q$ for every $Q \in \mathcal{E}_{\mathcal{F}} ^{*}$, we get that there are at least two abelian maximal subgroups in $S/Z$. By {{\cite[Part I, Proposition 5.11]{CR}}}, we have that $\mathcal{F}/Z$ is saturated.  Therefore we obtain that $\mathcal{F}/Z$ satisfies the hypothesis of the theorem and so $\mathcal{F}/Z$ is supersolvable by the choice of $\mathcal{F}$. 

Now let $R_1,R_2$ be two distinct abelian normal subgroup of $S$. Since $R_1/Z$, $R_2/Z$ are semi-invariant in $\mathcal{F}$, $\mathcal{F}/Z$ is supersolvable, we obtain from Lemma \ref{9} that $R_1 /Z$, $R_2 /Z$ are strongly $\mathcal{F}/Z$-closed. By Lemma \ref{10}, we assert that $R_1$ and $R_2$ are strongly $\mathcal{F}$-closed. Note that $R_1,R_2$ are abelian, it follows from {{\cite[Part I, Corollary 4.7(a)]{AK}}} that $R_1$, $R_2$ are normal in $\mathcal{F}$. Hence $R_1 R_2 = S$ is normal in $\mathcal{F}$, a contradiction. Thus we conclude that there is no non-trivial subgroup of $Z(S)$ which is normal in $\mathcal{F}$ and this step is complete.
\begin{itemize}
\item[\textbf{Step 5.}] The Sylow $p$-subgroups of ${\rm Aut}_{\mathcal{F}} (O_p (\mathcal{F}))$ have order $p$, and ${\rm Aut}_{\mathcal{F}} (O_p (\mathcal{F}))$ is not $p$-closed.
\end{itemize} 

Since $O_p (\mathcal{F})$ is normal in $\mathcal{F}$, $O_p (\mathcal{F})$ is weakly $\mathcal{F}$-closed, i.e. $O_p (\mathcal{F})^{\mathcal{F}} = \lbrace O_p (\mathcal{F}) \rbrace$. 
As $\mathcal{F}$ is saturated, we conclude from the definition of saturated fusion system that $O_p (\mathcal{F})$ is fully normalized in $\mathcal{F}$, i.e. ${\rm Aut}_S (O_p (\mathcal{F})) \in {\rm Syl}_p ({\rm Aut}_{\mathcal{F}} (O_p (\mathcal{F})))$. By Step 3, we get that $O_p (\mathcal{F})$ is $\mathcal{F}$-centric, abelian and maximal in $S$. Hence we obtain that ${\rm Aut}_S (O_p (\mathcal{F})) \cong S/O_p (\mathcal{F}) \cong C_p$. Again, it indicates from Step 3 that $O_p (\mathcal{F})$ is $\mathcal{F}$-radical. Hence we assert from $O_p (\mathcal{F})$ is abelian that $1 = O_p ({\rm Out}_{\mathcal{F}} (O_p (\mathcal{F})))=O_p ({\rm Aut}_{\mathcal{F}} (O_p (\mathcal{F})))={\rm Inn} (O_p (\mathcal{F}))$. Thus ${\rm Aut}_{\mathcal{F}} (O_p (\mathcal{F}))$ is not $p$-closed. Now Suppose that $\alpha \in {\rm Aut}_{\mathcal{F}} (O_p (\mathcal{F}))$ has order $p$. Then we claim that the commutator group $[\alpha,O_p (\mathcal{F})]$ has order $p$. Actually, since ${\rm Aut}_S (O_p (\mathcal{F}))$ is a Sylow $p$-subgroup of  ${\rm Aut}_{\mathcal{F}} (O_p (\mathcal{F}))$, it follows from Sylow Theorem that there exists $\beta \in {\rm Aut}_{\mathcal{F}} (O_p (\mathcal{F}))$ such that $\alpha ^{\beta} := \gamma \in {\rm Aut}_{S} (O_p (\mathcal{F}))$. Note that $[\alpha,O_p (\mathcal{F})]^{\beta} = [\alpha^{\beta},O_p (\mathcal{F})]$, hence without loss of generality we may assume that $\alpha \in {\rm Aut}_{S} (O_p (\mathcal{F}))$. Then there exists $s \in S$ such that $x^{\alpha} = x^s$ for any $x \in O_p (\mathcal{F})$. Therefore we assert that $[\alpha,O_p (\mathcal{F})]=[s,O_p (\mathcal{F})] \leq S'$. As $\alpha$ has order $p$, $\alpha$ acts non-trivially on $O_p (\mathcal{F})$. Hence $[\alpha,O_p (\mathcal{F})] \neq 1$. By Step 2, we conclude that $|[\alpha,O_p (\mathcal{F})]|=p$, and the proof of this step is complete.
\begin{itemize}
\item[\textbf{Step 6.}] $S$ is extraspecial of order $p^3$ and exponent $p$.
\end{itemize}

We give some notations first. Let $A:=O_p (\mathcal{F})$, $G:={\rm Aut}_{\mathcal{F}} (O_p (\mathcal{F}))$, $A_1 := C_A (H)$, $H:=O^{p'} (G)$ and $A_2 :=[H,A]$. Combining the fact that $G \leq  {\rm Aut} (O_p (\mathcal{F}))$, Step 5  and $A$ is abelian, we obtain from Lemma \ref{8} that $G$ normalizes $A_1$ and $A_2$, $A = A_1 \times A_2$, $A_2 = C_p \times C_p$. 

Since ${\rm Aut}_S (O_p (\mathcal{F}))$ is a Sylow $p$-subgroup of $G$, it follows from $H = O^{p'} (G)$ that ${\rm Aut}_S (A) \leq H$. Hence we get that ${\rm Aut}_S (A)$ centralizes $A_1$. As $A = O_p (\mathcal{F})$ is normal in $S$, for any $s \in S$ and $a \in A_1$, we have $a^s = a$. Therefore we conclude that $A_1$ lies in the center of $S$. 

Now we claim that $A_1 \unlhd \mathcal{F}$. Let $Q$ be a subgroup of $S$ such that $Q$ is $\mathcal{F}$-essential. By {{\cite[Part I, Corollary 3.3(a)]{AK}}}, $Q$ is $\mathcal{F}$-centric and $\mathcal{F}$-radical. By step 4, $Q = A$ or $Q = S$. Since $G$ normalizes $A_1$, $A_1$ is invariant in ${\rm Aut}_{\mathcal{F}} (O_p (\mathcal{F}))=G$. It follows from {{\cite[Part I, Corollary 3.3(a)]{AK}}} that we only need to prove that $A_1$ is ${\rm Aut}_{\mathcal{F}} (S)$-invariant. However, this follows directly from the fact that $G$ normalizes $A_1$ because any $\mathcal{F}$-automorphism of $S$ restricts to an $\mathcal{F}$-automorphism of $A$.

Note that $A_1$ is now normal in $\mathcal{F}$ by the argument in the proceeding paragraph. By Step 4, we get that $A_1 = 1$. Hence $A = A_2 \cong C_p \times C_p$. It indicates from the maximality of $A$ that $|S| = p^3$. As $S$ is non-abelian which is showed in Step 2 and any non-abelian $p$-group of order $p^3$ is extraspecial, we conclude that $S$ is extraspecial. 

Now we prove that $S$ has exponent $p$. Suppose that the exponent of $S$ is not equal to $p$, it follows from $A \cong C_p \times C_p$ that $S$ has exponent $p^2$. Hence $S$ has a cyclic maximal subgroup and so $S$ is metacyclic. However, it follows from {{\cite[Theorem C]{BO}}} and $p$ is odd that $\mathcal{F}$ is supersolvable, a contradiction and so $S$ has exponent $p$.
\begin{itemize}
\item[\textbf{Step 7.}] Final contradiction.
\end{itemize}

It yields from Step 6 that $S$ is extraspecial of order $p^3$ and exponent $p$. As we have argued in Step 3, $S$ has exactly one subgroup $A$ which is elementary abelian, $\mathcal{F}$-centric, and $\mathcal{F}$-radical. Then we conclude from {{\cite[Lemma 4.7]{RV}}} that $\mathcal{F}$ is isomorphic to one of the fusion systems considered in {{\cite[Lemma 2.11]{FJ}}}. Therefore we conclude again from {{\cite[Lemma 2.11]{FJ}}} that there exists a subgroup $P$ of $S$ which is not weakly $\mathcal{F}$-closed. Then clearly $P \neq A$, and there exists $\phi \in {\rm Iso}_{\mathcal{F}} (P,R)$ such that $R \neq P$. Obviously, $R$ is not equal to $A$ as well. Since $A$ is normal in $\mathcal{F}$, $\phi$ can be extended to $\overline{\phi} \in {\rm Hom}_{\mathcal{F}}
(PA,RA) = {\rm Aut}_{\mathcal{F}} (S)$. As $P$ is a maximal subgroup of $S$, we conclude that $P$ is invariant in ${\rm Aut}_{\mathcal{F}} (S)$, i.e. $P = P^{\overline{\phi}} = P^{\phi} = R$, a contradiction. This final contradiction finishes the proof and we are done.
\end{proof}
\begin{theorem}\label{D1}
Let $G$ be a finite group, $p$ be an odd prime, $S$ be a Sylow $p$-subgroup of $G$ and $\mathcal{F}:=\mathcal{F}_S (G)$. Assume that there is a subgroup $D$ of $S$ such that any subgroup of $S$ of order $1 < |D| <|S|$ is abelian and semi-invariant in $\mathcal{F}$. Then $\mathcal{F}$ is supersolvable. 
\end{theorem}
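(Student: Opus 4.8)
The plan is to argue by contradiction, following the four-step skeleton of Theorem \ref{Theorem B} and importing the delicate module-theoretic analysis of Theorem \ref{C} at the critical step. So I would suppose the statement fails and let $\mathcal{F}=\mathcal{F}_S(G)$ be a counterexample with $|\mathcal{F}|$ minimal. First I would record the inductive step, exactly as in Step~1 of Theorem \ref{Theorem B}: if $R\le S$ with $|D|<|R|$ and $\mathcal{F}_0$ is a proper saturated subsystem of $\mathcal{F}$ on $R$, then every subgroup of $R$ of order $|D|$ is abelian and, being semi-invariant in $\mathcal{F}$, is semi-invariant in $\mathcal{F}_0$; since $1<|D|<|R|$, the minimal choice of $\mathcal{F}$ forces $\mathcal{F}_0$ to be supersolvable. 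Note that here, unlike in Theorem \ref{Theorem B}, there is no hypothesis on subgroups of order $p|D|$, and $p$ odd makes the order-$4$ clause vacuous.

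Next I would analyse $\mathcal{E}_{\mathcal{F}}^{*}$. For $Q\in\mathcal{E}_{\mathcal{F}}^{*}\setminus\{S\}$ one has $|Q|\ge|D|$: if $|Q|<|D|$, choose $Q<R\le S$ with $|R|=|D|$; then $R$ is abelian, so $R$ centralizes $Q\le R$ and $R\le C_S(Q)=Z(Q)\le Q$ since $Q$ is $\mathcal{F}$-centric, a contradiction. If such a $Q$ is not normal in $\mathcal{F}$, then $N_{\mathcal{F}}(Q)$ is a proper subsystem on $N_S(Q)$ with $|N_S(Q)|\ge p|Q|\ge p|D|>|D|$, hence supersolvable by the inductive step; but then ${\rm Aut}_{\mathcal{F}}(Q)$ is $p$-closed and $Q$ cannot be $\mathcal{F}$-essential, which is precisely the mechanism of Lemma \ref{1}. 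Therefore every essential subgroup is normal in $\mathcal{F}$. If there is no essential subgroup then $\mathcal{E}_{\mathcal{F}}^{*}=\{S\}$, so $S=O_p(\mathcal{F})$; otherwise some essential $Q$ is normal, whence $Q\le O_p(\mathcal{F})$. In either case $|O_p(\mathcal{F})|\ge|D|$.

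If $|O_p(\mathcal{F})|>|D|$ the argument closes as in Step~3 of Theorem \ref{Theorem B}. I would form $G^{*}=N\rtimes A$ with $N=O_p(\mathcal{F})$ and $A={\rm Aut}_{\mathcal{F}}(N)$; every semi-invariant subgroup of $N$ becomes normal in $G^{*}$, so every subgroup of $N$ of order $|D|$ is normal in $G^{*}$, and since $p$ is odd with $1<|D|<|N|$, Lemma \ref{5} gives $N\le Z_{\mathfrak{U}}(G^{*})$. This produces a chain $1=S_0\le\cdots\le S_n=N$ with cyclic quotients and each $S_i$ normal in $G^{*}$, and the extension argument of Theorem \ref{Theorem B} (using that $N$ is strongly closed) shows each $S_i$ is strongly $\mathcal{F}$-closed. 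Combining this with the inductive step applied to subsystems on $R>N$ (where $|R|>|D|$), Lemma \ref{6} forces $\mathcal{F}$ supersolvable, a contradiction.

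The heart of the matter, and the step I expect to be the main obstacle, is the residual case $|O_p(\mathcal{F})|=|D|$. Here the normal essential subgroup equals $N:=O_p(\mathcal{F})$, which is abelian and, being $\mathcal{F}$-centric, self-centralizing, $C_S(N)=Z(N)=N$; thus $\mathcal{F}$ is constrained and Lemma \ref{5} is unavailable because $|D|\not<|N|$. To handle this I would transplant Steps~4--7 of Theorem \ref{C}: using Lemmas \ref{7}, \ref{9} and \ref{10} one shows no nontrivial subgroup of $Z(S)$ is normal in $\mathcal{F}$; then, since $N$ is $\mathcal{F}$-radical and ${\rm Aut}_S(N)\in{\rm Syl}_p({\rm Aut}_{\mathcal{F}}(N))$, the module analysis of Lemma \ref{8} forces $N\cong C_p\times C_p$ with $|S|=p^3$, and as $p$ is odd with $S$ non-abelian, metacyclicity is excluded by \cite{BO}, so $S$ is extraspecial of exponent $p$. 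At that point $|D|=|N|=p^2$, so every maximal subgroup of $S$ has order $|D|$ and is therefore semi-invariant; but the classification \cite[Lemma~4.7]{RV} together with \cite[Lemma~2.11]{FJ} yields a subgroup of $S$ that is $\mathcal{F}$-conjugate to a distinct subgroup of the same overgroup, contradicting semi-invariance exactly as in Step~7 of Theorem \ref{C}. The one genuinely new point relative to Theorem \ref{C} — and where I expect the real work to lie — is checking that the self-centralizing module $N$ is forced down to order $p^2$ (equivalently that $N$ is maximal in $S$): the absence of the ``order $p|D|$ abelian'' hypothesis must be compensated by $p$ odd and by realizability, and the expected mechanism is that a larger faithful action of $S/N$ on $N$ would create a non-abelian subgroup of order $|D|$, contrary to hypothesis.
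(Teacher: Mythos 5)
The residual case $|O_p(\mathcal{F})|=|D|$ --- which you correctly flag as the heart of the matter, and which is where the paper spends almost all of its effort --- is not actually proved in your proposal, and the mechanism you sketch for it cannot be made to work. Your plan is to transplant Steps 4--7 of Theorem \ref{C}, whose engine is Lemma \ref{8}. But Lemma \ref{8} requires (i) that the Sylow $p$-subgroups of ${\rm Aut}_{\mathcal{F}}(N)$ have order $p$, and (ii) that $[x,N]$ has order $p$ for every $p$-element $x$. In Theorem \ref{C} both come from the hypotheses specific to that theorem: every \emph{maximal} subgroup is semi-invariant, which forces $N=O_p(\mathcal{F})$ to be maximal so that ${\rm Aut}_S(N)\cong S/N\cong C_p$, and $S$ has two abelian maximal subgroups, which gives $|[S,S]|=p$ via \cite[Lemma 1.9]{BO}. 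In Theorem \ref{D1} neither is available: here $N$ is merely a self-centralizing subgroup of order $|D|$, ${\rm Aut}_S(N)\cong S/N$ can have order $\geq p^2$, and there is no control of $[S,S]$. In fact the paper proves the \emph{opposite} of what you predict: in this case $O_p(G)$ is not maximal in $S$ (that is where Theorem \ref{C} gets invoked), $S/O_p(G)$ is cyclic of order at least $p^2$, and $|O_p(G)|\geq p^4$; there is no collapse to $|S|=p^3$. Your fallback heuristic --- that a larger faithful action of $S/N$ on $N$ would create a non-abelian subgroup of order $|D|$ --- also fails: the non-abelian subgroups one can manufacture, such as $V\langle x\rangle$ with $x\in S\setminus N$ and $V<N$ invariant, have order $p|D|$ unless $S\setminus N$ contains suitable elements of order $p$, and nothing in the hypothesis prevents $\Omega_1(S)\leq N$; moreover subgroups of order $p|D|$ are completely unconstrained in Theorem \ref{D1} --- precisely the difference from Theorem \ref{Theorem B}.

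What the paper actually does in this case is a long group-theoretic argument exploiting realizability: the model theorem (\cite[Proposition 8.8]{LM}) plus a transport argument to get $O_{p'}(G)=1$ and $C_G(O_p(G))\leq O_p(G)$; elementary abelianness of $O_p(G)$ (quotient by $\Phi(O_p(G))$ and supersolvable embedding); $p$-closure of every proper overgroup of $O_p(G)$; cyclicity of $S/O_p(G)$ via the classification of minimal non-$p$-closed groups (\cite{QX}, \cite{LC}, Lemma \ref{13}); exclusion of small ranks via Lemma \ref{14}; and a final contradiction through the theory of $\mathcal{A}_2$-groups (Lemma \ref{15}, \cite[Lemma 1.9]{BO}), where the contradiction is that a suitable $T\leq S$ ends up centralizing $O_p(G)$. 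None of this, nor any substitute for it, appears in your proposal, so the proof is genuinely incomplete. A secondary, repairable point: your induction on $|\mathcal{F}|$ must run over \emph{all} saturated fusion systems rather than only realizable ones, because Lemma \ref{6} in your third step needs every proper subsystem over every overgroup of $O_p(\mathcal{F})$ to be supersolvable, not just the normalizer subsystems that are automatically realizable; the realizability you then invoke in the residual case has to be recovered from the model theorem (legitimate there, since $\mathcal{F}$ is constrained). That bookkeeping can be fixed; the missing residual-case argument cannot be fixed by the route you propose.
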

\begin{proof}
Suppose that the theorem is false, and without loss of generality,  let $G$ be a counterexample of minimal order. For the ease of reading, we break the argument into the following steps.
\begin{itemize}
\item[\textbf{Step 1.}]  Let $H <G$ with $S \cap H\in {\rm Syl}_p (H)$ and $|S \cap H| >|D|$. Then $\mathcal{F}_{S \cap H} (H)$ is supersolvable.
\end{itemize}

It follows immediately from the hypothesis that any subgroup of $S \cap H$ of order $|D|$ is abelian and semi-invariant in $\mathcal{F}_S (G)$. Now assume that $T$ is a subgroup of $S$ such that $T$ is semi-invariant in $\mathcal{F}_S (G)$. Then for any $T \leq K \leq S$ and any $\alpha \in {\rm Aut}_{\mathcal{F}} (K)$, $T^{\alpha} = T$. Therefore for any $T \leq K \leq S \cap H$ and $\alpha_0 \in {\rm Aut}_{\mathcal{F}_{S \cap H} (H)} (K) \subseteq {\rm Aut}_{\mathcal{F}} (K)$, $T^{\alpha_0} = T$. Hence we conclude that $T$ is semi-invariant in $\mathcal{F}_{S \cap H} (H)$. Consequently, any subgroup of $S \cap H$ of order $|D|$ is abelian and semi-invariant in $\mathcal{F}_{S \cap H} (H)$, which indicates that $\mathcal{F}_{S \cap H} (H)$ satisfies the hypothesis of the theorem and so $\mathcal{F}_{S \cap H} (H)$ is supersolvable by the minimal choice of $G$.
\begin{itemize}
\item[\textbf{Step 2.}] $C_G (O_p (G)) \leq O_p (G)$. 
\end{itemize}

Let $Q \in \mathcal{E}_{\mathcal{F}} ^{*}$. As $Q$ is $\mathcal{F}$-centric, we have $C_S (Q)\leq Q$. Since any subgroup of $S$ of order $|D|$ is abelian, we obtain that $|Q| \geq |D|$. Hence $|N_S (Q)|>|D|$. 

Suppose that $Q$ is not normal in $G$. Then clearly $N_G (Q) <G$. If $Q =S$, then $N_S (Q) = S \in {\rm Syl}_p (N_G (Q))$. If $Q <S$, then $Q$ is $\mathcal{F}$-essential. By {{\cite[Proposition 3.3(a)]{AK}}}, $Q$ is fully normalized in $\mathcal{F}$. It follows from {{\cite[Definition 2.4]{AK}}} that $N_S (Q) \in {\rm Syl}_p (N_G (Q))$. From the proceeding paragraph, we get that $|S \cap N_G (Q)| = |N_S (Q)| >|D|$. By Step 1, we conclude that $\mathcal{F}_{N_S(Q)} (N_G (Q))= N_{\mathcal{F}} (Q)$ is supersolvable.

Assume that no member of $\mathcal{E}_{\mathcal{F}} ^{*}$ is normal in $G$, then $N_{\mathcal{F}} (Q)$ is supersolvable for any $Q \in \mathcal{E}_{\mathcal{F}} ^{*}$. Hence we assert from Lemma  \ref{1} that $\mathcal{F}$ is supersolvable, a contradiction. Therefore there exists $Q \in \mathcal{E}_{\mathcal{F}} ^{*}$ such that $Q \unlhd G$. If $Q =S$, then $Z(Q) =Z(S) = C_S (Q)$, i.e. $Q$ is $\mathcal{F}$-centric. If $Q<S$, then $Q$ is $\mathcal{F}$-centric. Thus $\mathcal{F}$ is constrained. By {{\cite[Proposition 8.8]{LM}}}, the unique model of $\mathcal{F}_S (G) = \mathcal{F}$, namely $L$, is isomorphic to $\overline{G}:=G/O_{p'} (G)$.

Now suppose that $O_{p'} (G) >1$, and let $\phi: L \rightarrow G/O_{p'} (G)$ be an automorphism from $L$ to $G/O_{p'} (G)$. Clearly the image of $S$ can be written as $S_1 O_{p'} (G)/O_{p'} (G)$, where $S_1 \leq G$ is a Sylow $p$-subgroup of $G$. Now we claim that $\mathcal{F}_{S_1 O_{p'} (G)/O_{p'} (G)} (G/O_{p'} (G))$ is supersolvable. Since for any $P,Q \leq S$, we have the following:
\begin{align}\label{10001}
{\rm Hom}_{\mathcal{F}_S (G)} (P,Q) ={\rm Hom}_{\mathcal{F}_S (L)} (P,Q) = \left\{ \phi \psi \phi^{-1},P \rightarrow Q \,\Big|\,\psi\in  {\rm Hom}_{\mathcal{F}_{\overline{S}_1} (\overline{G})} (P^{\phi},Q^{\phi})\right\}.
 \end{align}
In particular, we have the following equation for any $P \leq S$:
\begin{align*}
{\rm Aut}_{\mathcal{F}_S (G)} (P) = {\rm Aut}_{\mathcal{F}_S (L)} (P) = \left\{ \phi \psi \phi^{-1},P \rightarrow P \,\Big|\,\psi\in  {\rm Aut}_{\mathcal{F}_{\overline{S}_1} (\overline{G})} (P^{\phi})\right\}.
\end{align*}
Now let $H$ be an arbitrary subgroup of $\overline{S}_1$ such that $|H| =|D|$. Then for any $\alpha \in {\rm Aut}_{\mathcal{F}_{\overline{S}_1} (\overline{G})} (K^{\phi})$ such that $H \leq K^{\phi} \leq \overline{S}_1$, it follows that $H^{\phi ^{-1}} \leq K \leq S$, $|H^{\phi ^{-1}}| = |H|=|D|$, $\phi \alpha \phi^{-1} \in {\rm Aut}_{\mathcal{F}_S (G)} (K)$ and $H$ is semi-invariant in $\mathcal{F}$ that $(H^{\phi ^{-1}})^{\phi \alpha \phi^{-1}} = H^{\phi ^{-1}}$, i.e. $H^{\alpha} = H$. By the choice of $K$ and $\alpha$, we conclude that $H$ is semi-invariant in $\mathcal{F}_{\overline{S}_1} (\overline{G})$. Since $H^{\phi ^{-1}} \leq S$ is abelian, $H$ is abelian as well. By the arbitrariness of $H$, we assert that $\mathcal{F}_{\overline{S}_1} (\overline{G})$ satisfies the hypothesis of the theorem. By the minimal choice of $G$, $\mathcal{F}_{\overline{S}_1} (\overline{G})$ is supersolvable. By (\ref{10001}), it yields that both $\mathcal{F}_S (L)$ and $\mathcal{F}_S (G)$ are supersolvable, a contradiction. Hence $O_{p'} (G) =1$, and so  the unique model of $\mathcal{F}_S (G)$ is isomorphic to $G$, i.e. $G$ is the model of $\mathcal{F}$. Thus we have $C_G (O_p (G)) \leq O_p (G)$ and this step is complete.
\begin{itemize}
\item[\textbf{Step 3.}] $|O_p (G)| = |D|$.
\end{itemize}

Suppose that $|O_p (G)| >|D|$. Then we get from the hypothesis of the theorem that any subgroup $H$ of $O_p (G)$ of order $|D|$ is semi-invariant in $\mathcal{F}$. Note that $H$ is invariant in ${\rm Aut}_{\mathcal{F}_S (G)} (O_p (G))$, we conclude immediately that $H$ is normal in $G$. Hence we obtain from any subgroup of $O_p (G)$ of order $|D|$ is normal in $G$ and Lemma \ref{5} that $O_p (G) \leq Z_{\mathfrak{U}} (G)$. Let $L$ be a proper subgroup of $G$ such that $O_p (G) < S \cap L$ and $S \cap L \in {\rm Syl}_p (L)$. Then we have $|S \cap H| > |O_p (G)| > |D|$. By Step 1, we get that $\mathcal{F}_{S \cap L} (L)$ is supersolvable. By the arbitrariness of $L$, it yields from Lemma \ref{2} that $\mathcal{F}_S (G)$ is supersolvable, which is absurd and we get that $|O_p (G)| \leq |D|$. Since $C_G (O_p (G)) \leq O_p (G)$, it follows that $|D| \leq |O_p (G)|$, which indicates that $|D| = |O_p (G)|$.
\begin{itemize}
\item[\textbf{Step 4.}] $O_p (G)$ is an elementary abelian  subgroup of $G$.
\end{itemize}

Assume that $O_p (G)$ is not elementary abelian, then we get from {{\cite[Lemma 4.5]{IS}}} that $\Phi (O_p (G)) \neq 1$. Now let $\overline{G}:= G/\Phi (O_p (G))$ and $\overline{\mathcal{F}}:=\mathcal{F}/\Phi (O_p (G)) = \mathcal{F}_{\overline{S}} (\overline{G})$. Let $H/\Phi (O_p (G))$ be a subgroup of $\overline{S}$ such that $|H/\Phi (O_p (G))| = |D|/|\Phi (O_p (G))|>1$. Then $|H| = |D|$, and so $H$ is semi-invariant in $\mathcal{F}$ by the hypothesis of the theorem. It follows from Lemma \ref{7} that $H/\Phi (O_p (G))$ is semi-invariant in $\overline{\mathcal{F}}$. Clearly $H/\Phi (O_p (G))$ is abelian. By the arbitrariness of $H/\Phi (O_p (G))$, we conclude that any subgroup of $\overline{S}$ of order $|H|/|\Phi (O_p (G))|$ is abelian and semi-invariant in $\overline{\mathcal{F}}$. It follows from the minimality of $G$ that $\mathcal{F}_{\overline{S}} (\overline{G})$ is supersolvable.

We obtain from Lemma \ref{12} that there exists a series
$$\Phi (O_p (G)) \leq V_0 \leq V_1 \leq \cdots \leq V_m =S,$$
such that $\overline{V_i}$ is strongly $\overline{\mathcal{F}}$-closed, $i=0,1,\cdots,m$, $\overline{V_{i+1}}/\overline{V_i}$ is cyclic, $i=0,1,\cdots,m$, and $V_n = O_p (G)$ for some $1 \leq n \leq m-1$ since $O_p (G) / \Phi (O_p (G))$ is normal in $G/\Phi (O_p (G))$, i.e. $O_p (G)/\Phi (O_p (G))$ is normal in $\mathcal{F}_{\overline{S}} (\overline{G})$. Since $\overline{V_i}$ is normal in $\overline{G}$, $i=0,1,\cdots,n$, it follows directly that every chief factor of $\overline{G}$ under $\overline{O_p (G)}$ is of prime order. Hence we obtain from {{\cite[Definition 7.1]{MW}}} that $\overline{O_p (G)}$ is supersolvably embedded in $\overline{G}$. By {{\cite[Theorem 7.19]{MW}}}, it indicates that $O_p (G)$ is supersolvably embedded in $G$. Therefore every chief factor of $G$ under $O_p (G)$ is of prime order, and so $O_p (G) \leq Z_{\mathfrak{U}} (G)$.

It follows from Step 1 and Step 3 that for any proper subgroup $H$ of $G$ with $O_p (G) < S \cap H$, $S \cap H \in {\rm Syl}_p (H)$, the fusion system $\mathcal{F}_{S \cap H} (H)$ is supersolvable. Thus we conclude from Lemma \ref{2} that $\mathcal{F}$ is supersolvable, a contradiction. Hence $O_p (G)$ is elementary abelian and this step is complete.
\begin{itemize}
\item[\textbf{Step 5.}] Suppose that $O_p (G) \leq H <G$. Then $H$ is $p$-closed. 
\end{itemize}

Let $H$ be an arbitrary subgroup of $G$ such that $O_p (G) \leq H <G$. Then we may assume that $S \cap H \in {\rm Syl}_p (H)$. 
It is obvious that $O_p (G) \leq S \cap H$, and if $O_p (G) = S \cap H$, then our proof is complete. Now suppose that $O_p (G) <H \cap S$, then we conclude from Step 1 that $\mathcal{F}_{S \cap H} (H)$ is supersolvable. Hence if yields from {{\cite[Proposition 2.3]{SN}}} that $H \cap S$ is normal in $\mathcal{F}_{S \cap H} (H)$. Therefore we get that $\mathcal{F}_{S \cap H} (H) = N_{\mathcal{F}_{S \cap H} (H)} (S \cap H)$. Now let $h \in H$, and $c_h$ denotes the automorphism of $O_p (G)$ induced by $h$, i.e.
$$c_h: O_p (G) \rightarrow O_p (G), x \mapsto x^h. $$ 
It is clear that $c_h \in \mathcal{F}_{S \cap H} (H) = N_{\mathcal{F}_{S \cap H} (H)} (S \cap H)$, hence there exists $u \in N_H (S \cap H)$ such that $c_u = c_h$, i.e. $hu^{-1} \in C_G (O_p (G)) \leq O_p (G) \leq N_H (S \cap H)$ since Step 2 yields that $G$ is $p$-constrained. Thus we assert that $h \in N_H (S \cap H)$. By the choice of $h$, we conclude that $H = N_H (S \cap H)$, i.e. $H \cap S \unlhd H$. Therefore $H$ is $p$-closed, and   the proof of this step is finished.
\begin{itemize}
\item[\textbf{Step 6.}] $S /O_p (G)$ is cyclic.
\end{itemize}

Let $H$ be a subgroup of $G$ such that $O_p (G) \leq H <G$ and $\overline{G}:=G/O_p (G)$. It yields from Step 5 that $H$ is $p$-closed. Therefore $\overline{H}$ is $p$-closed. Suppose that $\overline{G}$ is $p$-closed, then $G$ is $p$-closed, and so $O_p (G) = S$, which implies that $|D| = |O_p (G) | = |S|$ by Step 3, impossible. Hence $\overline{G}$ is not $p$-closed, and so $\overline{G}$ is a finite group which is not $p$-closed, while any  proper subgroup of $\overline{G}$ is $p$-closed. Therefore $\overline{G}$ is a minimal non-$p$-closed group, and we conclude from {{\cite[Lemma 1]{QX}}} that $\overline{G}$ is minimal non-nilpotent or $\overline{G}/\Phi (\overline{G})$ is non-abelian and simple.

Suppose that the former case holds, then we obtain from Lemma \ref{13} and $S / O_p (G) \in {\rm Syl}_p (\overline{G})$ that $\overline{S}$ is cyclic and this step is complete.

Now suppose that the latter case holds, and so $\widehat{G}:=\overline{G}/\Phi (\overline{G})$ is a non-abelian simple group. Since $\Phi(\overline{G})$ is nilpotent and $O_p (\overline{G})=1$, we get that $\Phi (O_p(\overline{G}))$ is a $p'$-group. Hence the Sylow $p$-subgroups of $\widehat{G}$ are isomorphic to $\overline{S}$, and we only need to show that $\widehat{G}$ has cyclic Sylow $p$-subgroups.

Let $O_p (G) \leq L < G$ such that $L/O_p (G) = \Phi (\overline{G})$ and let $H$ be an arbitrary subgroup of $G$ such that $L \leq H <G$. Then it indicates from Step 5 that $H$ is $p$-closed, hence the quotient group $H/L$ is $p$-closed. It is obvious that $G/L \cong \overline{G} / \overline{L} = \widehat{G}$, therefore we conclude from the Correspondence Theorem that any proper subgroup of $\widehat{G}$ is $p$-closed. Note that the only proper quotient of $\widehat{G}$ is $\widehat{G}/\widehat{G}$ since $\widehat{G}$ is simple, which is obviously $p$-closed. Thus  $\widehat{G}$ is a non-$p$-closed group, where all of its proper subgroups and proper quotient groups are $p$-closed. Hence we obtain that $\widehat{G}$ is a minimal non-$p$-closed group in the sense of {{\cite{LC}}}. Now we assert from {{\cite[Theorem 3.5]{LC}}} that $\widehat{G}$ has cyclic Sylow $p$-subgroups and this step is finished.
\begin{itemize}
\item[\textbf{Step 7.}] $O_p (G)$ is not a maximal subgroup of $S$.
\end{itemize}

Suppose that $O_p (G)$ is a maximal subgroup of $S$, and so we conclude from $|O_p (G)| = |D|$ by Step 3 and the hypothesis of the theorem that every maximal subgroup of $S$ is abelian and semi-invariant in $\mathcal{F}_S (G)$. Then it follows directly from Theorem \ref{C} that $\mathcal{F}_S (G)$ is supersolvable, a contradiction. Now we claim that $|O_p (G)| \geq p^4$.
Actually, it yields from Step 2 and Step 4 that $C_S (O_p (G)) = O_p (G)$. It follows from N-C Theorem that $N_ S (O_p (G))/C_S (O_p (G)) = S/O_p (G)$ is isomorphic to a $p$-subgroup of ${\rm Aut}(O_p (G))$, and $S / O_p (G)$ is cyclic by Step 6. Note that $S \neq O_p (G)$ by Step 3, and $O_p (G)$ is not maximal in $S$ by Step 7, we obtain that $|S/O_p (G)| \geq p^2$.

Now let $|O_p (G)| = p^t$ with $t$ an integer. Since $|D| = |O_p (G)| >1$ by Step 3, we get that $t>0$. It yields from Step 4 that $O_p (G)$ is elementary abelian, hence ${\rm Aut} (O_p (G)) \cong GL_t (p)$. Suppose that $t=1$, then $S/O_p (G)$ is isomorphic to a $p$-subgroup of $C_{p-1}$, a contradiction. Thus $t>1$. Now suppose that $t =2$ or $t=3$. Applying Lemma \ref{14}, we assert that $GL_t (p) \cong {\rm Aut} (O_p (G))$ does not have non-trivial cyclic $p$-subgroup of order greater than $p$. As $S / O_p (G)$ is a cyclic $p$-subgroup of order greater than $p$, we obtain that $t \neq 2,3$. Hence $t \geq 4$ and this part of the proof is complete. 
\begin{itemize}
\item[\textbf{Step 8.}] Final contradiction.
\end{itemize}

Since $|S/O_p (G)| \geq p^2$ as we have shown in the proceeding paragraph, there exists a subgroup $O_p (G) < T \leq S$ such that $|T/O_p (G)| = p^2$. Then $O_p (G)$ is properly contained in a maximal subgroup $T_1$ of $T$. Since $C_{T_1} (O_p (G)) \leq C_G (O_p (G)) \leq O_p (G) <T_1$, it follows that $T_1$ is non-abelian. Note that $|O_p (G) |=|D|$, it indicates from the hypothesis of the theorem that  every subgroup of $T_1$ with index $p^2$ is abelian and semi-invariant in $\mathcal{F}$. Therefore $T$ is actually an $\mathcal{A}_2$-group with $|T| =p^2 |O_p (G)| \geq p^6$ by Step 7.

Suppose that $T$ is metacyclic, then $O_p (G)$ is metacyclic in a natural way. However, we conclude from step 4 that $O_p (G)$ is an elementary abelian subgroup of $T$, which indicates that the order of $O_p (G)$ is at most $p^2$, a contradiction to Step 8. Hence we obtain that $T$ is not metacyclic.

Now suppose that $T$ has no abelian maximal subgroups. Then we assert immediately from Lemma \ref{15} (2) that $|T|=p^5$. Then it yields from $|T/O_p (G)| = p^2$ that $|O_p (G) |=p^3$, a contradiction to Step 8 again. Therefore $T$ has at least one abelian maximal subgroup.

Now assume that there is only one maximal subgroup of $T$ which is abelian. Then denote this abelian maximal subgroup by $U$. If $O_p (G) \leq U$, then $O_p (G)$ is abelian, and it follows from Step 2 that $C_G (O_p (G)) \geq U >O_p (G)$, a contradiction. Hence we get that $O_p (G) \not\leq U$. By the maximality of of $U$, we assert that $T = U O_p (G)$. Now let $Z:= O_p (G) \cap U$. It indicates from the commutativity of $O_p (G)$ and $U$ that $C_T (Z) \geq U O_p (G) =T$, hence $Z \leq Z(T)$. Then we obtain from Isomorphism Theorem that 
$$O_p (G)/Z = O_p (G) / (O_p (G) \cap U )\cong UO_p (G)/U = T/U \cong C_p.$$
Therefore $Z$ is a maximal subgroup of $O_p (G)$. It implies again from Step 2 that $Z(T) \leq C_T (O_p (G)) \leq C_G (O_p (G)) \leq O_p (G)$. Suppose that $Z(T) = O_p (G)$, then $T = UO_p (G) = Z(T) U$ is abelian, a contradiction. Thus $Z(T)<O_p (G)$, and so $Z \leq Z(T) <O_p (G)$, which yields that $Z = Z(T)$. Hence $O_p (G) /Z(T)$ is a normal subgroup of $Z/Z(T)$ with order $p$, and so we obtain that $[O_p (G) /Z(T), Z/Z(T)] < O_p (G) /Z(T)$, which implies that $O_p (G) / Z(T) \leq Z(T/Z(T))$. Since $T/Z(T)/O_p (G)/Z(T) \cong T/O_p (G)$ is abelian, we conclude immediately that $T/Z(T)$ is abelian. Thus we assert that $T' \leq Z(T)$. Now applying Lemma \ref{15} (1), it follows that $Z(T) = \Phi (T) <O_p (G)$. By {{\cite[Lemma 4.5]{IS}}}, we get that $T / O_p (G)$ is elementary abelian. As $|T/O_p (G)| = p^2$, $T/O_p (G)$ is not cyclic, and so $T/O_p (G) \leq S/O_p (G)$ is not abelian, a contradiction to Step 6. Hence $T$ has at least two abelian maximal subgroups.

Applying {{\cite[Lemma 1.9]{BO}}} to $T$, it implies that $|T/Z(T)| = p^2$. Note that $|T/O_p (G)|=p^2$, and $Z(T) \leq C_T (O_p (G)) \leq C_G (O_p (G)) \leq O_p (G)$ by Step 2, we assert that $O_p (G) = Z(T)$. Hence $T \leq C_G (O_p (G)) \leq O_p (G)$, a contradiction and our proof is complete.
\end{proof}
\begin{theorem}
Let $p$ be an odd prime and $\mathcal{F}$ be a saturated fusion system on a finite $p$-group $S$. Assume that there exists a subgroup $D$ of $S$ such that any subgroup of $S$ of order $1<|D| <|S|$ is abelian and semi-invariant in $\mathcal{F}$, then $\mathcal{F}$ is supersolvable.
\end{theorem}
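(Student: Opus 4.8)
The plan is to run a minimal counterexample argument and reduce the general (possibly exotic) situation to the realizable Theorem \ref{D1}. So suppose the statement fails and let $\mathcal{F}$ be a counterexample over $S$ with the number of morphisms $|\mathcal{F}|$ minimal. First I would record the standard reduction: if $R\le S$ with $|R|>|D|$ and $\mathcal{F}_0$ is a proper saturated subsystem of $\mathcal{F}$ on $R$, then every subgroup of $R$ of order $|D|$ is abelian (inherited) and semi-invariant in $\mathcal{F}_0$, since ${\rm Aut}_{\mathcal{F}_0}(K)\subseteq {\rm Aut}_{\mathcal{F}}(K)$ for all $K\le R$; hence $\mathcal{F}_0$ satisfies the hypothesis and is supersolvable by minimality. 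Next, exactly as in Step 2 of Theorem \ref{Theorem B}, for any $Q\in\mathcal{E}_{\mathcal{F}}^*$ the $\mathcal{F}$-centricity of $Q$ together with the abelianity of the subgroups of order $|D|$ forces $|Q|\ge|D|$, so $|N_S(Q)|>|D|$; therefore, whenever $Q$ is not normal in $\mathcal{F}$, the proper subsystem $N_{\mathcal{F}}(Q)$ satisfies the hypothesis and is supersolvable. Invoking Lemma \ref{1}, since $\mathcal{F}$ is not supersolvable there must exist some $Q\in\mathcal{E}_{\mathcal{F}}^*$ normal in $\mathcal{F}$; in particular $O_p(\mathcal{F})\neq 1$ and $|O_p(\mathcal{F})|\ge |Q|\ge |D|$.

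The argument then splits according to $|O_p(\mathcal{F})|$. If $|O_p(\mathcal{F})|>|D|$, I would mimic Step 3 of Theorem \ref{Theorem B}: set $N:=O_p(\mathcal{F})$, $A:={\rm Aut}_{\mathcal{F}}(N)$ and form the outer semidirect product $G:=N\rtimes A$. Every subgroup of $N$ of order $|D|$ is a subgroup of $S$ of order $|D|$, hence semi-invariant in $\mathcal{F}$, and the semi-invariance argument of Theorem \ref{Theorem B} shows any such subgroup is normal in $G$; as $p$ is odd and $1<|D|<|N|$, Lemma \ref{5} yields $N\le Z_{\mathfrak{U}}(G)$. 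This produces a series $1=S_0\le\cdots\le S_n=O_p(\mathcal{F})$ with $S_i\unlhd G$ and cyclic factors, and the same extension-of-isomorphisms argument as in Theorem \ref{Theorem B} shows each $S_i$ is strongly $\mathcal{F}$-closed. Together with the first reduction (proper subsystems on $R>O_p(\mathcal{F})$ are supersolvable), Lemma \ref{6} forces $\mathcal{F}$ to be supersolvable, a contradiction. If instead $|O_p(\mathcal{F})|=|D|$, then the normal $Q\in\mathcal{E}_{\mathcal{F}}^*$ cannot equal $S$ (since $|D|<|S|$), so $Q$ is $\mathcal{F}$-essential, hence $\mathcal{F}$-centric; and $Q\le O_p(\mathcal{F})$ with $|Q|\ge |D|=|O_p(\mathcal{F})|$ gives $Q=O_p(\mathcal{F})$. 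Thus $O_p(\mathcal{F})$ is $\mathcal{F}$-centric, i.e. $\mathcal{F}$ is constrained, and the Model Theorem \cite[Part III, Theorem 5.10]{AK} provides a finite group $G$ with $S\in{\rm Syl}_p(G)$ and $\mathcal{F}=\mathcal{F}_S(G)$. Since the hypothesis on $D$ is now precisely that of Theorem \ref{D1}, that theorem gives $\mathcal{F}$ supersolvable, again a contradiction.

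The main obstacle is the case $|O_p(\mathcal{F})|=|D|$: this is exactly the delicate situation handled by the long Steps 4--8 of Theorem \ref{D1}, and the point is to avoid redoing that $p$-group analysis by showing the minimal counterexample is forced to be constrained and then quoting Theorem \ref{D1}. The pivotal observation making this reduction work is that a normal member of $\mathcal{E}_{\mathcal{F}}^*$ of the minimal possible order $|D|$ must coincide with $O_p(\mathcal{F})$ and hence be $\mathcal{F}$-centric. The rest is bookkeeping: checking that semi-invariance genuinely passes to every relevant proper subsystem, and that the strong $\mathcal{F}$-closedness of the $S_i$ (needed to apply Lemma \ref{6}) really follows from normality in $G=N\rtimes A$ by extending $\mathcal{F}$-isomorphisms along the normal subgroup $N=O_p(\mathcal{F})$, precisely as in Theorem \ref{Theorem B}.
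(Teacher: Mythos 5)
Your proposal is correct, and its skeleton (minimal counterexample in the number of morphisms, inheritance of the hypothesis by proper saturated subsystems, Lemma \ref{1} forcing some $Q\in\mathcal{E}_{\mathcal{F}}^{*}$ to be normal in $\mathcal{F}$, then the Model Theorem plus Theorem \ref{D1}) is exactly the paper's. The one structural difference is your case split on $|O_p(\mathcal{F})|$: the paper has no such split, because once some $Q\in\mathcal{E}_{\mathcal{F}}^{*}$ is normal in $\mathcal{F}$, that $Q$ is automatically $\mathcal{F}$-centric --- either $Q$ is $\mathcal{F}$-essential (centric by definition) or $Q=S$ --- and $Q\leq O_p(\mathcal{F})$ then gives $C_S(O_p(\mathcal{F}))\leq C_S(Q)\leq Q\leq O_p(\mathcal{F})$, so $\mathcal{F}$ is constrained regardless of whether $|O_p(\mathcal{F})|$ equals or exceeds $|D|$. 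Thus the Model Theorem and Theorem \ref{D1} dispose of both of your cases in one stroke, and your Case A (forming $N\rtimes\mathrm{Aut}_{\mathcal{F}}(N)$, applying Lemma \ref{5} to get $N\leq Z_{\mathfrak{U}}(G)$, transferring strong closure, and invoking Lemma \ref{6}) is a valid but redundant re-run of Step 3 of Theorem \ref{Theorem B}. What your detour buys is that, when $|O_p(\mathcal{F})|>|D|$, the contradiction is reached purely inside fusion-system language without appealing to the Model Theorem; what the paper's route buys is brevity, by noticing that centricity of the normal member of $\mathcal{E}_{\mathcal{F}}^{*}$ is free and does not require identifying it with $O_p(\mathcal{F})$ or comparing orders with $|D|$.
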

\begin{proof}
Suppose that the theorem is false, and let $\mathcal{F}$ be a counterexample such that $|\mathcal{F}|$ is minimal. Firstly, Let $Q$ be a member of $\mathcal{E}_{\mathcal{F}} ^{*}$ such that $Q$ is not normal in $\mathcal{F}$. Then it is clear that $C_S (Q) \leq Q$, and we conclude from any subgroup of $S$ of order is abelian that $|Q| \geq |D|$. Then $|N_S (Q)| >|D|$. As $Q$ is not normal in $\mathcal{F}$, $N_{\mathcal{F}} (Q)$ is a proper saturated fusion system of $\mathcal{F}$. It is clear that every subgroup of $N_S(Q)$ of order $|D|$ is abelian and semi-invariant in $\mathcal{F}$. Let $H$ be a subgroup of $N_S (Q)$ which is semi-invariant in $\mathcal{F}$. Hence for any $H \leq K \leq S$ and $\alpha \in {\rm Aut}_{\mathcal{F}} (K)$, we have $H^{\alpha} = H$. Therefore for any $H \leq K \leq N_S (Q) \leq S$ and $\alpha \in {\rm Aut}_{N_{\mathcal{F}} (Q)} (K) \subseteq {\rm Aut}_{\mathcal{F}} (K)$, we have $H^{\alpha} = H$. Thus $H$ is semi-invariant in $N_{\mathcal{F}} (Q)$. Now we assert that every subgroup of $N_S(Q)$ of order $|D|$ is abelian and semi-invariant in $N_{\mathcal{F}} (Q)$, and so $N_{\mathcal{F}} (Q)$ satisfies the hypothesis of the theorem. By the minimality of $\mathcal{F}$, $N_{\mathcal{F}} (Q)$ is supersolvable. Suppose that no member of $\mathcal{E}_{\mathcal{F}} ^{*}$ is normal in $\mathcal{F}$, then it follows from Lemma \ref{1} that $\mathcal{F}$ is supersolvable, a contradiction. Hence there exists $Q \in \mathcal{E}_{\mathcal{F}} ^{*}$ such that $Q$ is normal in $\mathcal{F}$. Therefore $\mathcal{F}$ is constrained. Then we obtain from  the  Model Theorem {{\cite[Part III, Theorem 5.10]{AK}}} that there exists a finite group $G$ such that $S \in {\rm Syl}_p (G)$ and $\mathcal{F} = \mathcal{F}_S (G)$. Then we conclude immediately from Theorem \ref{D1} that $\mathcal{F}_S (G) = \mathcal{F}$ is supersolvable, a contradiction. Hence our proof is complete.
\end{proof}

Now with the strong results we have obtained above, we are ready to prove the corollaries which we have listed at the end of  Section \ref{1003}.
\begin{proof}[Proof of Corollary \ref{Corollary 1}]
Since there exists a subgroup $D$ of $S$ such that any subgroup of $S$ of order $1<|D|<|S|$ and cyclic subgroup of order $4$ (if $S$ is non-abelian, $p=2$ and $|D|=2$) satisfies the $S$-subnormalizer condition in $G$, then we conclude from Remark \ref{Equivalence} that all of them are semi-invariant in $\mathcal{F}_S (G)$. Also, any subgroup of $S$ of order $|D|$ or $p|D|$ is abelian. Hence we obtain from Theorem \ref{Theorem B} that $\mathcal{F}_S (G)$ is supersolvable.
\end{proof}
\begin{proof}[Proof of Corollary \ref{Corollary 2}]
It follows from our hypothesis and Remark \ref{Equivalence} that any maximal subgroup of $S$ is semi-invariant in $\mathcal{F}_S (G)$. Then we conclude immediately from Theorem \ref{C} that $\mathcal{F}_S (G)$ is supersolvable.
\end{proof}
\begin{proof}[Proof of Corollary \ref{Corollary 3}]
It follows directly from Remark \ref{Equivalence} and Theorem \ref{D1}.
\end{proof}
\section{Applications about characterizations for $p$-nilpotency of finite groups }\label{1005}

In this section, we apply the results in Section \ref{1003} and Section \ref{Section 4}, and we obtain 16 corollaries for $p$-nilpotency of finite group $G$ based on 16 different kinds of generalised normalities under the assumption that certain subgroups of $G$ satisfy some generalised normalities conditions.
\begin{corollary}
Let $G$ be a finite group, $p$ be a prime divisor of $|G|$ with $(p-1,|G|)=1$, $S$ be a Sylow $p$-subgroup of $G$. Assume that there exists a subgroup $D$ of $S$ such that $1 <|D|<|S|$, and any subgroup of $S$ of order $|D|$ is abelian and satisfies the $S$-subnormalizer condition in $G$, any subgroup of $S$ of order $p|D|$ is abelian. If $S$ is non-abelian, $p=2$ and $|D|=2$, then assume moreover that any cyclic subgroup of $S$ of order $4$ satisfies the $S$-subnormalizer condition in $G$. Then $G$ is $p$-nilpotent.
\end{corollary}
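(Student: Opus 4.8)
The plan is to reduce the $p$-nilpotency assertion to the already-established supersolvability of $\mathcal{F}_S(G)$, which is exactly what Corollary \ref{Corollary 1} delivers under the present hypotheses, and then to close the gap between supersolvability and $p$-nilpotency using the coprimality condition $(p-1,|G|)=1$.

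First I would observe that, apart from the extra arithmetic condition $(p-1,|G|)=1$, the assumptions here coincide verbatim with those of Corollary \ref{Corollary 1}: there is a subgroup $D$ with $1<|D|<|S|$ such that every subgroup of $S$ of order $|D|$ is abelian and satisfies the $S$-subnormalizer condition in $G$, every subgroup of order $p|D|$ is abelian, and in the exceptional case ($S$ non-abelian, $p=2$, $|D|=2$) every cyclic subgroup of order $4$ satisfies the $S$-subnormalizer condition. Since Corollary \ref{Corollary 1} never uses the hypothesis $(p-1,|G|)=1$, it applies directly and yields that $\mathcal{F}_S(G)$ is supersolvable.

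Having secured supersolvability of $\mathcal{F}_S(G)$, the second and final step is to invoke Remark \ref{p-nilpotence}. Since $(p-1,|G|)=1$ and $\mathcal{F}_S(G)$ is supersolvable, that remark, which itself rests on {{\cite[Theorem 1.9]{SN}}}, gives at once that $G$ is $p$-nilpotent, completing the argument.

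There is essentially no obstacle to overcome at this stage: all the genuine work, namely translating the $S$-subnormalizer condition into semi-invariance in $\mathcal{F}_S(G)$ via Remark \ref{Equivalence} and then deducing supersolvability through Theorem \ref{Theorem B}, has already been carried out in the proof of Corollary \ref{Corollary 1}. The only point meriting a moment's attention is to confirm that the exceptional clause for $p=2$, $|D|=2$ matches exactly, so that Corollary \ref{Corollary 1} is applicable without modification; once this is verified, the passage from supersolvability to $p$-nilpotency through $(p-1,|G|)=1$ is purely formal.
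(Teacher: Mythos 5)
Your proposal is correct and matches the paper's own proof exactly: the paper likewise applies Corollary \ref{Corollary 1} to obtain supersolvability of $\mathcal{F}_S(G)$ and then invokes Remark \ref{p-nilpotence} (via \cite[Theorem 1.9]{SN}) together with $(p-1,|G|)=1$ to conclude $p$-nilpotency. Nothing is missing; your additional check that the exceptional clause for $p=2$, $|D|=2$ aligns with Corollary \ref{Corollary 1} is a sensible, if implicit, verification.
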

\begin{proof}
It follows from Corollary  \ref{Corollary 1} that $\mathcal{F}_S (G)$ is supersolvable. By Remark \ref{p-nilpotence}, we conclude that $G$ is $p$-nilpotent and we are done.
\end{proof}

\begin{corollary}
Let $G$ be a finite group, $p$ be an odd prime divisor of $|G|$ with $(p-1,|G|)=1$, $S$ be a Sylow $p$-subgroup of $G$. Suppose that any maximal subgroup of $S$ satisfies the $S$-subnormalizer condition in $G$, and if $S$ is not cyclic, we have additionally that $S$ has more than one abelian maximal subgroup. Then $G$ is $p$-nilpotent.
\end{corollary}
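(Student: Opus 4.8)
The plan is to reduce this $p$-nilpotency assertion to a supersolvability statement about the fusion system $\mathcal{F}_S(G)$ that has already been established in Section \ref{Section 4}, and then to cross the bridge from fusion systems back to groups supplied by Remark \ref{p-nilpotence}. First I would observe that the hypotheses here are verbatim those of Corollary \ref{Corollary 2}: namely $p$ is an odd prime divisor of $|G|$, every maximal subgroup of $S$ satisfies the $S$-subnormalizer condition in $G$, and whenever $S$ is non-cyclic it possesses more than one abelian maximal subgroup. Consequently Corollary \ref{Corollary 2} applies directly and yields that $\mathcal{F}_S(G)$ is supersolvable.

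The second step transfers this conclusion about the fusion system back to the group $G$. At this point the additional arithmetic hypothesis $(p-1,|G|)=1$, which plays no role in the first step, becomes essential, for it is exactly the condition demanded in Remark \ref{p-nilpotence}. Invoking that remark, which itself rests on {{\cite[Theorem 1.9]{SN}}}, the supersolvability of $\mathcal{F}_S(G)$ together with $(p-1,|G|)=1$ forces $G$ to be $p$-nilpotent (and hence $p$-supersolvable). The whole argument is thus a clean two-line composition: apply Corollary \ref{Corollary 2}, then apply Remark \ref{p-nilpotence}.

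I do not anticipate a genuine obstacle, since all the structural work inside the fusion system—the reduction to essential subgroups, the analysis via $O_p(\mathcal{F})$, and the treatment of the extraspecial case of order $p^3$—has already been carried out in the proof of Theorem \ref{C} on which Corollary \ref{Corollary 2} depends. The only point deserving care is bookkeeping: I would verify that the coprimality condition is used precisely once, at the transfer step, and is not silently needed for the supersolvability of $\mathcal{F}_S(G)$ itself, which for odd $p$ holds without any coprimality assumption. Checking that the hypotheses of the corollary match those of Corollary \ref{Corollary 2} word for word (in particular the odd-prime restriction and the two-abelian-maximal-subgroups clause) completes the verification.
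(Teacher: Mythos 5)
Your proposal is correct and follows exactly the paper's own argument: the paper's proof likewise applies Corollary \ref{Corollary 2} to conclude that $\mathcal{F}_S(G)$ is supersolvable and then invokes Remark \ref{p-nilpotence} (via \cite[Theorem 1.9]{SN}) with the hypothesis $(p-1,|G|)=1$ to deduce $p$-nilpotency. Your additional observation that the coprimality assumption enters only at the transfer step, not in establishing supersolvability, is accurate and consistent with the paper.
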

\begin{proof}
It indicates from Corollary  \ref{Corollary 2} that $\mathcal{F}_S (G)$ is supersolvable. Applying Remark \ref{p-nilpotence}, we assert that $G$ is $p$-nilpotent and we are done.
\end{proof}
\begin{corollary}
Let $G$ be a finite group, $p$ be an odd prime divisor of $|G|$ with $(p-1,|G|)=1$, $S$ be a Sylow $p$-subgroup of $G$. Assume that there is a subgroup $D$ of $S$ such that any subgroup of $S$ of order $1 < |D| <|S|$ is abelian and satisfies the $S$-subnormalizer condition in $G$. Then $G$ is $p$-nilpotent.
\end{corollary}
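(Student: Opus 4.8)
The plan is to recognize this statement as the $p$-nilpotency counterpart of the fusion-system result already established in Corollary \ref{Corollary 3}, and to bridge the two via the coprimality hypothesis $(p-1,|G|)=1$. The entire argument is an application: the genuine work has already been absorbed into Corollary \ref{Corollary 3} (which in turn reduces through Theorem \ref{D1} to a minimal-counterexample analysis culminating in the $\mathcal{A}_2$-group structure theory of Lemma \ref{15}), so here I only need to chain two already-proved results.

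First I would observe that the hypotheses of the present statement contain precisely those of Corollary \ref{Corollary 3}: both assume that $p$ is an odd prime, that $S$ is a Sylow $p$-subgroup of $G$, and that there is a subgroup $D$ of $S$ with $1<|D|<|S|$ such that every subgroup of $S$ of order $|D|$ is abelian and satisfies the $S$-subnormalizer condition in $G$. The only extra assumption carried here is $(p-1,|G|)=1$, and this plays no role in the supersolvability conclusion. Hence Corollary \ref{Corollary 3} applies directly and yields that $\mathcal{F}_S(G)$ is supersolvable.

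Next I would invoke Remark \ref{p-nilpotence}. Since $(p-1,|G|)=1$ and $\mathcal{F}_S(G)$ has just been shown to be supersolvable, that remark (which rests on \cite[Theorem 1.9]{SN}) immediately gives that $G$ is $p$-nilpotent, finishing the proof. I expect no real obstacle at this stage; the only points requiring care are confirming that the hypothesis lists of Corollary \ref{Corollary 3} and the current statement coincide on the relevant conditions, and that the coprimality condition $(p-1,|G|)=1$ is exactly what Remark \ref{p-nilpotence} needs in order to transfer supersolvability of the fusion system $\mathcal{F}_S(G)$ to $p$-nilpotency of $G$.
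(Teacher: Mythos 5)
Your proposal is correct and is exactly the paper's own proof: the paper likewise deduces supersolvability of $\mathcal{F}_S(G)$ from Corollary \ref{Corollary 3} and then applies Remark \ref{p-nilpotence} (i.e.\ \cite[Theorem 1.9]{SN} with $(p-1,|G|)=1$) to conclude that $G$ is $p$-nilpotent.
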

\begin{proof}
It yields from Corollary  \ref{Corollary 3} that $\mathcal{F}_S (G)$ is supersolvable. Then we obtain from Remark \ref{p-nilpotence} that $G$ is $p$-nilpotent, and the proof is complete.
\end{proof}
\begin{remark}
Let $G$ be a finite group and $S$ be a Sylow $p$-subgroup of $G$. Suppose that $H$ is a subgroup of $S$ which is weakly closed in $G$ with respect to $S$. Then for any $g \in N_G (K)$, where $H \leq K \leq S$, $H^g \leq K \leq S$, which implies that $H^g = H$. Hence $H$ satisfies the $S$-subnormalizer condition in $G$. By Lemma \ref{16}, we may change the hypothesis of all theorems above that $H$ satisfies the $S$-subnormalizer condition into $H$ is pronormal in $G$, $H$ is weakly normal in $G$, $H$ is weakly closed in $G$ with respect to $S$ or $N_G (H)$ is the subnormalizer of $H$ in $G$, and we can still obtain that $G$ is $p$-nilpotent.
\end{remark}
\begin{corollary}
Let $G$ be a finite group, $p$ be a prime divisor of $|G|$ with $(p-1,|G|)=1$, $S$ be a Sylow $p$-subgroup of $G$.  Assume that there exists a subgroup $D$ of $S$ with $1<|D|<|S|$ such that every subgroup of $S$ of order $|D|$ or $p|D|$ is abelian, every subgroup of $S$ of order $|D|$ and every cyclic subgroup of $S$ of order 4 (if $S$ is a non-abelian 2-group and $|D|=2$) are generalized $S\Phi$-supplemented in $G$, then $G$ is $p$-nilpotent.
\end{corollary}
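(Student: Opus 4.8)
The plan is to reduce this statement to two results already established in the excerpt: the supersolvability criterion for $\mathcal{F}_S(G)$ recorded in Corollary \ref{generalised}, and the passage from supersolvability of the fusion system to $p$-nilpotency of $G$ provided by Remark \ref{p-nilpotence}. Since both ingredients are in hand, the argument will be short and the role of the proof is essentially to assemble them in the correct order.

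First I would observe that the group-theoretic hypotheses here are word-for-word those of Corollary \ref{generalised}: there is a subgroup $D$ with $1<|D|<|S|$, every subgroup of $S$ of order $|D|$ or $p|D|$ is abelian, and every subgroup of order $|D|$ together with every cyclic subgroup of order $4$ (in the exceptional case where $S$ is a non-abelian $2$-group and $|D|=2$) is generalized $S\Phi$-supplemented in $G$. Consequently Corollary \ref{generalised} applies directly and yields that $\mathcal{F}_S(G)$ is supersolvable. This is where all the substantive content sits: Corollary \ref{generalised} itself rests on the verification that the generalized $S\Phi$-supplemented property is a $1$-eternal relation in the sense of Definition \ref{eternal} (condition (0) coming from \cite[Lemma 2.3 (1)]{MCG} and condition (1) from Lemma \ref{30}), which then feeds into the machinery of Theorem \ref{3.1}.

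Second, with supersolvability of $\mathcal{F}_S(G)$ secured, I would invoke Remark \ref{p-nilpotence}. The extra arithmetic hypothesis $(p-1,|G|)=1$---which is absent from Corollary \ref{generalised} but imposed here---is exactly what that remark requires: combining $(p-1,|G|)=1$ with the supersolvability of $\mathcal{F}_S(G)$ and \cite[Theorem 1.9]{SN} gives that $G$ is $p$-nilpotent, completing the argument.

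There is therefore no genuine obstacle at this stage, since the hard analysis (the minimal-counterexample reduction, the control over $\mathcal{E}_{\mathcal{F}}^{*}$, and the embedding $O_p(G)\le Z_{\mathfrak{U}}(G)$) has already been carried out inside Theorem \ref{3.1} and Lemma \ref{30} and packaged into Corollary \ref{generalised}. If a fully self-contained proof were desired, the only point needing care would be re-deriving the $1$-eternality of the generalized $S\Phi$-supplemented property so as to apply Theorem \ref{3.1} afresh; but given Corollary \ref{generalised}, this is unnecessary and the two-line reduction above suffices.
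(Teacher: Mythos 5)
Your proposal is correct and follows exactly the paper's own proof: invoke Corollary \ref{generalised} to get supersolvability of $\mathcal{F}_S(G)$, then apply Remark \ref{p-nilpotence} (using the hypothesis $(p-1,|G|)=1$) to conclude $p$-nilpotency of $G$. The additional commentary on where the substantive content lies (the $1$-eternality verification feeding into Theorem \ref{3.1}) matches the paper's structure and requires no changes.
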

\begin{proof}
It indicates from Corollary \ref{generalised} that $\mathcal{F}_S (G)$ is supersolvable. Then we get from Remark \ref{p-nilpotence} that $G$ is $p$-nilpotent.
\end{proof}
\begin{corollary}
Let $G$ be a finite group, $p$ be a prime divisor of $|G|$ with $(p-1,|G|)=1$, $S$ be a Sylow $p$-subgroup of $G$. Suppose that any subgroup $H$ of $S$ with order $p$ or any cyclic subgroup $H$ of $S$ with order $4$ (if $S$ is non-abelian and $p=2$)  is an $ICSC$-subgroup of $G$, then $G$ is $p$-nilpotent.
\end{corollary}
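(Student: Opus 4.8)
The plan is to reduce this $p$-nilpotency statement to the supersolvability criterion for $\mathcal{F}_S(G)$ that has already been established, and then to exploit the arithmetic hypothesis $(p-1,|G|)=1$. First I would observe that the hypothesis stated here is \emph{identical} to the hypothesis of Corollary \ref{ICSC}: every subgroup of $S$ of order $p$, together with every cyclic subgroup of order $4$ in the exceptional case where $S$ is a non-abelian $2$-group, is an $ICSC$-subgroup of $G$. There is nothing extra to verify, so Corollary \ref{ICSC} applies verbatim and yields that $\mathcal{F}_S(G)$ is supersolvable.

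Second, I would feed this supersolvability into Remark \ref{p-nilpotence}. That remark records that whenever $(p-1,|G|)=1$ and $\mathcal{F}_S(G)$ is supersolvable, the group $G$ is $p$-nilpotent (this is the content of \cite[Theorem 1.9]{SN}). Since both of these hypotheses are now in hand---the coprimality condition being assumed in the statement, and the supersolvability coming from the previous paragraph---the desired conclusion that $G$ is $p$-nilpotent follows at once.

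I do not expect a genuine obstacle in this argument, because all the substantive work was carried out earlier: Corollary \ref{ICSC} itself rests on Theorem \ref{semi-eternal} together with the verification there that the $ICSC$-property is a $1$-semi-eternal relation, which in turn uses Lemma \ref{18} (closure of the $ICSC$-property under passage to intermediate subgroups, giving condition (0) of Definition \ref{eternal}) and Lemma \ref{19} (giving condition (1')). The only point deserving a sentence of care is to confirm that $(p-1,|G|)=1$ is precisely the arithmetic input Remark \ref{p-nilpotence} needs in order to promote supersolvability of the fusion system to $p$-nilpotency of the group; once that is noted, the proof is a two-step chain of implications and can be written in a couple of lines.
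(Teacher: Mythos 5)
Your proposal is correct and coincides with the paper's own proof: the paper likewise deduces supersolvability of $\mathcal{F}_S(G)$ from Corollary \ref{ICSC} and then applies Remark \ref{p-nilpotence} (i.e.\ \cite[Theorem 1.9]{SN}) with the hypothesis $(p-1,|G|)=1$ to conclude that $G$ is $p$-nilpotent. Your additional remarks tracing Corollary \ref{ICSC} back to Theorem \ref{semi-eternal}, Lemma \ref{18} and Lemma \ref{19} are accurate but not needed for this two-step deduction.
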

\begin{proof}
It yields from Corollary  \ref{ICSC} that $\mathcal{F}_S (G)$ is supersolvable. Then we obtain from Remark \ref{p-nilpotence} that $G$ is $p$-nilpotent, and the proof is complete.
\end{proof}

\begin{remark}
Using exactly the same proof, we can change the hypothesis that any subgroup $H$ of $S$ with order $p$ or $4$ (if $S$ is non-abelian and $p=2$)  is an $ICSC$-subgroup of $G$ into any cyclic  subgroup $H$ of $S$ with order $p$ or $4$ (if $S$ is non-abelian and $p=2$)  is an $ICSC$-subgroup of $G$. By Lemma \ref{18}, it is easy to find that this theorem actually covers the result of Y. Gao {\it et al} in {{\cite[Theorem 3.2]{GL3}}}.
\end{remark}
\begin{corollary}
Let $G$ be a finite group, $p$ be a prime divisor of $|G|$ with $(p-1,|G|)=1$, $S$ be a Sylow $p$-subgroup of $G$. Suppose that there exists a subgroup $1<D<S$ such that any subgroup of $S$ with order $|D|$ or $p|D|$ is abelian, any subgroup $H$ of $S$ of order $|D|$ or $4$ (If $S$ is a non-abelian $2$-group and $|D| =2$) is a strong $ICC$-subgroup of $G$, then $G$ is $p$-nilpotent.
\end{corollary}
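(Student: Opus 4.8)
The plan is to reduce the statement to two results already established in the paper and chain them together, since the present corollary is a formal consequence of earlier work rather than an independent argument. First I would observe that the hypotheses here are precisely those of Corollary \ref{ICC} augmented by the single arithmetic assumption $(p-1,|G|)=1$. Indeed, Corollary \ref{ICC} asks only that $p$ be a prime divisor of $|G|$, that $S$ be a Sylow $p$-subgroup, and that there exist $1<D<S$ for which every subgroup of $S$ of order $|D|$ or $p|D|$ is abelian and every subgroup of order $|D|$ (together with every cyclic subgroup of order $4$ in the case where $S$ is a non-abelian $2$-group and $|D|=2$) is a strong $ICC$-subgroup of $G$. The condition $(p-1,|G|)=1$ is not needed to verify that hypothesis, so the first step is simply to invoke Corollary \ref{ICC} and conclude that $\mathcal{F}_S(G)$ is supersolvable.

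With the supersolvability of $\mathcal{F}_S(G)$ in hand, the second and final step is to apply Remark \ref{p-nilpotence}: since $(p-1,|G|)=1$ and $\mathcal{F}_S(G)$ is supersolvable, \cite[Theorem 1.9]{SN} yields that $G$ is $p$-nilpotent. This closes the argument, mirroring the template used for the other corollaries in this section.

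In terms of difficulty, there is no genuine obstacle at the level of this corollary; all the substantive work has already been carried out upstream. Conceptually the real content resides in Corollary \ref{ICC}, which in turn rests on Theorem \ref{3.1} (the $1$-eternal axiomatization) together with the verification that the strong $ICC$-property is a $1$-eternal relation: it is inherited by intermediate overgroups, which is condition $(0)$ of Definition \ref{eternal} and is immediate from Definition \ref{1.4}(1), and it forces $O_p(G)\le Z_{\mathfrak{U}}(G)$ under the stated order hypotheses, which is condition $(1)$ and is supplied by Lemma \ref{3}. The only point requiring care when writing the present proof is to confirm that the hypotheses transcribe verbatim into those of Corollary \ref{ICC}, so that the citation is legitimate; once that matching is checked, the $p$-nilpotency of $G$ follows formally from the arithmetic hypothesis through Remark \ref{p-nilpotence}.
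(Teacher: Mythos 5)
Your proposal is correct and matches the paper's own proof exactly: the paper likewise invokes Corollary \ref{ICC} to get supersolvability of $\mathcal{F}_S(G)$ and then applies Remark \ref{p-nilpotence} (i.e.\ \cite[Theorem 1.9]{SN}) with the hypothesis $(p-1,|G|)=1$ to conclude $p$-nilpotency. Your additional remarks tracing Corollary \ref{ICC} back to Theorem \ref{3.1}, Definition \ref{1.4}(1), and Lemma \ref{3} are accurate but not needed for this corollary.
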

\begin{proof}
It yields from Theorem  \ref{ICC} that $\mathcal{F}_S (G)$ is supersolvable. Then we obtain from Remark \ref{p-nilpotence} that $G$ is $p$-nilpotent, and we are done.
\end{proof}
\begin{corollary}
Let $G$ be a finite group, $p$ be a prime divisor of $|G|$ with $(p-1,|G|)=1$, $S$ be a Sylow $p$-subgroup of $G$.  Suppose that there exists a subgroup $1<D<S$ such that any subgroup of $S$ with order $|D|$ or $p|D|$ is abelian, any subgroup of $S$ of order $|D|$ and any cyclic subgroup of $S$ of order $4$ (If $S$ is a non-abelian $2$-group, $|S|>4$ and $|D| =2$) is strongly $\mathfrak{U}$-embedded in $G$, then $G$ is $p$-nilpotent.
\end{corollary}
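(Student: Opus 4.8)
The plan is to follow the uniform two-step template that governs every corollary in this section: first establish that the fusion system $\mathcal{F}_S(G)$ is supersolvable, and then transfer this conclusion to the $p$-nilpotency of $G$ by means of the arithmetic hypothesis $(p-1,|G|)=1$. No new fusion-theoretic argument is required, since the substantive content has already been isolated in Section \ref{1003}.

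First I would observe that the hypotheses imposed on $S$ here are exactly those appearing in Corollary \ref{U}: there is a subgroup $1<D<S$ such that every subgroup of $S$ of order $|D|$ or $p|D|$ is abelian, and every subgroup of $S$ of order $|D|$ (together with every cyclic subgroup of order $4$ in the exceptional case where $S$ is a non-abelian $2$-group, $|S|>4$ and $|D|=2$) is strongly $\mathfrak{U}$-embedded in $G$. Consequently Corollary \ref{U} applies \emph{verbatim}, yielding that $\mathcal{F}_S(G)$ is supersolvable. Recall that Corollary \ref{U} itself is deduced from Theorem \ref{2-eternal} once one verifies, using Definition \ref{1.4}(2) and Lemma \ref{4}, that the strongly $\mathfrak{U}$-embedded property is a $2$-eternal relation; that verification is precisely what matches the side condition $|S|>4$ in the $|D|=2$ case to clause (2) rather than clause (1) of Definition \ref{eternal}.

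With the supersolvability of $\mathcal{F}_S(G)$ secured, the second step is immediate. Since $(p-1,|G|)=1$, Remark \ref{p-nilpotence}, which invokes \cite[Theorem 1.9]{SN}, gives at once that $G$ is $p$-nilpotent, and the proof is complete.

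The hard part, such as it is, lies entirely upstream in Corollary \ref{U} and the recognition that the strongly $\mathfrak{U}$-embedded property fits the $2$-eternal axiomatization; at the level of this corollary there is no genuine obstacle, only the bookkeeping of confirming that the present hypotheses are literally the hypotheses of Corollary \ref{U} and that the additional assumption $(p-1,|G|)=1$ is exactly what Remark \ref{p-nilpotence} needs.
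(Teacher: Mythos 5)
Your proposal is correct and follows exactly the same route as the paper's own proof: the paper likewise deduces supersolvability of $\mathcal{F}_S(G)$ by citing Corollary \ref{U} (whose hypotheses coincide verbatim with those here, apart from the extra arithmetic condition) and then concludes $p$-nilpotency from Remark \ref{p-nilpotence} using $(p-1,|G|)=1$. Your additional remarks tracing Corollary \ref{U} back to Theorem \ref{2-eternal} via Definition \ref{1.4}(2) and Lemma \ref{4} accurately reflect the paper's upstream argument as well.
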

\begin{proof}
It indicates from Theorem  \ref{U} that $\mathcal{F}_S (G)$ is supersolvable. Applying Remark \ref{p-nilpotence}, we assert that $G$ is $p$-nilpotent and we are done.
\end{proof}
\begin{corollary}
Let $G$ be a finite group, $p$ be a prime divisor of $|G|$ with $(p-1,|G|)=1$, $S$ be a Sylow $p$-subgroup of $G$. Suppose that any subgroup $H$ of $S$ with order $p$ or any cyclic subgroup $H$ of $S$ with order $4$ (if $S$ is non-abelian and $p=2$)  is $E$-supplemented in $G$, then $G$ is $p$-nilpotent.
\end{corollary}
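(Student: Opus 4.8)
The plan is to obtain the conclusion in two moves, exploiting the fact that the supersolvability of $\mathcal{F}_S(G)$ under these very hypotheses has already been settled earlier in the paper. First I would note that the assumption here—that every subgroup of $S$ of order $p$, together with every cyclic subgroup of order $4$ in the case where $S$ is a non-abelian $2$-group, is $E$-supplemented in $G$—is verbatim the hypothesis of Corollary \ref{E-supplemented}. Applying that corollary directly gives that $\mathcal{F}_S(G)$ is supersolvable, and this is where all the real content sits: the reduction is powered by Theorem \ref{semi-eternal} together with the fact (via Lemma \ref{17}) that $E$-supplementation is a $1$-semi-eternal relation.

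The second move is to transfer this supersolvability statement about the fusion system back to the group itself. Here I would invoke the extra coprimality hypothesis $(p-1,|G|)=1$, which is precisely the trigger for Remark \ref{p-nilpotence}: whenever $(p-1,|G|)=1$ and $\mathcal{F}_S(G)$ is supersolvable, the group $G$ is $p$-nilpotent. Combining the supersolvability just obtained with this remark finishes the argument, and the proof mirrors the template already used for the preceding $p$-nilpotency corollaries in this section.

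I do not expect any serious obstacle, since both steps are off-the-shelf applications of results established above. The only point deserving attention is that the coprimality condition $(p-1,|G|)=1$ is genuinely indispensable for the passage from the fusion system to $G$: supersolvability of $\mathcal{F}_S(G)$ alone is strictly weaker than $p$-nilpotency of $G$, and Remark \ref{p-nilpotence}, which rests on \cite[Theorem 1.9]{SN}, is exactly the bridge that the coprimality hypothesis makes available.
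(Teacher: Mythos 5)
Your proposal is correct and coincides with the paper's own proof: the paper likewise cites Corollary \ref{E-supplemented} (built on Theorem \ref{semi-eternal} and Lemma \ref{17}) to get supersolvability of $\mathcal{F}_S(G)$, and then applies Remark \ref{p-nilpotence} under the hypothesis $(p-1,|G|)=1$ to conclude that $G$ is $p$-nilpotent. Your added remark on the indispensability of the coprimality condition is accurate but not part of the paper's argument.
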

\begin{proof}
It yields from Theorem  \ref{E-supplemented} that $\mathcal{F}_S (G)$ is supersolvable. By Remark \ref{p-nilpotence}, we conclude that $G$ is $p$-nilpotent and the proof is finished.
\end{proof}

\begin{corollary}
Let $G$ be a finite group, $p$ be a prime divisor of $|G|$ with $(p-1,|G|)=1$, $S$ be a Sylow $p$-subgroup of $G$. Suppose that any subgroup $H$ of $S$ with order $p$ or any cyclic subgroup $H$ of $S$ with order $4$ (if $p=2$)  is a partial $CAP$-subgroup of $G$, then $G$ is $p$-nilpotent.
\end{corollary}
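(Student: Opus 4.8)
The plan is to follow the two-step template that governs every corollary in this section: first promote the local hypothesis to supersolvability of the fusion system, and then convert that supersolvability into $p$-nilpotency using the coprimality assumption. Concretely, I would observe that the hypothesis here---that every subgroup of $S$ of order $p$, together with every cyclic subgroup of order $4$ when $p=2$, is a partial $CAP$-subgroup of $G$---is word for word the hypothesis of Corollary \ref{pCAP}. Hence Corollary \ref{pCAP} applies directly and yields that $\mathcal{F}_S(G)$ is supersolvable.

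With supersolvability of $\mathcal{F}_S(G)$ in hand, the second step is to invoke Remark \ref{p-nilpotence}. The extra assumption $(p-1,|G|)=1$ is exactly what that remark requires: it feeds supersolvability of $\mathcal{F}_S(G)$ into \cite[Theorem 1.9]{SN} to deduce that $G$ is $p$-nilpotent (indeed $p$-supersolvable). This completes the argument.

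Since both ingredients are already established, there is no genuinely new obstacle at the level of this corollary; the substance is entirely front-loaded into Corollary \ref{pCAP} and Remark \ref{p-nilpotence}. If one preferred a self-contained route, the point to re-verify would be that the partial $CAP$ property qualifies as the appropriate eternal relation feeding Theorem \ref{2-semi-eternal}: one must check the inheritance axiom (0) of Definition \ref{eternal}, namely that a partial $CAP$-subgroup of $G$ remains a partial $CAP$-subgroup of every intermediate $K$ with $H\le K\le G$ (this is \cite[Lemma 2.1]{BB}), together with the corresponding normal $p$-subgroup statement, i.e. Lemma \ref{20}, giving $O_p(G)\le Z_{\mathfrak{U}}(G)$. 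The minimal-counterexample machinery of Theorem \ref{2-semi-eternal} then produces supersolvability, after which the coprimality hypothesis finishes the job via Remark \ref{p-nilpotence}. The only place where care is genuinely needed is ensuring that the $p=2$ clause for cyclic subgroups of order $4$ is tracked correctly through the reduction, since that is precisely the case under which Lemma \ref{20} must be applied.
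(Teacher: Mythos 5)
Your proposal is correct and coincides with the paper's own proof, which likewise deduces supersolvability of $\mathcal{F}_S(G)$ from Corollary \ref{pCAP} and then applies Remark \ref{p-nilpotence} using the hypothesis $(p-1,|G|)=1$ to conclude that $G$ is $p$-nilpotent. Your optional self-contained route through Theorem \ref{2-semi-eternal}, \cite[Lemma 2.1]{BB} and Lemma \ref{20} is exactly how the paper proves Corollary \ref{pCAP} itself, so nothing further is needed.
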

\begin{proof}
It implies from Theorem  \ref{pCAP} that $\mathcal{F}_S (G)$ is supersolvable. Then we obtain from Remark \ref{p-nilpotence} that $G$ is $p$-nilpotent, as needed.
\end{proof}
\begin{corollary}
Let $G$ be a finite group, $p$ be an odd prime divisor of $|G|$ with $(p-1,|G|)=1$, $S$ be a Sylow $p$-subgroup of $G$. Suppose that $S$ has a subgroup $D$ such that $1 \leq D <S$, and all subgroups of $S$ of order $|D|$ or $p|D|$ are abelian and $c$-supplemented in $G$, then $G$ is $p$-nilpotent.
\end{corollary}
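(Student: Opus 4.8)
The plan is to recognize that this corollary is an immediate consequence of the supersolvability result already established for fusion systems in Section \ref{1003}, combined with the bridge between supersolvability of $\mathcal{F}_S(G)$ and $p$-nilpotency of $G$ provided by Remark \ref{p-nilpotence}. The hypotheses here are verbatim those of Corollary \ref{c-supplemented}, with the sole addition of the arithmetic condition $(p-1,|G|)=1$, so no new group-theoretic work is required; the argument is purely a chaining of two previously proved facts.

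First I would invoke Corollary \ref{c-supplemented}. Since $p$ is an odd prime divisor of $|G|$, $S$ is a Sylow $p$-subgroup of $G$, and there is a subgroup $D$ of $S$ with $1 \leq D < S$ such that every subgroup of $S$ of order $|D|$ or $p|D|$ is both abelian and $c$-supplemented in $G$, the hypotheses of Corollary \ref{c-supplemented} are met exactly. That corollary then yields that $\mathcal{F}_S(G)$ is supersolvable. This step carries all the substantive content, but it is already available to us as a black box, resting ultimately on Theorem \ref{5-eternal} together with the fact (Lemma \ref{22} and Lemma \ref{21}) that the $c$-supplemented property is a $5$-eternal relation.

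Second I would apply Remark \ref{p-nilpotence}. Because we are now given that $(p-1,|G|)=1$ and have just deduced that $\mathcal{F}_S(G)$ is supersolvable, Remark \ref{p-nilpotence} (which in turn rests on \cite[Theorem 1.9]{SN}) immediately gives that $G$ is $p$-nilpotent, completing the proof.

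There is essentially no obstacle here: the corollary is designed as a direct application, mirroring the structure of the preceding corollaries in this section, each of which transforms a supersolvability statement for $\mathcal{F}_S(G)$ into a $p$-nilpotency statement via the same arithmetic hypothesis. The only point deserving a moment's care is verifying that the hypotheses align precisely with Corollary \ref{c-supplemented}---in particular that the range $1 \leq |D| < |S|$ and the dual condition on subgroups of orders $|D|$ and $p|D|$ match---but this is a matter of inspection rather than of proof.

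\begin{proof}
It follows from Corollary \ref{c-supplemented} that $\mathcal{F}_S (G)$ is supersolvable. Since $(p-1,|G|)=1$, we conclude from Remark \ref{p-nilpotence} that $G$ is $p$-nilpotent, and the proof is complete.
\end{proof}
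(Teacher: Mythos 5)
Your proof is correct and coincides with the paper's own argument: the paper likewise deduces supersolvability of $\mathcal{F}_S(G)$ from Corollary \ref{c-supplemented} and then concludes $p$-nilpotency of $G$ via Remark \ref{p-nilpotence} using the hypothesis $(p-1,|G|)=1$. Nothing further is needed.
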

\begin{proof}
It indicates from Theorem  \ref{c-supplemented} that $\mathcal{F}_S (G)$ is supersolvable. Then we conclude from Remark \ref{p-nilpotence} that $G$ is $p$-nilpotent, as follows.
\end{proof}
\begin{corollary}
Let $G$ be a finite group, $p$ be a prime divisor of $|G|$ with $(p-1,|G|)=1$, $S$ be a Sylow $p$-subgroup of $G$. Suppose that one of the following holds:
\begin{itemize}
\item[(1)] Every cyclic subgroup of $S$ of order $p$ or $4$ (if $P$ is a non-cyclic $2$-group) either is a weakly $\mathcal{H}$-subgroup of $G$ or has a supersoluble supplement in $G$.
\item[(2)] $p$ is odd, and there exists a subgroup $D$ of $S$ such that every subgroup of $S$ of order $|D|$ is a weakly $\mathcal{H}$-subgroup of $G$, every subgroup of $S$ of order $|D|$ or $p|D|$ is abelian.
\end{itemize} 
Then $G$ is $p$-nilpotent.
\end{corollary}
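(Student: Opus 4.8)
The plan is to reduce both cases to the supersolvability of $\mathcal{F}_S(G)$ and then invoke Remark \ref{p-nilpotence}: since $(p-1,|G|)=1$, supersolvability of $\mathcal{F}_S(G)$ forces $G$ to be $p$-nilpotent. Thus it suffices to prove that $\mathcal{F}_S(G)$ is supersolvable under either hypothesis (1) or (2), after which the same final sentence closes both branches.

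For case (2), the hypotheses coincide verbatim with those of Corollary \ref{weakly-H-2}, so $\mathcal{F}_S(G)$ is supersolvable at once and Remark \ref{p-nilpotence} finishes this case. No extra work is needed here beyond citing the corollary.

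For case (1), I would package the disjunctive condition as a single relation and apply Theorem \ref{3-semi-eternal}. Define $H \rightsquigarrow G$ to mean that $H$ is a weakly $\mathcal{H}$-subgroup of $G$ or $H$ has a supersoluble supplement in $G$. To apply Theorem \ref{3-semi-eternal} I must check that $\rightsquigarrow$ is a 3-semi-eternal relation, i.e. that conditions (0) and (3') of Definition \ref{eternal} hold. Condition (3') is precisely Lemma \ref{24}(1) applied to the normal $p$-subgroup $P = O_p(G)$. For the heredity condition (0), fix $H \leq K \leq G$ with $H \rightsquigarrow G$. If $H$ is a weakly $\mathcal{H}$-subgroup of $G$, then Lemma \ref{23} gives that $H$ is a weakly $\mathcal{H}$-subgroup of $K$. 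If instead $H$ has a supersoluble supplement $T$ in $G$, so that $G = HT$, then the Dedekind modular law yields $K = K \cap HT = H(K \cap T)$, and $K \cap T$, being a subgroup of the supersoluble group $T$, is itself supersoluble; hence $K \cap T$ is a supersoluble supplement of $H$ in $K$. In either event $H \rightsquigarrow K$, so (0) holds and $\rightsquigarrow$ is 3-semi-eternal. Since by hypothesis every subgroup of $S$ of order $p$ (and every cyclic subgroup of order $4$ when $S$ is a non-cyclic $2$-group) satisfies $H \rightsquigarrow G$, Theorem \ref{3-semi-eternal} yields that $\mathcal{F}_S(G)$ is supersolvable, and Remark \ref{p-nilpotence} again concludes that $G$ is $p$-nilpotent.

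The only genuinely new ingredient, and hence the main obstacle, is verifying the heredity (0) for the branch where $H$ merely \emph{has a supersoluble supplement}, since the weakly $\mathcal{H}$ branch is already covered by Lemma \ref{23}. Fortunately this reduces to the Dedekind modular law together with the fact that supersolvability is inherited by subgroups, so no serious difficulty arises; the remainder of the argument is purely a matter of matching the hypotheses to the already-established axiomatic framework of Section \ref{1003}.
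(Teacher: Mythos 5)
Your proof is correct, and its overall skeleton --- first establish that $\mathcal{F}_S(G)$ is supersolvable, then invoke Remark \ref{p-nilpotence} using $(p-1,|G|)=1$ --- is exactly the paper's. The difference is in how case (1) is handled. The paper disposes of both cases in one line by citing Corollary \ref{weakly-H-1} (for case (1)) and Corollary \ref{weakly-H-2} (for case (2)); but Corollary \ref{weakly-H-1}, as stated, assumes that every cyclic subgroup of order $p$ or $4$ \emph{is} a weakly $\mathcal{H}$-subgroup of $G$, with no mention of the alternative ``has a supersoluble supplement in $G$,'' so that citation does not literally cover the disjunctive hypothesis of case (1). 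You instead go back to Theorem \ref{3-semi-eternal}, define the disjunction as a single relation $\rightsquigarrow$, and verify that it is 3-semi-eternal: condition (3') of Definition \ref{eternal} comes from Lemma \ref{24}(1) (which, unlike Corollary \ref{weakly-H-1}, does admit the supersoluble-supplement alternative), and the heredity condition (0) comes from Lemma \ref{23} in the weakly-$\mathcal{H}$ branch and from the Dedekind modular law $K = K \cap HT = H(K\cap T)$ together with the fact that subgroups of supersoluble groups are supersoluble in the supplement branch. That heredity check is precisely the ingredient missing from the paper's one-line argument, so your treatment is the more complete one; what the paper's proof buys is brevity, at the cost of resting case (1) on a corollary whose hypotheses do not match the statement being proved.
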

\begin{proof}
It yields from Theorem  \ref{weakly-H-1} and  \ref{weakly-H-2} that $\mathcal{F}_S (G)$ is supersolvable. Then we assert from Remark \ref{p-nilpotence} that $G$ is $p$-nilpotent and we are done.
\end{proof}
\begin{corollary}
Let $G$ be a finite group, $p$ be a prime divisor of $|G|$ with $(p-1,|G|)=1$, $S$ be a Sylow $p$-subgroup of $G$.  Suppose that $S$ has a subgroup $D$ such that $1 \leq D <S$, and all subgroups of $S$ of order $|D|$ or $p|D|$ are abelian and weakly $S\Phi$-supplemented in $G$. If $S$ is a non-abelian $2$-group and $|D|=1$, we have additionally that every cyclic subgroup of $G$ of order $4$ is a weakly $S\Phi$-supplemented subgroup of $G$.  Then  $G$ is $p$-nilpotent.
\end{corollary}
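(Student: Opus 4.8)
The plan is to observe that the hypotheses of this corollary coincide verbatim with those of Theorem \ref{weakly SPhi}, so essentially all of the substantive work has already been done there. Concretely, I would first invoke Theorem \ref{weakly SPhi} to conclude that the fusion system $\mathcal{F}_S(G)$ is supersolvable. No extra verification is required, since the existence of a subgroup $D$ with $1 \leq |D| < |S|$, the abelianity of all subgroups of order $|D|$ or $p|D|$, the weakly $S\Phi$-supplemented condition on those subgroups, and the supplementary requirement on cyclic subgroups of order $4$ in the non-abelian $2$-group case with $|D|=1$ are precisely the assumptions of that theorem.

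Once supersolvability of $\mathcal{F}_S(G)$ is in hand, the remaining step is to pass from this fusion-theoretic conclusion back to a statement about $G$ itself. This is exactly the content of Remark \ref{p-nilpotence}: under the arithmetic hypothesis $(p-1,|G|)=1$, supersolvability of $\mathcal{F}_S(G)$ forces $G$ to be $p$-nilpotent, via \cite[Theorem 1.9]{SN}. Chaining these two implications together yields the desired conclusion, and the proof mirrors the two-line structure of every preceding corollary in this section.

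There is no genuine obstacle at the level of this corollary; the entire difficulty is absorbed into Theorem \ref{weakly SPhi}, whose proof runs the minimal-counterexample machinery patterned on Theorem \ref{3.1} and crucially applies Lemma \ref{26} to establish $O_p(G) \leq Z_{\mathfrak{U}}(G)$. The only point that genuinely matters here, and the one I would make sure to flag, is that the coprimality condition $(p-1,|G|)=1$ must be present so that Remark \ref{p-nilpotence} is applicable; this is supplied directly by hypothesis. Accordingly, the proof reduces to these two citations.
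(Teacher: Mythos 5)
Your proposal is correct and coincides with the paper's own proof: the paper likewise deduces supersolvability of $\mathcal{F}_S(G)$ from Theorem \ref{weakly SPhi} and then applies Remark \ref{p-nilpotence} (using $(p-1,|G|)=1$) to conclude $p$-nilpotency of $G$. Your additional observation that the coprimality hypothesis is exactly what makes Remark \ref{p-nilpotence} applicable is the only point of substance, and you have it right.
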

\begin{proof}
It indicates from Theorem  \ref{weakly SPhi} that $\mathcal{F}_S (G)$ is supersolvable. Then we conclude from Remark \ref{p-nilpotence} that $G$ is $p$-nilpotent.
\end{proof}
\begin{corollary}
Let $G$ be a finite group, $p$ be a prime divisor of $|G|$ with $(p-1,|G|)=1$, $S$ be a Sylow $p$-subgroup of $G$. Suppose that $S$ has a subgroup $D$ such that $1 < |D |<|S|$, and all subgroups of $S$ of order $|D|$ or $p|D|$ are abelian, every subgroup of order $|D|$ or $4$ (if $p = |D|=2$) is an $\mathscr{H} C$-subgroup of $G$.  Then  $G$ is $p$-nilpotent.
\end{corollary}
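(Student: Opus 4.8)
The plan is to observe that the hypotheses imposed here are precisely those of Corollary~\ref{HC}, supplemented only by the coprimality condition $(p-1,|G|)=1$. Hence the argument splits naturally into two steps that have already been prepared for us earlier in the paper, and no genuinely new work is required: the task is purely one of assembling existing pieces into the standard two-step chain that every $p$-nilpotency corollary in this section follows.

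First I would apply Corollary~\ref{HC} directly. The statement of Corollary~\ref{HC} requires exactly that $S$ possess a subgroup $D$ with $1<|D|<|S|$ for which every subgroup of $S$ of order $|D|$ or $p|D|$ is abelian, and every subgroup of $S$ of order $|D|$ or $4$ (when $p=|D|=2$) is an $\mathscr{H} C$-subgroup of $G$. These are verbatim the assumptions of the present corollary, so Corollary~\ref{HC} yields at once that the fusion system $\mathcal{F}_S(G)$ is supersolvable.

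Next I would invoke Remark~\ref{p-nilpotence}, which transfers structural information from the fusion system back to the group: since $(p-1,|G|)=1$ and $\mathcal{F}_S(G)$ is supersolvable, that remark (via \cite[Theorem 1.9]{SN}) guarantees that $G$ is $p$-nilpotent, which completes the proof.

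As for the main obstacle, there is essentially none at the level of this corollary, because all of the difficulty has already been absorbed into Corollary~\ref{HC} (whose proof rests on Theorem~\ref{7-eternal} together with the verification, via Lemma~\ref{27}, that the $\mathscr{H} C$-property is a $7$-eternal relation) and into the passage recorded in Remark~\ref{p-nilpotence}. The only point deserving a moment's care is checking that the hypotheses match the input of Corollary~\ref{HC} exactly, including the parenthetical special case $p=|D|=2$; once that bookkeeping is confirmed, the two-step chain closes immediately and we are done.
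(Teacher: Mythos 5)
Your proposal is correct and coincides with the paper's own proof: the paper likewise applies Corollary~\ref{HC} to conclude that $\mathcal{F}_S(G)$ is supersolvable and then invokes Remark~\ref{p-nilpotence} (using $(p-1,|G|)=1$) to obtain $p$-nilpotency of $G$. Your additional remarks on how Corollary~\ref{HC} itself rests on Theorem~\ref{7-eternal} and Lemma~\ref{27} accurately reflect the paper's structure as well.
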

\begin{proof}
It follows from Theorem  \ref{HC} that $\mathcal{F}_S (G)$ is supersolvable. Then we obtain from Remark \ref{p-nilpotence} that $G$ is $p$-nilpotent.
\end{proof}
\begin{corollary}
Let $G$ be a finite group, $p$ be a prime divisor of $|G|$ with $(p-1,|G|)=1$, $S$ be a Sylow $p$-subgroup of $G$. Suppose that there exists a subgroup $D$ of $S$ such that $1<|D|<|S|$, every subgroup of $S$ of order $|D|$ or $p|D|$ is abelian, every subgroup of $S$ of order $|D|$ or $2|D|$ (if $S$ is non-abelian, $p=2$ and $|S:D|>2$) is either $S$-quasinormally embedded or $SS$-quasinormal in $G$, then $G$ is $p$-nilpotent.
\end{corollary}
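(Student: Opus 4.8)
The plan is to reduce this statement to the supersolvability machinery already assembled in Sections \ref{1003} and \ref{Section 4}, following exactly the same two-step template used for every other $p$-nilpotency corollary in this section. First I would observe that the hypotheses here are word-for-word the hypotheses of Corollary \ref{SS}: there is a subgroup $D$ of $S$ with $1<|D|<|S|$, every subgroup of $S$ of order $|D|$ or $p|D|$ is abelian, and every subgroup of $S$ of order $|D|$ or $2|D|$ (under the exceptional condition that $S$ is non-abelian, $p=2$ and $|S:D|>2$) is either $S$-quasinormally embedded or $SS$-quasinormal in $G$. Hence Corollary \ref{SS} applies verbatim and yields that $\mathcal{F}_S (G)$ is supersolvable.

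The second and final step is to transfer supersolvability of the fusion system to $p$-nilpotency of the group. Here I would invoke Remark \ref{p-nilpotence}: the standing assumption $(p-1,|G|)=1$ together with the supersolvability of $\mathcal{F}_S (G)$ gives, via \cite[Theorem 1.9]{SN}, that $G$ is $p$-nilpotent, which is precisely the desired conclusion. Thus the entire argument is a two-line citation chain, and I would write the proof simply as ``It follows from Corollary \ref{SS} that $\mathcal{F}_S (G)$ is supersolvable; then Remark \ref{p-nilpotence} yields that $G$ is $p$-nilpotent.''

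I do not anticipate any genuine obstacle, because all of the substantive work has already been discharged upstream. The content of Corollary \ref{SS} rests on Theorem \ref{8-eternal}, whose hypothesis is met once one knows that the property ``$S$-quasinormally embedded or $SS$-quasinormal'' is an $8$-eternal relation; this in turn is exactly what is recorded in Lemma \ref{28} (giving condition (8) of Definition \ref{eternal}) combined with the restriction statements \cite[Lemma 2.1 (i)]{LSK} and \cite[Lemma 2.1 (a)]{XX} (giving condition (0)). The only thing one must verify at the present stage is that the numerical hypothesis $(p-1,|G|)=1$ is carried along so that Remark \ref{p-nilpotence} is genuinely applicable, and this is immediate from the statement. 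Consequently the proof is entirely routine and requires no new estimates or case analysis.
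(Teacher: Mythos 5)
Your proposal matches the paper's proof exactly: the paper also deduces supersolvability of $\mathcal{F}_S (G)$ from Corollary \ref{SS} and then applies Remark \ref{p-nilpotence} (via \cite[Theorem 1.9]{SN}) to conclude $p$-nilpotency. Your additional tracing of the upstream dependencies (Theorem \ref{8-eternal}, Lemma \ref{28}, and the restriction lemmas of \cite{LSK} and \cite{XX}) is consistent with how the paper establishes Corollary \ref{SS}, so there is nothing to correct.
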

\begin{proof}
It indicates from Theorem  \ref{SS} that $\mathcal{F}_S (G)$ is supersolvable. Then we assert from Remark \ref{p-nilpotence} that $G$ is $p$-nilpotent and the proof is complete.
\end{proof}
\begin{corollary}
Let $G$ be a finite group, $p$ be a prime divisor of $|G|$ with $(p-1,|G|)=1$, $S$ be a Sylow $p$-subgroup of $G$. Suppose that there exists a subgroup $D$ of $S$ such that $1<|D|<|S|$, every subgroup of $S$ of order $|D|$ or $p|D|$ is abelian and weakly $\mathcal{HC}$-embedded in $G$, then $G$ is $p$-nilpotent.
\end{corollary}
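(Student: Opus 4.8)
The plan is to follow exactly the two-step template that governs every corollary in this section: first extract supersolvability of the fusion system $\mathcal{F}_S(G)$ from the already-established results of Section \ref{1003}, and then convert that into $p$-nilpotency via the numerical hypothesis $(p-1,|G|)=1$. The key observation is that the hypotheses here are \emph{verbatim} those of Corollary \ref{weakly HC-embedded}, with the sole addition of the coprimality condition $(p-1,|G|)=1$, which plays no role in the supersolvability half of the argument and is reserved for the final step.

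First I would invoke Corollary \ref{weakly HC-embedded} directly. Its statement assumes a subgroup $D$ of $S$ with $1<|D|<|S|$ such that every subgroup of $S$ of order $|D|$ or $p|D|$ is abelian and weakly $\mathcal{HC}$-embedded in $G$, and concludes that $\mathcal{F}_S(G)$ is supersolvable. Since our hypotheses supply precisely such a $D$, this yields supersolvability of $\mathcal{F}_S(G)$ immediately, with no additional verification needed. (Recall that Corollary \ref{weakly HC-embedded} itself rests on Theorem \ref{9-eternal}, after checking via {{\cite[Lemma 2.1 (1)]{HX}}} and Lemma \ref{29} that the weakly $\mathcal{HC}$-embedded property is a $9$-eternal relation, so all the genuine group-theoretic content is already in place.)

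Second, with $\mathcal{F}_S(G)$ supersolvable in hand, I would apply Remark \ref{p-nilpotence}: under the assumption $(p-1,|G|)=1$ together with supersolvability of $\mathcal{F}_S(G)$, that remark (citing {{\cite[Theorem 1.9]{SN}}}) guarantees that $G$ is $p$-nilpotent. This completes the argument.

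There is essentially no obstacle to overcome at this level; the proof is a purely formal concatenation of Corollary \ref{weakly HC-embedded} and Remark \ref{p-nilpotence}, mirroring the preceding corollaries of Section \ref{1005}. The only point that warrants a moment's care is confirming that the coprimality hypothesis $(p-1,|G|)=1$ is used exactly once, at the transition from supersolvability to $p$-nilpotency, and is not needed for the fusion-system conclusion. Thus the write-up will be a single short paragraph: cite Corollary \ref{weakly HC-embedded} for supersolvability of $\mathcal{F}_S(G)$, then Remark \ref{p-nilpotence} for $p$-nilpotency of $G$.
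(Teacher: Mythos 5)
Your proposal is correct and matches the paper's own proof exactly: the paper likewise cites Corollary \ref{weakly HC-embedded} (referred to there as Theorem \ref{weakly HC-embedded}) for supersolvability of $\mathcal{F}_S(G)$ and then Remark \ref{p-nilpotence} for $p$-nilpotency. Your additional observation about where the hypothesis $(p-1,|G|)=1$ enters is accurate and consistent with the paper's argument.
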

\begin{proof}
It yields from Theorem  \ref{weakly HC-embedded} that $\mathcal{F}_S (G)$ is supersolvable. Then we obtain from Remark \ref{p-nilpotence} that $G$ is $p$-nilpotent and the proof is finished.
\end{proof}

\begin{corollary}
Let $G$ be a $Q_8$-$free$ finite group, $p$ be a prime divisor of $|G|$ with $(p-1,|G|)=1$, $S$ be a Sylow $p$-subgroup of $G$. Suppose that any subgroup $H$ of $S$ with order $p$ is an $NE^{*}$-subgroup of $G$, then $G$ is $p$-nilpotent.
\end{corollary}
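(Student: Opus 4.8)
The plan is to follow the uniform template established by all the preceding corollaries in this section, since the hypotheses here match exactly those of Theorem \ref{NE*}. The strategy proceeds in two moves: first establish the supersolvability of the fusion system $\mathcal{F}_S(G)$, and then transfer this to $p$-nilpotency of $G$ using the arithmetic hypothesis $(p-1,|G|)=1$.

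First I would observe that the hypotheses of the present corollary coincide verbatim with those of Theorem \ref{NE*}: namely, $G$ is $Q_8$-free, $p$ is a prime divisor of $|G|$, $S\in\mathrm{Syl}_p(G)$, and every subgroup $H$ of $S$ of order $p$ is an $NE^{*}$-subgroup of $G$. (The only additional assumption here, $(p-1,|G|)=1$, is not needed for supersolvability and is reserved for the second step.) Hence Theorem \ref{NE*} applies directly and yields that $\mathcal{F}_S(G)$ is supersolvable.

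Second, with $\mathcal{F}_S(G)$ now known to be supersolvable and with the standing hypothesis $(p-1,|G|)=1$, I would invoke Remark \ref{p-nilpotence}, which (citing \cite[Theorem 1.9]{SN}) states precisely that under these two conditions $G$ is $p$-nilpotent. This completes the argument.

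There is no genuine obstacle in this corollary: the substantive work has already been carried out in the proof of Theorem \ref{NE*} (where the counterexample-of-minimal-order framework of Theorem \ref{3.1}, together with Lemma \ref{31} to place $O_p(G)$ inside $Z_{\mathfrak{U}}(G)$, does the heavy lifting), and the passage from supersolvability of the fusion system to $p$-nilpotency is a black-box application of Remark \ref{p-nilpotence}. Thus the proof is a short two-sentence deduction, exactly parallel to the other corollaries in Section \ref{1005}.
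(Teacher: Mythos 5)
Your proposal is correct and matches the paper's own proof exactly: the paper likewise deduces supersolvability of $\mathcal{F}_S(G)$ from Theorem \ref{NE*} and then applies Remark \ref{p-nilpotence} (via \cite[Theorem 1.9]{SN} and the hypothesis $(p-1,|G|)=1$) to conclude $p$-nilpotency. Your added remarks on where the substantive work lies (Lemma \ref{31} inside the proof of Theorem \ref{NE*}) are accurate but not part of the paper's two-line argument.
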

\begin{proof}
It implies from Corollary \ref{NE*} that $\mathcal{F}_S (G)$ is supersolvable. Then we conclude from Remark \ref{p-nilpotence} that $G$ is $p$-nilpotent and we are done.
\end{proof}

\end{document}